\theoremstyle{plain}
\newcommand{\veps}{\varepsilon}
\newcommand{\supp}{\text{supp}}
\def\QQ{\mathbb Q}
\def\RR{\mathbb R}
\def\R{\mathbb R}
\def\NN{\mathbb N}
\def\ZZ{\mathbb Z}
\def\CC{\mathbb C}
\def\C{\mathbb C}
\def\card{\textrm{card}}
\newtheorem{theorem}{Theorem}[section]
\newtheorem{lemma}[theorem]{Lemma}
\newtheorem{corollary}[theorem]{Corollary}
\newtheorem{proposition}[theorem]{Proposition}
\theoremstyle{definition}
\newtheorem{example}[theorem]{Example}
\theoremstyle{remark}
\newtheorem{remark}[theorem]{Remark}
\newtheorem{question}[theorem]{Question}
\begin{document}


\date{\today}

\title{Baire theorem and hypercyclic algebras}

\author[F. Bayart]{Fr\'ed\'eric Bayart}
\email{Frederic.Bayart@uca.fr}
\author[F. Costa J\'unior]{Fernando Costa J\'unior}
\email{Fernando.Vieira\underline{ }Costa\underline{ }Junior@uca.fr,}
\author[D. Papathanasiou]{Dimitris Papathanasiou}
\email{dpapath@bgsu.edu}
\thanks{The first and the second author were partially supported by the grant ANR-17-CE40-0021 of the French National Research Agency ANR (project Front)}
\address{Laboratoire de Mathématiques Blaise Pascal UMR 6620 CNRS, Université Clermont Auvergne, Campus universitaire des Cézeaux, 3 place Vasarely, 63178 Aubière Cedex, France.}

\subjclass{47A16}

\keywords{hypercyclic operators, weighted shifts, convolution operators, hypercyclic algebras}

\begin{abstract}
The question of whether a hypercyclic operator $T$ acting on a Fréchet algebra $X$ admits or not an algebra of hypercyclic vectors (but 0) has been addressed in the recent literature. In this paper we give new criteria and characterizations in the context of convolution operators acting on $H(\CC)$ and backward shifts acting on a general Fréchet sequence algebra.
Analogous questions arise for stronger properties like frequent hypercyclicity. 
In this trend we give a sufficient condition for a weighted backward shift to admit an upper frequently hypercyclic algebra and we find a weighted backward shift acting on $c_0$ admitting a frequently hypercyclic algebra for the coordinatewise product. The closed hypercyclic algebra problem is also covered.
\end{abstract}

\maketitle



\section{Introduction}

Among the many problems in linear dynamics, understanding the structure of the set of hypercyclic vectors is a major one. Let us introduce the relevant definitions.
Let $(X,T)$ be a linear dynamical system, namely $X$ is a topological vector space and $T$ is a bounded linear operator on $X$.
A vector $x\in X$ with dense orbit under $T$ is called a hypercyclic vector, and we denote by $HC(T)$ the set of hypercyclic vectors for $T$:
$$
HC(T)=\{x\in X: \{x, Tx, T^2x,\dots \} \,\, \mbox{is dense in} \,\, X\}.
$$
This set $HC(T)$ possesses interesting properties. When $X$ is a Baire space, its nonemptyness implies its residuality, preventing it from being a non-trivial proper linear subspace of $X$.
However, it is well known that, whenever $HC(T)$ is nonempty, then $HC(T)\cup \{0\}$ contains a
dense linear manifold (see \cite{bourdon}). In many cases (not always) $HC(T)\cup \{0\}$ contains  a closed, infinite dimensional linear subspace
(see \cite{GoRo, peterson,shkarin, menet}). These properties reflect that we are working in a linear space.

Suppose now that $X$ has a richer structure: it is an $F$-algebra, namely a metrizable and complete topological algebra. It is natural to ask whether $HC(T)\cup\{0\}$ also contains 
a non-trivial subalgebra of $X$. Such an algebra will be called a \emph{hypercyclic algebra}. The pioneering work in that direction has been done independently by Shkarin in \cite{shkarin} and 
by Bayart and Matheron in \cite{BM09}: they showed that the derivation operator $D:f\mapsto f'$, acting on the Fr\'echet algebra $H(\CC)$ of entire functions endowed with
the pointwise multiplication, supports a hypercyclic algebra. However, this is not the case for all hypercyclic operators acting on an $F$-algebra: 
for instance, as pointed out in \cite{ACPS07}, the translation operators, acting on $H(\CC)$, do not support a hypercyclic algebra. 
Recent papers (see e.g. \cite{Bayhcalg,bes1,BCP18,BP18,FalGre18,FalGre19}) give other examples of operators admitting a hypercyclic algebra. 

Our aim, in this paper, is to shed new light on this problem and to study how it interacts with popular problems arising in linear dynamics. We are particularly interested in two questions.
  
\subsection{Existence of hypercyclic algebras}

All examples in the literature of operators supporting a hypercyclic algebra are generalizations of $D$. There
are several ways to extend it. You may see $D$ as a special convolution operator acting on $H(\CC)$. By \cite{GoSh91}, such an operator may be written $\phi(D)$, where $\phi$ is an entire function with exponential type; if $\phi$ is not constant, then $\phi(D)$ is hypercyclic. When $|\phi(0)|<1$, the existence
of hypercyclic algebras is well-understood since \cite{Bayhcalg}: such an algebra does exist if and only if $\phi$ is not a multiple of an exponential function. When $|\phi(0)|=1$, sufficient conditions are given in \cite{Bayhcalg} or in \cite{BesErnstPrieto}
but almost nothing, except a very specific example, is known when $|\phi(0)|>1$. We partly fill this gap by proving 
the existence of a hypercyclic algebra when $\phi$ goes to zero along some half-line.

\begin{theorem}\label{thm:convintro}
Let $\phi$ be a nonconstant entire function with exponential type, not a multiple of an exponential function. 
Assume that $|\phi(0)|>1$ and that there exists some $w\in\CC$ such that $|\phi(tw)|\to 0$ as $t\to+\infty$. Then $\phi(D)$ supports a hypercyclic algebra.
\end{theorem}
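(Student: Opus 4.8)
The plan is to apply the standard Baire-category criterion for hypercyclic algebras, which reduces the statement to a single approximation property: for every $m\ge 1$, every $u\in H(\CC)$, every target $h\in H(\CC)$, every compact $K\subseteq\CC$ and every $\veps>0$, one must produce $f\in H(\CC)$ and $n\in\NN$ with $\|f-u\|_K<\veps$, $\|\phi(D)^n(f^m)-h\|_K<\veps$ and $\|\phi(D)^n(f^l)\|_K<\veps$ for $1\le l<m$. Granting this for all such data, the sets of $f$ realizing each (countable) instance are open and dense, and their intersection is a residual set of generators $f$ for which every $P(f)$ with $P(0)=0$, $P\ne0$ is hypercyclic: writing $P=\sum_{l=1}^m c_l X^l$ with $c_m\ne0$, the control on the lower powers forces $\phi(D)^n(P(f))\to c_m h$, and as $h$ ranges over a dense set so does $c_m h$.

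To prove the approximation property I would work entirely with the exponential eigenvectors $e_\lambda\colon z\mapsto e^{\lambda z}$, for which $\phi(D)e_\lambda=\phi(\lambda)e_\lambda$ and $e_\lambda e_{\lambda'}=e_{\lambda+\lambda'}$; since $\phi$ is of exponential type, $\phi(D)^n(e_\lambda)=\phi(\lambda)^n e_\lambda$. Two density facts drive the construction. First, finite combinations of $\{e_{sw}:s>S\}$ are dense in $H(\CC)$ (a ray is a set with an accumulation point), so it suffices to treat $u$ of this form; for such $u$ all frequencies of $u^l$ lie far out on $\RR_+w$, where $|\phi(tw)|\to0$ forces $\phi(D)^n(u^l)\to0$. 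Second, the same density lets me approximate $h$ by $\sum_i b_i e_{\mu_i}$ with the $\mu_i$ chosen near $0$, where $|\phi(0)|>1$ gives $|\phi(\mu_i)|>1$. I then seek $f=u+\text{(perturbation)}$ whose perturbation combines a single large frequency $\lambda=tw$ on the decay ray with the complementary frequencies $\mu_i-(m-1)\lambda$, so that exactly one product of $m$ factors, $e_{\mu_i-(m-1)\lambda}\cdot e_\lambda^{\,m-1}=e_{\mu_i}$, lands back near $0$ and, after applying $\phi(D)^n$ and tuning the coefficients so that $\phi(\mu_i)^n(\cdots)\to b_i$, reconstructs $h$. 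The exponents of $\lambda$ are arranged precisely so that in $f^l$ with $1\le l<m$ no choice of factors can cancel the $\lambda$-contribution—this would require $l=mp$ for an integer $p\ge1$, impossible for $1\le l<m$—so every surviving frequency of $\phi(D)^n(f^l)$ sits far out on the ray and is annihilated.

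The main obstacle is the control of the parasitic cross terms, and this is where the coupling between $t$, the size of the perturbation coefficients, and the iterate $n$ must be handled with care. The delicate point is that building a near-$0$ frequency out of large ray frequencies forces the complementary frequencies, and more generally any frequency $\nu=\sum_i p_i\mu_i+(l-mp)\lambda$ obtained by using a complementary frequency with multiplicity $\ge2$ (here $p_i\in\NN$ records the multiplicities and $p=\sum_i p_i$), to lie far out in the direction $-w$, where the exponential type of $\phi$ only yields an \emph{upper} bound $|\phi|\le Ce^{\tau|\cdot|}$ and no decay. After normalizing the target, each parasitic term contributes a factor $\big(|\phi(\nu)|\big/\prod_i|\phi(\mu_i)|^{p_i}\big)^n$, so the whole scheme works precisely when every such ratio is $<1$: $t$ must be large enough that the decay on $\RR_+w$ is effective, yet the frequencies $\mu_i$ must be selected so that the expanding gain $\prod_i|\phi(\mu_i)|^{p_i}>1$ dominates the $-w$ growth of $|\phi(\nu)|$. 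Balancing these competing requirements—choosing the $\mu_i$, then $t$ and the coefficients, then letting $n\to\infty$ in the right order so that every parasitic ratio stays $<1$ while the designed term remains of order one—is the technical heart of the argument. It is exactly here that the hypothesis that $\phi$ is not a multiple of an exponential is indispensable: for a genuine exponential these ratios are rigidly multiplicative and cannot be beaten (matching the nonexistence of a hypercyclic algebra in that case), whereas a nondegenerate $\phi$ offers enough independence among the values $\phi(\mu_i)$ and $\phi(\nu)$ to win the balance, with $|\phi(0)|>1$ and $|\phi(tw)|\to0$ feeding respectively the expanding target frequencies and the absorbing sink that kills the lower-order and ray terms.
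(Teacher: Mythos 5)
Your global framework (the Birkhoff-type residuality argument with $T^N(f^m)\in V$ and $T^N(f^l)\in W$ for $l<m$, and the reduction to eigenvector combinations) is the same as the paper's, but your verification of the approximation property contains a gap that I do not believe can be repaired. The problem is exactly the one you flag and then claim to resolve: the complementary frequencies $\mu_i-(m-1)\lambda$ and, more generally, the parasitic frequencies $\nu=\sum_i p_i\mu_i+(l-mp)\lambda$ with $mp>l$ lie at distance at least $t|w|$ in the direction $-w$, where the hypotheses give no control whatsoever on $|\phi|$. Your proposed balance requires $|\phi(\nu)|<\prod_i|\phi(\mu_i)|^{p_i}$, but the right-hand side is bounded by $\big(\sup_{B(0,\delta)}|\phi|\big)^{m}$ \emph{independently of $t$}, whereas $|\phi(\nu)|$ may grow like $e^{ct}$. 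Concretely, $\phi(z)=2(1+z/100)e^{-z}$ satisfies all the hypotheses of the theorem with $w=1$, yet $|\phi(-s)|\sim \frac{s}{50}e^{s}$, so once $t$ is large every parasitic ratio is $\gg 1$ and the corresponding terms of $\phi(D)^n(f^l)$ blow up as $n\to\infty$. You cannot escape by keeping $t$ bounded, since the terms of $f^l$ built purely from ray frequencies (and the term $\veps^m e_{m\lambda}$ of $f^m$) need $|\phi|$ to be small at $lt w$, which forces $t$ large. Nor can the coefficients absorb the loss: membership of the perturbation in a neighbourhood of $0$ forces $\veps\lesssim e^{-t|w|R}$, so the prefactor $\veps^{l-mp}$ is itself exponentially large in $t$. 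Also note that your parenthetical claim that for $1\le l<m$ the surviving frequencies ``sit far out on the ray and are annihilated'' is incorrect for $p\ge1$: those frequencies sit far out on the \emph{opposite} ray, where nothing is annihilated.

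The paper's proof (Theorem \ref{versionconvol} combined with Corollary \ref{wellbehaved}) avoids the $-w$ direction entirely, and this is the key idea you are missing. Instead of approximating the target by exponentials with frequencies near $0$, it approximates it by $\sum_j b_j E(\lambda_j)$ with $\lambda_j$ near a point $mb$ on the \emph{positive} ray chosen just before $|\phi|$ drops to $1$ (so $|\phi(\lambda_j)|>1$ there), puts the perturbation at the frequencies $\lambda_j/m$ with coefficients $c_j^m=b_j/\phi(\lambda_j)^N$, and approximates the base point $u$ by exponentials with frequencies near some $a$ beyond the decay threshold. Every frequency occurring in every power $u^n$, $n\in I$, is then of the form $db+(n-d)a+O(\delta)$ with $d\ge0$, i.e.\ stays on the positive ray; condition (e) of Theorem \ref{versionconvol} (verified in Corollary \ref{wellbehaved} via Lemma \ref{lemmawellbehaved}) supplies the strict inequalities $|\phi(db+(n-d)a)|<|\phi(mb)|^{d/m}$ that kill all off-target terms, and the strict convexity of $t\mapsto\log|\phi(tw_1+(1-t)w_2)|$ (this is where ``not a multiple of an exponential'' enters, rather than through the independence heuristic you describe) handles the near-diagonal terms of $u^m$.
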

In particular, we shall see that if $\phi(z)=P(z)e^z$ for some non-constant polynomial $P$, then $\phi(D)$ supports a hypercyclic algebra.

\smallskip

Another way to generalize $D$ is to see it as a weighted backward shift acting on $H(\CC)$ considered as a sequence space.
This was explored in \cite{FalGre18}. The general context is that of a Fr\'echet sequence algebra $X$. Precisely we assume that $X$ is a subspace of the space $\omega=\CC^{\NN_0}$ of all complex sequences, whose topology is induced by a 
non-decreasing sequence of seminorms $(\|\cdot\|_q)_{q\geq 1}$ and that $X$ is endowed with a product $\cdot$ such that, for all $x,y\in X$, all $q\geq 1$, 
$$\|x\cdot y\|_q\leq \|x\|_q\times \|y\|_q.$$
There are two natural products on a Fr\'echet sequence space: the coordinatewise product and the convolution or Cauchy product.
It is clear that $\ell_p$ and $c_0$ are Fréchet sequence algebras for the coordinatewise product, and that $\ell_1$ is also a Fréchet sequence algebra for the convolution product.
Endowing $H(\mathbb C)$ with 
\[ \left\| \sum_{n\geq 0}a_n z^n \right\|_q =\sum_{n\geq 0}|a_n| q^n \]
and $\omega$ with
\[ \left\| (x_n) \right\|_q=\sum_{n=0}^q |x_n|,\]
we also obtain that $H(\CC)$ and $\omega$ are Fréchet sequence algebras for both products (on $H(\mathbb C)$, the Cauchy product of $f$ and $g$ is nothing else but the product
of the two functions $f$ and $g$). Another interesting source of examples for us will be the sequence spaces $X=\{(x_n)\in\omega: \gamma_n x_n\to 0\}$ endowed with $\|x\|=\sup_n \gamma_n |x_n|$, where $(\gamma_n)\in\RR_+^{\NN_0}$. Provided $\gamma_n\geq 1$ for all $n$, $X$ is a Fr\'echet sequence algebra for the coordinatewise product. 

Given a sequence of nonzero complex numbers $w=(w_n)_{n\in\NN}$, the (unilateral) weighted backward shift $B_w$ with weight $w$ is defined by 
$$B_w(x_0,x_1,\dots)=(w_1x_1,w_2x_2,\dots).$$
The weight $w$ will be called \emph{admissible} (for $X$) if $B_w$ is a bounded operator on $X$. 
It is known that, provided the canonical basis $(e_n)$ is a Schauder basis of $X$, $B_w$ is hypercyclic if and only if there exists a sequence $(n_k)$ such that
for all $l\in\NN$, $\big((w_{l+1}\cdots w_{n_k+l})^{-1}e_{n_k+l}\big)$ goes to zero. 

Let us first assume that $X$ is a Fr\'echet algebra under the coordinatewise product. Under a supplementary technical condition on $X$, a sufficient condition on $w$ is given in \cite{FalGre18}
so that $B_w$ supports a hypercyclic algebra. It turns out that we shall give a very natural characterization
of this property when $X$ admits a continous norm. We recall that a Fr\'echet space $(X,(\|\cdot\|_q))$ admits a continuous norm if there exists a norm $\|\cdot\|:X\to\mathbb R$ that is continuous for the topology of $X$, namely there exists $q\in\NN$ and $C>0$ with $\|x\|\leq C\|x\|_q$ for all $x\in X$. In particular, for any $q$ large enough, $\|\cdot\|_q$ itself is a norm.

\begin{theorem}\label{thm:mainws}
Let $X$ be a Fr\'echet sequence algebra for the coordinatewise product and with a continuous norm. Assume that $(e_n)$ is a Schauder basis for $X$. Let also $B_w$ be a bounded weighted shift on $X$. The following assumptions are equivalent.
\begin{enumerate}[(i)]
\item $B_w$ supports a dense and not finitely generated hypercyclic algebra.
\item There exists a sequence of integers $(n_k)$ such that for all $\gamma>0$, for all $l\in\mathbb N$, $\big((w_{l+1}\cdots w_{n_k+l})^{-\gamma}e_{n_k+l}\big)$ tends to zero.
\end{enumerate}
\end{theorem}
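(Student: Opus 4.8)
The plan is to prove both implications, the construction in (ii)$\Rightarrow$(i) being the heart of the matter. For the easier direction (i)$\Rightarrow$(ii), I pick any nonzero $f=\sum_j a_j e_j$ in the hypercyclic algebra $\mathcal A$. Since $\mathcal A$ is an algebra contained in $HC(B_w)\cup\{0\}$ for the coordinatewise product, every power $f^{d}=\sum_j a_j^{d}e_j$ lies in $\mathcal A$ and is hypercyclic. Approximating the target $\sum_{l\le L}e_l$ by the orbit of $f^d$ yields a sequence along which $|a_{n+l}|^{d}\,|w_{l+1}\cdots w_{n+l}|\to1$ for each $l\le L$, hence $|w_{l+1}\cdots w_{n+l}|^{-1/d}\sim|a_{n+l}|$; meanwhile $f\in X$ forces $a_{n+l}e_{n+l}\to0$, and the continuous norm lets me read this as $\|a_{n+l}e_{n+l}\|_q\to0$. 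Combining gives $(w_{l+1}\cdots w_{n+l})^{-1/d}e_{n+l}\to0$ along that sequence. Letting $d$ range over $\NN$ and extracting diagonally (using that the chosen indices keep $\|e_{n_k+l}\|_q$ controlled) produces a single sequence $(n_k)$ valid for every $\gamma=1/d$, hence for every $\gamma>0$; this is condition (ii).

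For (ii)$\Rightarrow$(i) I set up a Baire category argument. I fix a countable dense family of polynomials $P$ with $P(0)=0$ and a countable base $(V_s)$ of nonempty open sets; the sets $\{f:\exists n,\ B_w^n(P(f))\in V_s\}$ are open, and the whole point is their density. I take a finitely supported base point $f_0$ in a basic open set $U$ and perturb it by $u=\sum_{l\le L}b_l e_{n+l}$ on a far-out block, with $n=n_k$ from (ii) and $n>\max\supp f_0$. The disjoint supports collapse the coordinatewise algebra: $P(f_0+u)=P(f_0)+P(u)$, and since $B_w^{n}$ annihilates everything supported below $n$, one gets $B_w^{n}(P(f_0+u))=\sum_{l\le L}(w_{l+1}\cdots w_{n+l})\,P(b_l)\,e_l$. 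Prescribing a target $v=\sum_l v_l e_l$ thus amounts to solving $P(b_l)=v_l/(w_{l+1}\cdots w_{n+l})=:R_l$; choosing the root of smallest modulus gives $|b_l|\le C\bigl(|R_l|^{1/d}+|R_l|^{1/j_0}\bigr)$, where $d=\deg P$ and $j_0$ is its lowest degree. The cost of the perturbation is then
\[
\|u\|_q\le\sum_{l\le L}|b_l|\,\|e_{n+l}\|_q ,
\]
and condition (ii) applied with $\gamma=1/d$ and $\gamma=1/j_0$ shows each summand tends to $0$ as $k\to\infty$. Hence $f_0+u\in U$ while $B_w^{n_k}(P(f_0+u))\to v$, proving density. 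This is precisely where the full strength of (ii) — decay for \emph{every} $\gamma>0$, i.e. for every $1/\deg P$ — is consumed, and I expect it to be the main obstacle.

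It remains to upgrade the resulting hypercyclic algebra to one that is dense and not finitely generated. I run the same scheme in the sequence space $X^{\NN}$ of candidate generator systems $(f_i)_i$, intersecting over all multivariate $P$ with $P(0)=0$ and all $V_s$ to force every nonzero element $P(f_{i_1},\dots,f_{i_N})$ to be hypercyclic, and over all targets to force density of the generated algebra; the perturbation argument above goes through verbatim because, on a block disjoint from the base supports, $P(f_{i_1}+u_1,\dots)=P(f_{i_1},\dots)+P(u_1,\dots,u_N)$ coordinatewise. Non-finite-generation is built in by carrying out, at the stage introducing $f_{j+1}$, the perturbation on a block of indices lying strictly beyond those effectively used by $f_1,\dots,f_j$, which keeps $f_{j+1}$ outside the subalgebra generated by its predecessors, so no finite subfamily generates the whole algebra. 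The delicate point here is organizational rather than computational: encoding these countably many simultaneous requirements — hypercyclicity of every algebra element, density, and algebraic independence of the generators — as a single dense $G_\delta$ set, while the smallness of every perturbation is guaranteed uniformly by (ii).
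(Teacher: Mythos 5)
Your perturbation construction for the density step is essentially the paper's: on a block $\{n_k,\dots,n_k+L\}$ disjoint from the base support, the coordinatewise product makes $P(f_0+u)=P(f_0)+P(u)$, and the cost of solving $P(b_l)=v_l/(w_{l+1}\cdots w_{n_k+l})$ is controlled by condition (ii) applied with $\gamma=1/\deg P$ (in fact the product-of-roots bound $|b_l|\le(|R_l|/|\hat P(d)|)^{1/d}$ already suffices). The converse direction is also the paper's argument. However, there is a genuine gap in your Baire bookkeeping. A residual set of $f$ obtained by intersecting over a \emph{countable dense family} of polynomials only guarantees that $P(f)\in HC(B_w)$ for $P$ in that family; the algebra generated by $f$ consists of $P(f)$ for \emph{all} nonzero $P\in\CC[z]$ with $P(0)=0$, and hypercyclicity does not pass to limits ($HC(B_w)$ is a dense $G_\delta$, not closed), so density of the family buys you nothing. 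This is exactly the difficulty that the paper's Theorem \ref{thm:generalcriterion} is built to overcome: one intersects over \emph{compact} sets $E(A,\beta,s)$ of polynomials with a normalized coefficient $\hat P(\beta)=1$ and the others bounded by $s$ (compactness makes the sets $\mathcal A(A,\beta,s,k)$ open, normalization lets an arbitrary $P$ be rescaled into some $E(A,\beta,s)$, and a scalar multiple of a hypercyclic vector is hypercyclic), with the Bourdon--Feldman theorem resolving which $\beta$ works. Without some version of this compactness-plus-normalization device your residual set does not yield a hypercyclic algebra.

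Two further points. First, your mechanism for non-finite-generation ("carry out the perturbation for $f_{j+1}$ on a block beyond those used by $f_1,\dots,f_j$") is not compatible with a Baire argument, where the generators arise simultaneously as a point of a residual subset of $X^{\NN}$ rather than by successive stages; the paper instead argues that a relation $u_{q+1}=Q(u_1,\dots,u_q)$ would make the nonzero polynomial $P(z)=z_{q+1}-Q(z)$ vanish at $u$, contradicting the (already established) hypercyclicity of $P(u)$. Second, in the multivariate step you cannot simply "solve $P(b^{(1)},\dots,b^{(N)})=R_l$ exactly with all roots small": the paper instead sets $b^{(j)}_l=y_l^{\kappa_j}/(w_{l+1}\cdots w_{n_k+l})^{\kappa_j}$ for a vector $\kappa$ chosen so that the linear forms $L_\alpha(\kappa)=\sum_j\alpha_j\kappa_j$ attain a strict minimum $L_\beta(\kappa)=1$ at a single $\beta\in A$; then only the $\beta$-monomial survives $B_w^{n_k}$ asymptotically while the others go to $0$ — which is precisely the weaker requirement ("$T^N(u^\beta)\in V$ and $T^N(u^\alpha)\in W$ for $\alpha\ne\beta$") that Theorem \ref{thm:generalcriterion} asks for, and which uses the full strength of (ii) for $\gamma=\kappa_j$. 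Your final sentence correctly identifies the organizational encoding as the delicate point, but the fix requires these specific ideas, not just more careful bookkeeping.
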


In particular, this theorem implies that on $\ell_p$ or $c_0$, any hypercyclic weighted shift supports a hypercyclic algebra.

\smallskip

When $X$ is a Fr\'echet algebra for the Cauchy product, it is shown in \cite{FalGre18} that, under additional technical assumptions on $X$, $B_w$ supports a hypercyclic algebra as soon as it is mixing,
namely as soon as $(w_1\cdots w_n)^{-1}e_n$ tends to zero. We shall improve that theorem by showing that any hypercyclic backward shift on a Fréchet sequence algebra for the Cauchy product
supports a hypercyclic algebra. We will only require a supplementary assumption on $X$ (to be regular) which is less strong than the assumption required in \cite{FalGre18}. 

\begin{theorem}\label{thm:wscauchy}
 Let $X$ be a regular Fréchet sequence algebra for the Cauchy product and let $B_w$ be a bounded weighted shift on $X$. The following assertions are equivalent.
 \begin{enumerate}[(i)]
  \item $B_w$ is hypercyclic.
  \item $B_w$ supports a dense and not finitely generated hypercyclic algebra.
 \end{enumerate}
\end{theorem}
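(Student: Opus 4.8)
The implication $(ii)\Rightarrow(i)$ is immediate, since a hypercyclic algebra is in particular nonzero and contained in $HC(B_w)\cup\{0\}$, so $B_w$ has a hypercyclic vector. The whole content lies in $(i)\Rightarrow(ii)$, and the plan is to run a Baire category argument. Fix a countable basis $(V_s)$ of nonempty open sets and a countable base $(W_t)$ of neighbourhoods of $0$. For integers $m\geq 1$ consider
\[
E(m,s,t)=\Big\{f\in X:\ \exists n\in\NN,\ B_w^n(f^m)\in V_s\ \text{and}\ B_w^n(f^j)\in W_t\ \text{for all }1\leq j<m\Big\}.
\]
Each $E(m,s,t)$ is open by continuity of $f\mapsto B_w^n(f^j)$, so it suffices to prove that each is dense; a vector $f$ in the residual intersection then satisfies $P(f)\in HC(B_w)$ for every nonzero polynomial $P$ with $P(0)=0$, because writing $P=\sum_{j=1}^m a_jX^j$ with $a_m\neq0$, the term $a_mB_w^n(f^m)$ drives the orbit towards any prescribed target while the lower order terms are swallowed by a small $W_t$.

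The density of $E(m,s,t)$ is where the Cauchy product is exploited. Given a nonempty open set $U$, I would first pick a finitely supported $f_0\in U$ (possible since $(e_n)$ is a Schauder basis) and look for $f=f_0+u$ with $u$ finitely supported at \emph{very high and sparse} indices. The crucial feature of the convolution product is that $\supp(u^j)$ sits around $j$ times the support of $u$, so the powers $u,u^2,\dots,u^m$ occupy well separated blocks climbing to the top. Expanding $f^j$ by the binomial formula, the term of highest support is $u^j$, and once we apply a backward shift $B_w^n$ with $n$ tuned to bring $u^m$ down to low indices, every cross term $f_0^{j-i}u^i$ with $i<m$ (and in particular every power $f^j$ with $j<m$) is shifted below index $0$ and therefore \emph{vanishes}; note also that $B_w^n(f_0^m)=0$ as soon as $n$ exceeds the finite support of $f_0$. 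Consequently $B_w^n(f^j)=0\in W_t$ for $j<m$ automatically, and the whole problem collapses to finding a small $u$ supported high and an $n$ with $B_w^n(u^m)\in V_s$.

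To solve $B_w^n(u^m)\approx v$ for a prescribed finitely supported target $v=\sum_{l=0}^M v_le_l$ (which we may assume has $v_0\neq0$, perturbing the constant term if necessary), I would use the hypercyclicity of $B_w$ in the explicit form recalled before the statement: along the sequence $(n_k)$ one has $B_w^{n_k}\big((w_{l+1}\cdots w_{n_k+l})^{-1}e_{n_k+l}\big)=e_l$ while $(w_{l+1}\cdots w_{n_k+l})^{-1}e_{n_k+l}\to0$. Superposing these, the vector $g=\sum_{l=0}^M v_l(w_{l+1}\cdots w_{n_k+l})^{-1}e_{n_k+l}$ satisfies $B_w^{n_k}g=v$ exactly and is supported around the high index $n_k$. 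In the power series picture $e_j=z^j$ we may write $g=z^{n_k}h(z)$ with $h(0)=v_0(w_1\cdots w_{n_k})^{-1}\neq0$, so $h$ admits a convolution $m$-th root over $\CC$; truncating it and setting $u=z^{n_k/m}h(z)^{1/m}$ (after a harmless adjustment of the block position to absorb the divisibility of $n_k$ by $m$) yields $u^m\approx g$, hence $B_w^{n_k}(u^m)\approx v$. The \textbf{main obstacle} is to guarantee that this root $u$ is \emph{small}, i.e.\ lies in $U$: submultiplicativity only gives the useless lower bound $\|u\|_q\geq\|u^m\|_q^{1/m}$, so controlling $\|u\|_q$ from above is exactly where the regularity assumption on $X$ must enter, through the behaviour of $\|e_j\|_q$ and of the weight products along the chosen indices.

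It remains to upgrade a single generator to a \emph{dense and not finitely generated} hypercyclic algebra. For density one simply notes that the generator produced above can be taken inside any prescribed open set, so the residual set of good $f$ is dense. For the non-finite-generation I would run the same Baire scheme in the product space $X^{\NN}$: the same support-separation mechanism, applied now to the distinct monomials $\prod_p f_p^{\alpha_p}$ built from a sequence of generators supported at rapidly increasing sparse scales, ensures that every nonzero element of the algebra generated by $\{f_p\}$ is hypercyclic, while choosing the $f_p$ inductively outside the (meagre) subalgebra generated by $f_1,\dots,f_{p-1}$ forces the algebra not to be finitely generated. The bookkeeping for these mixed monomials is the only additional complication, and it is purely a matter of making the blocks sparse enough that the top monomial of any given polynomial is isolated by a single backward shift.
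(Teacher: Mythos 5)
Your overall skeleton (Baire argument with open sets $E(m,s,t)$, support separation under the Cauchy product so that all powers $f^j$ with $j<m$ and all cross terms are annihilated by $B_w^n$, reduction to finding a small $u$ supported high with $B_w^n(u^m)$ close to the target) is exactly the paper's strategy, which is formalized there via Theorem \ref{thm:generalcriterion} with $\beta=\max A$. But the step you yourself flag as the ``main obstacle'' is precisely the entire content of the theorem, and your proposed device --- taking a convolution $m$-th root $u=z^{n_k/m}h(z)^{1/m}$ of the preimage $g=\sum_l v_l(w_{l+1}\cdots w_{n_k+l})^{-1}e_{n_k+l}$ --- does not come with any size control. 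The coefficients of the formal root $(1+\tilde h)^{1/m}$, where $\tilde h=(h-h(0))/h(0)$, involve the ratios $\frac{v_l}{v_0}\cdot\frac{w_1\cdots w_l}{w_{n_k+1}\cdots w_{n_k+l}}$, i.e.\ reciprocals of partial products of the weight at the tail; for a general admissible weight on a general regular Fréchet sequence algebra (think of $H(\CC)$, where individual $w_n$ may tend to $0$ while the products $w_1\cdots w_n$ still grow) these are unbounded, so there is no reason the truncated root should lie in the prescribed neighbourhood $U$. Submultiplicativity only bounds $\|u\|$ from below, as you note, and the regularity axiom (c) by itself does not repair this. So as written the density of $E(m,s,t)$ is not established.

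The paper's proof avoids taking roots altogether. The perturbation is a \emph{two-block} vector $u=x+\sum_{j=0}^p d_j e_{J_k-3p+j}+\veps e_{J_k}$, and the target $y$ is carried not by the pure top term $\veps^m e_{mJ_k}$ but by the \emph{cross term} $m\veps^{m-1}\sum_j d_j e_{mJ_k-3p+j}$ of the binomial expansion of $u^m$. Given $\veps$, the equation $m\veps^{m-1}d_j\,w_1\cdots w_{mJ_k-3p+j+\rho}=w_1\cdots w_j\,y_j$ is \emph{linear} in $d_j$, so $d_j$ is explicit; the only remaining issue is that $\veps$ small forces $d_j$ large and vice versa, and the paper resolves this by choosing $\veps$ as a geometric mean of the two competing quantities (the displayed formula for $\veps$ in the proof) and then verifying the three smallness conditions \eqref{condconv:1}--\eqref{condconv:3} using the regularity inequality $\|e_n\|_r\|e_k\|_r\leq C\|e_{n+k}\|_q$ through Lemma \ref{lem:wscauchy} and Corollary \ref{cor:wscauchy}; the leftover term $\veps^m e_{mJ_k+\rho}$ is shown to be negligible after shifting. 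This linearization-plus-balancing is the key idea missing from your proposal. For the ``not finitely generated'' part your outline is in the right spirit, but note that the paper does not choose generators inductively outside meagre subalgebras: it works in $X^\NN$, separates the mixed monomials $u^\alpha$ by attaching translation blocks $\eta_i e_{s_i}$ with $s_i$ chosen so that the lexicographic maximum $\beta$ strictly dominates in support, and derives non-finite generation from the fact that any polynomial relation among the $u_i$ would produce a nonzero $P$ with $P(u)=0$, contradicting hypercyclicity of $P(u)$.
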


In particular, if we compare this statement with Theorem \ref{thm:mainws}, we see that, for the convolution product, we do not need extra assumptions on the weight, which was not the case
for the coordinatewise product. Even if the proofs of Theorems \ref{thm:mainws}, \ref{thm:wscauchy} share some similarities (the latter one being much more difficult), they also have strong
differences, the main one being that, under the coordinatewise product, any power of $x\in\omega$ keeps the same support, which is far from being the case if we work with the Cauchy product.

We also point out that this detailed study of the existence of hypercyclic algebras for weighted shifts has interesting applications. 
For instance, working with a bilateral shift, it will allow us to exhibit an invertible operator on a Banach algebra supporting a hypercyclic algebra and such that its inverse does not (see Example \ref{ex:inverse}).

\subsection{Frequently and upper frequently hypercyclic algebras}

Another fruitful subject in linear dynamics is frequent and upper frequent hypercyclicity. We say that $T$ is frequently hypercyclic (resp. upper frequently hypercyclic) if there exists a vector $x\in X$ such that, for all $U\subset X$ open and non-empty, the set $\{n\in\mathbb N: T^n x\in U\}$ has positive lower density (resp. positive upper density). Again, linearity allows to give a nice criterion to prove that an operator is (upper) frequently hypercyclic and gives rise to nice examples. 
For instance, if $X$ is a Fr\'echet sequence space and $B_w$ is a bounded weighted shift acting on $X$, it is known that the unconditional convergence of $\sum_{n\geq 1}(w_1\cdots w_n)^{-1}e_n$
implies that $B_w$ is frequently hypercyclic. Moreover, in some spaces (for instance, on $\ell_p$-spaces), this condition is even necessary for the upper frequent hypercyclicity of $B_w$.

Of course, it is natural to ask if a (upper) frequently hypercyclic operator defined on an $F$-algebra $X$ admits a (upper) frequently hypercyclic algebra, namely an algebra consisting only, except 0, of (upper) 
frequently hypercyclic vectors.
Falc\'o and Grosse-Erdmann have shown recently (\cite{FalGre19}) that this is not always the 
case: for instance, $\lambda B$, $\lambda>1$, acting on any $\ell_p$ space ($1\leq p<+\infty$) or on $c_0$, endowed with the coordinatewise product, does not admit a frequently hypercyclic algebra. 
Nevertheless, this leaves open the possibility for $\lambda B$, $\lambda>1$, to admit an upper frequently hypercyclic algebra. 

We shall give two general results implying that a weighted shift on a Fréchet sequence algebra admits an upper frequently hypercyclic algebra. 
The first one deals with Fréchet sequence algebras endowed with the coordinatewise product.
In view of Theorem \ref{thm:mainws}, the natural extension of the above result for the existence of an upper frequently hypercyclic vector
is to ask now for the unconditional convergence of the series $\sum_{n\geq 1}(w_1\cdots w_n)^{-1/m}e_n$ for all $m\geq 1$. This is sufficient! 

\begin{theorem}\label{thm:ufhcws}
Let $X$ be a Fr\'echet sequence algebra for the coordinatewise product and with a continuous norm. 
Assume that $(e_n)$ spans a dense subspace of $X$. Let also $B_w$ be a bounded weighted shift on $X$ such that, for all $m\geq 1$, 
$\sum_{n\geq 1}(w_1\cdots w_n)^{-1/m}e_n$ converges unconditionally. Then $B_w$ admits an upper frequently hypercyclic algebra.
\end{theorem}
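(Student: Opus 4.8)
The plan is to produce a single generator: an element $f\in X$ such that $P(f)$ is upper frequently hypercyclic for \emph{every} nonzero polynomial $P$ with $P(0)=0$. Since the product is coordinatewise, the non-unital subalgebra generated by $f$ is exactly $\{P(f):P(0)=0\}$, so this yields the desired upper frequently hypercyclic algebra (here I only need existence, not the stronger conclusions of Theorem \ref{thm:mainws}). Writing $f=\sum_k a_k e_{n_k}$ on a sparse support $(n_k)$, disjointness of the $e_{n_k}$ gives the crucial simplification $f^m=\sum_k a_k^m e_{n_k}$, hence $P(f)=\sum_k P(a_k)e_{n_k}$, and applying the shift, $B_w^N(P(f))=\sum_{k:\,n_k\ge N}P(a_k)\,w_{n_k}\cdots w_{n_k-N+1}\,e_{n_k-N}$, so only support points with $n_k\ge N$ survive.

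First I would fix a countable dense family of finitely supported targets $y=\sum_{i=0}^{d} y_i e_i$, and, in order to reach every polynomial at once, group polynomials by degree and run a compactness argument over the (normalized) coefficients of $\{P:\deg P\le D,\ P(0)=0\}$. To each relevant event (a target together with a degree class) I would attach a set of integers of positive upper density, the sets chosen pairwise disjoint; the combinatorial input is that $\NN$ splits into infinitely many sets of positive upper density. For an event with time $N$ I want $B_w^N(P(f))\approx y$, which amounts to placing support points at the consecutive positions $N,N+1,\dots,N+d$ and solving $P(a)=y_i/(w_{N+i}\cdots w_{i+1})$ for $i=0,\dots,d$. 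Because $P(0)=0$, a small right-hand side admits a small root $a$; and the right-hand side is indeed small, since the unconditional convergence of $\sum_n (w_1\cdots w_n)^{-1/m}e_n$ forces $w_1\cdots w_n$ to grow, so that $w_{N+i}\cdots w_{i+1}=(w_1\cdots w_{N+i})/(w_1\cdots w_i)$ is large for large $N$. If $P$ vanishes to order $r$ at $0$, the solution has size $|a|\approx (w_1\cdots w_N)^{-1/r}$, and this is exactly why the hypothesis is needed for every exponent $1/m$: it guarantees that the whole family of coefficients I must place is unconditionally summable, and that every power $f^m=\sum_k a_k^m e_{n_k}$ lies in $X$ with controlled $\|\cdot\|_q$. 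Unconditional convergence is also what lets me distribute the support freely among the disjoint density sets.

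The main obstacle is interference, driven by the tension between positive upper density and separation. Since each target must be reached along a set of $N$ of positive upper density, the times used for one event cannot have gaps tending to infinity, so the active blocks $[N,N+d]$ for nearby times necessarily overlap and cannot be prescribed independently; moreover a block designed for time $N'$ leaves, at other times $N$, a spurious contribution equal to a product of weights over a short range of indices, which can be large precisely when $N$ slightly exceeds $N'$. Controlling all these cross terms in the continuous norm $\|\cdot\|_q$ — so that for each event $\|B_w^N(P(f))-y\|_q<\varepsilon$ holds simultaneously on a set of $N$ of positive upper density, uniformly over the compact family of normalized polynomials of bounded degree — is the heart of the proof. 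I would resolve it by placing distinct events at multiplicatively well-separated scales (taking each density set to be a union of long dyadic blocks supported on a sparse sequence of scales), by bounding the cross terms through the tails of the convergent series $\sum_n(w_1\cdots w_n)^{-1/m}e_n$, and by handling the unavoidable overlaps inside a single block by letting one carefully chosen coefficient govern a whole run of consecutive times. Once $\|B_w^N(P(f))-y\|_q<\varepsilon$ is secured on a set of positive upper density, density of the target family gives that each $P(f)$ is upper frequently hypercyclic, which completes the argument.
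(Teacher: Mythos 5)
Your route is genuinely different from the paper's and, as sketched, it has a real gap. The paper does not attempt a direct construction of a single generator: it first proves a Birkhoff-type criterion for upper frequently hypercyclic algebras (Proposition \ref{prop:ufhccriterion} and Corollary \ref{cor:ufhccriterion}, adapted from Bonilla--Grosse-Erdmann), in which the density bound $\delta$ is allowed to depend on the target $V$, the neighbourhood $W$ and the set of exponents, but \emph{not} on $U$ or $N_0$. This reduces the theorem to a one-event statement: for fixed $m_0<m_1$ and a fixed target $v$, build $u=x+\sum_{k\ge N_1}v(Nk)$ with $v(k)=\sum_l v_l^{1/m_0}(w_{l+1}\cdots w_{k+l})^{-1/m_0}e_{k+l}$, and check that all times $Nj$ are good, the tail being absorbed by Lemma \ref{lem:unconditionalfhc}; the Baire category theorem then combines the countably many events for free. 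Your plan instead forces you to superpose the blocks of \emph{all} events inside one fixed vector $f$ and to control their mutual interference globally. That is precisely the hard part of the paper's construction of a \emph{frequently} hypercyclic algebra on $c_0$ (Theorem \ref{thm:afhcc0}), where it requires both a refined separation lemma (Theorem \ref{thm:setsafhc}) and a weight built for the purpose; it is not something the hypotheses of Theorem \ref{thm:ufhcws} hand you.

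The concrete gap is in your interference analysis. A coefficient placed at a position $M'<N$ contributes nothing at time $N$ (it is annihilated by $B_w^N$), so the danger is not ``$N$ slightly exceeding $N'$''. The dangerous cross terms come from positions $M>N$ belonging to an event whose vanishing order $r'$ is \emph{larger} than the order $r$ of the polynomial being tested: the coefficient there is $a\approx (w_1\cdots w_M)^{-1/r'}$, and the term $\hat P(r)a^{r}$ picks up the factor $w_M\cdots w_{M-N+1}$, giving a contribution of size roughly $(w_1\cdots w_M)^{1-r/r'}/(w_1\cdots w_{M-N})$. This is \emph{not} a tail of any of the series $\sum_n(w_1\cdots w_n)^{-1/m}e_n$ and it blows up (e.g.\ for $\lambda B$ it is about $\lambda^{N-Mr/r'}$) unless $M\gtrsim N\,r'/r$, a multiplicative exclusion zone whose ratio is unbounded over pairs of events. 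Your ``multiplicatively well-separated scales'' happens to point in the right direction, but you would need an explicit inductive enumeration of blocks beating all ratios $r'/r$ simultaneously while preserving positive upper density of every event, and that argument is absent. Two smaller points: you cannot ``solve $P(a)=y_i/(w_{N+i}\cdots w_{i+1})$'' since $a$ must be chosen before $P$ is; the correct move is to take $a$ an exact $r$-th root of the right-hand side and show the higher-order terms of $P$ are killed by the weight growth, which also forces you to index events by a bound $s$ on the normalized coefficients (the set of normalized polynomials of bounded degree is not compact without it). I would recommend abandoning the single-vector construction and proving the one-event statement, letting Baire do the gluing as the paper does.
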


In particular, for $\lambda>1$, $\lambda B$ admits on any $\ell_p$-space ($1\leq p<+\infty$)  and on $c_0$ an upper frequently hypercyclic algebra.
This last result was independently obtained by Falc\'o and Grosse-Erdmann in \cite{FalGre19} in a different context (they concentrate themselves on $\lambda B$ 
but allow different notions of hypercyclicity) and with a completely different proof.

Regarding Fréchet sequence algebras endowed with the convolution product, we also have been able to get a general statement (see the forthcoming Theorem \ref{thm:ufhcconvolution}). 
Its main feature is that we will only need the unconditional convergence of $\sum_{n\geq 1}(w_1\cdots w_n)^{-1}e_n$ and a technical condition to ensure the existence of an upper frequently hypercyclic algebra. As a corollary, we can state the following.
\begin{corollary}
\begin{enumerate}[(i)]
\item Let $X=\ell_1$ endowed with the convolution product and let $\lambda>1$. Then $\lambda B$
admits an upper frequently hypercyclic algebra.
\item Let $X=H(\CC)$ endowed with the convolution product. Then $D$ 
admits an upper frequently hypercyclic algebra.
\end{enumerate}
\end{corollary}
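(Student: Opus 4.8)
The plan is to deduce both parts of this Corollary from the forthcoming Theorem~\ref{thm:ufhcconvolution}, which the excerpt tells us governs upper frequently hypercyclic algebras for weighted shifts under the convolution product. According to the description, that theorem requires two ingredients: the unconditional convergence of the series $\sum_{n\geq 1}(w_1\cdots w_n)^{-1}e_n$ together with some technical regularity condition on $X$. So my strategy is simply to verify these two hypotheses in each of the two concrete settings.

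\medskip

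For part (i), I take $X=\ell_1$ with the convolution (Cauchy) product and $w_n\equiv\lambda$ with $\lambda>1$, so that $w_1\cdots w_n=\lambda^n$ and $(w_1\cdots w_n)^{-1}e_n=\lambda^{-n}e_n$. Since $\|\lambda^{-n}e_n\|_1=\lambda^{-n}$ and $\sum_{n\geq 1}\lambda^{-n}<\infty$, the series $\sum_{n\geq 1}\lambda^{-n}e_n$ is absolutely, hence unconditionally, convergent in $\ell_1$. For part (ii), I take $X=H(\CC)$ with the Cauchy product (which is the usual pointwise product of entire functions) and $D$, which corresponds to the weighted shift with $w_n=n$, so that $w_1\cdots w_n=n!$. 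Here I must check unconditional convergence of $\sum_{n\geq 1}(n!)^{-1}e_n$ in each seminorm $\|\cdot\|_q$: using $\|e_n\|_q=q^n$ from the stated norm on $H(\CC)$, I have $\sum_{n\geq 1}q^n/n!=e^q-1<\infty$ for every $q$, which gives the desired convergence. In both cases I must also confirm that the operator is a bounded weighted shift on $X$: this is immediate for $\lambda B$ on $\ell_1$ and is the classical boundedness (indeed continuity) of $D$ on $H(\CC)$.

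\medskip

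The remaining point is to check the technical condition of Theorem~\ref{thm:ufhcconvolution} for these two algebras. Since the precise form of that condition is not visible in the present excerpt, my plan is to appeal to it once it is stated and to verify it directly; I expect $\ell_1$ and $H(\CC)$ to be the model examples the theorem was designed to accommodate, so this verification should be routine. The main (and really the only) obstacle is therefore whether $\ell_1$ and $H(\CC)$ with the convolution product satisfy that technical regularity hypothesis — in particular whether they are \emph{regular} in the sense used for Theorem~\ref{thm:wscauchy}. Granting this, the Corollary follows immediately by specializing Theorem~\ref{thm:ufhcconvolution} to the two weight sequences above.
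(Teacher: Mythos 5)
Your overall strategy is exactly the paper's: the corollary is obtained by specializing Theorem~\ref{thm:ufhcconvolution} to the weights $w_n\equiv\lambda$ on $\ell_1$ and $w_n=n$ on $H(\CC)$, and your verification of the unconditional convergence of $\sum_n(w_1\cdots w_n)^{-1}e_n$ in both cases is correct (as is the regularity of these two algebras, which follows from the exact identities $\|e_n\|_1\,\|e_k\|_1=\|e_{n+k}\|_1$ in $\ell_1$ and $\|e_n\|_r\,\|e_k\|_r=r^{n+k}=\|e_{n+k}\|_r$ in $H(\CC)$).

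However, the step you defer --- checking the ``technical condition'' (b) of Theorem~\ref{thm:ufhcconvolution} --- is in fact where essentially all of the work in the paper's proof lies, so leaving it as ``routine'' is a genuine gap. The condition asks that for every $m\geq 2$ there be $c\in(0,1)$ with
\[
\lim_{\sigma\to+\infty}\ \sup_{z\in c_{00}\cap B_{\ell_\infty}}\Bigl\|\sum_{n\geq c\sigma}\frac{z_n\,(w_1\cdots w_{m\sigma})^{(m-1)/m}}{w_1\cdots w_{(m-1)\sigma+n}}\,e_n\Bigr\|=0.
\]
For $\ell_1$ with $w_n\equiv\lambda$ this is indeed immediate, since $(w_1\cdots w_{m\sigma})^{(m-1)/m}/(w_{n+1}\cdots w_{n+(m-1)\sigma})=1$ and the sum reduces to a tail of the geometric series $\sum_{n\geq c\sigma}\lambda^{-n}$. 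But for $D$ on $H(\CC)$ the quantity to control in each seminorm $\|\cdot\|_r$ is $\sum_{n\geq c\sigma}(m\sigma)!^{(m-1)/m}\,r^n/((m-1)\sigma+n)!$, and showing this tends to $0$ requires an actual asymptotic estimate: the paper bounds it by $C\,r^{c\sigma}(m\sigma)!^{(m-1)/m}/((m-1+c)\sigma)!$ and then invokes Stirling's formula, using $(m\sigma)!^{(m-1)/m}\leq C_\veps\,\sigma^{(m-1+\veps)\sigma}$ and $((m-1+c)\sigma)!\geq C_\veps\,\sigma^{(m-1+c-\veps)\sigma}$ with $\veps<2c$, to extract a factor $\sigma^{-(c-2\veps)\sigma}$ that kills $r^{c\sigma}$. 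Your intuition that the two examples are the model cases for the theorem is right, but the $H(\CC)$ verification is a genuine computation that the proof cannot omit.
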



Coming back to our initial problem, we show that it is possible to exhibit a weighted shift supporting a frequently hypercyclic algebra.
The place to do this will be $c_0$ endowed with the coordinatewise product; of course, the weight sequence will be much more complicated than that of the Rolewicz operator.

\begin{theorem}\label{thm:afhcc0}
 There exists a weight $(w_n)$ such that $B_w$, acting on $c_0$ endowed with the coordinatewise product, supports a frequently hypercyclic algebra.
\end{theorem}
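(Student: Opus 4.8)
The plan is to produce a single vector $u\in c_0$ together with a bounded weight $w$ so that \emph{every} nonzero element of the algebra generated by $u$ is frequently hypercyclic. Since the product is coordinatewise, that algebra is exactly $\{P(u):P\in\CC[t],\ P(0)=0\}$, and such an element is nonzero precisely when $P\neq 0$ (as soon as $u$ takes infinitely many distinct values, which I arrange). Thus it suffices to construct $u$ and $w$ with $P(u)$ frequently hypercyclic for every nonzero $P$ with $P(0)=0$. Fixing a countable dense set $D\subset c_0$ of finitely supported vectors with Gaussian rational entries, a countable dense family $\mathcal P$ of polynomials with $P(0)=0$, and rational tolerances, I would reduce the goal to the following: for each task $\tau=(P_\tau,y_\tau,\varepsilon_\tau)$ in this countable list, the set $\{N\in\NN:\ \|B_w^N(P_\tau(u))-y_\tau\|<\varepsilon_\tau\}$ has positive lower density. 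A final approximation step, exploiting that near $0$ a polynomial with $P(0)=0$ is a homeomorphism onto a neighbourhood of $0$ on its lowest-degree scale, upgrades this to all nonzero $P$.

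\textbf{Construction.} Using a Bayart--Grivaux-type partition, I would write $\NN=\bigsqcup_\tau A_\tau$ with every $A_\tau$ of positive lower density and with the elements of all the $A_\tau$ mutually very far apart. To each $N\in A_\tau$ I attach a block of coordinates $\{N+s:s\in\supp(y_\tau)\}$; the separation guarantees these blocks are pairwise disjoint over all tasks and all $N$. Writing $W_n=w_1\cdots w_n$ for the weight profile, so that $(B_w^N x)_j=(W_{j+N}/W_j)\,x_{j+N}$, I would prescribe $W$ to grow in a sharply peaked, multiscale fashion: just before each activated block the profile is made to climb so that the gain $W_{j+N}/W_j$ at the designated time $N$ is enormous, while the per-step ratios $w_n$ stay bounded (this costs only a long enough window, which the separation provides, and keeps $B_w$ bounded on $c_0$). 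On its block I then define $u_{j+N}$ to be the small solution of $(W_{j+N}/W_j)\,P_\tau(u_{j+N})=y_{\tau,j}$ up to precision $\varepsilon_\tau$, and set $u=0$ off the blocks. Because the prescribed gains tend to infinity along the construction, the solved values tend to $0$, so $u\in c_0$.

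\textbf{Verification.} For $\tau$ fixed and $N\in A_\tau$, the choice of $u$ on the block gives $B_w^N(P_\tau(u))\approx y_\tau$ on $\supp(y_\tau)$; one then checks that all other coordinates of $B_w^N(P_\tau(u))$ are negligible, using that $u$ is tiny and that the foreign blocks read at time $N$ carry only small gains. This yields $A_\tau\subseteq\{N:\|B_w^N(P_\tau(u))-y_\tau\|<\varepsilon_\tau\}$, hence positive lower density, so every $P_\tau(u)$, and finally every nonzero $P(u)$, is frequently hypercyclic.

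The main obstacle is the control of cross-interference together with the insistence on positive \emph{lower} (not merely upper) density for all degrees at once. This is exactly the point where the constant weight of Rolewicz's operator fails (\cite{FalGre19}): squaring shifts the geometric threshold by a power and forces one of the two return-time sets to have lower density $0$. I expect the heart of the proof to be the design of the multiscale profile $W$ so that each coordinate is seen with large gain at essentially only one time, namely its designated return time, so that when $B_w^{N'}(P_{\tau'}(u))$ is computed for a different task or time, the contribution of a foreign block $(W_p/W_{p-N'})\,P_{\tau'}(u_p)$ stays small. Guaranteeing this simultaneously for all tasks, while keeping each $A_\tau$ of positive lower density and the weights bounded, is the delicate multiscale bookkeeping that the argument must carry out.
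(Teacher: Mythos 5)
Your architecture is the right one and matches the paper's: disjoint return-time sets of positive lower density, one block of coordinates per return time, $u$ defined on each block by inverting the weighted action against the target, and a multiscale weight designed so that foreign blocks contribute negligibly. But the proposal defers precisely the two ingredients that constitute the proof, and one reduction step is not sound as stated. First, the separation you invoke is not strong enough. The classical lemma (\cite[Lemma 6.19]{BM09}) only gives $|n-n'|\geq a(p)+a(q)$, a bound that is \emph{fixed} once the two indices are fixed; the cross-interference estimate forces you to know that $|n-n'|\to\infty$ as $\max(n,n')\to\infty$, uniformly over pairs taken from two different sets. This is a genuinely new combinatorial statement (Theorem \ref{thm:setsafhc}(iii) in the paper), proved by a separate multi-step construction; "mutually very far apart" does not come for free from the standard partition. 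Second, the weight itself. Writing $W_n=w_1\cdots w_n$, the foreign-block contribution at time $N'$ to a coordinate owned by time $N$ has the shape $\bigl(W_p/W_{p-N'}\bigr)\big/\bigl(W_p/W_{p-N}\bigr)^{\beta}$ where $\beta$ ranges over all ratios $m/m'$ of degrees, hence can be an \emph{arbitrarily small} positive number. Killing this for every $\beta>0$ simultaneously, with $w_n$ bounded and bounded below and $W_n\to\infty$, requires that the product of the weights over any late window be $o\bigl(W_n^{\alpha}\bigr)$ for every $\alpha>0$ — the paper engineers this with the recursion $w_{M_k+1}\cdots w_{M_{k+1}}=(w_1\cdots w_{M_k})^{1/k}$ and then must match the scales $M_k$ against the separation property above. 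Your text explicitly labels this "the delicate multiscale bookkeeping that the argument must carry out," i.e., the heart of the proof is acknowledged but not supplied.

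The final "approximation step" from a countable family $\mathcal P$ to all nonzero $P$ is also problematic. Because the gains $W_{j+N}/W_j$ are enormous, the map $P\mapsto B_w^N(P(u))$ is violently discontinuous in the coefficients of $P$ (a perturbation $\delta z^k$ of $P_\tau$ produces, on a block whose owner has lowest degree $m>k$... more to the point, on blocks the amplified error is $\delta\,(W_p/W_{p-N'})\,u_p^{k}$ with $u_p^{k}$ decaying like a fractional power of the gain, which need not be summable against the gain), and frequent hypercyclicity is in any case not stable under perturbation of the vector. The paper sidesteps this entirely by parametrizing tasks by pairs $(v(p),m(p))$ — a target \emph{and} a designated degree — and showing that for an arbitrary $P=\sum_{m=m_0}^{m_1}\hat P(m)z^m$ the minimal-degree monomial dominates all higher ones on the home blocks, so every polynomial is handled directly without any density argument in the space of polynomials. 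As it stands, your proposal is a correct road map with the two load-bearing constructions and the globalization step missing.
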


The proof of this theorem will need the construction of disjoint subsets of $\NN$ with positive lower density and with some other extra properties, which 
seems interesting by itself.

\subsection{Organization of the paper}
Up to now, there were two ways to produce hypercyclic algebras: by a direct construction (this is the method devised in \cite{shkarin} and in \cite{FalGre18})
or by using a Baire argument (this method was initiated in \cite{BM09}). In this paper, we improve the latter.
We first give in Section \ref{sec:criterion} a general result for the existence of a hypercyclic algebra, enhancing the main lemma proved in \cite{BM09}. 
This general theorem will be suitable to our new examples of operators supporting a hypercyclic algebra. 
Next, we adapt the Baire argument to produce upper frequently hypercyclic algebras as well. Since the set of frequently hypercyclic vectors is always meagre, Theorem \ref{thm:afhcc0} cannot be proved using such an argument; it follows from a careful construction both of the weight and of the algebra.

We finish in the last section by making some remarks and asking some questions. In particular, we give a negative answer to a question raised by Shkarin about the existence of a closed hypercyclic algebra for the derivation operator.

\subsection{Notations}
The symbol $\mathbb N$ will stand for the set of positive integers, whereas $\NN_0=\NN\cup\{0\}$.
We shall denote by $\mathcal P_f(A)$ the set of finite subsets of a given set $A$. 

For $x=\sum_{n=0}^{+\infty}x_n e_n\in \omega$, the support of $x$ is equal to $\supp(x)=\{n\in\NN_0: x_n\neq 0\}$. The notation $c_{00}$ will denote the set of
sequences in $\omega$ with finite support.

For $u\in X^d$ and $\alpha\in\NN_0^d$, 
$u^\alpha$ will mean $u_1^{\alpha_1}\cdots u_d^{\alpha_d}$. If $z$ is any complex number and $m\in\NN$, $z^{1/m}$ will denote any $m$th root of $z$.

When working on a Fréchet space $(X,\|\cdot\|_p)$, it is often convenient to endow $X$ with an $F$-norm $\|\cdot\|$ defining the topology of $X$ (see \cite[Section 2.1]{GePeBook}).
Such an $F$-norm can be defined by the formula
\[ \|x\|=\sum_{p=1}^{+\infty}\frac1{2^p}\min(1,\|x\|_p). \]
In particular, an $F$-norm satisfies the triangle inequality and the inequality
\begin{equation}\label{eq:fnorm}
\forall \lambda\in\CC,\ \forall x\in X,\ \|\lambda x\|\leq (|\lambda|+1)\|x\|,
\end{equation}
a property which replaces the positive homogeneity of the norm.

We finally recall some results on unconditional convergence in Fr\'echet spaces (see for instance \cite[Appendix A]{GePeBook}). A series $\sum_{n=0}^{+\infty}x_n$ 
in a Fréchet space $X$ is called unconditionally convergent if for any bijection $\pi:\NN_0\to\NN_0$, the series $\sum_{n=0}^{+\infty}x_{\pi(n)}$ is convergent.
This amounts to saying that, for any $\veps>0$, there is some $N\in\NN$ such that, whenever $\sup_n |\alpha_n|\leq 1$, the series $\sum_{n=0}^{+\infty}\alpha_n x_n$
converges and 
\[\left \|\sum_{n=N}^{+\infty}\alpha_n x_n\right\|<\veps.
\]


\section{A general criterion}
\subsection{A transitivity criterion to get hypercyclic algebras}
\label{sec:criterion}

We first give a general statement which may be thought of as a Birkhoff transitivity theorem for hypercyclic algebras.
This criterion will be the main ingredient for the results from Section \ref{sec:convolution} and some from Section \ref{sec:ws}.
A different version of this approach will give rise to a new criterion for the existence of upper frequently hypercyclic algebras in Section \ref{sec:fhc}.

\begin{theorem}\label{thm:generalcriterion}
 Let $T$ be a continuous operator on a separable commutative $F$-algebra $X$ and let $d\geq 1$. Assume that for any $A\subset \NN_0^d \backslash\{(0,\dots,0)\}$ finite and non-empty, 
   for any non-empty open subsets $U_1,\dots,U_d,V$ of $X$, for any neighbourhood $W$ of $0$, 
 there exist $u=(u_1,\dots,u_d)\in U_1\times\cdots\times U_d$, $\beta\in A$  and $N\geq 1$ such that $T^N(u^\beta)\in V$ and $T^N(u^\alpha)\in W $ for all $\alpha\in A$, $\alpha\neq\beta$.
 Then the set of $d$-tuples that generate a hypercyclic algebra for $T$ is residual in $X^d$. Moreover, if the assumptions are satisfied for all $d\geq 1$, then $T$ admits a dense and  not finitely generated hypercyclic algebra.
\end{theorem}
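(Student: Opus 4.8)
The plan is to turn the hypothesis into a genuine Birkhoff transitivity statement and run the Baire category theorem in $X^d$. First I would reformulate the target: a tuple $u=(u_1,\dots,u_d)$ generates a hypercyclic algebra exactly when $P(u)\in HC(T)$ for every polynomial $P$ in $d$ variables with $P(0)=0$ and $P\not\equiv 0$. Since $HC(T)$ is invariant under multiplication by nonzero scalars, I may normalise each such $P$ by its coefficient of largest modulus; thus it suffices that for every finite nonempty $A\subset\NN_0^d\setminus\{0\}$, every distinguished $\beta\in A$, and every $c=(c_\alpha)_{\alpha\in A\setminus\{\beta\}}\in\overline{\DD}^{A\setminus\{\beta\}}$, the vector $u^\beta+\sum_{\alpha\in A\setminus\{\beta\}}c_\alpha u^\alpha$ be hypercyclic. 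Fixing a countable basis $(V_j)$ of open sets of $X$ (possible since $X$ is separable), I would set
\[
\Omega_{A,\beta,j}=\Bigl\{u\in X^d:\ \forall c\in\overline{\DD}^{A\setminus\{\beta\}},\ \exists N\ge1,\ T^N\bigl(u^\beta+\!\!\sum_{\alpha\in A\setminus\{\beta\}}\!\!c_\alpha u^\alpha\bigr)\in V_j\Bigr\},
\]
and check that the set of tuples generating a hypercyclic algebra is exactly $\bigcap_{A,\beta,j}\Omega_{A,\beta,j}$, a countable intersection.

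The point of normalising is that the coefficient set $\overline{\DD}^{A\setminus\{\beta\}}$ is compact. Each $\Omega_{A,\beta,j}$ is open: for $u$ in it, the sets $\{c:T^N(\cdots)\in V_j\}$ ($N\ge1$) are open in $c$ and cover the compact polydisc, so finitely many $N$ suffice; then the tube lemma (continuity of $(u,c)\mapsto T^N(u^\beta+\sum c_\alpha u^\alpha)$ together with compactness of the polydisc) yields a whole neighbourhood of $u$ inside $\Omega_{A,\beta,j}$. By Baire it remains to prove that each $\Omega_{A,\beta,j}$ is dense.

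Density is where the hypothesis enters and is the main obstacle. Given a basic box $U_1\times\dots\times U_d$, I want $u$ in it lying in $\Omega_{A,\beta,j}$. The idea is to apply the assumption to the index set $A$, with $V:=V_j$ as the target for the distinguished monomial and with $W$ a small neighbourhood of $0$ for the others: if the monomial singled out by the assumption is the prescribed $\beta$, then for every $c\in\overline{\DD}^{A\setminus\{\beta\}}$ one gets $T^N(u^\beta+\sum c_\alpha u^\alpha)=T^N(u^\beta)+\sum c_\alpha T^N(u^\alpha)\in V_j$, the first term lying in $V_j$ and the rest being small because $T^N(u^\alpha)\in W$ and $|c_\alpha|\le 1$; thus a single good application settles the whole polydisc at once. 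The delicate point -- this is the crux -- is that the assumption only furnishes \emph{some} $\beta\in A$, not the one I prescribed; coordinating the dominant monomial produced by the assumption with $\beta$, uniformly over the compact polydisc, is exactly what has to be engineered. I expect to handle it by covering the polydisc by finitely many pieces and applying the assumption repeatedly inside shrinking neighbourhoods of the current approximation, each new application being harmless to the finitely many open conditions already secured (openness of $\{u:T^N(\cdots)\in V_j\}$), until the whole polydisc is covered.

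For fixed $d$, Baire's theorem then shows that $\bigcap_{A,\beta,j}\Omega_{A,\beta,j}$ is residual in $X^d$, which is the first assertion. For the ``moreover'', assuming the hypotheses for all $d$, let $\mathcal G_d\subset X^d$ be the corresponding residual set and fix a dense sequence $(a_k)$ in $X$. I would build $u_1,u_2,\dots$ inductively: at step $d$, Kuratowski--Ulam shows that for residually many $(u_1,\dots,u_{d-1})$ the slice $\{v:(u_1,\dots,u_{d-1},v)\in\mathcal G_d\}$ is residual in $X$, so I can pick $u_d$ in this (dense) slice, within $1/d$ of $a_d$, and outside the algebra generated by $u_1,\dots,u_{d-1}$ (a proper, at most countable-dimensional, hence meagre subset of the infinite-dimensional space $X$). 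Then the algebra $\mathcal A$ generated by $\{u_k\}$ is dense (it contains a dense sequence), every nonzero element of $\mathcal A$ is a polynomial without constant term in finitely many $u_k$ and hence hypercyclic, and $\mathcal A$ is not finitely generated, since any finite subset lies in the algebra generated by some $u_1,\dots,u_D$ while $u_{D+1}$ does not.
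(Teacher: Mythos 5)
Your reduction to normalised polynomials and your openness argument are fine, but the density step --- which you yourself single out as the crux --- is a genuine gap, and the workaround you sketch cannot close it. Your scheme needs \emph{each} set $\Omega_{A,\beta,j}$ to be dense for \emph{every} prescribed $\beta\in A$, since as $P$ ranges over the polynomials supported on $A$ the index of the largest coefficient ranges over all of $A$. The hypothesis, however, only produces a tuple $u$ together with \emph{some} $\beta\in A$ of its own choosing; in all the applications in the paper this $\beta$ is a specific element forced by the construction (e.g.\ $\min A$ or $\max A$), and there is no reason for $\Omega_{A,\beta',j}$ to be dense for the other indices $\beta'$. Your proposed repair --- covering the polydisc by finitely many pieces and reapplying the assumption in shrinking neighbourhoods --- does not engage with this: a single application already handles the whole polydisc (as you note), and the real obstruction sits at the centre $c=0$, where the condition $T^N\bigl(u^{\beta'}+\sum_\alpha c_\alpha u^\alpha\bigr)\in V_j$ degenerates to exactly the instance of the hypothesis in which $\beta'$ is the dominant monomial, which the hypothesis never promises to deliver. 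Repeated applications will keep handing you whichever $\beta$ the hypothesis prefers, never the one you prescribed.

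The paper's proof avoids this by weakening the density claim: it shows only that the \emph{union} $\bigcup_{\beta\in A}\mathcal A(A,\beta,s,k)$ is dense (which is precisely what one application of the hypothesis yields), so that for a fixed polynomial $P$ and varying $k$ one obtains, for each $k$, \emph{some} $\beta=\beta(k)\in A$ and some $N$ with $T^N\bigl(P(u)/\hat P(\beta)\bigr)\in V_k$. Hence the union of the finitely many orbits of the vectors $P(u)/\hat P(\beta)$, $\beta\in A$, is dense, and the Bourdon--Feldman theorem then forces one of these orbits to be dense, whence $P(u)$ is hypercyclic. This appeal to Bourdon--Feldman is the idea missing from your argument; without it (or without the extra assumption that $\beta$ can be chosen independently of $U_1,\dots,U_d,V,W$), the Baire scheme as you have set it up does not go through. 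Your treatment of the ``moreover'' part via Kuratowski--Ulam and an inductive choice of generators is reasonable (the paper instead works directly in $X^{\NN}$ and quotes a residuality result for free generating sequences), but it rests on the first part and so inherits the gap.
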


\begin{proof}
Let $(V_k)$ be a basis of open neighbourhoods of $X$. For $A\in \mathcal P_f(\NN_0^d)$, $A\neq\varnothing$, $(0,\dots,0)\notin A$, for $s,k\geq 1$, for $\beta\in A$,
define
\begin{align*}
 E(A, \beta,s)&=\left\{\sum_{\alpha\in A}\hat P(\alpha)z^\alpha\in\mathbb C[z_1,\dots,z_d]:\ \hat P(\beta)=1\textrm{ and }\sup_{\alpha\in A} |\hat P(\alpha)|\leq s\right\}\\
 \mathcal A(A,\beta,s,k)&=\left\{u\in X^d:\ \forall P\in E(A,\beta,s),\ \exists N\geq 1,\ T^N(P(u))\in V_k\right\}.
\end{align*}
The sets $E(A,\beta,s)$ are compact subsets of $\mathbb C[z_1,\dots,z_d]$.
By continuity of the maps $(u,P)\mapsto T^N(P(u))$, this implies that each set $\mathcal A(A,\beta,s,k)$ is open.
Let us show that, for all $A$, $k$ and $s$, $\bigcup_{\beta\in A}\mathcal A(A,\beta,s,k)$ is dense in $X^d$. Indeed,
pick $U_1,\dots,U_d$ non-empty open subsets of $X$. Let $V\subset V_k$ and $W=B(0,\veps)$ be a neighbourhood of $0$ such that
$V+B\big(0,(s+1)\card(A)\veps\big)\subset V_k$. The assumptions of the proposition give the existence of $u=(u_1,\dots,u_d)\in U_1\times\cdots\times U_d$, $\beta\in A$
and $N\geq 1$. We claim that $u$ belongs to $\mathcal A(A,\beta,s,k)$. Indeed, 
$$T^N P(u)=\sum_{\alpha\neq\beta} \hat P(\alpha)T^N(u^\alpha)+T^N(u^\beta)\in V+B\big(0,(s+1)\card(A)\veps\big)\subset V_k$$
(observe that we have used \eqref{eq:fnorm}). 
Hence, $\bigcap_{A,s,k}\bigcup_\beta \mathcal A(A,\beta,s,k)$ is a residual subset of $X^d$. Pick $u\in \bigcap_{A,s,k}\bigcup_\beta \mathcal A(A,\beta,s,k)$.



We show that for all non-zero polynomials $P\in \mathbb C[z_1,\dots,z_d]$ with $P(0)=0$, $P(u)$ belongs to $HC(T)$.
We set $A=\left\{\alpha:\ \hat P(\alpha)\neq 0\right\}$ and we first prove that $\bigcup_{\beta\in A}\textrm{Orb}\left(T,\frac1{\hat P(\beta)}P(u)\right)$ is dense. Let us fix some $k$
and let us set $s=\sup_{\alpha,\beta\in A} |\hat P(\alpha)|/|\hat P(\beta)|$. Let $\beta\in A$ be such that $u\in\mathcal A(A,\beta,s,k)$. Define $Q=P/\hat P(\beta)$.
Then $Q$ belongs to $E(A,\beta,s)$ so that there exists $N\geq 1$ satisfying
\[ T^N\left(\frac1{\hat P(\beta)}P(u)\right)=T^N \big(Q(u)\big)\in V_k. \]
By the Bourdon-Feldman theorem, we deduce that there is some $\beta_0\in A$ such that $\textrm{Orb}\left(T,\frac 1{\hat P(\beta_0)}P(u)\right)$ is dense in $X$. Since any non-zero
multiple of a hypercyclic vector remains hypercyclic, we finally deduce that $P(u)$ is a hypercyclic vector for $T$.


The modification to obtain dense and infinitely generated algebras is easy. For $A\in\mathcal P_f(\NN_0^d)$, $A\neq\varnothing$, $(0,\dots,0)\notin A$, we now let 
$$ \mathcal A(A,\beta,s,k)=\left\{u\in X^\mathbb N:\ \forall P\in E(A,\beta,s),\ \exists N\geq 1,\ T^N(P(u))\in V_k\right\}$$
and we still consider the set $\bigcap_{A,s,k}\bigcup_{\beta\in A}\mathcal A(A,\beta,s,k)$ where now the intersection runs over all non-empty and finite sets 
$A\subset \mathbb N^d\backslash\{(0,\dots,0)\}$ with $d\geq 1$ arbitrary. This intersection is still residual in $X^\mathbb N$. 
We also know from \cite{BP18} that the set of $u$ in $X^\NN$ that induce a dense algebra in $X$ is residual in $X^\NN$. Hence we may pick $u\in X^\NN$ belonging
to $\bigcap_{A,s,k}\bigcup_\beta\mathcal A(A,\beta,s,k)$ and inducing a dense algebra in $X$.
It is plain that for any non-zero polynomial $P$ with $P(0)=0$, $P(u)$ is hypercyclic for $T$.

It remains to show that the algebra generated by $u$ is not finitely generated. 
Assume on the contrary that it is generated by a finite number of $P_1(u),\dots,P_p(u)$. In particular, it is generated by a finite number
of $u_1,\dots,u_q$. Then there exists a polynomial $Q\in \mathbb C[z_1,\dots,z_q]$ such that $Q(0)=0$ and $u_{q+1}=Q(u_1,\dots,u_q)$.
Define $P(z)=z_{q+1}-Q(z)$. Then 
$P$ is a non-zero polynomial with $P(0)=0$. Nevertheless, $P(u)=0$, which contradicts the fact that $P(u)$ is a hypercyclic vector for $T$.
\end{proof}

\begin{remark}
 Theorem \ref{thm:generalcriterion} remains true if the algebra is not commutative. This is clear if $d=1$. For the remaining cases,
 we have to replace in the proof polynomials in $d$ commutative variables by polynomials in $d$ non-commutative variables.
 Details are left to the reader.
\end{remark}

We point out that, unlike \cite[Lemma 3.1]{BP18}, in the previous theorem, the index $\beta$ may depend on $A$, $U_1,\dots,U_d,V$ and $W$. We will never use this possibility:
we will only need that $\beta$ may depend on $A$ and we will denote $\beta=\beta_A$. For this particular case, we could give an easier proof
avoiding the use of the Bourdon-Feldman theorem (see the proof of \cite[Lemma 3.1]{BP18}).

Let us give a couple of corollaries. The first one comes from \cite[Remark 5.28]{BM09} and was the key lemma in \cite{BM09}, \cite{BCP18} or \cite{Bayhcalg} to get hypercyclic algebras.

\begin{corollary}\label{cor:thmgenmax}
Let $T$ be a continuous operator on a separable $F$-algebra $X$. Assume that, for any pair $(U,V)$ of non-empty open sets in $X$, for any open neighbourhood $W$ of zero, 
and for any positive integer $m$, one can find $u\in U$ and an integer $N$ such that $T^N(u^n)\in W$ for all $n<m$ and $T^N(u^m)\in V$. Then $T$ admits a hypercyclic algebra.
\end{corollary}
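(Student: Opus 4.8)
The plan is to deduce this statement directly from Theorem \ref{thm:generalcriterion} applied with $d=1$. In the one-variable case, a finite subset $A \subset \NN_0 \backslash \{0\}$ is just a finite set of positive exponents, and a multi-index $\alpha$ reduces to a single integer. The hypothesis of Theorem \ref{thm:generalcriterion} then asks: given $A$ finite and non-empty, given non-empty open sets $U,V$ and a neighbourhood $W$ of $0$, to produce some $u \in U$, some $\beta \in A$, and some $N \geq 1$ with $T^N(u^\beta) \in V$ and $T^N(u^\alpha) \in W$ for every $\alpha \in A$ with $\alpha \neq \beta$. The corollary's hypothesis is superficially different: it fixes $\beta$ to be the top exponent $m$ and asks for control on \emph{all} smaller powers $n < m$, not merely on those in a prescribed set $A$.

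First I would observe that the corollary's hypothesis is in fact stronger than what Theorem \ref{thm:generalcriterion} requires, so it suffices to match them up. Given an arbitrary finite non-empty $A \subset \NN_0 \backslash \{0\}$ together with $U,V,W$, set $m = \max A$ and apply the corollary's hypothesis to the same $U,V,W$ and this $m$. This yields $u \in U$ and $N \geq 1$ with $T^N(u^m) \in V$ and $T^N(u^n) \in W$ for every $n < m$. Now take $\beta = m$, which indeed lies in $A$; then $T^N(u^\beta) = T^N(u^m) \in V$, and for every $\alpha \in A$ with $\alpha \neq \beta$ we have $\alpha < m$, hence $T^N(u^\alpha) \in W$. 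This is precisely the conclusion demanded by Theorem \ref{thm:generalcriterion} for $d=1$ (with the choice $\beta = \beta_A = \max A$).

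Having verified the hypothesis of Theorem \ref{thm:generalcriterion} for $d=1$, I would invoke its first conclusion: the set of $u \in X$ generating a hypercyclic algebra for $T$ is residual in $X$, and in particular non-empty, so $T$ admits a hypercyclic algebra. I do not expect any genuine obstacle here; the only point needing care is the bookkeeping that the corollary controls the top power and sends the lower ones into $W$, which aligns exactly with the freedom in Theorem \ref{thm:generalcriterion} to let $\beta$ depend on $A$. One should also note that the corollary drops the commutativity and continuous-norm hypotheses present elsewhere, but these play no role for $d=1$: a single-variable polynomial in one element $u$ involves no commutativity issue, and the residuality argument of Theorem \ref{thm:generalcriterion} uses only separability and the $F$-algebra structure. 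Thus the corollary follows immediately, and it recovers the key lemma of \cite{BM09}.
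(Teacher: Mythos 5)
Your proposal is correct and follows exactly the paper's own route: the paper proves this corollary by observing that the hypotheses of Theorem~\ref{thm:generalcriterion} hold with $d=1$ and $\beta_A=\max A$, which is precisely the matching you carry out (your remark that commutativity is irrelevant for $d=1$ is also consistent with the remark following Theorem~\ref{thm:generalcriterion}). Nothing further is needed.
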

\begin{proof}
It is straightforward to show that the assumptions of Theorem \ref{thm:generalcriterion} with $d=1$ and $\beta_A=\max A$ are satisfied.
\end{proof}

In this work, we will often use the inverse choice for $\beta_A$.
\begin{corollary}\label{cor:thmgenmin}
Let $T$ be a continuous operator on a separable $F$-algebra $X$. Assume that, for any pair $(U,V)$ of non-empty open sets in $X$, for any open neighbourhood $W$ of zero, 
and for any positive integers $m_0<m_1$, one can find $u\in U$ and an integer $N$ such that $T^N(u^m)\in W$ for all $m\in\{m_0+1,\dots,m_1\}$ and $T^N(u^{m_0})\in V$. Then $T$ admits a hypercyclic algebra.
\end{corollary}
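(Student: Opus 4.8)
The plan is to deduce Corollary \ref{cor:thmgenmin} directly from Theorem \ref{thm:generalcriterion} applied with $d=1$, exactly as Corollary \ref{cor:thmgenmax} was obtained, but making a different choice of the distinguished index $\beta_A$. When $d=1$, a finite nonempty set $A\subset\NN_0\setminus\{0\}$ is simply a finite set of positive integers, and the monomials $u^\alpha$ are the ordinary powers. So I first unpack what Theorem \ref{thm:generalcriterion} demands in this case: given such an $A$, given nonempty open sets $U,V$ and a neighbourhood $W$ of $0$, I must produce $u\in U$, an index $\beta\in A$, and $N\geq 1$ with $T^N(u^\beta)\in V$ and $T^N(u^\alpha)\in W$ for every $\alpha\in A$ with $\alpha\neq\beta$.

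The key step is the choice $\beta_A=\min A$. Writing $m_0=\min A$ and $m_1=\max A$, the remaining exponents $\alpha\in A$ with $\alpha\neq\beta$ all lie in $\{m_0+1,\dots,m_1\}$. The hypothesis of the corollary, applied to this pair $m_0<m_1$ (if $A$ is a singleton then $m_0=m_1$ and there are no constraints of the $W$-type, so the hypothesis with, say, $m_1=m_0+1$ already supplies what we need), produces $u\in U$ and $N$ with $T^N(u^{m_0})\in V$ and $T^N(u^m)\in W$ for all $m\in\{m_0+1,\dots,m_1\}$. In particular $T^N(u^\alpha)\in W$ holds for every $\alpha\in A\setminus\{m_0\}$, since each such $\alpha$ satisfies $m_0<\alpha\leq m_1$. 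Thus the triple $(u,\beta_A,N)$ witnesses the hypothesis of Theorem \ref{thm:generalcriterion} for this $A$.

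Having verified the hypothesis for every finite nonempty $A\subset\NN_0\setminus\{0\}$ with $d=1$, Theorem \ref{thm:generalcriterion} yields that the set of $u$ generating a hypercyclic algebra is residual in $X$; in particular it is nonempty, so $T$ admits a hypercyclic algebra. The only point requiring a moment of care is the edge case where $A$ is a singleton: there the set $\{m_0+1,\dots,m_1\}$ is empty, but this is harmless because the condition $T^N(u^\alpha)\in W$ then has no content, and we need only the single requirement $T^N(u^{m_0})\in V$, which the corollary's hypothesis supplies. There is no genuine obstacle here; the whole content is the bookkeeping observation that covering all intermediate powers $m_0<m\leq m_1$ in $W$ automatically covers every exponent of $A$ other than the minimum.

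\begin{proof}
We verify the hypotheses of Theorem \ref{thm:generalcriterion} with $d=1$ and $\beta_A=\min A$. Let $A\subset\NN_0\setminus\{0\}$ be finite and non-empty, and set $m_0=\min A$, $m_1=\max A$. Let $U,V$ be non-empty open subsets of $X$ and let $W$ be an open neighbourhood of $0$. Applying the assumption of the corollary to the pair $m_0\leq m_1$ (taking $m_1=m_0+1$ if $A=\{m_0\}$), we obtain $u\in U$ and an integer $N$ such that $T^N(u^{m_0})\in V$ and $T^N(u^m)\in W$ for all $m\in\{m_0+1,\dots,m_1\}$. Every $\alpha\in A$ with $\alpha\neq m_0$ satisfies $m_0<\alpha\leq m_1$, hence $T^N(u^\alpha)\in W$. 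Thus $T^N(u^{\beta_A})\in V$ and $T^N(u^\alpha)\in W$ for all $\alpha\in A$, $\alpha\neq\beta_A$. By Theorem \ref{thm:generalcriterion}, $T$ admits a hypercyclic algebra.
\end{proof}
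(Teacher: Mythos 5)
Your proof is correct and follows exactly the paper's route: the paper's proof of this corollary is the one-line observation that it is Theorem \ref{thm:generalcriterion} with $d=1$ and $\beta_A=\min A$, which is precisely what you verify (including the harmless singleton case). Nothing to add.
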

\begin{proof}
This is now Theorem \ref{thm:generalcriterion} with $d=1$ and $\beta_A=\min A$.
\end{proof}

\subsection[Algebras not contained in a finitely generated algebra]{Countably generated, free hypercyclic algebras.}

The conclusion of Theorem \ref{thm:generalcriterion} about infinitely generated hypercyclic algebras does not prevent the possibility for such an algebra to be contained in a finitely generated 
algebra. Furthermore, it is well known that every at least two generated algebra contains an infinitely generated subalgebra. 
For all of the examples of this paper we may avoid this scenario thanks to the following result. We notice that due to \cite[Corollary 2.7]{BP18}, a countably generated free algebra is not contained in a finitely generated one.

\begin{corollary}\label{generators}
 Let $X$ be a separable commutative $F$-algebra that contains a dense freely generated subalgebra.
 Let $T$ be a continuous operator on  $X$ and let $d\geq 1$. Assume that for any $A\subset \NN_0^d \backslash\{(0,\dots,0)\}$ finite and non-empty, 
   for any non-empty open subsets $U_1,\dots,U_d,V$ of $X$, for any neighbourhood $W$ of $0$, 
 there exist $u=(u_1,\dots,u_d)\in U_1\times\cdots\times U_d$, $\beta\in A$  and $N\geq 1$ such that $T^N(u^\beta)\in V$ and $T^N(u^\alpha)\in W$ for all $\alpha\in A$, $\alpha\neq\beta$.
 Then $T$ admits a $d$-generated, free hypercyclic algebra. Moreover, if the assumptions are satisfied for all $d\geq 1$, then $T$ admits a dense, countably generated, free hypercyclic algebra.
\end{corollary}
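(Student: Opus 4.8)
The plan is to reduce Corollary \ref{generators} to Theorem \ref{thm:generalcriterion} by refining the Baire-category argument so that the residual set we intersect against lands inside the set of $u$ that generate a \emph{free} algebra. The key observation is that the dynamical hypotheses of the corollary are word-for-word those of Theorem \ref{thm:generalcriterion}; the only extra input is the structural assumption that $X$ contains a dense freely generated subalgebra. So the work is purely in the algebraic bookkeeping, and I would run the same construction as in the proof above while adding one more residual set to the intersection.

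First I would treat the $d$-generated case. Following the theorem's proof verbatim, the sets $\mathcal A(A,\beta,s,k)$ are open in $X^d$ and each union $\bigcup_{\beta\in A}\mathcal A(A,\beta,s,k)$ is dense, so $G:=\bigcap_{A,s,k}\bigcup_{\beta\in A}\mathcal A(A,\beta,s,k)$ is residual in $X^d$. The point is that every $u\in G$ has the property that $P(u)\in HC(T)$ for every nonzero polynomial $P$ with $P(0)=0$; in particular $P(u)\neq 0$ for all such $P$. But $P(u)\neq 0$ for all nonzero $P$ vanishing at $0$ is \emph{exactly} the statement that $u_1,\dots,u_d$ generate a free algebra (a free algebra on $d$ generators is the polynomial algebra $\CC[z_1,\dots,z_d]$ without unit, i.e. polynomials with zero constant term, and freeness means no nontrivial relation). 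Hence any single $u\in G$ already furnishes a $d$-generated free hypercyclic algebra, and I would not even need the density hypothesis for this part.

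For the countably generated case I would pass to $X^{\NN}$, exactly as in the last paragraphs of the theorem's proof, forming the residual set $\bigcap_{A,s,k}\bigcup_{\beta\in A}\mathcal A(A,\beta,s,k)$ with the intersection running over all finite nonempty $A\subset\NN^d\setminus\{(0,\dots,0)\}$ and all $d\geq 1$. This set is residual in $X^{\NN}$. The new ingredient is that I would intersect it with the set $\mathcal F$ of those $u\in X^{\NN}$ that induce a \emph{free} algebra; the density assumption guarantees, via \cite{BP18}, that the set of $u$ inducing a dense algebra is residual, and I would invoke the analogous fact that the set inducing a free algebra is residual (this is where the ``contains a dense freely generated subalgebra'' hypothesis is used, as in \cite[Corollary 2.7]{BP18}). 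Then any $u$ in the intersection induces a dense, countably generated, free hypercyclic algebra.

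The main obstacle, and the only genuinely new step beyond re-running the proof of Theorem \ref{thm:generalcriterion}, is justifying that the freeness condition defines a residual subset of $X^{\NN}$ — i.e. that the existence of a dense freely generated subalgebra forces ``generic'' tuples to be free. For the finitely generated case there is no obstacle at all, since freeness there is an immediate algebraic consequence of $P(u)\in HC(T)$ for every admissible $P$. I would therefore spend the bulk of the write-up on the $X^{\NN}$ step, citing \cite[Corollary 2.7]{BP18} to conclude both that a countably generated free algebra is not contained in any finitely generated one and that freeness is residual, and otherwise refer back to the already-established residuality and density arguments rather than repeating them.
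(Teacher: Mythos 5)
Your proposal is correct, and its overall architecture — intersecting the residual set produced by Theorem \ref{thm:generalcriterion} with a residual set of tuples generating dense free algebras, both in $X^d$ and in $X^{\NN}$ — is the same as the paper's. The one genuine difference is in the $d$-generated case: you observe that freeness is already a consequence of the conclusion of Theorem \ref{thm:generalcriterion}, since $P(u)\in HC(T)$ forces $P(u)\neq 0$ for every nonzero $P$ with $P(0)=0$, which is exactly algebraic independence of $u_1,\dots,u_d$. The paper instead invokes \cite[Proposition 2.4]{BP18} to get a second residual set of free $d$-tuples and takes the intersection. Your route is slightly more economical and, as you note, does not use the hypothesis that $X$ contains a dense freely generated subalgebra for the finitely generated claim; that hypothesis is only needed (via \cite[Proposition 2.4]{BP18}) to make the set of sequences generating a \emph{dense} free algebra residual in $X^{\NN}$, which is what both you and the paper use for the second claim. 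Two small points: the relevant residuality statement in \cite{BP18} is Proposition 2.4, not Corollary 2.7 (the latter is the non-containment of a countably generated free algebra in a finitely generated one); and the ``main obstacle'' you single out — residuality of freeness in $X^{\NN}$ — is in fact no obstacle at all, since your automatic-freeness observation applies verbatim there; only the density of the generated algebra genuinely requires the external input from \cite{BP18}.
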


\begin{proof} 
By Theorem~\ref{thm:generalcriterion} the set of $d$-tuples generating a hypercyclic algebra for $T$ is residual in $X^d$.
By \cite[Proposition 2.4]{BP18}, the set of $u$ in $X^d$ that induce a $d$-generated, free algebra is residual in $X^d$. For the conclusion we just need to pick an element in the intersection of those two sets.

For the second claim, we consider $X^{\NN}$ endowed with the product topology. By the assumption, for each $N\in \NN$, the set
$$
H_N=\{ (u_n)\in X^{\NN}: (u_1,\dots ,u_N) \,\, \mbox{generates a hypercyclic algebra for} \,\, T\}
$$
is residual in $X^{\NN}$ and hence, by the Baire category theorem, the set $H=\bigcap_{N=1}^{\infty}H_N$ is residual as well. The algebra generated by any $u\in H$ is hypercyclic for $T$.
Furthermore, by \cite[Proposition 2.4]{BP18}, the set of sequences of $X^{\NN}$ which generate a dense and free algebra is also residual. The conclusion follows by one more application of the Baire category theorem.
\end{proof}

Hence, we need to provide for our examples a dense and freely generated subalgebra. This is quite easy for a Fréchet sequence algebra endowed with the Cauchy product: provided $\textrm{span}(e_n)$ is dense, it always contain a dense and 
freely generated subalgebra, namely the unital algebra generated by the sequence $e_1$. This covers the case of $H(\CC)$ and of the disc algebra. This is slightly more difficult for a Fréchet sequence algebra endowed 
with the coordinatewise product.

\begin{lemma}\label{free}
Let $X$ be a Fr\'echet sequence algebra, endowed with the pointwise product, and for which the sequence $(e_n)$ is a Schauder basis. 
Then $X$ has a dense freely generated subalgebra.
\end{lemma}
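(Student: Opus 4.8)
The plan is to avoid an explicit construction and instead obtain the generators by a Baire category argument inside $X^{\mathbb N}$. Since $(e_n)$ is a Schauder basis, $X$ is separable, so $X^{\mathbb N}$ is a separable Fréchet space, hence a Baire space. I would show that two subsets of $X^{\mathbb N}$ are residual: the set $\mathcal D$ of sequences $(g_j)_{j\ge 1}$ whose linear span is dense in $X$, and the set $\mathcal F$ of algebraically free sequences (no nontrivial polynomial relation with zero constant term). Any $(g_j)\in\mathcal D\cap\mathcal F$ then does the job: the subalgebra it generates contains $\mathrm{span}(g_j)$, hence is dense, and freeness of the family means precisely that this subalgebra is freely generated. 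As the intersection of two residual sets in a Baire space is nonempty, this finishes the proof.

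Residuality of $\mathcal D$ is standard. Fixing a countable base $(V_k)$ of nonempty open sets of $X$, I would write $\mathcal D=\bigcap_k O_k$ with $O_k=\{(g_j):\ \mathrm{span}(g_1,\dots,g_m)\cap V_k\neq\varnothing\text{ for some }m\}$. Each $O_k$ is clearly open, and it is dense because any basic open subset of $X^{\mathbb N}$ constrains only finitely many coordinates, so one is free to move an unconstrained $g_j$ into $V_k$.

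The substance is the residuality of $\mathcal F$. I would write $\mathcal F=\bigcap_A U_A$, where $A$ runs over the countably many finite nonempty sets of nonzero finitely-supported multi-indices and $U_A=\{(g_j):\ \{g^\alpha\}_{\alpha\in A}\text{ is linearly independent in }X\}$. Each $U_A$ is open, since the product is continuous (so $(g_j)\mapsto(g^\alpha)_{\alpha\in A}$ is continuous) and linear independence of finitely many vectors is an open condition. For density of $U_A$ I would exploit the coordinatewise product: if $D$ is the largest generator index occurring in $A$ and $\pi_k=(g_{1,k},\dots,g_{D,k})\in\mathbb C^D$ is the vector of $k$-th coordinates, then a relation $\sum_\alpha c_\alpha g^\alpha=0$ reads $\sum_\alpha c_\alpha \pi_k^{\,\alpha}=0$ for every $k$, so $\{g^\alpha\}_{\alpha\in A}$ is independent if and only if the matrix $(\pi_k^{\,\alpha})_{k\ge 0,\ \alpha\in A}$ has rank $|A|$. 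Thus it suffices to perturb finitely many generators so that $|A|$ of the points $\pi_k$ make the corresponding generalized Vandermonde determinant nonzero.

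The key point, which I expect to be the main obstacle, is to achieve this with an arbitrarily small perturbation. Since the monomials $\{z^\alpha\}_{\alpha\in A}$ are linearly independent functions on $\mathbb C^D$, there exist $q_1,\dots,q_{|A|}$ with $\det(q_i^{\,\alpha})_{i,\alpha}\neq 0$; rescaling, $\det\big((\epsilon q_i)^{\alpha}\big)=\epsilon^{\sum_{\alpha\in A}|\alpha|}\det(q_i^{\,\alpha})\neq 0$ for every $\epsilon\neq 0$, so the witnessing points may be taken as small as desired. I would then place them at high coordinates: choosing indices $k_1<\dots<k_{|A|}$ large and $\epsilon$ small, and redefining the $k_i$-th coordinate of $g_j$ to be $\epsilon q_{i,j}$, the resulting change $\sum_i(\epsilon q_{i,j}-g_{j,k_i})e_{k_i}$ is small in every seminorm: the terms $g_{j,k_i}e_{k_i}\to 0$ because $g_j=\sum_k g_{j,k}e_k$ converges, while the terms $\epsilon q_{i,j}e_{k_i}$ are controlled by taking $\epsilon$ small once the finitely many $k_i$ are fixed. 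This yields a nearby free configuration inside any prescribed basic open set, proving $U_A$ dense and completing the argument.
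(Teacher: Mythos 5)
Your proof is correct, but it takes a genuinely different route from the paper. The paper constructs the generators explicitly: it sets $g_n=e_n+\sum_{k\geq 1}\lambda_n^{n+k}c_{n+k}e_{n+k}$ for a carefully chosen multiplicatively independent sequence $(\lambda_n)$ and a coefficient sequence $(c_n)$ that is constant on long blocks, proves freeness by evaluating a generalized Vandermonde determinant at consecutive coordinates inside one such block, and gets density from the convergence $g_n^p\to e_n$ as $p\to\infty$. You instead run a Baire category argument in $X^{\NN}$: the sequences with dense linear span form a residual set for soft reasons, and the algebraically free sequences form a residual set because each $U_A$ is open (continuity of the product plus openness of linear independence, detected by the continuous coordinate functionals) and dense, the density being achieved by planting rescaled witnessing points $\epsilon q_i$ for a generalized Vandermonde determinant at finitely many high coordinates — small because $g_{j,k}e_k\to 0$ by the Schauder basis property and because $\epsilon$ is chosen after the coordinates are fixed. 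The two arguments share their combinatorial core (for the coordinatewise product, algebraic independence of $\{g^\alpha\}_{\alpha\in A}$ is a rank condition on the matrix $(\pi_k^{\,\alpha})$, certified by linear independence of the monomials $z^\alpha$ on $\CC^D$), but your version buys genericity — the good generating sequences are residual, not merely existent — and avoids both the delicate choice of the $\lambda_n$ and the block structure of $(c_n)$; note that it also gets density of the generated algebra more cheaply, from the dense span, rather than from the relation $g_n^p\to e_n$. The paper's construction, in exchange, is explicit and shows that each basis vector $e_n$ lies in the closure of the algebra. Your argument is self-contained (it does not presuppose the existence of a dense free subalgebra, which is the hypothesis under which the residuality statement of B\`es--Papathanasiou cited elsewhere in the paper operates), so there is no circularity.
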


\begin{proof}
Let $(b_n)\subset (0,1)$ be such that the series $\sum_{n=0}^{\infty}b_n \|e_n\|_n$ converges.
Consider the sequence of natural numbers $(a_n)$ such that $a_0=0$ and $a_n=a_{n-1}+n, n\in \NN$ and define $c_n:=\min_{l\in [a_{m-1},a_m)}b_l$, if $n\in [a_{m-1},a_m)$.
The series $\sum_{n=0}^{\infty}c_ne_n$ converges absolutely since $c_n\leq b_n,\ n\in \NN$.

Define a sequence $(\lambda_n)\subset (0,1)$ inductively as follows: choose $\lambda_0 \in (0,1)$ and, 
for $n\in \NN,$ take $\lambda_n \in (0,1)\setminus \{ \lambda_0^{p_0}\dots \lambda_{n-1}^{p_{n-1}}: p_0,\dots ,p_{n-1}\in \QQ \}$. 
Observe that if $\lambda=(\lambda_0,\dots ,\lambda_p),$ $p\in \NN _0$ and $\alpha \neq \beta \in \NN _0^p$, then $\lambda ^{\alpha}\neq \lambda ^{\beta}$. Let now, for each $n\in \NN _0$,
$$
g_n=e_n+\sum_{k=1}^{\infty}\lambda_n^{n+k}c_{n+k}e_{n+k},
$$
where the convergence of the series is ensured by the convergence of $\sum_{n=0}^{+\infty}b_n\|e_n\|_n$ and the inequality $\lambda_n^{n+k}c_{n+k}<b_{n+k}$.
We claim that the algebra generated by the $g_n,\ n\in \NN $ is dense and free. 

First, we show that the sequence $\{g_n: n\in \NN \}$ is algebraically independent. For that reason, let
\begin{equation} \label{1}
\sum_{\alpha \in A}a_{\alpha}g^{\alpha}=0,
\end{equation}
where $A=\{ \alpha(1),\dots ,\alpha(q) \}\subset \NN_0^p$, $g=(g_1,\dots ,g_p)\in X^p$, and $p\in \NN$. If we consider the coordinate $N=n+k$ in equation \eqref{1}, we get the following equation which holds for all $N$ sufficiently large.
\begin{equation} \label{2}
\sum_{i=1}^qa_{\alpha(i)}(\lambda_1^Nc_N)^{\alpha_1(i)}\dots (\lambda_p^Nc_N)^{\alpha_p(i)}=0.
\end{equation}
Now we may choose $N$ sufficiently big such that $N, \dots ,N+q-1\in [a_{m-1},a_m)$  for some $m$, which means that $c_N=\dots =c_{N+q-1}=b_m$. Equation \eqref{2} then becomes
$$
\sum_{i=1}^qa_{\alpha(i)}(\lambda_1^Mb_m)^{\alpha_1(i)}\dots (\lambda_p^Mb_m)^{\alpha_p(i)}=0,
$$
where $M$ ranges over $N,\dots ,N+q-1$. Setting $A$ the matrix
$$
\begin{bmatrix}
(\lambda_1^Nb_m)^{\alpha_1(1)}\cdots (\lambda_p^Nb_m)^{\alpha_p(1)} &\dots &(\lambda_1^Nb_m)^{\alpha_1(q)}\cdots (\lambda_p^Nb_m)^{\alpha_p(q)} \\
\vdots &\ddots &\vdots \\
(\lambda_1^{N+q-1}b_m)^{\alpha_1(1)}\cdots (\lambda_p^{N+q-1}b_m)^{\alpha_p(1)} &\dots &(\lambda_1^{N+q-1}b_m)^{\alpha_1(q)}\cdots (\lambda_p^{N+q-1}b_m)^{\alpha_p(q)} 
\end{bmatrix},
$$
we find the matrix equality 
$$A\begin{bmatrix}
a_{\alpha(1)}\\
\vdots \\
a_{\alpha(q)}
\end{bmatrix}
=0.
$$
The determinant of the square matrix $A$, after making use of the Vandermonde identity, is
$$
\prod_{i=1}^q \prod_{j=1}^p (b_m\lambda_j^N)^{\alpha_j(i)}\prod_{i>j}[(\lambda_1^{\alpha_1(i)}\dots \lambda_p^{\alpha_p(i)})-(\lambda_1^{\alpha_1(j)}\dots \lambda_p^{\alpha_p(j)})]\neq 0.
$$
Hence, we get that $a_{\alpha(i)}=0$ for all $i=1,\dots ,q$.

Next, we show that the algebra generated by $\{g_n: n\in \NN \}$ is dense in $X$. We will show that the elements $e_n, n\in \NN$, are in the closure of this algebra. Let us fix $n\in\NN$ and observe that, for all $p\in\NN$, 
$$
g_n^p-e_n=\sum_{k=1}^{\infty}\lambda_n^{(n+k)p}c_{n+k}^p e_{n+k}.
$$
Fix $q\in\NN$ and let $\veps>0$. There exists $N\geq q$ such that, for all $p\in\NN$, 
$$ 
\left \| \sum_{k>N}\lambda_n^{(n+k)p}c_{n+k}^pe_{n+k}\right \|_q\leq \sum_{k>N}b_{n+k}\|e_{n+k}\|_q <\varepsilon.
$$
Since furthermore
$$
\sum_{k=1}^N\lambda_n^{(n+k)p}c_{n+k}^p e_{n+k}\xrightarrow[p\rightarrow \infty]{}  0,
$$
we conclude that $g_n^p\xrightarrow[p\rightarrow \infty]{}e_n$.
\end{proof}

We conclude this subsection by comparing Corollary \ref{generators} with \cite[Remark 3.4]{BP18}. Corollary \ref{generators} allows the index $\beta$ to depend on $A, U_1,\dots ,U_d,V$ and $W$ providing, at least theoretically, an extra flexibility and range of application for the result. Practically, throughout the paper, $\beta$ will depend only on $A$ in which case Corollary \ref{generators} and \cite[Remark 3.4]{BP18} coincide. We were unable to find an example where Corollary \ref{generators} applies while \cite[Remark 3.4]{BP18} does not.


\section{Convolution operators with $|\phi(0)|>1$}

\subsection{Operators with many eigenvectors}

\label{sec:convolution}

In this section we shall deduce Theorem \ref{thm:convintro} from a more general assertion on operators having many eigenvalues. As Theorem \ref{thm:generalcriterion} does for Corollary \ref{cor:thmgenmax}, this generalized statement also includes \cite[Theorem 2.1]{Bayhcalg} as a particular case. Before stating and proving it, let us add some notation.

Given $p,d\in\NN$, we denote each set $\{1,...,p\}$ by $I_p$, each $d$-tuple $(j_1,...,j_d)\in I_p^d$ by the multi-index $\textbf{j}\in I_p^d$ and each product $a_{j_1}\cdots a_{j_m}$ by the symbol $a_{\textbf{j}}$. We allow $d=0$ with the convention that, in this case, $a_{\textbf{j}}=1$.

\begin{theorem}\label{versionconvol}
Let $X$ be an $F$-algebra and let $T\in\mathcal{L}(X)$. Assume that there exist a function $E:\CC\to X$ and an entire function $\phi:\CC\to\CC$ satisfying the following assumptions:
\begin{enumerate}[(a)]
  \item for all $\lambda\in\CC$, $TE(\lambda)=\phi(\lambda)E(\lambda)$;
  \item for all $\lambda, \mu\in\CC$, $E(\lambda)E(\mu)=E(\lambda+\mu)$;
  \item for all $\Lambda \subset\CC$ with an accumulation point, the linear span of $\{E(\lambda):\lambda\in\Lambda\}$ is dense in $X$;
  \item $\phi$ is not a multiple of an exponential function;
  \item for all $I\in\mathcal{P}_f(\NN)\backslash\{\varnothing\}$, there exist $m\in I$ and $a,b\in\CC$ such that $|\phi(mb)|>1$ and, for all $n\in I$ and $d\in\{0,...,n\}$, with $(n,d)\neq(m,m)$, $|\phi(db+(n-d)a)|<|\phi(mb)|^{d/m}.$
\end{enumerate}
Then $T$ supports a hypercyclic algebra.
\end{theorem}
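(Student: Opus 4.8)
The plan is to verify the hypotheses of Theorem~\ref{thm:generalcriterion} in the case $d=1$, taking for each finite non-empty $A=I\subset\NN$ the index $\beta=m$ furnished by assumption (e); since the criterion allows $\beta$ to depend on $A$, a successful verification immediately yields a hypercyclic algebra. So I fix $I\in\mathcal P_f(\NN)\setminus\{\varnothing\}$, non-empty open sets $U,V$ and a neighbourhood $W$ of $0$, and I must produce $u\in U$ and $N\geq1$ with $T^N(u^m)\in V$ and $T^N(u^n)\in W$ for every $n\in I\setminus\{m\}$. Let $m,a,b$ be given by (e); note that taking $d=0$ shows $|\phi(na)|<1$ for all $n\in I$, so $a$ is a ``damping'' frequency, while $|\phi(mb)|>1$ makes $b$ an ``amplifying'' one.

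I would build $u=p+w$, where $p=\sum_i\eta_i E(a+\veps_i)$ is a finite combination of eigenvectors with frequencies near $a$ approximating a chosen $u_0\in U$ (possible by (c), since a neighbourhood of $a$ has accumulation points), and $w=\sum_k c_k E(b+\delta_k)$ is a small ``active'' part supported near $b$. By (a) and (b), $u^n=\sum_{d=0}^n\binom nd p^{\,n-d}w^d$, where each $p^{\,n-d}w^d$ is a combination of eigenvectors $E(\theta)$ with $\theta$ close to $(n-d)a+db$, and $T^N$ multiplies the $E(\theta)$-component by $\phi(\theta)^N$. I would choose the scales so that $\|w\|\to0$ while $N\to\infty$ with $|c_k|^m|\phi(mb)|^N$ of order $1$; concretely $c_k=b_k^{1/m}\phi(m(b+\delta_k))^{-N/m}$ for prescribed targets $b_k$. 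Then $u\approx p\approx u_0\in U$, and the magnitude of the $d$-th summand of $T^N(u^n)$ is governed by $\big(|\phi((n-d)a+db)|\,/\,|\phi(mb)|^{d/m}\big)^N$. By the strict inequalities in (e) (which persist, by continuity, for the slightly perturbed frequencies, only finitely many being involved), every term with $(n,d)\neq(m,m)$ tends to $0$, placing all $T^N(u^n)$ with $n\neq m$, and all the lower ($d<m$) contributions to $T^N(u^m)$, into $W$.

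It then remains to make the surviving term, the pure active $m$-th power, approximate an arbitrary $v_0\in V$. Here $T^N(w^m)=\sum_{(l_k)}\binom{m}{(l_k)}\big(\prod_kc_k^{l_k}\big)\phi\big(mb+\textstyle\sum_kl_k\delta_k\big)^N E\big(mb+\sum_kl_k\delta_k\big)$: the diagonal terms (all mass on one $k$) reproduce $\sum_k b_k E(m(b+\delta_k))$ by the choice of $c_k$, while an off-diagonal term decays provided $|\phi(mb+\sum_kl_k\delta_k)|<\prod_k|\phi(m(b+\delta_k))|^{l_k/m}$, i.e. provided $h(\delta):=\log|\phi(m(b+\delta))|$ is strictly convex at the chosen frequencies. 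Since $\phi(mb)\neq0$, $h$ is harmonic near $0$; and $h$ is affine exactly when $\phi$ is a multiple of an exponential, which (d) forbids. Hence, after perhaps replacing $b$ by a nearby frequency (keeping (e) by continuity), the Hessian of $h$ is non-zero, so there is a direction $v$ with $s\mapsto h(sv)$ strictly convex near $0$; choosing $\delta_k=s_kv$ with small, distinct $s_k$ in this interval both kills the off-diagonal terms and, since $\{m(b+\delta_k)\}$ accumulates, lets $\sum_k b_kE(m(b+\delta_k))$ approximate $v_0$ by (c). Letting $N\to\infty$ with the finite configuration fixed then puts $T^N(u^m)$ in $V$ and every other $T^N(u^n)$ in $W$, as required.

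I expect the main obstacle to be exactly this last point: a single $m$-th power $T^N(w^m)$ cannot approximate an arbitrary vector, so one is forced to spread the active frequencies and then suppress the resulting off-diagonal cross terms, for which the only available strict inequality is the strict convexity of $\log|\phi|$ guaranteed by (d). Balancing this convexity requirement (which wants the $\delta_k$ spread out) against the requirement, coming from (e), that the frequencies remain close to $a$ and $b$ (so that the mixed-power terms decay), together with the simultaneous scale balancing of the $c_k$ and $N$, is the delicate technical heart of the proof.
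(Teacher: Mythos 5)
Your proposal is correct and follows essentially the same route as the paper's proof: apply Theorem~\ref{thm:generalcriterion} with $d=1$ and $\beta=m$ given by (e), take $u=\sum_l a_lE(\gamma_l)+\sum_j c_jE(\lambda_j/m)$ with the $\gamma_l$ near $a$, the $\lambda_j$ on a small segment near $mb$ along which $\log|\phi|$ is strictly convex (the paper invokes \cite[Lemma 2.2]{Bayhcalg} for exactly the harmonic/affine dichotomy you sketch from (d)), and $c_j^m=b_j/\phi(\lambda_j)^N$, so that (e) plus continuity kills all terms with $(n,d)\neq(m,m)$ and strict convexity kills the off-diagonal part of the pure $m$-th power. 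The balancing of these constraints that you flag as the delicate point is handled in the paper by fixing one $\delta>0$ small enough for all finitely many strict inequalities simultaneously before choosing the segment $[w_1,w_2]$.
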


The proof of this result follows the lines of that of Theorem 2.1 in \cite{Bayhcalg}, replacing Corollary \ref{cor:thmgenmax} by the more general Theorem \ref{thm:generalcriterion}. For the sake of completeness, we include the details.

\begin{proof}
Let $(U,V, W)$ be a triple of non-empty open sets in $X$, with $0\in W$, and let $I\in\mathcal{P}_f(\NN)\backslash\{\varnothing\}$. By the hypothesis there are $m\in I$ and $a,b\in\C$ satisfying (e).
Define $w_0:=mb$ and let $\delta>0$ be small enough and $w_1,w_2\in B(w_0,\delta)$ so that 
\begin{enumerate}[(i)]                                       
\item $|\phi|>1$ on $B(w_0,\delta)$;
\item $t\mapsto \log|\phi(tw_1+(1-t)w_2)|$ is strictly convex (the existence of $w_1,w_2\in B(w_0,\delta)$ comes from \cite[Lemma 2.2]{Bayhcalg} and is a consequence of (d));
\item for all $n\in I$ and $d\in\{0,...,n\}$, with $(n,d)\neq(m,m)$, and for all $\lambda_1,...,\lambda_d\in B(w_0,\delta)$ and $\gamma_1,...,\gamma_{n-d}\in B(a,\delta)$, 
\begin{equation}\label{starcond0} \biggl|\phi\biggl(\frac{\lambda_1+\cdots+\lambda_d}{m}+\gamma_1+\cdots+\gamma_{n-d}\biggr)\biggr|<\left(|\phi(\lambda_1)|\times\cdots\times |\phi(\lambda_d)|\right)^{1/m}.
\end{equation}
\end{enumerate}
The last condition can be satisfied because \[ \frac{\lambda_1+\cdots+\lambda_d}{m}+\gamma_1+\cdots+\gamma_{n-d}=db+(n-d)a+z,\] where the size of $z$ can be controlled through $\delta$. Now, since $B(a,\delta)$ and $[w_1,w_2]$ have accumulation points, we can find $p,q\in\NN$, $a_1,...,a_p,b_1,...,b_q\in\C$, $\gamma_1,...,\gamma_p\in B(a,\delta)$ and $\lambda_1,...,\lambda_q\in[w_1,w_2]$ with \[\sum_{l=1}^pa_lE(\gamma_l)\in U~~\text{and}~~\sum_{j=1}^qb_jE(\lambda_j)\in V.\] For some big $N\in\NN$ (which will be determined later in the proof) and each $j\in\{1,...,q\}$, let $c_j:=c_j(N)$ be any complex number satisfying \[c_j^{m}(N)=\frac{b_j}{\phi(\lambda_j)^N}\] and define \[u:=u(N)=\sum_{l=1}^pa_lE(\gamma_l)+\sum_{j=1}^qc_jE\biggl(\frac{\lambda_j}{m}\biggr).\] For the powers of $u$ we have the formula
\begin{equation}\label{formulaun}
u^n=\sum_{d=0}^n\sum_{\substack{\textbf{l}\in I_p^{n-d}\\ \textbf{j}\in I_q^d}}\alpha(\textbf{l},\textbf{j},d,n)a_{\textbf{l}}c_{\textbf{j}}E\biggl(\gamma_{l_1}+\cdots+\gamma_{l_{n-d}}+ \frac{\lambda_{j_1}+\cdots+\lambda_{j_d}}{m}\biggl).
\end{equation}
We claim that, if $N$ is taken large enough, $u=u(N)$ satisfies the conditions of the general criterion with $d=1$, what will complete the proof.

That $u\in U$ for large $N$ is clear since, from (i), $c_j(N)\to 0$ as $N\to\infty$. Applying $T^N$ to $u^n$ we see that we need to study the behaviour (as $N$ grows) of
\begin{equation}\label{study0}
c_{\textbf{j}}(N)\biggl[\phi\biggl(\gamma_{l_1}+\cdots+\gamma_{l_{n-d}}+\frac{\lambda_{j_1}+\cdots+\lambda_{j_d}}{m}\biggl)\biggl]^N.
\end{equation}
For $n\in I\backslash\{m\}$ we have that (\ref{study0}) goes to 0 as $N$ grows by the inequality (\ref{starcond0}) and the definition of $c_j$, $j=1,...,q$. 
This way we get $T^N(u^n)\in W$ for all $n\in I\backslash\{m\}$ if $N$ is large enough. Now let us consider the case $n=m$. We have
\begin{align*}
  u^{m} &= \sum_{d=0}^{m-1}\sum_{\substack{\textbf{l}\in I_p^{m-d}\\ \textbf{j}\in I_q^d}} \alpha(\textbf{l},\textbf{j},d,m)a_{\textbf{l}}c_{\textbf{j}} E\biggl(\gamma_{l_1}+\cdots+\gamma_{l_{m-d}}+\frac{\lambda_{j_1}+\cdots+\lambda_{j_d}}{m}\biggl) \\
                    &+ \sum_{\textbf{j}\in I_q^m\backslash D_q} \alpha(\textbf{j},m)c_{\textbf{j}}E\biggl(\frac{\lambda_{j_1}+\cdots+\lambda_{j_m}}{m}\biggl)\\
                    &+\sum_{j=1}^qc_j^{m}E(\lambda_j)\\
                    &=:v_1+v_2+v_3,
\end{align*}
where
\[
v_1:=\sum_{d=0}^{m-1}\sum_{\substack{\textbf{l}\in I_p^{m-d}\\ \textbf{j}\in I_q^d}} \alpha(\textbf{l},\textbf{j},d,m)a_{\textbf{l}}c_{\textbf{j}} 
E\biggl(\gamma_{l_1}+\cdots+\gamma_{l_{m-d}}+\frac{\lambda_{j_1}+\cdots+\lambda_{j_d}}{m}\biggl),
\]
\[v_2:=\sum_{\substack{\textbf{j}\in I_q^{m}}\backslash D_q}\alpha(\textbf{j},m)c_{\textbf{j}}E\biggl(\frac{\lambda_{j_1}+\cdots+\lambda_{j_{m}}}{m}\biggl),~~~~v_3:=\sum_{j=1}^qc_j^mE(\lambda_j)
\]
and $D_q$ is the diagonal of $I_q^{m}$, that is, the set of all $m$-tuples $(j,...,j)$ with $1\leq j\leq q$. Again we have $T^N(v_1)\to 0$ as $N\to\infty$ from (\ref{starcond0}).
Furthermore, since $t\in[0,1]\mapsto\log|\phi(tw_1+(1-t)w_2)|$ is strictly convex, we have
\[
\biggl|\phi\biggl(\frac{\lambda_{j_1}+\cdots+\lambda_{j_{m}}}{m}\biggl)\biggl|<|\phi(\lambda_{j_1})|^{1/m}\cdots|\phi(\lambda_{j_{m}})|^{1/m}.
\]
From this we conclude that
$$\begin{array}{l}
\displaystyle |c_{\textbf{j}}(N)|\cdot\Biggl|\phi\Biggl(\frac{\lambda_{j_1}+\cdots+\lambda_{j_{m}}}{m}\Biggl)\Biggr|^N \\
\displaystyle \quad\quad = \Biggl|\frac{b_{\textbf{j}}^{1/m}}{|\phi(\lambda_{j_1})|^{N/m}\cdots|\phi(\lambda_{j_n})|^{N/m}}\Biggr|\cdot \Biggl|\phi\Biggl(\frac{\lambda_{j_1}+\cdots+\lambda_{j_{m}}}{m}\Biggr)\Biggr|^N\\
\displaystyle \quad\quad =\bigl|b_{\textbf{j}}^{1/m}\bigl|\cdot\left|\frac{\phi\Bigl(\frac{\lambda_{j_1}+\cdots+\lambda_{j_{m}}}{m}\Bigr)}{|\phi(\lambda_{j_1})|^{1/m}\cdots|\phi(\lambda_{j_{m}})|^{1/m}}\right|^N\to0~~\text{as}~~N\to\infty,
\end{array}
$$
what shows that $T^N(v_2)$ also tends to 0 as $N\to\infty$. Finally, by the definition of $c_j$, $j=1,...,q$, we get
\[
T^Nv_3=\sum_{j=1}^qb_jE(\lambda_j)\in V
\]
for all $N\in\NN$. This completes the proof.
\end{proof}

We now deduce a more readable corollary,  when the entire function $\phi$ is ``well behaved'' in some half-line of the complex plane (like in the Figure \ref{figure} for example).

\begin{corollary}\label{wellbehaved}
Let $X$ be an $F$-algebra and let $T\in\mathcal{L}(X)$. Assume that there exist a function $E:\C\to X$ and an entire function $\phi:\C\to\C$ satisfying the following assumptions:
\begin{enumerate}[(a)]
  \item for all $\lambda\in\C$, $TE(\lambda)=\phi(\lambda)E(\lambda)$;
  \item for all $\lambda, \mu\in\C$, $E(\lambda)E(\mu)=E(\lambda+\mu)$;
  \item for all $\Lambda \subset\C$ with an accumulation point, the linear span of $\{E(\lambda):\lambda\in\Lambda\}$ is dense in $X$;
  \item $\phi$ is not a multiple of an exponential function;
  \item there exist $v\in\C$ and a real number $p>0$ such that $|\phi(v)|>1$ and $|\phi(tv)|\leq1$ for all $t>p$.
\end{enumerate}
Then $T$ supports a hypercyclic algebra.
\end{corollary}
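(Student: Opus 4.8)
The plan is to deduce the statement directly from Theorem~\ref{versionconvol}: hypotheses (a)--(d) of the corollary coincide verbatim with those of the theorem, so the only thing to do is to show that the half-line condition (e) of the corollary implies the combinatorial condition (e) of the theorem. Thus I fix a finite non-empty $I\subset\NN$ and must produce $m\in I$ and $a,b\in\C$ with $|\phi(mb)|>1$ and $|\phi(db+(n-d)a)|<|\phi(mb)|^{d/m}$ for every $n\in I$ and $d\in\{0,\dots,n\}$ with $(n,d)\neq(m,m)$. Throughout I would keep $a$ and $b$ on the half-line $\{tv:t>0\}$, writing $a=t_1v$ and $b=(\xi/m)v$ for scalars $t_1,\xi>0$ to be chosen, and I would set $m=\min I$.

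The decisive step is the diagonal case $d=n$, $n\neq m$, which involves only $b$ and reads $|\phi(nb)|<|\phi(mb)|^{n/m}$. Introducing $\Phi(t)=|\phi(tv)|^{1/t}$ (equivalently $h(t)=\frac1t\log|\phi(tv)|$), this is exactly $\Phi(n\xi/m)<\Phi(\xi)$, and since $n>m=\min I$ one has $n\xi/m>\xi$. So it suffices to choose the scale $\xi$ so that $|\phi(\xi v)|>1$ and $\Phi(t)<\Phi(\xi)$ for all $t>\xi$. Such a $\xi$ exists because of the prescribed behaviour of $\Phi$ on the half-line: one has $\Phi(1)=|\phi(v)|>1$ while $\Phi(t)\leq1$ for $t>p$, so $\sup_{t>0}\Phi(t)>1$. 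When this supremum is finite I would take $\xi$ to be the largest point at which $\Phi$ attains its maximum (it lies in $(0,p]$), so that $\Phi(t)<\Phi(\xi)$ for every $t>\xi$; when it is infinite (which happens precisely when $|\phi(0)|>1$, since then $\Phi(t)\to+\infty$ as $t\to0^+$) I would use that $\Phi$ is strictly decreasing near $0$ to find a small $\xi>0$ with $\Phi(t)<\Phi(\xi)$ for all $t>\xi$. With this $\xi$ fixed, $|\phi(mb)|=|\phi(\xi v)|>1$ is settled as well.

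It remains to treat the off-diagonal cases $d<n$, where the factor $a$ appears. Here the argument $db+(n-d)a=(d\xi/m+(n-d)t_1)v$ has coefficient at least $(n-d)t_1\geq t_1>p$, so $|\phi(db+(n-d)a)|\leq1$ by hypothesis (e). When $d\geq1$ this already suffices, since the right-hand side $|\phi(mb)|^{d/m}$ is then strictly larger than $1$. The genuinely delicate point is $d=0$: the inequality becomes $|\phi(na)|=|\phi(nt_1v)|<1$, and the half-line hypothesis only gives $\leq1$. To upgrade this to a strict inequality I would note that the real-analytic function $t\mapsto|\phi(tv)|^2-1$ on $(p,+\infty)$ cannot vanish identically, for otherwise analytic continuation would force $|\phi(v)|=1$, contradicting $|\phi(v)|>1$; hence its zero set $Z$ is discrete, and I would pick $t_1>p$ avoiding the countable set $\bigcup_{n\in I}\frac1nZ$. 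For such $t_1$ we get $nt_1\notin Z$, and therefore $|\phi(nt_1v)|<1$ for every $n\in I$.

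Putting the three cases together, the constructed $m=\min I$, $a=t_1v$ and $b=(\xi/m)v$ verify condition (e) of Theorem~\ref{versionconvol}, and the conclusion follows. The main obstacle, as indicated, is the diagonal step: one must pin down a single scale $\xi$ at which $t\mapsto|\phi(tv)|^{1/t}$ dominates all larger values while staying above $1$ (which forces the separate treatment of the finite- and infinite-supremum cases, the latter corresponding to $|\phi(0)|>1$), together with the auxiliary strictness at $d=0$, which crucially uses $|\phi(v)|>1$ to rule out $|\phi(tv)|\equiv1$ along the half-line.
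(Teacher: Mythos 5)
Your overall route is the paper's: reduce to condition (e) of Theorem~\ref{versionconvol} with $m=\min I$ and with $a,b$ on the ray $\RR_+v$. Your handling of the off-diagonal terms ($d<n$: the argument of $\phi$ lands beyond $p$, so $|\phi|\le 1$, which beats $|\phi(mb)|^{d/m}>1$ once $d\ge1$) and of the $d=0$ terms (upgrade $\le 1$ to $<1$ using that the real-analytic function $t\mapsto|\phi(tv)|^2-1$ is not identically zero, hence has discrete zero set) is correct and close in spirit to the paper, which instead produces an accumulation point of $\{|\phi|=1\}$ by varying $a$ and invokes Lemma~\ref{lemmawellbehaved}. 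Where you genuinely diverge is the diagonal case $d=n>m$. The paper takes $t_1$ just below the threshold $t_0=\min\{s>0:|\phi(tv)|\le1\ \forall t\ge s\}$ with $|\phi(t_1v)|>1$ and $(1+\frac1m)t_1>t_0$, so that $nb=\frac{n}{m}t_1$ already lies past $t_0$ and $|\phi(nb)|\le1<|\phi(mb)|^{n/m}$ comes for free (the exponent $n/m$ is positive, so the right-hand side exceeds $1$ and no strict comparison of $\Phi$-values is ever needed). You instead demand a scale $\xi$ at which $\Phi(t)=|\phi(tv)|^{1/t}$ strictly dominates all its values at larger $t$; this works but is more than is required, and it is exactly where your argument has a gap.

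The gap: in the finite-supremum case, ``the largest point at which $\Phi$ attains its maximum'' need not exist, because the supremum of $\Phi$ over $(0,p]$ need not be attained. For instance $\phi(z)=e^{2z-z^2}$, $v=1$, $p=2$ satisfies all hypotheses ($|\phi(1)|=e>1$, $|\phi(t)|\le1$ for $t\ge2$, $|\phi(0)|=1$), yet $\Phi(t)=e^{2-t}$ is strictly decreasing with $\sup\Phi=e^2$ approached only as $t\to0^+$. This sub-case ($|\phi(0)|=1$, finite non-attained supremum) is not vacuous for the paper's applications, e.g.\ $\phi(z)=P(z)e^z$ with $|P(0)|=1$. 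The repair is short and also eliminates your case distinction: maximize $\Phi$ over the compact interval $[1,p]$ and let $\xi$ be the \emph{largest} maximizer there (the set of maximizers is a nonempty compact subset of $[1,p]$). Then $\Phi(\xi)\ge\Phi(1)=|\phi(v)|>1$, every $t\in(\xi,p]$ satisfies $\Phi(t)<\Phi(\xi)$ since it is not a maximizer, and every $t>p$ satisfies $\Phi(t)\le1<\Phi(\xi)$; so $\Phi(t)<\Phi(\xi)$ for all $t>\xi$, which is all your diagonal step needs. (Your infinite-supremum branch is also stated too quickly --- strict decrease of $\Phi$ near $0$ only controls $t$ in a neighbourhood of $\xi$, not all $t>\xi$, and one must additionally take $\xi$ small enough that $\Phi(\xi)$ exceeds $\sup_{t\ge\delta}\Phi$ --- but it is subsumed by the same fix.) With this correction the proof is complete.
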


\begin{figure}[ht]
    \centering
    \includegraphics[height=4.35cm]{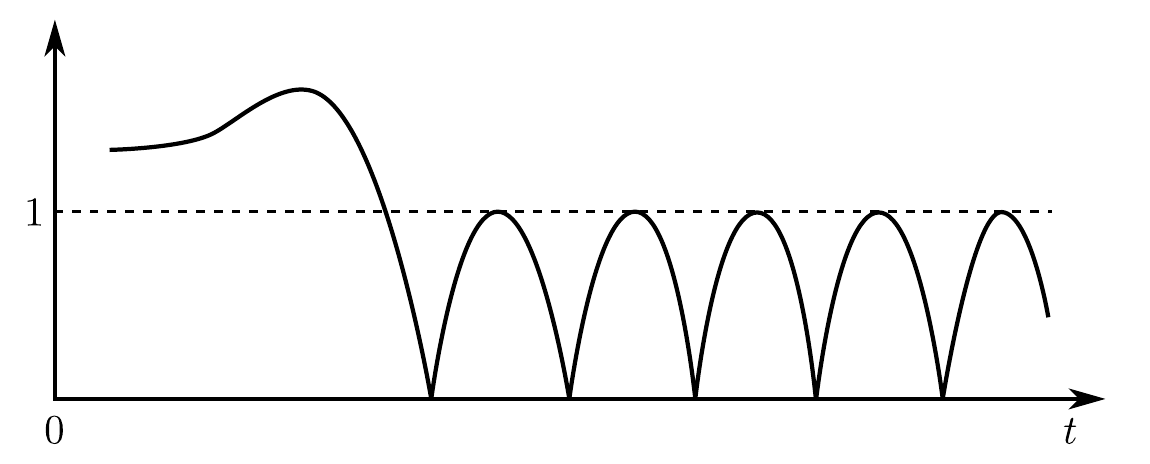}
    \caption{Graph of $t\mapsto |\phi(tv)|$}
    \label{figure}
\end{figure}

For the proof of Corollary \ref{wellbehaved}, we are going to need the following simple lemma.

\begin{lemma}\label{lemmawellbehaved}
Let $\phi$ be entire and $\Lambda\subset\mathbb R$ have an accumulation point in it. If $|\phi(t)|=1$ for all $t\in\Lambda$ then $|\phi(t)|=1$ for all $t\in\mathbb R$.
\end{lemma}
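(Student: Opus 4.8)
The plan is to prove Lemma \ref{lemmawellbehaved} by exploiting the rigidity of entire functions together with the identity theorem. The statement asserts that if an entire function $\phi$ has modulus identically $1$ on a real set $\Lambda$ with an accumulation point in it, then $|\phi(t)|=1$ for all real $t$. The natural approach is to reduce a statement about the \emph{modulus} of $\phi$ to a statement about $\phi$ itself, since the identity theorem applies to holomorphic functions, not to their absolute values.

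First I would observe that the condition $|\phi(t)|=1$ on $\Lambda$ should be rewritten as $\phi(t)\overline{\phi(t)}=1$ for $t\in\Lambda$. The trick is to introduce the function $\phi^*(z):=\overline{\phi(\bar z)}$, which is again entire (its Taylor coefficients are the complex conjugates of those of $\phi$). For real $t$ we have $\phi^*(t)=\overline{\phi(t)}$, so the condition on $\Lambda$ becomes $\phi(t)\phi^*(t)=1$ for all $t\in\Lambda$. Thus the entire function $\psi(z):=\phi(z)\phi^*(z)-1$ vanishes on $\Lambda$.

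Next, since $\Lambda$ has an accumulation point (in $\Lambda\subset\RR\subset\CC$), the zero set of $\psi$ has an accumulation point in $\CC$. By the identity theorem for entire functions, $\psi\equiv 0$, that is, $\phi(z)\phi^*(z)=1$ for all $z\in\CC$. Restricting back to real arguments $t\in\RR$, we get $\phi(t)\overline{\phi(t)}=1$, i.e. $|\phi(t)|^2=1$, hence $|\phi(t)|=1$ for every $t\in\RR$, which is the desired conclusion.

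I do not expect any serious obstacle here; the lemma is genuinely elementary once one has the idea of forming $\phi^*$ to convert the modulus condition into a holomorphic identity. The only point requiring a small amount of care is verifying that $\phi^*$ is entire and that $\phi^*(t)=\overline{\phi(t)}$ on the reals, both of which follow immediately from writing $\phi(z)=\sum_n a_n z^n$ and noting $\phi^*(z)=\sum_n \overline{a_n} z^n$. One should also confirm that the accumulation point of $\Lambda$ lying \emph{in} $\Lambda$ (as the hypothesis states) guarantees it is a genuine accumulation point of the zero set of $\psi$ in $\CC$, so that the identity theorem applies.
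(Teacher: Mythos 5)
Your proof is correct and follows essentially the same route as the paper: both introduce the entire function $z\mapsto\overline{\phi(\bar z)}$ and use the identity theorem to propagate the relation from $\Lambda$ to all of $\CC$, then restrict back to $\RR$. If anything, your formulation via $\psi=\phi\phi^*-1$ is slightly cleaner than the paper's, which writes the relation as $\overline{\phi(\bar t)}=\phi(t)^{-1}$ and thus implicitly glosses over possible zeros of $\phi$.
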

\begin{proof}
Since $|\phi(t)|=1$ for all $t\in\Lambda\subset\R$, we have $\overline{\phi(\overline{t})}\cdot\phi(t)=\overline{\phi(t)}\cdot\phi(t)=1$, hence
\[
\overline{\phi(\overline{t})}=\phi(t)^{-1}, \forall t\in\Lambda\subset\R.
\]
Since this is a holomorphic equality ($t\mapsto\overline{\phi(\overline{t})}$ is entire), it extends to the whole complex plane. In particular it holds for all $t\in\R$, that is,
\[
\overline{\phi(t)}=\overline{\phi(\overline{t})}=\phi(t)^{-1}, \forall t\in\R,
\]
thus
\[
|\phi(t)|=1, \forall t\in\R,
\]
as we wanted.
\end{proof}

\begin{proof}[Proof of Corollary \ref{wellbehaved}] We may assume without loss of generality that $v=1$.
Let $t_0>0$ be the smallest positive real number such that $|\phi(t)|\leq1$ for all $t\geq t_0$.
We just need to prove that condition (e) of Theorem \ref{versionconvol} is satisfied. 
So let $I\in\mathcal{P}_f(\NN)\backslash\{\varnothing\}$ be arbitrary and set $m=\min I$.
We can find $t_1<t_0$ near enough to $t_0$ so that $|\phi(t_1)|>1$ and $t_1+\frac{t_1}{m}>t_0$.
 Letting $b:=\frac{t_1}{m}$ we have $|\phi(mb)|>1$. Now fix $a_0>t_0$ and take $\epsilon\in(0,1/2)$ such that $a_0-\epsilon>t_0$. 
 There exists $a\in[a_0-\epsilon,a_0+\epsilon]$ such that $|\phi(db+(n-d)a)|<1$ for all $n\in I$ and $d\in\{0,...,n\}$ with $(n,d)\neq(m,m)$. In fact, if this is not the case then, 
 for each $a\in[a_0-\epsilon,a_0+\epsilon]$, we can find a point $t_a=d_ab+(n_a-d_a)a$, with $n_a\in I$, $d_a\in\{0,...,n_a\}$ and $(n_a,d_a)\neq(m,m)$, such that $|\phi(t_a)|\geq 1$.
 Since $m=\min(I)$, $b=t_1/m$ and $a>t_0$, we get $t_a>t_0$ so that $|\phi(t_a)|\leq 1$.
 This way, varying $a\in[a_0-\epsilon,a_0+\epsilon]$ we find infinitely many points $t_a$ within $[t_0,\max I(a_0+\veps)]$ 
in which $|\phi|$ assumes the value 1. 
The set $\Lambda$ composed by these points is infinite, closed and subset of the compact $[t_0,\max (I)(\alpha_0+\epsilon)]$, hence $\Lambda$ has an accumulation point. 
By Lemma \ref{lemmawellbehaved} we conclude that $|\phi(t)|=1$ for all $t\in\R$, which contradicts the fact that $|\phi(t_1)|>1$. This completes the proof. 
\end{proof}

\subsection{Applications to convolution operators}

We now observe that we may apply Corollary \ref{wellbehaved} to convolution operators $\phi(D)$ with $|\phi(0)|>1$, where $E(\lambda)(z)=e^{\lambda z}$. This yields immediately Theorem \ref{thm:convintro}. 
We may also apply Corollary \ref{wellbehaved} to handle the case $|\phi(0)|=1$.

\begin{corollary}
Let $P\in\CC[z]$ be a non-constant polynomial and let $\phi(z)=P(z)e^z$. Then $\phi(D)$ supports a hypercyclic algebra.
\end{corollary}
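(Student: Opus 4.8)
The plan is to apply Corollary~\ref{wellbehaved} to the operator $T=\phi(D)$ acting on $X=H(\CC)$, with the standard setup $E(\lambda)(z)=e^{\lambda z}$. First I would verify the structural hypotheses (a)--(d). For (a), the eigenvector relation $\phi(D)e^{\lambda z}=\phi(\lambda)e^{\lambda z}$ is immediate since $De^{\lambda z}=\lambda e^{\lambda z}$. For (b), $e^{\lambda z}e^{\mu z}=e^{(\lambda+\mu)z}$ holds for the pointwise product on $H(\CC)$. For (c), the density of the span of $\{e^{\lambda z}:\lambda\in\Lambda\}$ for any $\Lambda$ with an accumulation point is a classical fact (any functional annihilating all these exponentials has a Borel-transform vanishing on a set with an accumulation point, hence vanishing identically). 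For (d), I need that $\phi(z)=P(z)e^z$ is not a multiple of an exponential function; this is clear because $P$ is a non-constant polynomial, so $\phi(z)/e^{cz}=P(z)e^{(1-c)z}$ can never be constant for any $c\in\CC$.

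The main work is to verify condition (e) of Corollary~\ref{wellbehaved}: I must exhibit $v\in\CC$ and $p>0$ with $|\phi(v)|>1$ and $|\phi(tv)|\le 1$ for all $t>p$. The natural choice is to look along the negative real axis, $v=-1$, where $e^z$ decays. Indeed, for $t>0$ we have $|\phi(-t)|=|P(-t)|e^{-t}$, and since $|P(-t)|$ grows only polynomially while $e^{-t}$ decays exponentially, $|\phi(-t)|\to 0$ as $t\to+\infty$. Hence there exists $p>0$ with $|\phi(-t)|\le 1$ for all $t>p$, which gives $|\phi(tv)|\le 1$ for $t>p$ with $v=-1$. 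It remains only to arrange $|\phi(v)|>1$; since $v=-1$ may not satisfy this, I would instead rescale: pick any $v=-s$ with $s>0$ small enough that $|\phi(-s)|=|P(-s)|e^{-s}>1$, which is possible provided $|P(0)|>1$, and otherwise one may exploit that along the ray $tv$ the modulus $|\phi(tv)|=|P(-st)|e^{-st}$ still tends to $0$, so only the single-point condition $|\phi(v)|>1$ must be secured by a suitable choice of the point on the ray.

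The genuine subtlety, and the step I expect to require the most care, is reconciling the two parts of (e): I need one point on the ray where $|\phi|>1$ together with eventual decay below $1$. If $|P(0)|\le 1$ the origin fails the first requirement, so I cannot simply take $v$ near $0$. The clean fix is to observe that whatever $P$ is, the continuous function $t\mapsto|P(-t)|e^{-t}$ starts at $|P(0)|$, tends to $0$, and (because $P$ is non-constant) its modulus $|P(-t)|$ is unbounded if we instead move in a direction where the exponential does not dominate. Concretely, I would choose the direction $v=-1$ and note that along the real axis $\phi$ is not of constant modulus $1$ (by Lemma~\ref{lemmawellbehaved}, since otherwise $|\phi(t)|\equiv 1$ on $\RR$, impossible as $|\phi(t)|=|P(t)|e^{t}\to\infty$ on the positive axis). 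Therefore there exists some $t_1$ with $|\phi(-t_1)|>1$; setting $v=-t_1$ rescales the ray so that $|\phi(v)|>1$, while $|\phi(tv)|=|P(-tt_1)|e^{-tt_1}\to 0$ still yields the eventual bound $\le 1$ for large $t$. With (e) established, Corollary~\ref{wellbehaved} applies directly and $\phi(D)$ supports a hypercyclic algebra.
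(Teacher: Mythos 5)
Your structural verifications (a)--(d) are fine, and the overall strategy of feeding $\phi(D)$ into Corollary~\ref{wellbehaved} is the right one. The gap is in your verification of condition (e), specifically in the final step: from ``$|\phi|$ is not of constant modulus $1$ on $\RR$'' you conclude ``therefore there exists $t_1$ with $|\phi(-t_1)|>1$.'' This inference is false. Take $P(z)=z/10$, so $\phi(z)=(z/10)e^z$. Then $|\phi(-t)|=(t/10)e^{-t}\leq 1/(10e)<1$ for every $t>0$, so no point of the negative real half-line (nor any rescaling $v=-t_1$ of it) satisfies $|\phi(v)|>1$, even though $|\phi(t)|\to\infty$ as $t\to+\infty$ certainly prevents $|\phi|\equiv 1$ on $\RR$. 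Your argument only secures the single-point condition when $|P(0)|>1$ (take $v=-s$ with $s$ small); for $|P(0)|\leq 1$ the negative real axis can fail entirely, and this is precisely the delicate case.

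The repair is to abandon the real axis and exploit the imaginary one: since $P$ is non-constant, $|\phi(it)|=|P(it)|\to+\infty$ as $t\to+\infty$, so there is $t_0$ with $|\phi(it_0)|>1$; by continuity of $|\phi|$ one may then perturb $it_0$ to some $v=|v|e^{i\theta}$ with $\theta\in(\pi/2,3\pi/2)$, i.e.\ in the open left half-plane, still satisfying $|\phi(v)|>1$. Along the ray $tv$ the factor $|e^{tv}|=e^{t\operatorname{Re}(v)}$ decays exponentially while $|P(tv)|$ grows only polynomially, so $|\phi(tv)|\to 0$ and condition (e) holds. This is exactly how the paper treats the case $|P(0)|=1$ (the paper otherwise splits into three cases on $|P(0)|$, citing an earlier result for $|P(0)|<1$ and Theorem~\ref{thm:convintro} for $|P(0)|>1$); note that the left-half-plane choice of $v$ actually handles all three cases uniformly, so your plan of a single application of Corollary~\ref{wellbehaved} would work once the direction $v$ is chosen correctly.
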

\begin{proof}
The case $|P(0)|<1$ is done in \cite{Bayhcalg}, the case $|P(0)|>1$ is settled by Theorem \ref{thm:convintro}. It remains to consider the case $|P(0)|=1$. Since $|P(it)|$ tends to $+\infty$ as $t$ tends to $+\infty$, there exists $t_0\in\RR$ such that $|\phi(it_0)|>1$. By continuity of $|\phi|$, there exists $v=|v|e^{i\theta}$ with $\theta\in (\pi/2,3\pi/2)$ such that $|\phi(v)|>1$. Now, because $v$ lies in the left half-plane, $|\phi(tv)|$ tends to $0$ as $t$ tends to $+\infty$. We may conclude with Corollary \ref{wellbehaved}.
\end{proof}

We finish this section by pointing out that Theorem \ref{versionconvol} can also handle functions which do not satisfy the properties described above.

\begin{example}
The convolution operator induced by $\phi(z)=\frac 12e^z+e^{iz}-\frac 14$ supports a hypercylic algebra (let us observe that $|\phi(0)|>1$ and that $\phi$ does not tend to $0$ along any ray). 
Indeed, for any $I\in\mathcal P_f(\mathbb N)\backslash\{\varnothing\}$ we choose
$m=\max(I)$ and take $a=k(2\pi i)$ and $b=k2\pi$ for some large integer $k$. 
Let $n\in I$ and $d\in\{0,\dots,n\}$ with $(n,d)\neq (m,m)$. Then 
$$\phi(db+(n-d)a)=\frac 12 e^{2dk\pi}+e^{-2(n-d)k\pi}-\frac 14.$$
In particular, 
$$|\phi(mb)|=\frac 12e^{2mk\pi}+\frac 34>1.$$
When $d=0$, 
$$|\phi(na)|=\left|e^{-2nk\pi}+\frac 14\right|<1.$$
Finally, 
$$\left|\phi(db+(n-d)a)\right|\leq \frac12 e^{2dk\pi}+\frac 34$$
and we have, for all $d=1,\dots,m-1$, 
$$\left(\frac 12e^{2dk\pi}+\frac 34\right)^m\leq \left(\frac 12e^{2mk\pi}+\frac 34\right)^d$$
if $k$ is large enough.
\end{example}

\begin{remark}
Combining the previous arguments with that of \cite[Section 6]{Bayhcalg}, under the assumptions of Theorem \ref{thm:convintro}, $\phi(D)$ admits
a dense, countably generated, free hypercyclic algebra.
\end{remark}



\section{Weighted shifts on Fréchet sequence algebras}

\label{sec:ws}


\subsection{Fr\'echet sequence algebras with the coordinatewise product}

We begin with the proof of Theorem \ref{thm:mainws}. We first explain where the property of admitting a continuous norm comes into play.

\begin{lemma}
Let $X$ be a Fr\'echet sequence algebra for the coordinatewise product and with a continuous norm. Then the sequence $(e_n)$ is bounded below.
\end{lemma}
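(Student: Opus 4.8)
The plan is to exploit the single most important feature of the coordinatewise product, namely that every basis vector is idempotent. Indeed, since $(e_n\cdot e_n)_k=(e_n)_k(e_n)_k$ equals $1$ when $k=n$ and $0$ otherwise, we have $e_n\cdot e_n=e_n$ for every $n$. Feeding this into the submultiplicativity hypothesis $\|x\cdot y\|_q\leq\|x\|_q\|y\|_q$, I obtain, for every $q\geq 1$ and every $n$,
\[ \|e_n\|_q=\|e_n\cdot e_n\|_q\leq\|e_n\|_q^2. \]
Thus each value $t=\|e_n\|_q$ satisfies $t\leq t^2$, which forces the dichotomy $\|e_n\|_q=0$ or $\|e_n\|_q\geq 1$.

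The only thing left is to rule out the degenerate case $\|e_n\|_q=0$, and this is exactly where the continuous norm enters. By hypothesis there is an index $q_0$ and a constant $C>0$ so that $\|x\|\leq C\|x\|_{q_0}$ for some genuine norm $\|\cdot\|$ on $X$; since $\|x\|_{q_0}=0$ then implies $\|x\|=0$ and hence $x=0$, the seminorm $\|\cdot\|_{q_0}$ separates points and is therefore itself a norm (this is the passage recalled in the paragraph preceding Theorem~\ref{thm:mainws}). As $e_n\neq 0$, this gives $\|e_n\|_{q_0}>0$, and combining with the dichotomy above yields $\|e_n\|_{q_0}\geq 1$ for all $n$. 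Hence $(e_n)$ is bounded below, with constant $1$ for the seminorm $\|\cdot\|_{q_0}$, which is the assertion.

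There is essentially no serious obstacle here: the whole content is the observation that idempotency together with submultiplicativity produces the sharp dichotomy $\|e_n\|_q\in\{0\}\cup[1,+\infty)$, and that the continuous norm is precisely what excludes the value $0$. I expect the only point requiring a line of care is the step from ``$X$ admits a continuous norm'' to ``$\|\cdot\|_{q_0}$ is a norm'', which follows immediately from the defining inequality of a continuous norm as indicated above; everything else is a one-line computation.
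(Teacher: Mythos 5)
Your proof is correct and is essentially identical to the paper's: both use the idempotency $e_n\cdot e_n=e_n$ together with submultiplicativity to get $\|e_n\|_q\leq\|e_n\|_q^2$, and the continuous norm (giving a $q$ for which $\|\cdot\|_q$ is a genuine norm) to exclude $\|e_n\|_q=0$, hence $\|e_n\|_q\geq 1$. No issues.
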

\begin{proof}
Let $q\geq 1$ be such that $\|\cdot\|_q$ is a norm on $X$. Then for all $n\in\NN_0$, 
$$0<\|e_n\|_q=\|e_n\cdot e_n\|_q\leq \|e_n\|_q^2$$
which shows that $\|e_n\|_q\geq 1$.
\end{proof}

We shall prove the following precised version of Theorem \ref{thm:mainws}.
\begin{theorem}\label{thm:wsprecised}
Let $X$ be a Fr\'echet sequence algebra for the coordinatewise product and with a continuous norm. Assume that $(e_n)$ is a Schauder basis for $X$. Let also $B_w$ be a bounded weighted shift on $X$. The following assumptions are equivalent.
\begin{enumerate}[(i)]
\item $B_w$ supports a dense, countably generated, free hypercyclic algebra. 
\item $B_w$ supports a hypercyclic algebra.
\item For all $m\geq 1$, there exists $x\in X$ such that $x^m$ is a hypercyclic vector for $B_w$.
\item For all $m\geq 1$, for all $L\in\NN$, there exists a sequence of integers $(n_k)$ such that, for all $l=0,\dots,L$, $((w_{l+1}\cdots w_{n_k+l})^{-1/m}e_{n_k+l})$ tends to zero.
\item There exists a sequence of integers $(n_k)$ such that, for all $\gamma>0$ and for all $l\in\mathbb N$,  $\big((w_{l+1}\cdots w_{n_k+l})^{-\gamma}e_{n_k+l}\big)$ tends to zero.
\end{enumerate}
\end{theorem}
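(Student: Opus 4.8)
The plan is to prove the cyclic chain $(i)\Rightarrow(ii)\Rightarrow(iii)\Rightarrow(iv)\Rightarrow(v)\Rightarrow(i)$. The implication $(i)\Rightarrow(ii)$ is immediate. For $(ii)\Rightarrow(iii)$ I would use the key feature of the coordinatewise product, namely that $x^m$ has the same support as $x$, so that $x^m=0$ only when $x=0$; hence any nonzero element $x$ of a hypercyclic algebra $\mathcal A$ has $x^m\in\mathcal A\setminus\{0\}$, and $x^m$ is then hypercyclic, giving (iii) for every $m\ge 1$. For $(iii)\Rightarrow(iv)$, fix $m$ and $L$ and take $x$ with $x^m\in HC(B_w)$. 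Approximating the finitely supported vector $g=\sum_{l=0}^{L}e_l$ along the dense orbit of $x^m$ yields indices $n_k\to\infty$ with $B_w^{n_k}(x^m)\to g$; since $(e_n)$ is a Schauder basis the coordinate functionals are continuous, and reading coordinate $l$ gives $(w_{l+1}\cdots w_{n_k+l})\,x_{n_k+l}^m\to 1$ for each $l\le L$. Thus $|x_{n_k+l}|$ and $|w_{l+1}\cdots w_{n_k+l}|^{-1/m}$ are asymptotically equal, while $x=\sum_n x_ne_n$ converges and forces $x_{n_k+l}e_{n_k+l}\to 0$; combining these converts the latter into $(w_{l+1}\cdots w_{n_k+l})^{-1/m}e_{n_k+l}\to 0$, which is (iv).

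The step $(iv)\Rightarrow(v)$ is a diagonal extraction. Applying (iv) with $m=L=j$ gives, for each $j$, a sequence of indices tending to infinity along which the corresponding vectors tend to $0$; from these I would select a single index $n_j$ per $j$, increasing in $j$, with $|w_{l+1}\cdots w_{n_j+l}|^{-1/j}\|e_{n_j+l}\|_q<1/j$ for all $l,q\le j$. The bounded-below property $\|e_n\|_q\ge 1$ (from the lemma preceding the theorem, where the continuous norm is used) shows both that these selections are possible and that $|w_{l+1}\cdots w_{n_j+l}|>1$ for large $j$; then, for a fixed $\gamma>0$ and $l$, the elementary monotonicity $t^{-\gamma}\le t^{-1/j}$ for $t>1$ and $j\ge 1/\gamma$ upgrades the control at exponent $1/j$ to arbitrary $\gamma$, producing the single sequence demanded by (v).

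The substance of the theorem is $(v)\Rightarrow(i)$, which I would obtain from Corollary \ref{generators}; its standing hypothesis of a dense freely generated subalgebra is exactly Lemma \ref{free}. Fix $d$, a finite nonempty $A\subset\NN_0^d\setminus\{0\}$, open sets $U_1,\dots,U_d,V$ and a neighbourhood of $0$. The decisive choice is that of $\beta\in A$: I would fix a strictly positive $\theta\in\RR_+^d$ with $\QQ$-independent coordinates, so that $\alpha\mapsto\langle\theta,\alpha\rangle$ is injective on $A$, let $\beta$ be its unique minimizer, and rescale $\theta$ so that $\langle\theta,\beta\rangle=1$ and $\langle\theta,\alpha\rangle>1$ for every $\alpha\in A\setminus\{\beta\}$. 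Writing $\pi_l=w_{l+1}\cdots w_{n_k+l}$ for a large index $n_k$ supplied by (v), I would then set $u_i=f_i+v_i$ with $f_i\in U_i$ finitely supported and $v_i=\sum_{l=0}^{L}a_{i,l}e_{n_k+l}$, where $a_{i,l}=|\pi_l|^{-\theta_i}s_{i,l}$ and the bounded factors $s_{i,l}$ are chosen to solve $\pi_l\prod_{i}a_{i,l}^{\beta_i}=g_l$ for a target $g=\sum_{l=0}^{L}g_le_l\in V$. Taking $N=n_k$, the supports separate so that $B_w^{N}(u^\beta)=g\in V$ exactly, whereas each of the finitely many nonzero coordinates of $B_w^{N}(u^\alpha)$ has modulus comparable to $|\pi_l|^{1-\langle\theta,\alpha\rangle}\to 0$ for $\alpha\ne\beta$; the bumps are small (hence $u_i\in U_i$) directly by (v) with $\gamma=\theta_i$, and $|\pi_l|\to\infty$ again by the bounded-below property. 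This verifies the hypotheses of Corollary \ref{generators} for every $d$.

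I expect the main obstacle to be this last implication, and within it the handling of multi-indices of equal total degree: a bump sensitive only to $|\alpha|=\alpha_1+\cdots+\alpha_d$ cannot separate $\beta$ from a different $\alpha$ with $|\alpha|=|\beta|$, so the role of the separating functional $\theta$ --- which exposes $\beta$ as a vertex of $A$ and assigns each monomial $u^\alpha$ the genuine decay rate $|\pi_l|^{1-\langle\theta,\alpha\rangle}$ --- is precisely to break these ties. The remaining implications are comparatively routine, $(iii)\Rightarrow(iv)$ resting only on orbit approximation together with the Schauder-basis convergence of $x$.
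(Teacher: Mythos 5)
Your proposal is correct and follows essentially the same route as the paper: the same chain of implications, the same orbit-approximation argument for $(iii)\Rightarrow(iv)$, the same diagonal extraction plus bounded-below trick for $(iv)\Rightarrow(v)$, and for $(v)\Rightarrow(i)$ the same perturbation $u_j=x_j+\sum_l (w_{l+1}\cdots w_{n_k+l})^{-\kappa_j}(\cdots)e_{n_k+l}$ with $\beta$ chosen as the minimizer over $A$ of a generic linear functional, the whole thing fed into Corollary~\ref{generators} via Lemma~\ref{free}. The only differences are cosmetic: the paper picks its exponent vector $\kappa$ off the finitely many hyperplanes $L_\alpha=L_{\alpha'}$ rather than with $\QQ$-independent coordinates, and encodes the target amplitudes as fractional powers $y_l^{\kappa_j}$ rather than through your bounded correction factors $s_{i,l}$.
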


\begin{proof}

The implications $(i)\implies (ii)$ and $(ii)\implies (iii)$ are trivial. The proof of $(iii)\implies (iv)$ mimics that of the necessary condition for hypercyclicity. Let $m\geq 1$ and $x\in X$ be 
such that $x^m\in HC(B_w)$. Write $x=\sum_{n=0}^{+\infty}x_n e_n$. Since $(e_n)$ is a Schauder basis, the sequence $(x_n e_n)$ goes to zero.
Moreover, there exists a sequence of integers $(n_k)$ such that $(B_w^{n_k}(x^m))_k$ goes to $e_0+\cdots+e_L$. Since convergence in $X$ implies coordinatewise convergence,
for all $l=0,\dots,L$, $(w_{l+1}\cdots w_{n_k+l}x_{n_k+l}^m)$ converges to $1$. Hence the sequences $\big((w_{l+1}\cdots w_{n_k+l})^{1/m}x_{n_k+l}\big)$ are bounded below. Writing 
$$(w_{l+1}\cdots w_{n_k+l})^{-1/m}e_{n_k+l}=\frac{1}{(w_{l+1}\cdots w_{n_k+l})^{1/m}x_{n_k+l}}\cdot x_{n_k+l}e_{n_k+l}$$
we get the result.

To prove that $(iv)\implies (v)$, observe that a diagonal argument ensure the existence of a sequence $(n_k)$ such that, for all $m\geq 1$ and all $l\in\NN$, 
the sequence $((w_{l+1}\cdots w_{n_k+l})^{-1/m}e_{n_k+l})$ tends to zero. Now we can conclude by observing that, since the sequence $(e_n)$ is bounded below, if $\big((w_{l+1}\cdots w_{n_k+l})^{-1/m}e_{n_k+l}\big)$ 
tends to zero for some $m$, then $(w_{l+1}\cdots w_{n_k+l})$ tends to $+\infty$ and, in particular, $\big((w_{l+1}\cdots w_{n_k+l})^{-\gamma}e_{n_k+l}\big)$ tends to zero for all $\gamma\geq 1/m$.  

It remains to prove the most difficult implication, $(v)\implies (i)$. We start by fixing a sequence of integers $(n_k)$ such that for all $\gamma>0$ and for all $l\in\mathbb N$,  $\big((w_{l+1}\cdots w_{n_k+l})^{-\gamma}e_{n_k+l}\big)$ goes to zero. We intend to apply Theorem \ref{thm:generalcriterion}. Thus, let $d\geq 1$ and $A\subset\NN_0^d \backslash\{(0,\dots,0)\}$ 
be finite and 
non-empty. For $\alpha\in A$ we define the linear form $L_\alpha$ on $\mathbb R^d$ by $L_\alpha(\kappa)=\sum_{j=1}^d \alpha_j\kappa_j$.
Since $L_\alpha$ and $L_{\alpha'}$ coincide only on a hyperplane for $\alpha\neq \alpha'$,
there exist $\kappa\in(0,+\infty)^d$ and $\beta=\beta_A\in A$ such that $0<L_{\beta}(\kappa)<L_\alpha(\kappa)$ for all $\alpha\neq\beta$, $\alpha\in A$. 
Without loss of generality, we may assume that $L_{\beta}(\kappa)=1$.

Let now $U_1,\dots,U_d,V$ be non-empty open subsets of $X$ and let $W$ be a neighbourhood of zero. Let $x_1,\dots,x_d$ belonging respectively to $U_1,\dots,U_d$ with finite support and let 
$y=\sum_{l=0}^p y_l e_l$ belonging to $V$. We set, for $j=1,\dots,d$,
$$u_j:=u_j(n_k)=x_j+\sum_{l=0}^p \frac{y_l^{\kappa_j}}{(w_{l+1}\cdots w_{n_k+l})^{\kappa_j}}e_{n_k+l}.$$
Our assumption implies that, provided $n_k$ is large enough, $u_j$ belongs to $U_j$ for all $j=1,...,d$. Moreover, again if $n_k$ is large enough (larger than the size of the support of each $x_j$), for all $\alpha\in A$, 
\[ B_w^{n_k}(u^\alpha)=\sum_{l=0}^p \frac{y_l^{L_\alpha(\kappa)}}{(w_{l+1}\cdots w_{n_k+l})^{L_\alpha(\kappa)-1}}e_{l}. \]
In particular, for $\alpha=\beta$, $B_w^{n_k}(u^{\beta})=y\in V$ whereas, for $\alpha\neq\beta$, since $L_\alpha(\kappa)-1>0$ and since the sequences $(w_{l+1}\cdots w_{n_k+l})$ tend to $+\infty$,
we get $B_w^{n_k}(u^\alpha)\in W$ provided $n_k$ is large enough. Hence, $B_w$ admits a dense and not finitely generated hypercyclic algebra. 
\end{proof}


As recalled in the introduction, the hypercyclicity of $B_w$ on $X$ is equivalent to the existence of a sequence $(n_k)$ such that, for all $l\in\NN$, 
$((w_{l+1}\cdots w_{n_k+l})^{-1}e_{n_k+l})$ tends to zero. It is well known that this last condition is equivalent to the following one, which seems much weaker:
there exists a sequence of integers $(n_k)$ such that $((w_1\cdots w_{n_k})^{-1}e_{n_k})$ tends to zero. In view of this and of Theorem \ref{thm:wsprecised}, it is tempting to conjecture that $B_w$ supports
a hypercyclic algebra if and only if there exists a sequence of integers $(n_k)$ such that, for all $\gamma>0$, $( (w_1\cdots w_{n_k})^{-\gamma}e_{n_k})$ tends to zero. Unfortunately,
this is not the case, as the following example points out.

\begin{example}
 Let $X=\{(x_n)\in\omega: |x_n|a_n\to 0\}$ where $a_{2n}=1$ and $a_{2n+1}=2^n$ endowed with $\|x\|=\sup_n |x_n|a_n$ and let $w$   be the weight such that  $w_1\cdots w_{2n}=2^{n-1}$
 and $w_1\cdots w_{2n+1}=2^{2n}$. Then $w$ is an admissible weight on $X$, $( (w_1\cdots w_{2n})^{-\gamma}e_{2n})$ tends to zero for all $\gamma>0$ but $B_w$ does not admit a hypercyclic
 algebra.
\end{example}

\begin{proof}
 We first observe that, endowed with the coordinatewise product, $X$ is a Fréchet sequence algebra (since $a_n\geq 1$ for all $n$). To prove that $w$ is admissible, it suffices to observe
 that $w_k \|e_{k-1}\|\leq 2 \|e_k\|$ for all $k$. The construction of $w$ ensures that $w_{2n}=2^{-(n-1)}$ and $w_{2n+1}=2^{n+1}$. Hence the previous inequality is clearly satisfied
 if we separate the case $k$ even and $k$ odd. Moreover for all $\gamma>0$, 
 \[ (w_1\cdots w_{2n})^{-\gamma} \|e_{2n}\|=2^{-\gamma(n-1)}\xrightarrow{n\to+\infty}0. \]
 To prove that $B_w$ does not support a hypercyclic algebra, it suffices to observe that, for all $n\geq 1$, $(w_1\cdots w_{2n+1})^{-1/2}\|e_{2n+1}\|=1$, which implies that condition (v) of Theorem
 \ref{thm:wsprecised} cannot be satisfied.
\end{proof}

Nevertheless, if we add an extra assumption on $X$, then we get the expected result.

\begin{corollary}
 Let $X$ be a Fr\'echet sequence algebra for the coordinatewise product and with a continuous norm. 
 Assume that $(e_n)$ is a Schauder basis for $X$. Assume also that, for all admissible weights $w$, for all $\gamma>0$, $w^\gamma$ is admissible.
 Let $B_w$ be a bounded weighted shift on $X$. The following assumptions are equivalent.
\begin{enumerate}[(i)]
\item $B_w$ supports a hypercyclic algebra.
\item For all $\gamma>0$, there exists a sequence $(n_k)$ such that $((w_1\cdots w_{n_k})^{-\gamma} e_{n_k})$ tends to zero.
\end{enumerate}
\end{corollary}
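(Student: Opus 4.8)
The plan is to read both conditions through the list of equivalent properties (i)--(v) of Theorem~\ref{thm:wsprecised} and to translate the present condition (ii) into a statement about the shifts $B_{w^\gamma}$ associated with the powers $w^\gamma=(w_n^\gamma)_n$. The observation that drives everything is that, since $(w^\gamma)_1\cdots(w^\gamma)_{n_k}=(w_1\cdots w_{n_k})^\gamma$, the assertion ``$\big((w_1\cdots w_{n_k})^{-\gamma}e_{n_k}\big)$ tends to zero'' says exactly that $B_{w^\gamma}$ satisfies the seemingly weaker form of the hypercyclicity criterion recalled just before the Example. This is where the extra hypothesis enters essentially: in order to speak of $B_{w^\gamma}$ as an operator at all, and to invoke that characterization, we need $w^\gamma$ to be admissible, which is precisely what we are assuming.

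The easy implication is $(i)\Rightarrow(ii)$. By Theorem~\ref{thm:wsprecised} the existence of a hypercyclic algebra is equivalent to its condition (v), which furnishes a single sequence $(n_k)$ with $\big((w_{l+1}\cdots w_{n_k+l})^{-\gamma}e_{n_k+l}\big)\to 0$ for all $\gamma>0$ and all $l$; specializing to $l=0$ gives (ii) at once, in fact with a sequence that does not even depend on $\gamma$.

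For the converse $(ii)\Rightarrow(i)$ I would fix $\gamma>0$ and argue as follows. Since $w^\gamma$ is admissible, $B_{w^\gamma}$ is a bounded weighted shift, and (ii) for this $\gamma$ states precisely that $B_{w^\gamma}$ meets the weaker-looking hypercyclicity condition, hence is hypercyclic. Now I apply the recalled equivalence in the other direction: hypercyclicity of $B_{w^\gamma}$ produces a sequence $(n_k)$, a priori depending on $\gamma$, such that for every $l$, $\big((w^\gamma)_{l+1}\cdots(w^\gamma)_{n_k+l}\big)^{-1}e_{n_k+l}=(w_{l+1}\cdots w_{n_k+l})^{-\gamma}e_{n_k+l}\to 0$. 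Taking $\gamma=1/m$ and letting $m$ range over $\NN$ yields exactly condition (iv) of Theorem~\ref{thm:wsprecised}: the sequence may depend on $m$, and since it works for all $l$ it works in particular for $l=0,\dots,L$ and any $L$. Theorem~\ref{thm:wsprecised} then delivers $(i)$.

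The step I expect to be the crux is not any computation but the correct use of the admissibility hypothesis together with the weak-to-strong passage for one $w^\gamma$ at a time. The naive hope --- that (ii) could be upgraded directly to a single sequence valid simultaneously for all $\gamma$ and all $l$ --- is exactly what the preceding Example rules out, and the reason it fails there is that $w^\gamma$ ceases to be admissible for large $\gamma$, so the equivalence simply cannot be invoked. The entire force of the extra hypothesis is to let us run the weak-to-strong implication separately for each $\gamma=1/m$; once this is available, feeding the outcome into condition (iv) of Theorem~\ref{thm:wsprecised} is automatic.
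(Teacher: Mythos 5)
Your proposal is correct and follows essentially the same route as the paper: both directions rest on Theorem~\ref{thm:wsprecised}, and the converse uses the admissibility of $w^\gamma$ to make $B_{w^\gamma}$ a bounded shift and thereby upgrade the $l=0$ condition to the condition for all $l$ required in (iv). Your detour through ``$B_{w^\gamma}$ is hypercyclic'' is only a repackaging of the paper's inline computation, which applies $(B_{w^\gamma})^{L-l}$ to $(w_1\cdots w_{n_k})^{-\gamma}e_{n_k}$ and invokes the continuity of $B_{w^\gamma}$ --- the same mechanism that underlies the weak-to-strong equivalence you cite.
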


\begin{proof}
 We assume that (ii) is satisfied and we show that, for all $\gamma>0$ and for all $L\in\NN$, there exists a sequence $(m_k)$ such that
 $( (w_1\cdots w_{m_k+l})^{-\gamma}e_{m_k+l})$ tends to zero. An application of Theorem \ref{thm:wsprecised} will then allow to conclude. 
 It is easy to get this sequence $m_k$. Indeed, it is sufficient to set $m_k=n_k-L$, since in that case
 \[ (w_1\cdots w_{m_k+l})^{-\gamma}e_{m_k+l}=(B_{w^\gamma})^{L-l}\left((w_1\cdots w_{n_k})^{-\gamma}e_{n_k}\right)\]
 which goes to zero by continuity of $B_{w^\gamma}$. 
\end{proof}

We may observe that our favorite sequence spaces (namely unweighted $\ell_p$-spaces or $H(\CC)$) satisfy the assumptions of the last corollary. We may also observe that on unweighted $\ell_p$-spaces as well as on any Fréchet sequence algebra with a continuous norm such that $(e_n)$ is bounded, the convergence
of $\big((w_1\cdots w_{n_k})^{-\gamma}e_{n_k}\big)$ to zero is equivalent to the convergence of $(w_1\cdots w_{n_k})$ to $+\infty$. Hence, we may formulate the following corollary.

\begin{corollary}\label{cor:wslp}
 Let $X$ be a Fr\'echet sequence algebra for the coordinatewise product and with a continuous norm. 
 Assume that $(e_n)$ is a Schauder basis for $X$ and that $(e_n)$ is bounded. Assume also that, for all admissible weights $w$, for all $\gamma>0$, $w^\gamma$ is admissible.
 Let $B_w$ be a bounded weighted shift on $X$. The following assumptions are equivalent.
\begin{enumerate}[(i)]
\item $B_w$ supports a hypercyclic algebra.
\item $B_w$ is hypercyclic.
\item There exists a sequence $(n_k)$ such that $(w_1\cdots w_{n_k})$ tends to $+\infty$.
\end{enumerate}
\end{corollary}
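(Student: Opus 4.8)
The plan is to establish the cycle $(i)\Rightarrow(ii)\Rightarrow(iii)\Rightarrow(i)$, leaning on the preceding corollary (which already characterizes $(i)$ through the decay of $\big((w_1\cdots w_{n_k})^{-\gamma}e_{n_k}\big)$) together with the standard characterization of hypercyclicity of weighted shifts recalled just before Theorem~\ref{thm:wsprecised}. The one genuinely new input is the hypothesis that $(e_n)$ is \emph{bounded}, and not merely bounded below: this is what will let me eliminate the parameter $\gamma$ and collapse every $\gamma$-dependent decay condition to the single scalar condition $(iii)$.

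First I would isolate the key elementary fact: for a fixed sequence of integers $(n_k)$ and any $\gamma>0$,
\[ \big((w_1\cdots w_{n_k})^{-\gamma}e_{n_k}\big)\to 0 \text{ in } X \quad\Longleftrightarrow\quad |w_1\cdots w_{n_k}|\to+\infty. \]
For the forward implication I would use the seminorm $\|\cdot\|_q$ furnished by the continuous-norm lemma, for which $\|e_n\|_q\geq 1$ for every $n$; then $\big\|(w_1\cdots w_{n_k})^{-\gamma}e_{n_k}\big\|_q\geq |w_1\cdots w_{n_k}|^{-\gamma}$, so convergence to zero forces $|w_1\cdots w_{n_k}|\to+\infty$. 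For the converse, boundedness of $(e_n)$ gives $\sup_n\|e_n\|_p=:M_p<\infty$ for every seminorm, whence $\big\|(w_1\cdots w_{n_k})^{-\gamma}e_{n_k}\big\|_p\leq M_p\,|w_1\cdots w_{n_k}|^{-\gamma}\to 0$. The decisive point is that this equivalence holds for each $\gamma>0$ with the \emph{same} right-hand condition, so it is independent of $\gamma$; this is exactly the statement recalled in the remark preceding the corollary.

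With this in hand I would close the cycle. The implication $(i)\Rightarrow(ii)$ is immediate: any nonzero element of a hypercyclic algebra is a hypercyclic vector, so the mere existence of such an algebra yields $HC(B_w)\neq\varnothing$, i.e. $(ii)$. For $(ii)\Leftrightarrow(iii)$, I would invoke the recalled characterization ($B_w$ is hypercyclic if and only if there is $(n_k)$ with $\big((w_1\cdots w_{n_k})^{-1}e_{n_k}\big)\to 0$) and apply the key fact with $\gamma=1$. Finally, for $(iii)\Rightarrow(i)$, condition $(iii)$ provides a sequence $(n_k)$ with $|w_1\cdots w_{n_k}|\to+\infty$; by the key fact this \emph{same} sequence witnesses $\big((w_1\cdots w_{n_k})^{-\gamma}e_{n_k}\big)\to 0$ for every $\gamma>0$, which is precisely assertion $(ii)$ of the preceding corollary, and that corollary then delivers a hypercyclic algebra.

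The main point to watch — rather than a genuine obstacle — is that the preceding corollary asks for the $\gamma$-decay condition to hold for \emph{all} $\gamma>0$ (a priori with sequences depending on $\gamma$), whereas $(iii)$ offers a single sequence. The step reconciling the two is exactly the $\gamma$-independence established above, which is where the stronger hypothesis on $(e_n)$ is genuinely used; bounded below alone (as guaranteed by the continuous norm) would only give one direction of the key fact. The standing assumption that $w^\gamma$ is admissible for every $\gamma>0$, inherited from the preceding corollary, is what makes those $\gamma$-decay conditions meaningful in the first place.
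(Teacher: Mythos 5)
Your proposal is correct and follows essentially the same route as the paper: the paper derives this corollary from the remark immediately preceding it, namely that when $(e_n)$ is bounded (and bounded below, thanks to the continuous norm) the convergence of $\big((w_1\cdots w_{n_k})^{-\gamma}e_{n_k}\big)$ to zero is, for every $\gamma>0$, equivalent to $(w_1\cdots w_{n_k})\to+\infty$, and then applies the preceding corollary. Your write-up simply makes explicit the two-sided seminorm estimates and the closing of the cycle of implications, which is exactly the intended argument.
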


\begin{remark}
On $H(\CC)$, the sequence $(z^n)$ is unbounded. Nevertheless, any hypercyclic weighted shift $B_w$ on $H(\CC)$ supports a hypercyclic algebra. Indeed, for a sequence of integers $(n_k)$, 
\begin{align*}
&(w_1\cdots w_{n_k})^{-1}z^{n_k}\textrm{ tends to $0$ in }H(\CC)\\
\iff&\forall q\geq 1,\ (w_1\cdots w_{n_k})^{-1}q^{n_k}\textrm{ tends to $0$}\\
\iff&\forall q\geq 1,\ \forall \gamma>0,\ (w_1\cdots w_{n_k})^{-\gamma}q^{n_k}\textrm{ tends to $0$}\\
\iff&\forall \gamma>0,\ (w_1\cdots w_{n_k})^{-\gamma}z^{n_k}\textrm{ tends to $0$ in }H(\CC).
\end{align*}
\end{remark}

\begin{remark}
Theorem \ref{thm:wsprecised}  points out one difficulty when dealing with hypercyclic algebras: to admit a hypercyclic algebra is not a property preserved by similarity. 
Indeed, let $X=\{(x_n)\in\omega : |x_n|2^n\to 0\}$ endowed with $\|x\|=\sup_n |x_n|2^n$ and let $w$ be the weight such that  $w_1\cdots w_n=n\cdot 2^n$ for all $n\geq 1$. Then $(w_1\cdots w_n)^{-1}2^n$ goes to zero whereas $(w_1\cdots w_n)^{-1/2}2^n$ tends to $+\infty$, showing that $B_w$ is hypercyclic but that no square vector $x^2$ belongs to $HC(B_w)$. 

Let now $(\rho_n)$ be defined by $\rho_1=1$ and $\rho_n=n/(n-1)$ for $n\geq 2$. Then $B_w$ acting on $X$ is similar to $B_\rho$ acting on $c_0$, the similarity being given by $S:X\to c_0,\ (x_n)\mapsto (2^n x_n)$. 
But $B_\rho$ admits a hypercyclic algebra, which is not the case of $B_w$. Of course, the problem is that $S$ is not a morphism of algebra.
\end{remark}

When $X$ does not admit a continuous norm, one cannot apply Theorem \ref{thm:wsprecised}.
The space $\omega$ is the prototypal example of a Fr\'echet space without a continuous norm (in fact, by a result of Bessaga and Pelczinski \cite{BesPel57}, a Fréchet space fails to admit a continous 
norm if and only if it has a subspace isomorphic to $\omega$) and we shall now concentrate on this space. On $\omega$, for all weight sequences $w=(w_n)$, the weighted shift $B_w$ is bounded, hypercyclic and satisfies $(iv)$. If the sequences  $(w_l\cdots w_{n+l})$ converge to $+\infty$ for all $\ell\geq 0$, then an easy modification of the proof of the previous theorem shows that $B_w$ admits a hypercyclic algebra. On the other hand, if the sequences  $(w_l\cdots w_{n+l})$ converge to $0$ for all $\ell\geq 0$, then we may modify the previous proof using Corollary \ref{cor:thmgenmax} instead of Corollary \ref{cor:thmgenmin} to prove that $B_w$ still admits a hypercyclic algebra. A completely different case is that of the unweighted shift $B$.
It is a hypercyclic multiplicative operator on $\omega$. By \cite[Theorem 16]{BCP18}, $B$ supports a hypercyclic algebra if and only if for each nonconstant polynomial $P\in\mathbb C[X]$ with $P(0)=0$,
the map $\tilde P:\omega\to\omega,\ x\mapsto P(x)$ has dense range. This is clearly true.

We now show that every weighted shift on $\omega$ admits a hypercyclic algebra showing that, coordinate by coordinate, $B_w$ behaves like one of the three previous models.
\begin{theorem}\label{thm:algebraomega}
Every weighted shift $B_w$ on $\omega$ endowed with the coordinatewise product supports a hypercyclic algebra.
\end{theorem}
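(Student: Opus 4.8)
The plan is to prove Theorem \ref{thm:algebraomega} by a coordinate-by-coordinate analysis, combining the three model behaviours already identified in the paragraph preceding the statement. The key structural observation about $\omega$ is that, unlike spaces with a continuous norm, its seminorms $\|x\|_q=\sum_{n=0}^q|x_n|$ only see finitely many coordinates, so the products $(w_{\ell+1}\cdots w_{n+\ell})$ governing the dynamics of $B_w$ may behave completely differently along different coordinate blocks. First I would fix a weight $w$ and, for each starting index $\ell\geq 0$, examine the sequence $(w_{\ell+1}\cdots w_{n+\ell})_n$. The natural idea is to partition the coordinates according to whether these products tend to $+\infty$, stay bounded away from both $0$ and $+\infty$, or tend to $0$, and to treat each regime with the appropriate version of the criterion.

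The technical heart of the argument, I expect, will be that $\omega$ has no continuous norm, so I cannot invoke Theorem \ref{thm:wsprecised} directly; instead I would apply Theorem \ref{thm:generalcriterion} by hand. Given $d\geq 1$, a finite nonempty $A\subset\NN_0^d\setminus\{(0,\dots,0)\}$, open sets $U_1,\dots,U_d,V$ and a neighbourhood $W$ of $0$, I would choose target vectors $x_j\in U_j$ with finite support and $y=\sum_{\ell=0}^p y_\ell e_\ell\in V$, and then construct perturbations $u_j=x_j+\sum_\ell \eta_{j,\ell}(n_k)\,e_{n_k+\ell}$ as in the proof of Theorem \ref{thm:wsprecised}. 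Because only finitely many coordinates $0,\dots,p$ of $y$ matter and the topology of $\omega$ is controlled by the finitely many seminorms $\|\cdot\|_p$, I only need to control the finitely many products $(w_{\ell+1}\cdots w_{n_k+\ell})$ for $\ell=0,\dots,p$ along a single subsequence $(n_k)$. On $\omega$ such a subsequence always exists: by passing to a further subsequence I may assume, for each of the finitely many relevant $\ell$, that $(w_{\ell+1}\cdots w_{n_k+\ell})_k$ converges in $[0,+\infty]$.

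The main obstacle, and the reason for the case analysis, is the choice of the exponent vector $\kappa$ and the distinguished index $\beta$ in the criterion. In the divergent-to-$+\infty$ regime the minimal-index choice $\beta=\beta_A$ from Corollary \ref{cor:thmgenmin} works, exactly as in Theorem \ref{thm:wsprecised}: one sets $\eta_{j,\ell}=y_\ell^{\kappa_j}(w_{\ell+1}\cdots w_{n_k+\ell})^{-\kappa_j}$ with $L_\beta(\kappa)=1<L_\alpha(\kappa)$, so that $B_w^{n_k}(u^\beta)\to y$ while the other powers, carrying exponents $L_\alpha(\kappa)-1>0$ on products tending to $+\infty$, land in $W$. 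In the convergent-to-$0$ regime I would instead use the maximal-index choice of Corollary \ref{cor:thmgenmax}, so that the surviving power is the one with the largest $L_\alpha(\kappa)$ and the products appear with positive powers tending to $0$. The genuinely new difficulty is the \emph{mixed} case, where different coordinates $\ell\in\{0,\dots,p\}$ fall into different regimes. Here I would handle each coordinate block separately, noting that the supports $e_{n_k+\ell}$ for distinct $\ell$ are disjoint so the coordinatewise product decouples the blocks; for coordinates where the product stays bounded (the ``unweighted-shift'' model) I would appeal to the fact, recalled just before the theorem via \cite[Theorem 16]{BCP18}, that the relevant polynomial maps have dense range on $\omega$. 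The bookkeeping needed to verify that a single subsequence $(n_k)$ and a single exponent $\kappa$ simultaneously deliver $B_w^{n_k}(u^\beta)\in V$ and $B_w^{n_k}(u^\alpha)\in W$ across all blocks is where the care lies, but no new idea beyond the three models and the localising effect of the seminorms on $\omega$ is required.
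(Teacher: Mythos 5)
Your overall strategy --- pass to a subsequence $(n_k)$ along which each of the finitely many relevant products $(w_{l+1}\cdots w_{n_k+l})$ converges in $[0,+\infty]$, then treat the three regimes separately --- is exactly the paper's starting point. But there is a genuine gap at the step you dismiss as ``bookkeeping'': in the mixed case the construction cannot be fitted into Theorem \ref{thm:generalcriterion} at all, because no single index $\beta\in A$ can do the job. Concretely, take $d=1$ and $A=\{m_0,m_1\}$ with $m_0<m_1$, and suppose some coordinates $l$ have $w_{l+1}\cdots w_{n_k+l}\to+\infty$ (block $A_1$) while others have $w_{l+1}\cdots w_{n_k+l}\to 0$ (block $A_2$). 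With the perturbations you propose, $B_w^{n_k}(u^{m_0})$ converges to $\sum_{l\in A_1}y_le_l$ and $B_w^{n_k}(u^{m_1})$ converges to $\sum_{l\in A_2}y_le_l$: neither power lands in $V$ (a neighbourhood of $y=\sum_l y_le_l$) and neither lands in $W$. The hypothesis of Theorem \ref{thm:generalcriterion} --- one distinguished power in $V$, all others in $W$ --- is therefore simply false here, and the obstruction is even worse on the bounded block $A_3$, where the limiting value of $B_w^{n_k}(P(u))$ at coordinate $l$ is $a_lP(z_l)$ and thus involves \emph{all} coefficients of $P$ rather than a single one.

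This is why the paper does not invoke Theorem \ref{thm:generalcriterion} but reruns the Baire argument from scratch with a modified family of compact polynomial sets $E(I,s)$ in which both $|\hat P(\min I)|\geq 1/s$ and $|\hat P(\max I)|\geq 1/s$ are imposed (no coefficient is normalised to $1$), and builds the candidate vector using three dense parameter sequences $(\alpha(k))$, $(\beta(k))$, $(z(k))$ that anticipate, respectively, the value of $\hat P(m_0)$ needed on $A_1$, the value of $\hat P(m_1)$ needed on $A_2$, and the preimages under $P$ needed on $A_3$; for a given $P$ one then selects a sub-subsequence $\phi(k)$ along which these parameters converge to the right targets. Your appeal to \cite[Theorem 16]{BCP18} for the bounded block is in the right spirit but does not by itself produce a single vector $u$, chosen before $P$, that works for every polynomial simultaneously. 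So the missing ingredient is not bookkeeping but a replacement for the transitivity criterion itself; once the sets $E(I,s)$ and the dense parameter sequences are in place, the rest of your outline does go through.
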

\begin{proof}
For $V$ a non-empty open subset of $\omega$, $I\subset\NN$ finite and non-empty and $s>0$, let us define
\begin{align*}
E(I,s)=\Big\{P\in\mathbb C[z]:\ & |\hat P(\min I)|\geq 1/s,\ |\hat P(\max I)|\geq 1/s,\\
&|\hat P(n)|\leq s\textrm{ for all }n\in\mathbb N,\\
&\hat P(n)=0\textrm{ when }n\notin I\Big\}
\end{align*}
\begin{align*}
\mathcal A(I,s,V)=\Big\{u\in \omega:\ \forall P\in E(I,s),\ \exists N\geq 1,\ T^N(P(u))\in V\Big\}.
\end{align*}
As in the proof of Theorem \ref{thm:generalcriterion}, it is enough to prove that each set $\mathcal A(I,s,V)$ is dense and open. The last property follows easily from the compactness of $E(I,s)$. 
Thus, let us fix $I,s$ and $V$ and let us prove that $\mathcal A(I,s,V)$ is dense. We set $m_0=\min(I)$ and $m_1=\max(I)$. Let $U$ be a non-empty open subset of $\omega$.
Let $p\in\NN_0$, $u_0,\dots,u_p,v_0,\dots,v_p\in \CC$ and $\veps>0$ be such that, for all $x,y\in\omega$, 
\[ |x_l-u_l|<\veps\textrm{ for all }l=0,\dots,p\textrm{ implies }x\in U, \]
\[ |y_l-v_l|<\veps\textrm{ for all }l=0,\dots,p\textrm{ implies }y\in V.\]
Let us first look at the sequence $(w_1\cdots w_n)$. Three possibilities (which are not mutually exclusive) can occur:
\begin{itemize}
\item either $(w_1\cdots w_n)$ is bounded and bounded below;
\item or it admits a subsequence going to zero;
\item or it admits a subsequence going to $+\infty$.
\end{itemize}
Thus, we get the existence of a subsequence $(w_1\cdots w_{n_k})$ going to $a_0\in [0,+\infty]$. We then do the same with $(w_2\cdots w_{n_k+1})$ and so on. By successive extractions, we get the existence of a sequence of integers $(n_k)$ (we can assume that $n_{k+1}-n_k>p$ for all $k$ and that $n_0>p$) and of $a_0,\dots,a_p\in [0,+\infty]$ such that, for all $l=0,\dots,p$, $(w_{l+1}\cdots w_{n_k+l})$ tends to $a_l$. 
We set $A_1=\{l\in\{0,\dots,p\}:\ a_l=+\infty\}$, $A_2=\{l\in\{0,\dots,p\}:\ a_l=0\}$ and $A_3=\{l\in\{0,\dots,p\}:\ a_l\in(0,+\infty)\}$.

We fix now $(\alpha(k))$, $(\beta(k))$ two sequences of non-zero complex numbers and $(z(k))$ a sequence in $\mathbb C^{p+1}$ such that $(\alpha(k),\beta(k),z(k))$ is dense in $\mathbb C^{p+3}$. We set 
$$x=u+\sum_{k=0}^{+\infty}y(k)$$
where, for $l=0,\dots,p$,
$$y_{n_k+l}(k)=
\left\{
\begin{array}{cl}
\displaystyle \frac{v_l^{1/m_0}}{\alpha(k)^{1/m_0}(w_{l+1}\cdots w_{n_k+l})^{1/m_0}}&\textrm{provided }l\in A_1,\\[0.5cm]
\displaystyle \frac{v_l^{1/m_1}}{\beta(k)^{1/m_1}(w_{l+1}\cdots w_{n_k+l})^{1/m_1}}&\textrm{provided }l\in A_2,\\[0.5cm]
z_l(k)&\textrm{ provided }l\in A_3
\end{array}
\right.$$
and $y_i(k)=0$ if $i\neq n_k,\dots,n_k+p$.

We claim that $x\in U\cap \mathcal A(I,s,V)$. The definition of $\veps$ and $p$ ensure that $x\in U$. Let $P\in E(I,s)$. There exists an increasing function $\phi:\NN\to\NN$
such that $\alpha(\phi(k))\to \hat P(m_0)$, $\beta(\phi(k))\to \hat P(m_1)$ and $a_l P(z_l(\phi(k)))\to v_l$ for all $l\in A_3$. We claim that $(B_w^{n_{\phi(k)}}(P(x)))$ belongs to $V$ provided $k$ is large enough. It suffices to prove that for $l=0,\dots,p$, the $l$-th coordinate of $B_w^{n_{\phi(k)}}(P(x))$ tends to $v_l$. Assume first that $l\in A_1$. This $l$-th coordinate is equal to 
$$w_{l+1}\cdots w_{n_{\phi(k)}+l}P\left(\frac{v_l^{1/m_0}}{\alpha(\phi(k))^{1/m_0}(w_{l+1}\cdots w_{n_{\phi(k)}+l})^{1/m_0}}\right).$$
Now, since $w_{l+1}\cdots w_{n_{\phi(k)}+l}$ tends to $+\infty$, and $m_0=\min(I)$,
$$w_{l+1}\cdots w_{n_{\phi(k)}+l}P\left(\frac{v_l^{1/m_0}}{\alpha(\phi(k))^{1/m_0}(w_{l+1}\cdots w_{n_{\phi(k)}+l})^{1/m_0}}\right)=\hat P(m_0)\frac{v_l}{\alpha(\phi(k))}+o(1)$$
and this tends to $v_l$. When $l\in A_2$, the proof is similar since now, because $w_{l+1}\cdots w_{n_{\phi(k)}+l}$ tends to 0, and $m_1=\max(I)$,
$$w_{l+1}\cdots w_{n_{\phi(k)}+l}P\left(\frac{v_l^{1/m_1}}{\beta(\phi(k))^{1/m_1}(w_{l+1}\cdots w_{n_{\phi(k)}+l})^{1/m_1}}\right)=\hat P(m_1)\frac{v_l}{\beta(\phi(k))}+o(1)$$
and this also goes to $v_l$. Finally, when $l\in A_3$, the $l$-th coordinate of $B_w^{n_{\phi(k)}}(P(x))$ is equal to $w_{l+1}\cdots w_{n_{\phi(k)}+l}P(z_l(\phi(k)))$ which tends again to $v_l$.
\end{proof}

Theorem \ref{thm:algebraomega} has an analogue (with a completely different proof!) if we endow $\omega$ with the Cauchy product: see \cite[Corollary 3.9]{FalGre18}. We also point out that the existence
of a continuous norm is an important assumption in several problems in linear dynamics, for instance for the
existence of a closed infinite dimensional subspace of hypercyclic vectors (see \cite{menet}).

\subsection[Bilateral shifts]{Bilateral shifts on Fréchet sequence algebras with the coordinatewise product}


In this section, we investigate the case of bilateral shifts on a Fréchet sequence algebra $X$ on $\ZZ$; namely, $X$ is a subset of $\CC^\ZZ$ endowed with the coordinatewise product under which it is an $F$-algebra. We intend to give an analogue of Theorem \ref{thm:wsprecised} for bilateral shifts on $X$. The statement and the methods are close to what happens for unilateral shifts. Since we do not want to give an exhaustive list of examples in this work, there is an extra interest for looking at bilateral shifts: a small subtility appears in this case, since the condition that appears is not symmetric for the positive part of the weight and for the negative one. This will lead us to an interesting example of a hypercyclic operator $T$ supporting a hypercyclic algebra such that $T^{-1}$ does not. 

\begin{theorem}\label{thm:bilateralshifts}
Let $X$ be a Fréchet sequence algebra on $\ZZ$ for the coordinatewise product, with a continuous norm. Assume that $(e_n)_{n\in\ZZ}$ is a Schauder basis for $X$. Let also $B_w$ be a bounded bilateral shift on $X$ such that, for all $\gamma\in(0,1)$, $B_{w^\gamma}$ is bounded. The following assertions are equivalent.
\begin{enumerate}[(i)]
\item $B_w$ supports a hypercyclic algebra.
\item For all $m\geq 1$, for all $L\in\NN$, there exists a sequence of integers $(n_k)$ such that, for all $l=-L,\dots,L$, $\big( (w_{l+1}\cdots w_{n_k+l})^{-1/m}e_{n_k+l}\big)$ and $\big( w_{l}\cdots w_{-n_k+l+1}e_{-n_k+l}\big)$ tend to zero.
\end{enumerate}
\end{theorem}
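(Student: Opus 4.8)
The strategy is to prove both implications by transporting the unilateral arguments to the bilateral setting, keeping track of the two ``tails'' separately. For the easy direction $(i)\Rightarrow(ii)$, I would mimic the proof of $(iii)\Rightarrow(iv)$ in Theorem~\ref{thm:wsprecised}. Fix $m\ge 1$ and suppose $x^m\in HC(B_w)$ for some $x=\sum_{n\in\ZZ}x_ne_n$. Since $(e_n)_{n\in\ZZ}$ is a Schauder basis, $(x_ne_n)$ tends to zero along $\ZZ$ in both directions. Choosing $(n_k)$ so that $B_w^{n_k}(x^m)\to \sum_{|l|\le L}e_l$ and using that convergence in $X$ implies coordinatewise convergence, I get that $(w_{l+1}\cdots w_{n_k+l}\,x_{n_k+l}^m)$ converges to $1$ for each $l$ with $|l|\le L$; this shows $\big((w_{l+1}\cdots w_{n_k+l})^{1/m}x_{n_k+l}\big)$ is bounded below, and factoring as in the unilateral case yields that $\big((w_{l+1}\cdots w_{n_k+l})^{-1/m}e_{n_k+l}\big)\to 0$. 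The negative tail term $\big(w_l\cdots w_{-n_k+l+1}e_{-n_k+l}\big)\to 0$ comes directly from the Schauder basis property: it is (up to the weight factors governing how $B_w^{n_k}$ acts on the negative coordinates) precisely the coordinate of $B_w^{n_k}(x^m)$ in a slot tending to $-\infty$, which must vanish since the target $\sum_{|l|\le L}e_l$ has no mass there.

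For the main direction $(ii)\Rightarrow(i)$, the plan is to verify the hypotheses of Corollary~\ref{generators} (via Theorem~\ref{thm:generalcriterion}) with $\beta_A=\min$ chosen through a linear functional, exactly as in the proof of $(v)\Rightarrow(i)$ above. First I would apply a diagonal extraction to condition $(ii)$ to obtain a single sequence $(n_k)$ that works for all $m\ge 1$ and all $l\in\ZZ$ simultaneously, and then upgrade the exponents from $1/m$ to arbitrary $\gamma>0$ using that $(e_n)$ is bounded below (Lemma before Theorem~\ref{thm:wsprecised}): if $(w_{l+1}\cdots w_{n_k+l})^{-1/m}e_{n_k+l}\to 0$ then $w_{l+1}\cdots w_{n_k+l}\to+\infty$, hence the same holds for every $\gamma\ge 1/m$. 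Given $d\ge 1$ and a finite $A\subset\NN_0^d\setminus\{0\}$, I pick $\kappa\in(0,\infty)^d$ and $\beta=\beta_A\in A$ with $0<L_\beta(\kappa)<L_\alpha(\kappa)$ for $\alpha\ne\beta$, normalized so $L_\beta(\kappa)=1$. For target data $x_j\in U_j$ of finite support and $y=\sum_{|l|\le p}y_le_l\in V$, I set
\[
u_j=x_j+\sum_{|l|\le p}\frac{y_l^{\kappa_j}}{(w_{l+1}\cdots w_{n_k+l})^{\kappa_j}}\,e_{n_k+l},
\]
so that $B_w^{n_k}(u^\beta)=y\in V$ while $B_w^{n_k}(u^\alpha)\to 0$ for $\alpha\ne\beta$ by the $L_\alpha(\kappa)>1$ estimate together with $w_{l+1}\cdots w_{n_k+l}\to+\infty$.

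The new feature, and where I expect the main technical care, is controlling the \emph{negative} coordinates of $B_w^{n_k}(u^\alpha)$. In the unilateral case $B_w^{n_k}$ kills everything to the left of position $0$, but here, when $B_w^{n_k}$ shifts the bump $e_{n_k+l}$ leftward, the portion of $u^\alpha$ supported on the finite sets $\supp(x_j)$ gets pushed to negative indices weighted by products of the form $w_l\cdots w_{-n_k+l+1}$, and these must be driven to $0$ uniformly. This is exactly the role of the second convergence in $(ii)$, $\big(w_l\cdots w_{-n_k+l+1}e_{-n_k+l}\big)\to 0$, and the reason the condition is \emph{not symmetric}: positive indices demand the weight products blow up (so the inverse powers vanish), whereas negative indices demand the weight products themselves vanish. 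The boundedness hypothesis on $B_{w^\gamma}$ for $\gamma\in(0,1)$ is what guarantees that raising $u$ to noninteger-exponent combinations stays inside $X$ and that the continuity arguments go through. I would organize the estimate so that each of the finitely many coordinates of $B_w^{n_k}(u^\alpha)$ lying in $\{-p,\dots,p\}$ is checked against the two tail conditions, then invoke Corollary~\ref{generators} (using Lemma~\ref{free}, valid also for bilateral algebras, to supply a dense freely generated subalgebra) to conclude the existence of a dense, countably generated, free hypercyclic algebra, which in particular gives $(i)$.
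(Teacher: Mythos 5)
Your $(ii)\Rightarrow(i)$ direction is sound and essentially matches the paper (the paper applies Corollary~\ref{cor:thmgenmin} with $d=1$ and exponent $1/m_0$ rather than the full $\kappa$-functional machinery, but both work, and you correctly identify that the second condition in $(ii)$ is exactly what kills the part of $u^\alpha$ supported on $\bigcap_j\supp(x_j)$ after it is pushed to the far negative indices).

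The gap is in $(i)\Rightarrow(ii)$, at the negative tail. From $B_w^{n_k}(x^m)\to e_{-L}+\cdots+e_L$ and the Schauder basis property you only obtain
\[
w_l\cdots w_{-n_k+l+1}\,x_l^m\,e_{-n_k+l}\longrightarrow 0\qquad(l=-L,\dots,L),
\]
and condition $(ii)$ asks for the same statement \emph{without} the factor $x_l^m$. To remove it you must divide by $x_l^m$, which is impossible if $x_l=0$ for some $|l|\le L$ --- and nothing forces the coordinates of a hypercyclic vector (or of an algebra generator) at the finitely many slots $-L,\dots,L$ to be nonzero. Your parenthetical ``up to the weight factors'' hides exactly this coefficient. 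Contrast with the positive tail, where the convergence $w_{l+1}\cdots w_{n_k+l}x_{n_k+l}^m\to 1$ itself guarantees that the relevant coordinates are eventually bounded away from zero; the negative-tail coordinates converge to $0$, which gives no such nonvanishing. The paper fixes this by first choosing $s=s_{k_0}$ large enough that all coordinates of $B_w^{s}(x^m)$ at $-L,\dots,L$ are nonzero, then replacing $x$ by $y=B_{w^{1/m}}^{s}(x)$ (so that $y_l^m$ is the $l$-th coordinate of $B_w^s(x^m)$, hence nonzero for $|l|\le L$) and taking $n_k=s_k-s$. This is also the one place where the hypothesis that $B_{w^\gamma}$ is bounded for $\gamma\in(0,1)$ is actually needed (to ensure $y\in X$); your attribution of that hypothesis to the construction in $(ii)\Rightarrow(i)$ is off, since there $u$ is built from finitely many basis vectors and lies in $X$ automatically.
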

\begin{proof}
$(ii)\implies (i)$. We intend to apply Corollary \ref{cor:thmgenmin}. Let $1\leq m_0<m_1$, let $U,V$ be nonempty open subsets of $X$ and let $W$ be a neighbourhood of zero. Let $x,y$ belonging to $U$ and $V$ respectively, with finite support contained in $[-p,p]$. Write $y=\sum_{l=-p}^p y_le_l$ and let $(n_k)$ be the sequence given in (ii) for $m=m_0$ and $L=p$. Define
$$u:=u(n_k)=x+\sum_{l=-p}^p \frac{y_l^{1/m_0}}{(w_{l+1}\cdots w_{n_k+l})^{1/m_0}}e_{n_k+l}.$$
Provided $k$ is large enough, $u$ belongs to $U$. Moreover, for $m\in\{m_0,\dots,m_1\}$, 
$$B_w^{n_k}(u^m)=\sum_{l=-p}^p w_l\cdots w_{-n_k+l+1}x_l^m e_{-n_k+l}+
\sum_{l=-p}^p \frac{y_l^{m/m_0}}{(w_{l+1}\cdots w_{n_k+l})^{\frac m{m_0}-1}}e_l.$$
For all values of $m$, it is clear that 
$$\sum_{l=-p}^p w_l\cdots w_{-n_k+l+1}x_l^me_{-n_k+l}\xrightarrow{k\to+\infty}0.$$
Hence, for $m=m_0$ and provided $k$ is large enough, $B_w^{n_k}(u^{m_0})$ belongs to $V$. 
Furthermore, if $m>m_0$, since each sequence $(w_{l+1}\cdots w_{n_k+l})^{-1}$ goes to zero (recall that $(e_n)$ is bounded below), then $B_w^{n_k}(u^m)$ belongs to $W$ for large values of $k$, showing that $B_w$ admits a hypercyclic algebra. 

$(i)\implies (ii)$. The proof is slightly more difficult than for unilateral shifts. Fix $m$ and $L$ and let $x\in X$ be such that $x^m\in HC(B_w)$. Let $(s_k)$ be an increasing sequence of integers such that $B_w^{s_k}(x^m)$ tends to $e_{-L}+\cdots+e_L$. We fix some $s\in\NN$ (which can be taken equal to some $s_{k_0}$) such that, for all $l=-L,\dots,L$, the $l$-th coordinate of $B_w^s(x)$ is not equal to zero. We then consider $y\in X$ defined by $y_l=(w_{l+1}\cdots w_{l+s})^{1/m}x_{l+s}$ (namely, $y=B_{w^{1/m}}^s(x)$) and we set $n_k=s_k-s$. It is easy to check that $B_w^{n_k}(y^m)=B_w^{s_k}(x^m)$. Hence, it goes to $e_{-L}+\cdots +e_L$. This implies that 
\begin{itemize}
\item for all $l=-L,\cdots, L$, 
$$w_l\cdots w_{-n_k+l+1}y_l^m e_{-n_k+l}\textrm{ tends to }0.$$
\item for all $l=-L,\cdots,L$, 
$$w_{l+1}\cdots w_{n_k+l}y_{n_k+l}^m\textrm{ tends to }1.$$
\end{itemize}
We conclude as in the unilateral case, using that $y_l$ is never equal to zero for $l=-L,\cdots,L$. 
\end{proof}
We can then state corollaries similar to what happens in the unilateral case.

\begin{corollary}
 Let $X$ be a Fr\'echet sequence algebra on $\ZZ$ for the coordinatewise product and with a continuous norm. 
 Assume that $(e_n)_{n\in\ZZ}$ is a Schauder basis for $X$. Assume also that, for all admissible weights $w$, for all $\gamma\in (0,1)$, $w^\gamma$ is admissible.
 Let $B_w$ be a bounded bilateral weighted shift on $X$. The following assumptions are equivalent.
\begin{enumerate}[(i)]
\item $B_w$ supports a hypercyclic algebra.
\item For all $\gamma>0$, there exists a sequence $(n_k)$ such that $((w_1\cdots w_{n_k})^{-\gamma} e_{n_k})$ tends to zero and $(w_{-1}\cdots w_{-n_k}e_{-n_k})$ tends to $0$.
\end{enumerate}
\end{corollary}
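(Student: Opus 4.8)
The plan is to reduce everything to condition (ii) of Theorem~\ref{thm:bilateralshifts}, which is equivalent to $B_w$ supporting a hypercyclic algebra; it then suffices to show that assertion (ii) of the corollary is equivalent to that condition, exactly mimicking the one-sided corollary proved just above. Throughout I would use two standing facts: the continuous norm forces $(e_n)_{n\in\ZZ}$ to be bounded below, so that for a scalar sequence $(c_k)$ and indices $N_k\to\pm\infty$ the convergence $c_k e_{N_k}\to 0$ in $X$ is governed by $|c_k|\,\|e_{N_k}\|$; and the hypothesis that $w^\gamma$ is admissible for $\gamma\in(0,1)$ guarantees that $B_{w^{1/m}}$ is bounded for every $m\ge 1$.

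For the positive tail I would transcribe the unilateral argument verbatim. Fixing $m$ and $L$ and applying (ii) with $\gamma=1/m$ yields a \emph{single} sequence $(n_k)$ controlling both tails. Taking the sequence required by Theorem~\ref{thm:bilateralshifts} to be $n_k-L$ and applying the nonnegative powers $B_{w^{1/m}}^{L-l}$ (bounded, since $L-l\ge 0$ for $l\le L$) to $(w_1\cdots w_{n_k})^{-1/m}e_{n_k}$, the inverse-power weights telescope and produce $(w_{l+1}\cdots w_{n_k-L+l})^{-1/m}e_{n_k-L+l}$ up to a fixed constant; moreover one further application of $B_{w^{1/m}}$ lowers $l$ by one, so the top index $l=L$ dominates the whole range. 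The converse, extracting (a) from condition (ii) of Theorem~\ref{thm:bilateralshifts}, is equally routine: the case $l=0$ together with the bounded-below basis gives $w_1\cdots w_{n_k}\to+\infty$, whence $(w_1\cdots w_{n_k})^{-\gamma}e_{n_k}\to 0$ for every $\gamma\ge 1/m$.

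The negative tail is where the asymmetry announced before the statement appears, and I expect it to be the main obstacle. Here the relevant quantities carry the weights to the first power (no root), so the window must be slid with $B_w$ itself rather than with $B_{w^{1/m}}$; since $B_w$ again lowers the parameter $l$, it is enough to reach the top index $l=L$, and I would try to match the target $(w_L\cdots w_{-\nu_k+L+1})e_{-\nu_k+L}$, up to the fixed factor $w_0\cdots w_L$, with the datum $(w_{-1}\cdots w_{-n_k})e_{-n_k}$ supplied by (b). The delicate point is a one-step discrepancy between these two expressions: in (b) the deepest weight $w_{-n_k}$ sits exactly at the basis index, whereas in the window coming from $B_w^{\nu_k}e_L$ the product stops one index above the basis vector. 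Reconciling them forces the choice $\nu_k=n_k+L+1$ and one additional application of $B_w$, which introduces a spurious factor $w_{-n_k}$; controlling it is precisely where the admissibility (boundedness) of $B_w$ and the bounded-below property of $(e_n)_{n\in\ZZ}$ must be brought to bear so that scalar and vector convergence stay aligned. Once (ii) of the corollary and condition (ii) of Theorem~\ref{thm:bilateralshifts} are shown equivalent, the corollary follows immediately.
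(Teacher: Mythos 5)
The paper never writes out a proof of this corollary (it is introduced with ``we can then state corollaries similar to what happens in the unilateral case''), so the only template is the unilateral argument, and your positive-tail computation reproduces it correctly: sliding the window with nonnegative powers of the bounded operator $B_{w^{1/m}}$ and absorbing the fixed factor coming from the weights of index between $l+1$ and $0$ (or $1$ and $l$) is exactly the paper's mechanism. The trouble is in the negative tail, where you have located the difficulty without resolving it. Writing $d_k=w_{-1}\cdots w_{-n_k}e_{-n_k}$ and noting that the quantity required by Theorem~\ref{thm:bilateralshifts} is, up to a constant depending only on $l$, $B_w^{\nu_k}(e_l)=C_l\,B_w^{\nu_k-l}(e_0)$ with $B_w^{j}(e_0)=w_0w_{-1}\cdots w_{-j+1}e_{-j}$, one finds for any choice of $\nu_k$ a discrepancy factor of the form $C_l/w_{-n_k}$. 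The tools you invoke to control it point the wrong way: admissibility of $B_w$ gives $w_{-n}\|e_{-n-1}\|_q\leq C\|e_{-n}\|_{q'}$, i.e.\ an upper bound on $\|e_{-n-1}\|$ of size $\|e_{-n}\|/w_{-n}$, which reinstates the uncontrolled factor $1/w_{-n_k}$ whenever $w_{-n_k}\to 0$, and the lower bound on $\|e_n\|$ gives nothing here. What can close this gap is the standing hypothesis that $w^\gamma$ remains admissible for \emph{every} admissible weight --- an assumption on the space $X$ which forces $\|e_{n-1}\|_q$ to be dominated by $\|e_n\|_{q'}$ independently of the weight (this is how one checks it on the spaces $\{x:\ a_n|x_n|\to 0\}$) --- and you never bring that hypothesis to bear on the negative tail; you only use it to say that $B_{w^{1/m}}$ is bounded.

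There is a second, structural gap. Condition (ii) of Theorem~\ref{thm:bilateralshifts} demands a \emph{single} sequence along which both the family $(w_{l+1}\cdots w_{n_k+l})^{-1/m}e_{n_k+l}$ and the family $w_l\cdots w_{-n_k+l+1}e_{-n_k+l}$ tend to zero for all $|l|\leq L$. Your argument produces the shift $n_k-L$ for the positive targets and $\nu_k=n_k+L+1$ for the negative ones; these are different sequences and cannot both be the sequence fed into the theorem. Since $B_{w^{1/m}}$ and $B_w$ only slide basis indices downwards, the positive targets force the common sequence to sit at distance at least $L$ \emph{below} $n_k$ while the negative targets force it to sit at distance at least $L$ \emph{above} $n_k$, so reconciling the two is not bookkeeping: one must prove the corollary's one-point conditions at the shifted indices directly (again via the admissibility of fractional powers and the resulting control of $\|e_{n\pm 1}\|$ by $\|e_n\|$), or argue differently. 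As written, the proposal is an accurate map of where the proof has to go, but both decisive steps are asserted rather than carried out.
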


\begin{corollary}
 Let $X$ be a Fr\'echet sequence algebra on $\ZZ$ for the coordinatewise product and with a continuous norm. 
 Assume that $(e_n)$ is a Schauder basis for $X$ and that $(e_n)$ is bounded. Assume also that, for all admissible weights $w$, for all $\gamma\in(0,1)$, $w^\gamma$ is admissible.
 Let $B_w$ be a bounded bilateral weighted shift on $X$. The following assumptions are equivalent.
\begin{enumerate}[(i)]
\item $B_w$ supports a hypercyclic algebra.
\item $B_w$ is hypercyclic.
\item There exists a sequence $(n_k)$ such that $(w_1\cdots w_{n_k})$ and $(w_{-1}\cdots w_{-n_k})$ tend to $+\infty$.
\end{enumerate}
\end{corollary}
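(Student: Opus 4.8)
The plan is to obtain this corollary from the more precise Theorem \ref{thm:bilateralshifts}, in exactly the way Corollary \ref{cor:wslp} is derived from Theorem \ref{thm:wsprecised} in the unilateral setting: the extra hypothesis that $(e_n)_{n\in\ZZ}$ is bounded is precisely what erases the dependence on the power $m$ in Theorem \ref{thm:bilateralshifts}(ii) and collapses the hypercyclic-algebra condition onto plain hypercyclicity. First I would record the two-sided control on the basis. The lemma showing that a continuous norm forces $(e_n)$ to be bounded below (whose proof transfers verbatim to the index set $\ZZ$) gives $\|e_n\|_q\ge 1$ for every $q$ large enough that $\|\cdot\|_q$ is a norm; boundedness of $(e_n)$ then yields, for each such $q$, a constant $C_q>0$ with $1\le\|e_n\|_q\le C_q$ for all $n\in\ZZ$. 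Consequently, for a fixed index and any exponent, a scalar multiple of a basis vector by a product of weights tends to $0$ in $X$ if and only if that scalar product tends to $0$ in $\CC$.

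For the equivalence (i) $\Leftrightarrow$ (ii) I would invoke Theorem \ref{thm:bilateralshifts}. Since $1\le\|e_n\|_q\le C_q$, for every $\gamma>0$ and every fixed $l$ one has $(w_{l+1}\cdots w_{n_k+l})^{-\gamma}e_{n_k+l}\to0$ if and only if $w_{l+1}\cdots w_{n_k+l}\to+\infty$, a statement that does not involve $\gamma$; the same boundedness turns the backward requirement $w_l\cdots w_{-n_k+l+1}e_{-n_k+l}\to0$ into a condition on scalar products alone. Hence the family of conditions in Theorem \ref{thm:bilateralshifts}(ii) indexed by $m\ge 1$ all reduce to one and the same requirement, namely the case $m=1$; thus condition (ii) of that theorem holds for some $m$ if and only if it holds for all $m$, and this common statement, read through the Schauder basis, is exactly the characterization of the hypercyclicity of $B_w$. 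This delivers (i) $\Leftrightarrow$ (ii) directly.

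It remains to prove (ii) $\Leftrightarrow$ (iii), which I would handle by translating the hypercyclicity criterion for bilateral shifts into the divergence of the two scalar products in (iii). Running the criterion along the basis, hypercyclicity amounts to the existence of $(n_k)$ for which, for every fixed $l$, both $B_w^{n_k}e_l\to0$ and some preimage of $e_l$ under $B_w^{n_k}$ tends to $0$; read coordinatewise with $1\le\|e_n\|_q\le C_q$, the second forces $w_{l+1}\cdots w_{n_k+l}\to+\infty$ (at $l=0$, the forward product of (iii)), while the first governs the backward product $w_{-1}\cdots w_{-n_k}$ of (iii). To pass between the single products at the base index and the ``for all $l$'' form, I would use, exactly as in the proof of Corollary \ref{cor:wslp}, the hypothesis that $w^\gamma$ is admissible for $\gamma\in(0,1)$: applying the bounded operators $B_{w^\gamma}$, equivalently shifting $(n_k)$ by a fixed amount, propagates the behaviour from $l=0$ to all $l$ in a finite window, so that a single diagonal sequence $(n_k)$ serves every $l$ simultaneously. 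The main obstacle is the backward half of this translation: unlike the forward products it carries no power $m$, and one must check carefully that the two-sided basis bounds together with the admissibility of the powers $w^\gamma$ convert the orbit condition $B_w^{n_k}e_l\to0$ into the statement on $w_{-1}\cdots w_{-n_k}$ recorded in (iii), uniformly over the finite window of indices $l$.
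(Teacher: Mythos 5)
Your reduction of (i)$\Leftrightarrow$(ii) is correct and is exactly the route the paper intends: no proof is printed for this corollary, which is meant to follow from Theorem \ref{thm:bilateralshifts} just as Corollary \ref{cor:wslp} follows from Theorem \ref{thm:wsprecised}, and your observation that $1\leq \|e_n\|_q\leq C_q$ makes the forward conditions independent of $m$ (while the backward condition never involved $m$) is the right one. The problem is the step you yourself call ``the main obstacle'' and then leave unverified: converting the orbit condition $B_w^{n_k}e_l\to 0$ into ``the statement on $w_{-1}\cdots w_{-n_k}$ recorded in (iii)''. That conversion cannot be carried out, because it goes the wrong way. Since $B_w^{n_k}e_0=w_0w_{-1}\cdots w_{-n_k+1}e_{-n_k}$ and the basis is bounded above and below, the orbit condition forces $w_0w_{-1}\cdots w_{-n_k+1}\to 0$, hence $w_{-1}\cdots w_{-n_k}\to 0$ (the individual weights are bounded above, since $B_w$ is bounded and the basis is bounded above and below) --- the exact opposite of the divergence $w_{-1}\cdots w_{-n_k}\to+\infty$ demanded by (iii). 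Indeed (iii) as printed is not equivalent to hypercyclicity: for the constant weight $w_n=2$, $n\in\ZZ$, on $\ell^p(\ZZ)$, both products in (iii) equal $2^{n_k}\to+\infty$, yet the bilateral Rolewicz operator is not hypercyclic (its backward products $w_0\cdots w_{-n+1}=2^n$ never tend to $0$); conversely, $w_n=2$ for $n\geq 1$ and $w_n=1/2$ for $n\leq 0$ gives a hypercyclic shift for which $w_{-1}\cdots w_{-n}=2^{-n}$ never tends to $+\infty$.

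What has happened is that (iii) in the statement carries a sign error: compare the corollary immediately preceding this one, whose condition (ii) requires $(w_{-1}\cdots w_{-n_k}e_{-n_k})\to 0$, which for a bounded basis reads $w_{-1}\cdots w_{-n_k}\to 0$. The statement your plan actually proves is the corrected one, with $(w_{-1}\cdots w_{-n_k})\to 0$ in (iii). With that correction the argument closes, and more easily than you anticipate: given one sequence with $w_1\cdots w_{n_k}\to+\infty$ and $w_{-1}\cdots w_{-n_k}\to 0$, set $m_k=n_k+L+1$; for each fixed $l$ with $|l|\leq L$ the backward product telescopes, e.g.\ for $0\leq l\leq L$ one has $w_l\cdots w_{l-m_k+1}=(w_l\cdots w_1w_0)\,(w_{-1}\cdots w_{-(n_k+L-l)})$, which is a fixed nonzero constant times $w_{-1}\cdots w_{-n_k}$ times boundedly many weights bounded above, hence tends to $0$ with no appeal to admissibility of powers; the hypothesis that $w^\gamma$ is admissible is needed only on the forward side, where, as in the proof of Corollary \ref{cor:wslp}, one writes $(w_1\cdots w_{m_k+l})^{-\gamma}e_{m_k+l}$ as a power of the continuous operator $B_{w^\gamma}$ applied to $(w_1\cdots w_{n_k})^{-\gamma}e_{n_k}$, precisely to avoid any lower bound on individual weights. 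As submitted, however, your proof is incomplete exactly at the point that needed checking, and the check refutes the step (and the printed statement) rather than confirming it.
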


On the contrary, the nonsymmetry of the conditions in (ii) of Theorem \ref{thm:bilateralshifts} proves to be useful to get the following example.
\begin{example}\label{ex:inverse}
There exists an invertible operator $T$ on a Banach algebra such that $T$ supports a hypercyclic algebra and $T^{-1}$ does not.
\end{example}

\begin{proof}
Let 
$$X=\left\{x\in\CC^\ZZ:\ x_n (|n|+1)\xrightarrow{n\to\pm \infty}0\right\},$$ 
endowed with 
$$\|x\|=\sup_n |x_n| (|n|+1).$$
Equipped with the coordinatewise product, $X$ is a  Fréchet sequence algebra. Let $w$ be the weight defined by $w_0=1$, $w_n=2$ and $w_{-n}=n^2/(n+1)^2$ for $n>0$. For all $\gamma>0$, the weighted shift $B_{w^\gamma}$ is bounded on $X$. Moreover, it satisfies the assumptions of Theorem \ref{thm:bilateralshifts} with $(n_k)$ equal to the whole sequence of integers. In particular, $w_{-1}\cdots w_{-n} \|e_{-n}\|=(n+1)^{-1}$ tends to zero.

It is plain that $B_w$ is invertible and that its inverse is the forward shift $F_\rho$, defined by $F_\rho(e_n)=\rho_{n+1}e_{n+1}$ with $\rho_n=1/w_n$. Assume that $F_\rho$ supports a hypercyclic algebra. Then we apply the symmetrized version of  Theorem \ref{thm:bilateralshifts} adapted to forward shifts with $m=2$ to get the existence of a sequence $(n_k)$ such that $(\rho_{-1}\cdots \rho_{-n_k})^{-1/2}e_{-n_k}$ tends to zero. 
This is impossible since 
$$\left\|(\rho_{-1}\cdots \rho_{-n_k})^{-1/2}e_{-n_k}\right\|\sim_{k\to+\infty}\frac{n_k}{n_k^{2\times 1/2}}=1.$$
\end{proof}

\color{black}


\subsection{Fréchet sequence algebras for the convolution product}

This subsection is devoted to the proof of Theorem \ref{thm:wscauchy}. We first have to give the meaning of a regular Fréchet sequence algebra. 
Let $(X,(\|\cdot\|_q))$ be such a Fréchet sequence algebra for the Cauchy product. We will say that $X$ is \emph{regular} provided that it satisfies the following three properties:
\begin{enumerate}[(a)]
 \item $X$ admits a continuous norm;
 \item $(e_n)$ is a Schauder basis of $X$;
 \item for any $r\geq 1$, there exists $q\geq 1$ and $C>0$ such that, for all $n,k\geq 0$,
 \[ \|e_n\|_r\cdot \|e_k\|_r\leq C\|e_{n+k}\|_q.\]
\end{enumerate}
Let us make some comments on these assumptions. Conditions (a) and (b) are standard in this work. We shall use (a) by assuming that $\|e_n\|_q>0$ for all $n\in\NN_0$
and all $q>0$. Regarding (c), it should be thought as a reverse inequality for the continuity of the product in $X$. Observe also that $H(\CC)$ and $\ell_1$
are clearly regular. However, this is not the case of all Fréchet sequence algebras for the Cauchy product. 
Pick for instance any sequence $(a_n)$ of positive real numbers such that, for all $n,p,q\in\NN_0$, with $n=p+q$, $a_n\leq a_p\cdot a_q$
and $a_n^2/a_{2n}\to +\infty$ (the sequence $a_n=1/n!$ does the job). Then the Banach space $X=\{x\in\omega:\|x\|_X=\sum_{n\geq 0} a_n |x_n|<+\infty\}$
is a Fréchet sequence algebra for the Cauchy product which does satisfy (c).

A consequence of (c) is the following technical lemma which will be crucial later.
\begin{lemma}\label{lem:wscauchy}
 Let $X$ be a regular Fréchet sequence algebra for the Cauchy product and let $(w_n)$ be an admissible weight sequence on $X$. Then, for all $M\geq 1$,
 for all $r\geq 1$, and for all $\rho \geq 0$, there exist $C>0$ and $q\geq r$ such that, for all $n\geq M$, for all $u<v$ in $\{n-M,\dots,n\}$, for all $k\in\{n-M+\rho,\dots,n+\rho \}^{v-u}$, 
 \begin{equation}\label{eq:wscauchy} \prod_{j=1}^{v-u} w_{k_j} \|e_u\|_r\leq C \|e_v\|_q. 
 \end{equation}

\end{lemma}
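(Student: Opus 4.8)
The goal is to bound a product of weights $\prod_{j=1}^{v-u} w_{k_j}$ times $\|e_u\|_r$ by a multiple of $\|e_v\|_q$, where the crucial point is that $v-u$ weights appear and the indices $u,v$ differ by at most $M$. The natural strategy is to use the admissibility of $B_w$ to control individual weights, and then use regularity condition (c) repeatedly to absorb the growth of the $e_n$-seminorms. First I would record what admissibility gives quantitatively: since $B_w$ is bounded on $X$, for the chosen $r$ there is some seminorm index and a constant bounding $\|w_k e_{k-1}\|$ in terms of $\|e_k\|$, i.e. a pointwise inequality of the form $w_k \|e_{k-1}\|_{r} \leq C_1 \|e_k\|_{q_1}$ for suitable $q_1 \geq r$ and $C_1 > 0$. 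The key subtlety is that the weights $w_{k_j}$ are indexed by the $k_j \in \{n-M+\rho,\dots,n+\rho\}$, which are \emph{not} the same as the indices $u,\dots,v$ whose seminorms we are shifting; so I first want to control each $w_{k_j}$ by a uniform constant depending only on $M$ and $\rho$.

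\medskip

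The cleanest way is to observe that there are only finitely many ``shapes'' to worry about, but since $n$ ranges over all integers $\geq M$ we cannot literally take a maximum. Instead I would separate the two roles. Each factor $w_{k_j}$ lies in a window of width $2M$ around $n$, so using admissibility I can write $w_{k_j}\|e_{k_j-1}\|_r \leq C_1\|e_{k_j}\|_{q_1}$; iterating such ``one-step'' inequalities lets me trade a weight $w_k$ for a unit increase in the subscript of an $e$-seminorm at the cost of a constant. The plan is therefore to telescope: starting from $\|e_u\|_r$, I multiply in the weights one at a time, each time either raising the subscript by one (via admissibility) or, when needed, splitting via condition (c). Because $v-u \leq M$ and each $k_j$ lies within $M+\rho$ of the cluster, only boundedly many (at most $M$) such steps occur, so the accumulated constant is bounded by $C_1^{M}$ times a constant coming from finitely many applications of (c), and the seminorm index rises only finitely many times, landing on some fixed $q \geq r$ depending on $M,r,\rho$ but \emph{not} on $n$. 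This uniformity in $n$ is exactly what must be verified carefully.

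\medskip

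In more detail, the engine is regularity condition (c): $\|e_a\|_r \cdot \|e_b\|_r \leq C\|e_{a+b}\|_q$. The point is that increasing a subscript from $u$ toward $v$ can be achieved by multiplying $\|e_u\|_r$ by an appropriate $\|e_{s}\|_r$ with $s = v-u$ (small, at most $M$) and invoking (c) to get $\|e_v\|_q$ up to a constant. Meanwhile the product of weights $\prod w_{k_j}$, all of whose indices sit in a window of size $O(M)$ about $n$, must be bounded by $\|e_s\|_r$-type quantities via admissibility, again using that only $v-u \leq M$ weights are present. I would assemble these two ingredients so that the $v-u$ weight factors are matched against a single application of (c) raising $u$ to $v=u+(v-u)$, with the finitely many (at most $M$) admissibility steps supplying the constant and the index jump from $r$ to $q$.

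\medskip

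\textbf{Main obstacle.} The genuine difficulty is achieving \emph{uniformity in $n$}: the indices $k_j$ and $u,v$ all drift off to infinity with $n$, so I cannot simply take a maximum over finitely many cases. The resolution is that everything depends only on the \emph{differences} $v-u$, $k_j - u$, etc., which are bounded by $M+\rho$; admissibility gives translation-robust one-step inequalities whose constant $C_1$ and index shift are independent of $n$, and (c) likewise relates $\|e_a\|_r\|e_b\|_r$ to $\|e_{a+b}\|_q$ with $C,q$ independent of $a,b$. So the whole estimate reduces to a bounded number of $n$-independent steps, and I expect the careful bookkeeping of how many times one must apply admissibility versus (c) — and ensuring the final seminorm index $q$ and constant $C$ depend only on $M,r,\rho$ — to be the one place where the argument needs genuine attention rather than routine manipulation.
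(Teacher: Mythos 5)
Your overall strategy coincides with the paper's: reduce to inserting one weight at a time (a finite induction of at most $M$ steps, taking suprema over the boundedly many cases), and for each step combine the admissibility inequality $w_k\|e_{k-1}\|_r\leq C_1\|e_k\|_{q_1}$ with condition (c), relying on the fact that all relevant index differences are at most $M+\rho$ to keep $C$ and $q$ independent of $n$. You also correctly single out uniformity in $n$ as the point needing care. However, the one concrete mechanism you propose for that point is flawed. You assert that the product $\prod_j w_{k_j}$ ``must be bounded by $\|e_s\|_r$-type quantities'' with $s\leq M$ small, so that the weights contribute only a constant and the entire subscript shift from $u$ to $v$ is performed by a single application of (c) to $\|e_u\|_r\cdot\|e_{v-u}\|_r$. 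This cannot work: the weights need not be bounded (for $D$ on $H(\CC)$, one of the two model regular algebras here, $w_n=n$), and admissibility only controls $w_k$ by the ratio $\|e_k\|_{q_1}/\|e_{k-1}\|_r$ of seminorms at the \emph{large} index $k$, a ratio which itself tends to infinity with $k$. The growth of the weights has to be absorbed into the jump of seminorm index from $r$ to $q$ at the large indices $u,v$, not into a small-index factor.

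What is missing is the device that transports the admissibility inequality from the index $k$ (near $n+\rho$) to the index $u$ (near $n-M$), and it uses a third ingredient you never invoke: the submultiplicativity $\|e_{a+b}\|_q\leq\|e_a\|_q\cdot\|e_b\|_q$ coming from the continuity of the Cauchy product, which is the reverse of (c). The paper's one-step proof splits into two cases. If $u\geq k-1$, write $e_u=e_{k-1}\cdot e_{u-(k-1)}$ and use submultiplicativity to expose $\|e_{k-1}\|_r$, apply admissibility to produce $\|e_k\|_{q_1}$, then recombine with (c) to reach $\|e_{u+1}\|_{q_2}$. If $u<k-1$, multiply and divide by $\|e_{(k-1)-u}\|_r$, use (c) to form $\|e_{k-1}\|_{q_1}$ in the numerator, apply admissibility, then split $e_k=e_{k-(u+1)}\cdot e_{u+1}$ by submultiplicativity; the leftover factor $\|e_{k-(u+1)}\|_{q_2}/\|e_{k-(u+1)}\|_r$ is then controlled by a maximum over the finitely many values $1\leq l\leq M+\rho-1$, which is where the bounded window is genuinely used. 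Without some such two-sided use of the product structure, your telescoping cannot get started whenever $k_j$ is not equal to the current subscript plus one, which is the generic situation in this lemma.
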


Before to proceed with the proof, let us comment the statement of Lemma \ref{lem:wscauchy}. The inequality \eqref{eq:wscauchy}
is nothing else than the continuity of $B_w$ if we assume that $k_j=u+j$ for $j=1,\dots,v-u$. The regularity of $X$ (and more precisely the third condition) will imply that we may slightly move
the indices $k_j$.

\begin{proof}
 Let us fix $M\geq 1$ and observe that $v-u$ may only take the values $1,\dots,M$. Then, upon doing a finite induction and taking suprema, we need only to prove that,
 for all $r\geq 1$, and for all $\rho \geq 0$, there exist $C>0$ and $q\geq r$ such that, for all $n\geq M$, for all $u\in\{n-M,\dots,n-1\}$, for all $k\in \{n-M+\rho,\dots,n+\rho \}$, 
 \begin{equation}\label{eq:lemwscauchy}
  w_k \|e_u\|_r\leq C \|e_{u+1}\|_q,
 \end{equation}
 a property which should be thought as a strong version of the continuity of $B_w$. Assume first that $u\geq k-1$. Then, writing $e_u=e_{k-1}\cdot e_{u-(k-1)}$
 and using the continuity of the product and of $B_w$, we get the existence of $C>0$ and $q_1\geq r$ such that
 \begin{align*}
  w_k \|e_u\|_r&\leq w_k \|e_{k-1}\|_r \cdot \|e_{u-(k-1)}\|_r \\
  &\leq C_1  \|e_k\|_{q_1} \cdot \|e_{u-(k-1)}\|_{q_1}.
 \end{align*}
 We now use property (c) for $r=q_1$ to deduce the existence of $C_2>0$ and $q_2\geq q_1$ such that 
 \[ w_k \|e_u\|_r \leq C_1C_2 \|e_{u+1}\|_{q_2}. \]
 Hence, \eqref{eq:lemwscauchy} is proved for $q=q_2$ and $C=C_1C_2$.
 If we assume that $u<k-1$, then the argument is completely similar by exchanging the place where we use the continuity of the product and property (c). Precisely,
 \begin{align*}
  w_k \|e_u\|_r&=w_k\frac{\|e_u\|_r\cdot \|e_{(k-1)-u}\|_r}{\|e_{(k-1)-u}\|_r}\\
  &\leq C_1 w_k \frac{\| e_{k-1} \|_{q_1}}{\|e_{(k-1)-u}\|_r}\\
  &\leq C_1 C_2 \frac{\|e_k\|_{q_2}}{\|e_{k-(u+1)}\|_r}\\
  &\leq C_1 C_2 \frac{\|e_{k-(u+1)}\|_{q_2}}{\|e_{k-(u+1)}\|_r}\|e_{u+1}\|_{q_2}. 
 \end{align*}
 Hence, \eqref{eq:lemwscauchy} is proved for $q=q_2$ and 
 \[ C=\max \left\{C_1C_2 \frac{\|e_l\|_{q_2}}{\|e_l\|_{r}}:1\leq l\leq M+\rho -1 \right\}. \]
\end{proof}

Lemma \ref{lem:wscauchy} will be used through the following more particular form.
\begin{corollary}\label{cor:wscauchy}
 Let $X$ be a regular Fréchet sequence algebra for the Cauchy product and let $(w_n)$ be an admissible weight sequence on $X$. Then, for all $m\geq 1$, for all $N\geq 1$, for
 all $r\geq 1$, and for all $\rho \geq 0$, there exist $C>0$ and $q\geq 1$ such that, for all $n\geq mN$, for all $s\in\{1,\dots,N\}$, 
\[ (w_{n-s+1+\rho })^{m-1}\cdots (w_{n-1+\rho })^{m-1}(w_{n+\rho} )^{m-1} \|e_{n-ms+m\rho}\|_r \leq C\|e_{n-s+\rho}\|_q. \]
\end{corollary}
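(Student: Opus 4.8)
The plan is to derive the stated estimate from a single application of Lemma~\ref{lem:wscauchy}, correcting a constant shift in the index at the very end. First, the case $m=1$ is trivial: the left-hand side reduces to $\|e_{n-s+\rho}\|_r$, so it suffices to take $q=r$ and $C=1$ (recall the seminorms are non-decreasing). So I assume $m\geq 2$.

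The product on the left is $\big(\prod_{i=0}^{s-1} w_{n-i+\rho}\big)^{m-1}$, which is a product of exactly $(m-1)s$ weights $w_{k_j}$, each index $k_j$ lying in the window $\{n-s+1+\rho,\dots,n+\rho\}$. The idea is to feed these into Lemma~\ref{lem:wscauchy}. Since each weight morally raises the $e$-index by one, consuming all $(m-1)s$ of them starting from $e_{n-ms+m\rho}$ lands exactly at $e_{(n-ms+m\rho)+(m-1)s}=e_{n-s+m\rho}$. Concretely, I would apply Lemma~\ref{lem:wscauchy} with its parameter $M$ replaced by $mN+(m-1)\rho$, with the same $r$, and with its $\rho$-parameter set to $0$; this yields constants $C_0>0$ and $q_0\geq r$. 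I then instantiate the lemma at $\tilde n=n+m\rho$, $u=n-ms+m\rho=\tilde n-ms$ and $v=n-s+m\rho=\tilde n-s$, taking for $(k_j)$ the $s$ indices $n,n-1,\dots,n-s+1$ each raised by $\rho$ and repeated $m-1$ times. One checks that $v-u=(m-1)s$ equals the number of weights, that $n\geq mN$ forces $\tilde n\geq mN+(m-1)\rho=M$, and that $s\leq N$ guarantees $u,v\in\{\tilde n-M,\dots,\tilde n\}$ and all $k_j\in\{\tilde n-M,\dots,\tilde n\}$. Lemma~\ref{lem:wscauchy} then gives
\[ \Big(\prod_{i=0}^{s-1} w_{n-i+\rho}\Big)^{m-1}\|e_{n-ms+m\rho}\|_r\leq C_0\,\|e_{n-s+m\rho}\|_{q_0}. \]

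The main (and essentially only) obstacle is that the natural target index $n-s+m\rho$ differs from the desired $n-s+\rho$ by the constant $(m-1)\rho$; no rearrangement of the lemma can remove this gap, since the number of consumed weights is fixed at $(m-1)s$ while the index increment they produce is also $(m-1)s$. I would bridge it using the algebra structure: under the Cauchy product one has $e_{n-s+m\rho}=e_{n-s+\rho}\cdot e_{(m-1)\rho}$, so the submultiplicativity $\|x\cdot y\|_{q_0}\leq\|x\|_{q_0}\|y\|_{q_0}$ yields $\|e_{n-s+m\rho}\|_{q_0}\leq \|e_{(m-1)\rho}\|_{q_0}\,\|e_{n-s+\rho}\|_{q_0}$. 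Since $\|e_{(m-1)\rho}\|_{q_0}$ is a constant independent of $n$ and $s$, setting $q=q_0$ and $C=C_0\|e_{(m-1)\rho}\|_{q_0}$ completes the proof. Finally, all $e$-indices occurring are non-negative, since $n\geq mN$ and $s\leq N$ give $n-ms+m\rho\geq m\rho\geq 0$.
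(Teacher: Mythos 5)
Your proof is correct and follows essentially the same route as the paper: a single application of Lemma~\ref{lem:wscauchy} with $v-u=(m-1)s$, followed by an algebra-structure step to absorb the residual $(m-1)\rho$ shift in the index. The only (harmless) difference is in the bookkeeping: the paper invokes the lemma with the given $\rho$ and $M=mN$ at the point $n$ (so $u=n-ms$, $v=n-s$) and then corrects both endpoints, using submultiplicativity on the left and property (c) of regularity on the right, whereas you re-parametrize the lemma ($\rho=0$, enlarged $M$, shifted base point $\tilde n=n+m\rho$) so that only one submultiplicativity step $\|e_{n-s+m\rho}\|_{q_0}\leq\|e_{n-s+\rho}\|_{q_0}\|e_{(m-1)\rho}\|_{q_0}$ is needed at the end.
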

\begin{proof}
We apply the previous lemma with $M=mN$ to get $q'\geq r$ and $C'>0$ such that
 \[(w_{n-s+1+\rho })^{m-1}\cdots (w_{n-1+\rho })^{m-1}(w_{n+\rho} )^{m-1} \|e_{n-ms}\|_r \leq {C'}\|e_{n-s}\|_{q'}.\]
Now, using property (c) and the continuity of the product on a Fréchet algebra, we get $q\geq q'\geq r$ and $C''>0$ with
\begin{align*}
&(w_{n-s+1+\rho })^{m-1}\cdots (w_{n-1+\rho })^{m-1}(w_{n+\rho} )^{m-1} \|e_{n-ms+m\rho}\|_r \\ &\quad\quad\quad\quad\leq (w_{n-s+1+\rho })^{m-1}\cdots (w_{n-1+\rho })^{m-1}(w_{n+\rho} )^{m-1} \|e_{n-ms}\|_r\|e_\rho\|_r\|e_{(m-1)\rho}\|_r \\ &\quad\quad\quad\quad\leq C'\|e_{n-s}\|_{q'}\|e_\rho\|_{q'}\|e_{(m-1)\rho}\|_r \\ &\quad\quad\quad\quad\leq C'C''\|e_{n-s+\rho}\|_q\|e_{(m-1)\rho}\|_r\\&\quad\quad\quad\quad = C\|e_{n-s+\rho}\|_q,
\end{align*}
where $C=C'C''\|e_{(m-1)\rho}\|_r$.
\end{proof}

Before proceeding with the proof of Theorem \ref{thm:wscauchy}, let us make some comments on the difference between the coordinatewise product and the convolution product. Let $P(z)=\sum_{m\in I}\hat P(m)z^m$ be a nonzero polynomial, $x,y$ with finite support. The work done in Section \ref{sec:criterion} shows that it is important for us to find $u$ close to $x$ and $N$ such that $B_w^N(P(u))$ is close to $y$. In both cases, $u$ will be of the form $u=x+z$, where $z$ has finite support and $\min(\supp(z))\gg \max(\supp(x))$. For the coordinatewise product, each $u^m$ has the same support as $u$. Moreover, since $z$ has to be small, $z^m$ becomes smaller as $m$ increases. Hence, in $P(u)$, the most important term was $u^{m_0}$, where $m_0=\min(I)$ (we assume $\hat P(m)\neq 0\iff m\in I$) and it was natural to apply Corollary \ref{cor:thmgenmin}. 

Regarding the convolution product, the support of $u^m$ is now moving to the right: $\max(\supp(u^{m+1}))\geq \max(\supp(u^m))$. By using a specific translation term (an idea coming from \cite{shkarin} and \cite{FalGre19}), we will arrange the choice of $N$ such that 
$B_w^N(u^m)=0$ when $m<m_1:=\max(I)$ and $B_w^N(u^{m_1})$ is close to $y$. This explains why we will rather use Corollary \ref{cor:thmgenmax}.

\begin{proof}[Proof of Theorem \ref{thm:wscauchy}]
We start with a hypercyclic weighted shift $B_w$ and prove that $B_w$ supports a dense hypercyclic algebra which is not contained in a finitely generated algebra. Let $d\geq1$, $A\in\mathcal P_f(\NN_0^d)\backslash\{\varnothing\}$ and $U_1,...,U_d,$ $V,W\subset X$ be open and non-empty, with $0\in W$. We choose $\beta=\max A$ under the lexicographical order, say $\beta=(m,\beta_2,...,\beta_d)$. Upon interchanging the coordinates in $\NN^d$, we may and will assume that $m>0$. Let $x_1,...,x_d$ belonging respectively to $U_1,...,U_d$ and let $y=\sum_{l=0}^py_le_l$ belonging to $V$. We can find $r\geq1$, $\delta>0$ and a ball $B\subset W$ for the seminorm $\|\cdot\|_r$ and with radius $\delta$ such that $y+B\subset V$ and $x_i+B\subset U_i$, for all $i=1,...,d$. 


Since $\beta>\alpha$ for all $\alpha\in A\backslash\{\beta\}$ under the lexicographical order, 
we may find integers $s_1,\dots,s_d$ with $s_i>4p$ such that
\begin{equation}\label{cond:s}
(m-\alpha_1)s_1+(\beta_2-\alpha_2)s_2+\cdots+(\beta_d-\alpha_d)s_d>3p,\text{  for all  }\alpha\in A\setminus \{ \beta \}.
\end{equation}
The procedure to do this is the following. First find $s_d$ such that $(\beta _d-\alpha _d)s_d>3p$, for all $\alpha _d<\beta _d$, $\alpha_d \in \pi _d(A)$. Next, find $s_{d-1}$ such that $(\beta _{d-1}-\alpha_{d-1})s_{d-1}+(\beta_d-\alpha_d)s_d>3p$, for all $\alpha_{d-1}<\beta_{d-1}\in \pi_{d-1}(A)$ and $\alpha_d \in \pi_d(A)$. Continuing inductively, after finitely many steps we define $s_2$ such that $(\beta_2-\alpha_2)s_2+\dots +(\beta_d-\alpha_d)s_d>3p$, for all $\alpha_2<\beta_2$, $\alpha_2\in \pi_2(A)$, and $\alpha_i\in \pi_i(A)$, for $i=3,\dots ,d$. Finally we chose $s_1$ large enough so that $(m-\alpha_1)s_1+(\beta_2-\alpha_2)s_2+\cdots+(\beta_d-\alpha_d)s_d>3p$, for all $\alpha_1<m$, $\alpha_1 \in \pi_1(A)$, $\alpha_i \in \pi_i(A)$, for $i=2,\dots ,p$. This way we get (\ref{cond:s}).

These $s_i$ being fixed, we now choose positive real numbers $\eta_2,\dots,\eta_d$ such that
\begin{equation}\label{cond:eta}
\|\eta_ie_{s_i}\|_r<\delta,\text{  for all  }i=2,...,d.
\end{equation}



We will distinguish two cases in order to apply Corollary \ref{generators}. The most difficult one is $m>1$, an assumption that we now make.  We set  $\rho=\beta_2s_2+\cdots+\beta_ds_d$ and we consider a sequence $(J_k)$ going to $+\infty$ such that, for all $l=0,\dots,p$, 
\[ (w_1\cdots w_{mJ_k-3p+l+\rho})^{-1}e_{mJ_k-3p+l+\rho}\xrightarrow{k\to+\infty}0. \]
Indeed,  let $(m_k)$ be a sequence of integers such that $(w_1\cdots w_{m_k})^{-1}e_{m_k}$ goes to zero and $m_k\geq m+\rho$ for all $k$. Define $J_k$ as the single integer such that $m_k -m < m J_k-2p+\rho\leq m_k$. Then, for all $l=0,\dots,p$, 
\[ (w_1\cdots w_{mJ_k-3p+\rho+l})^{-1} e_{mJ_k-3p+\rho+l}=B_w^{ m_k-mJ_k+3p-\rho-l}\big( (w_1\cdots w_{m_k})^{-1} e_{m_k}\big) \] which tends to zero by the continuity of $B_w$ and because 
$$0\leq m_k-mJ_k+3p-\rho-l\leq m+p.$$ 

We now proceed with the construction of the vectors $u_1,\dots,u_d$ required to apply Corollary \ref{generators}. We set, for $k$ large enough, $N=mJ_k-3p+\rho$ and
\begin{gather*}
\veps=\max_{0\leq l\leq p}\left(\frac{\|e_{J_k-3p+l}\|_r}{w_1\cdots w_{mJ_k-3p+l+\rho}}\right)^{\frac{1}{2(m-1)}}\times \min\left(\frac{1}{\|e_{J_k}\|_r},\frac{1}{(w_1\cdots w_{mJ_k+\rho})^{1/m}}\right)^{\frac{1}{2}},\\
d_j=\frac{w_1\cdots w_jy_j}{\eta_2^{\beta_2}\cdots \eta_d^{\beta_d}m\varepsilon^{m-1}w_1\cdots w_{mJ_k-3p+j+\rho}}.
\end{gather*}
We also define
\begin{gather*}
u_1=x_1+\sum_{j=0}^pd_je_{J_k-3p+j}+\varepsilon e_{J_k},\\
u_i=x_i+\eta_ie_{s_i},\text{  for  }i=2,...,d.
\end{gather*}
Let us postpone the proof of the following facts.
\begin{gather}
\label{condconv:1} \varepsilon \|e_{J_k}\|_r\to0\text{  as  }k\to+\infty, \\
\label{condconv:2} |d_j|\cdot \|e_{J_k-3p+j+\rho}\|_r\to0\text{  as  }k\to+\infty,\text{  for all  }j=0,...,p, \\
\label{condconv:3} \varepsilon^mw_1\cdots w_{mJ_k+\rho}\to0\text{  as  }k\to+\infty.
\end{gather}
From (\ref{condconv:1}) and (\ref{condconv:2}) we get $u_1\in U_1$ if $k$ is large enough and from (\ref{cond:eta}) we get $u_i\in U$ for $i=2,...,d$.
We claim that $u^\alpha\in\ker B_w^N$ for all $\alpha\in A$ with $\alpha\neq\beta$. In fact, for a given $\alpha\in A\backslash\{\beta\}$, say $\alpha=(\alpha_1, ..., \alpha_d)$, we have
\begin{gather*}
\max \left(\supp(u^\alpha)\right) \leq \alpha_1J_k+\alpha_2s_2+\cdots+\alpha_ds_d,
\end{gather*}
so the claim follows by (\ref{cond:s}) since $\alpha_1\leq m$ and for $k$ large enough, $J_k\geq s_1$.
Finally, for the main power $\beta$ we write
$$u^\beta=z+\sum_{j=0}^p \eta_2^{\beta_2}\cdots \eta_d^{\beta_d}m\veps^{m-1}d_j e_{mJ_k-3p+j+\rho}+\veps^m \eta_2^{\beta_2}\cdots \eta_d^{\beta_d}e_{mJ_k+\rho},$$
where the maximum of the support of $z$ is less than $N=mJ_k-3p+\rho$. Indeed, a term in $z$ can come
\begin{itemize}
\item either from a term in $u_1^{\beta_1}$ with support in $[0,mJ_k-4p]$ so that the maximum of the support of this term is at most $mJ_k-4p+\rho<N$;
\item or from a term in some $u_i^{\beta_i}$, $i=2,\dots,d$, with support in $[0,(\beta_i-1)s_i+p]$. The maximum of the support of such a term is then at most $mJ_k+\rho+p-s_i<N$ since $s_i>4p$.
\end{itemize}
Hence, we get 
\begin{gather*}
B_w^Nu^\beta=y+\frac{\varepsilon^m \eta_2^{\beta_2}\cdots \eta_d^{\beta_d} w_1\cdots w_{mJ_k+\rho}}{w_1\cdots w_{3p}}e_{3p},
\end{gather*}
which belongs to $V$ by (\ref{condconv:3}) if $k$ is big enough. It remains now to show that properties (\ref{condconv:1}), (\ref{condconv:2}) and (\ref{condconv:3}) hold true.

\smallskip

Let us first prove \eqref{condconv:1}. By property (c) and an easy induction, there exist $q\geq 1$ and $C>0$ (depending on $r$ and $m$) such that, for all $k\geq 1$ and all $l\in\{0,\dots,p\}$,
\begin{align*}
\frac{\|e_{J_k-3p+l}\|_r^{\frac{1}{2(m-1)}}\cdot \|e_{J_k}\|_r}{\|e_{J_k}\|_r^{\frac 12}}
&= \frac{\left(\|e_{J_k-3p+l}\|_r\cdot \|e_{J_k}\|_r^{m-1}\cdot \|e_\rho\|_r\right)^{\frac 1{2(m-1)}}}
{ \|e_\rho\|_r^{\frac 1{2(m-1)}}}\\
 &\leq C \|e_{mJ_k-3p+l+\rho}\|_q^{\frac 1{2(m-1)}}.
\end{align*} 
Hence,
\[ \veps \|e_{J_k}\|_r \leq C \max_{0\leq l\leq p} \left(\frac { \|e_{mJ_k-3p+l+\rho}\|_q}{w_1\cdots w_{mJ_k-3p+l+\rho}}\right)^{\frac 1{2(m-1)}}   \]
and this goes to zero as $k$ tends to $+\infty$.

Regarding \eqref{condconv:2}, we first write 
\begin{align*}
|d_j|\cdot \|e_{J_k-3p+j+\rho}\|_r &\leq C \frac{\|e_{J_k-3p+j+\rho}\|_r}{w_1\cdots w_{mJ_k-3p+j+\rho}}\times \min_{0\leq l\leq p} \left(\frac{w_1\cdots w_{mJ_k-3p+l+\rho}}{\|e_{J_k-3p+l}\|_r}\right)^{\frac12}\\
&\quad\quad\quad\times
\max\left(\|e_{J_k}\|_r,(w_1\cdots w_{mJ_k+\rho})^{\frac 1m}\right)^{\frac{m-1}{2}}   \\
&\leq C \frac{\|e_\rho\|_r^{\frac 12}\cdot \|e_{J_k-3p+j+\rho}\|_r^\frac12}{(w_1\cdots w_{mJ_k-3p+j+\rho})^{\frac12}}\\
&\quad\quad\quad\times \max\left(\|e_{J_k}\|_r,(w_1\cdots w_{mJ_k+\rho})^{\frac 1m}\right)^{\frac{m-1}{2}},
\end{align*}
where the last line comes from the continuity of the product, more precisely from 
$$ \|e_{J_k-3p+l+\rho}\|_r \leq \|e_{J_k-3p+l}\|_r\cdot \|e_\rho\|_r.$$

Assume first that the maximum is attained for $\|e_{J_k}\|_r$. In that case, using (c) in a similar way we write
\[ \frac{\|e_{J_k-3p+j+\rho}\|_r}{ w_1\cdots w_{mJ_k-3p+j+\rho}} \|e_{J_k}\|_r^{m-1}\leq C \frac{\|e_{mJ_k-3p+j+\rho}\|_{q}}{w_1\cdots w_{mJ_k-3p+j+\rho}} \]
and the last parcel goes to zero. If the maximum is attained for $(w_1\cdots w_{mJ_k+\rho})^{\frac 1m}$, 
we now write
\begin{gather*}
\frac{\|e_{J_k-3p+j+\rho}\|_r (w_1\cdots w_{mJ_k+\rho})^{\frac{m-1}m}}{ w_1\cdots w_{mJ_k-3p+j+\rho}}
= \frac{\bigl(\|e_{J_k-3p+j+\rho}\|_r^m (w_1\cdots w_{mJ_k+\rho})^{m-1}\bigr)^{\frac{1}{m}}}{w_1\cdots w_{mJ_k-3p+j+\rho}}
\\
\leq C_1 \frac{\left ( (w_1\cdots w_{mJ_k+\rho})^{m-1} \|e_{mJ_k-m(3p-j)+m\rho} \|_{q}\right)^{\frac 1m} }{w_1\cdots w_{mJ_k-3p+j+\rho}}.
\end{gather*}
Now, using Corollary \ref{cor:wscauchy} for $n=mJ_k$, $N=3p$  and $s=3p-j$, we get the existence of $C_2>0$ and $q'\geq q$ (which does not depend on $k$) such that 
\begin{align*}
  (w_1\cdots w_{mJ_k+\rho} )^{m-1} \|e_{mJ_k-m(3p-j)+m\rho}\|_q &\leq C_2(w_1\cdots w_{mJ_k-3p+j+\rho})^{m-1}\\
  &\quad\quad\quad\times \|e_{mJ_k-3p+j+\rho}\|_{q'} 
\end{align*}
so that 
\[  \frac{\|e_{J_k-3p+j+\rho}\|_r (w_1\cdots w_{mJ_k+\rho})^{\frac{m-1}m}}{ w_1\cdots w_{mJ_k-3p+j\rho}}
\leq C_1 C_2^{\frac 1m} \left(\frac{\|e_{mJ_k-3p+j+\rho}\|_{q'}}{ w_1\cdots w_{mJ_k-3p+j+\rho}}\right)^{\frac 1m} \]
and this goes to zero.

Finally, let us prove \eqref{condconv:3}. The proof is very similar. Indeed, for all $l=0,\dots,p$, 
\begin{align*}
 &\left(\frac{\|e_{J_k-3p+l+\rho}\|_r}{w_1\cdots w_{mJ_k-3p+l+\rho}}\right)^{\frac m{2(m-1)}} (w_1\cdots w_{mJ_k+\rho})^{\frac 12}\\ 
&\quad\quad\quad\quad\leq \frac{\left(\|e_{J_k-3p+l+\rho}\|_r^m (w_1\cdots w_{mJ_k+\rho})^{m-1}\right)^{\frac1{2(m-1)}}}{(w_1\cdots w_{mJ_k-3p+l+\rho})^{\frac m{2(m-1)}}}\\
&\quad\quad\quad\quad\leq C_1 \frac{\left(\|e_{mJ_k-m(3p-l)+m\rho}\|_q (w_1\cdots w_{mJ_k+\rho})^{m-1}\right)^{\frac1{2(m-1)}}}
{(w_1\cdots w_{mJ_k-3p+l+\rho})^{\frac m{2(m-1)}}}\\
&\quad\quad\quad\quad\leq C_1 C_2^{\frac 1{2(m-1)}} \frac{\left( \|e_{mJ_k-3p+l+\rho}\|_{q'} (w_1\cdots w_{mJ_k-3p+l+\rho})^{m-1}\right)^{\frac 1{2(m-1)}}}
{(w_1\cdots w_{mJ_k-3p+l+\rho})^{\frac m{2(m-1)}}}
\\
&\quad\quad\quad\quad\leq C_1 C_2^{\frac 1{2(m-1)}}\left(\frac{\|e_{mJ_k-3p+l+\rho}\|_{q'}} {w_1\cdots w_{mJ_k-3p+l+\rho}}\right)^{\frac 1{2(m-1)}},
\end{align*}
which achieves the proof of \eqref{condconv:3}.

We now sketch briefly the proof when $m=1$. We still set  $\rho =\beta_2s_2+\dots +\beta_ds_d$ and we now consider a sequence $(J_k)$ satisfying 
$$ (w_1\cdots w_{J_k+l+\rho})^{-1}e_{J_k+l+\rho}\xrightarrow{k\to+\infty} 0$$
for all $l=0,\dots,p$.
We define
\begin{gather*}
u_1=x_1+\sum_{j=0}^pd_je_{J_k+j+\rho},\\
u_i=x_i+\eta_ie_{s_i},\text{  for  }i=2,...,d,
\end{gather*}
where 
$$
d_j=\frac{y_jw_1\dots w_j}{\eta_2^{s_2}\dots \eta_d^{s_d}w_1\dots w_{J_k+j+\rho}}.
$$
Setting $N=J_k+\rho$ for $k$ sufficiently large, it is easy to check that $u_i\in U_i$, $i=1,\dots ,d$, $B_w^N(u^{\alpha})=0$ if $\alpha \in A\setminus \{ \beta \}$, and $B_w^N(u^{\beta})\in V$, which concludes the proof. 
\end{proof}
85

\section{Frequently and upper frequently hypercyclic algebras}

\subsection{How to get upper frequently hypercyclic algebras}\label{sec:fhc}

In \cite{BoGre18}, following the proof made in \cite{BAYRUZSA} that the set of upper frequently hypercyclic vectors is either empty or residual,
the authors gave an analogue to Birkhoff's transitivity theorem for upper frequent hypercyclicity. We adapt it in order to get upper frequently hypercyclic algebras.

\begin{proposition}\label{prop:ufhccriterion}
Let $T$ be a continuous operator on an $F$-algebra $X$ satisfying the following condition: for each $I\in\mathcal P_f(\mathbb N)\backslash \{\varnothing\}$, there exists $m_0\in I$ such that, 
for each non-empty open subset $V$ of $X$ and each neighbourhood $W$ of the origin, there is $\delta >0$ such that for each non-empty open subset $U$ 
and each $N_0 \in \mathbb{N}$, there is $u\in U$ and $N\geq N_0$ satisfying
$$
\frac{1}{N+1}\card\left\{p\leq N: T^p(u^m)\in W \,\, \mbox{for} \,\, m\in I\backslash\{m_0\}\,\, \mbox{and} \,\, T^p(u^{m_0})\in V\right\}>\delta.
$$
Then $T$ admits an upper frequently hypercyclic algebra.
\end{proposition}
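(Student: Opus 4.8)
The plan is to run a Baire category argument directly in $X$ (this is the one-variable case, since the hypothesis only involves powers $u^m$), combining the scheme of Theorem \ref{thm:generalcriterion} with the density layer of the upper-frequent-hypercyclicity transitivity theorem of \cite{BoGre18}. Fix a countable basis $(V_k)$ of nonempty open subsets of $X$. For a finite nonempty $I\subset\NN$ with $0\notin I$, let $m_0=m_0(I)\in I$ be the index provided by the hypothesis, and for $s\geq 1$ set
$$E(I,s)=\Big\{Q=\textstyle\sum_{m\in I}\hat Q(m)z^m:\ \hat Q(m_0)=1,\ \sup_{m\in I}|\hat Q(m)|\leq s\Big\},$$
a compact subset of $\CC[z]$. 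Given also $k\geq 1$, I would choose a nonempty open $V\subset V_k$ and $\veps>0$ with $V+B\big(0,(s+1)\card(I)\veps\big)\subset V_k$ and put $W=B(0,\veps)$; the hypothesis applied to $I,V,W$ then yields a number $\delta=\delta(I,s,k)>0$. The purpose of these choices is that whenever $T^p(u^{m_0})\in V$ and $T^p(u^m)\in W$ for all $m\in I\setminus\{m_0\}$, the inequality \eqref{eq:fnorm} gives $T^p(Q(u))\in V_k$ \emph{simultaneously for every} $Q\in E(I,s)$, since $Q(u)=u^{m_0}+\sum_{m\neq m_0}\hat Q(m)u^m$ with all coefficients bounded by $s$.

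Next I would introduce, for $N_0\geq 1$,
$$\mathcal A(I,s,k,N_0)=\Big\{u\in X:\ \forall Q\in E(I,s),\ \exists N\geq N_0,\ \tfrac{1}{N+1}\card\{p\leq N:\ T^p(Q(u))\in V_k\}>\delta\Big\},$$
and first check that each such set is open. For $u_0\in\mathcal A(I,s,k,N_0)$, every $Q\in E(I,s)$ comes with some $N_Q\geq N_0$; by continuity of $(u,Q')\mapsto T^p(Q'(u))$ on the finitely many indices $p\leq N_Q$ together with openness of $V_k$, the good times persist on a neighbourhood of $(u_0,Q)$, and compactness of $E(I,s)$ produces one neighbourhood of $u_0$ working for all $Q$, exactly as $\mathcal A(A,\beta,s,k)$ was shown to be open in Theorem \ref{thm:generalcriterion}. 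Hence $\mathcal A(I,s,k):=\bigcap_{N_0}\mathcal A(I,s,k,N_0)$ is a $G_\delta$ set, and unwinding the quantifiers shows that $u\in\mathcal A(I,s,k)$ forces $\overline{\dens}\{p:\ T^p(Q(u))\in V_k\}\geq\delta$ for every $Q\in E(I,s)$.

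Then I would prove density of each $\mathcal A(I,s,k,N_0)$. Given a nonempty open $U$, the hypothesis (with the same $\delta$ fixed above) furnishes $u\in U$ and $N\geq N_0$ for which the set of $p\leq N$ satisfying $T^p(u^{m_0})\in V$ and $T^p(u^m)\in W$ $(m\neq m_0)$ has cardinality $>\delta(N+1)$; by the first paragraph these $p$'s are return times of $Q(u)$ to $V_k$ for \emph{all} $Q\in E(I,s)$ at once, so $u\in U\cap\mathcal A(I,s,k,N_0)$. Thus every $\mathcal A(I,s,k,N_0)$ is dense and open, and the residual set $R=\bigcap_{I,s,k}\mathcal A(I,s,k)$ (a countable intersection, $X$ being a Baire space) is nonempty. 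For $u\in R$ and any nonzero $P$ with $P(0)=0$, writing $P=\sum_{m\in I}\hat P(m)z^m$ and normalising $Q=P/\hat P(m_0)\in E(I,s)$ for $s$ large enough, the containment $u\in\mathcal A(I,s,k)$ for all $k$ shows that $Q(u)$ returns to each $V_k$ with positive upper density, i.e. $Q(u)$ is upper frequently hypercyclic; as $P(u)=\hat P(m_0)Q(u)$ is a nonzero scalar multiple and upper frequent hypercyclicity is invariant under nonzero scalars, $P(u)$ is upper frequently hypercyclic too. Hence the algebra $\{P(u):P(0)=0\}$ is an upper frequently hypercyclic algebra.

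The main obstacle I anticipate is the openness/uniformity step: a single vector $u$ must serve the whole compact family $E(I,s)$ of normalised polynomials simultaneously. The mechanism making this possible is that the good times are extracted from the $Q$-independent conditions $T^p(u^{m_0})\in V$ and $T^p(u^m)\in W$, so the density estimate passes to $T^p(Q(u))$ uniformly in $Q$; the normalisation $\hat Q(m_0)=1$ is precisely what keeps $E(I,s)$ compact, and the only remaining point to record is the harmless passage from $Q(u)$ back to $P(u)$ through a nonzero scalar.
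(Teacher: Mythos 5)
Your proof is correct and follows essentially the same Baire-category route as the paper: the same countable family of dense open sets whose density comes directly from the hypothesis, the same extraction of a residual set of generators, and the same final reduction $P\mapsto P/\hat P(m_0)$ combined with the scalar invariance of $UFHC(T)$. The only difference is organizational: you carry the compact family $E(I,s)$ of normalised polynomials through the definition of the sets (so openness requires the compactness argument of Theorem \ref{thm:generalcriterion}), whereas the paper indexes its sets by basis pairs $(V_k,W_j)$ and by the $Q$-independent conditions on the powers $u^m$ — which makes openness immediate — and only brings the polynomial in at the very end; both devices work for exactly the reason you point out, namely that the good return times are determined by conditions not depending on $Q$.
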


\begin{proof}
Let $(V_k)_{k}$ be a basis for the topology of $X$ and let $(W_j)_{j}$ be a basis of open neighbourhoods of the origin. 
By the assumption for each $I\in\mathcal P_f(\mathbb N)\backslash \{\varnothing\}$, there exists $m_0=m_0(I)$ such that, 
for each $k,j$, there is $\delta _{k,j,I}>0$ such that  for each non-empty open subset $U$ and each $N_0 \in \mathbb{N}$, there is $u\in U$ and $N\geq N_0$ satisfying
\begin{equation*}
\frac{1}{N+1}\card\left\{p\leq N: T^p(u^m)\in W_j \,\, \mbox{for} \,\, m\in I\backslash\{m_0\} \,\, \mbox{and} \,\, T^p(u^{m_0})\in V_k\right\}>\delta _{k,j,I}.
\end{equation*}
 We set 
\begin{align*}
A=\bigcap_{k,j,I,N_0\geq 1}\bigcup_{N\geq N_0}\Big\{u\in X&: \frac{1}{N+1}\card\big\{p\leq N: T^p(u^m)\in W_j \,\, \mbox{for} \,\, m\in I\backslash\{m_0\} \,\, \\ &\mbox{and} \,\, 
T^p(u^{m_0})\in V_k\big\}>\delta_{k,j,I}
\Big\}
\end{align*}
and show that $A$ is residual.

For fixed $k,j,I,N_0$ the set 
 \begin{align*}
 \bigcup_{N\geq N_0}\Bigg\{u\in X: &\ \frac{1}{N+1}\card\big\{p\leq N: T^p(u^m)\in W_j \,\, \mbox{for} \,\, m\in I\backslash\{m_0\} \,\, \mbox{and}\\
& \,\, T^p(u^{m_0})\in V_k\big\}>\delta_{k,j,I}\Big\}
 \end{align*}
is clearly open, and the definition of $\delta_{k,j,I}$ implies that it is also dense. By the Baire category theorem $A$ is residual.

Next, we check that if $u$ belongs to $A$ and if $P\in\CC[z]$ is not constant, with $P(0)=0$, then $P(u)\in UFHC(T)$.
We write $P(z)=\sum_{m\in I}\hat P(m)z^m$ for some $I\in\mathcal P_f(\mathbb N)\backslash \{\varnothing\}$ and $\hat P(m)\neq 0$ for all $m\in I$.
Since $UFHC(T)$ is invariant under multiplication by a scalar we may assume that $\hat P(m_0)=1$. 
Let $V$ be a non-empty open set and find $k,j$ such that $V_k+ (\card(I)-1)\| \hat P \|_{\infty}W_j\subset V$.
For each $N_0 \in \mathbb{N}$ there is $N\geq N_0$ such that 
$$
\frac{1}{N+1}\card\left \{p\leq N:  T^p(u^m)\in W_j \,\, \mbox{for} \,\, m\in I\backslash\{m_0\} \,\, \mbox{and} \,\, T^p(u^{m_0})\in V_k\right\}>\delta_{k,j,I}.
$$
But if $T^p(u^m) \in W_j$ for $m\in I\backslash\{m_0\}$ and $T^p(u^{m_0})\in V_k$,  then $T^p(P(u))\in V$. 
Therefore, 
\begin{align*}
\frac{1}{N+1}\card\left\{p\leq N: T^p(P(u))\in V\right\}> \delta_{k,j,I}
\end{align*}
which yields that $\overline{\textrm{dens}}(\{p\in \mathbb{N}: T^p(P(u))\in V\})> \delta_{k,j,I}>0$.
\end{proof}

We will apply this lemma either for $m_0(I)=\min(I)$ or $m_0=\max(I)$. The proposition gets the simpler forms:
\begin{corollary}\label{cor:ufhccriterion}
 Let $X$ be an $F$-algebra. If for each nonempty subset $V$ of $X$, for each neighbourhood $W$ of the origin, 
 for any positive integers $m_0<m_1$, there is $\delta >0$ such that for each nonempty open subset $U$ 
and each $N_0 \in \mathbb{N}$, there is $u\in U$ and $N\geq N_0$ satisfying
$$
\frac{1}{N+1}\card\left\{p\leq N: T^p(u^m)\in W \,\, \mbox{for} \,\, m\in \{m_0+1,\dots,m_1\}\,\, \mbox{and} \,\, T^p(u^{m_0})\in V\right\}>\delta
$$
then $T$ admits an upper frequently hypercyclic algebra.
\end{corollary}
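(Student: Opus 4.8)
The plan is to obtain Corollary \ref{cor:ufhccriterion} as a direct specialization of Proposition \ref{prop:ufhccriterion}, made by choosing $m_0(I)=\min(I)$ for every $I$. In other words, I would verify that the hypothesis listed in Proposition \ref{prop:ufhccriterion} is implied by the (formally stronger) hypothesis assumed here, and then quote the Proposition verbatim.

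First I would fix an arbitrary $I\in\mathcal P_f(\mathbb N)\backslash\{\varnothing\}$ and set $m_0=\min(I)$ together with $m_1=\max\bigl(I\cup\{m_0+1\}\bigr)$, so that $m_0<m_1$ and $I\subseteq\{m_0\}\cup\{m_0+1,\dots,m_1\}$ in all cases (the adjustment by $m_0+1$ is only needed to cover the degenerate case where $I$ is a singleton). Given a non-empty open set $V$ and a neighbourhood $W$ of the origin, I would apply the present hypothesis to the pair $m_0<m_1$ to produce the corresponding $\delta>0$.

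The crucial, and essentially only, observation is the inclusion $I\setminus\{m_0\}\subseteq\{m_0+1,\dots,m_1\}$. It entails that, for any $u$ and any $p$, the condition ``$T^p(u^m)\in W$ for all $m\in\{m_0+1,\dots,m_1\}$'' is at least as restrictive as ``$T^p(u^m)\in W$ for all $m\in I\setminus\{m_0\}$''. Consequently, for every non-empty open $U$ and every $N_0\in\mathbb N$, the vector $u\in U$ and the integer $N\geq N_0$ supplied by the present hypothesis satisfy
\[
\frac{1}{N+1}\card\left\{p\leq N:\ T^p(u^m)\in W\ \text{for}\ m\in I\setminus\{m_0\}\ \text{and}\ T^p(u^{m_0})\in V\right\}>\delta,
\]
because the counting set on display contains the (smaller) set counted in the hypothesis of this Corollary. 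This is exactly the condition required in Proposition \ref{prop:ufhccriterion} for the choice $m_0(I)=\min(I)$.

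Having verified the hypothesis of Proposition \ref{prop:ufhccriterion}, I would conclude immediately that $T$ admits an upper frequently hypercyclic algebra. I do not expect any genuine obstacle: the whole argument reduces to the set inclusion above, which converts the stronger counting requirement of the Corollary into the weaker one demanded by the Proposition. The only point deserving a line of care is the singleton case $I=\{m_0\}$, where the clause ``$m\in I\setminus\{m_0\}$'' is vacuous and the displayed estimate reduces to a lower bound on the upper density of $\{p\leq N:\ T^p(u^{m_0})\in V\}$; the choice $m_1=m_0+1$ ensures the present hypothesis still applies and the inclusion remains valid.
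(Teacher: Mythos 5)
Your proposal is correct and matches the paper's intent exactly: the paper obtains this corollary by specializing Proposition \ref{prop:ufhccriterion} to the choice $m_0(I)=\min(I)$, which is precisely your argument via the inclusion $I\setminus\{m_0\}\subseteq\{m_0+1,\dots,m_1\}$. Your extra care with the singleton case $I=\{m_0\}$ is a welcome detail the paper leaves implicit.
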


\begin{corollary}\label{cor:ufhccriterionmax}
  Let $X$ be an $F$-algebra. If for each nonempty subset $V$ of $X$, for each neighbourhood $W$ of the origin, 
 for any positive integer $m$, there is $\delta >0$ such that for each nonempty open subset $U$ 
and each $N_0 \in \mathbb{N}$, there is $u\in U$ and $N\geq N_0$ satisfying
$$
\frac{1}{N+1}\card\left\{p\leq N: T^p(u^n)\in W \,\, \mbox{for} \,\, n\in \{1,\dots,m-1\}\,\, \mbox{and} \,\, T^p(u^{m})\in V\right\}>\delta
$$
then $T$ admits an upper frequently hypercyclic algebra.
\end{corollary}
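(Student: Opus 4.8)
The plan is to obtain this statement as a direct specialization of Proposition \ref{prop:ufhccriterion} via the choice $m_0(I):=\max(I)$, in exact parallel with the way Corollary \ref{cor:thmgenmax} follows from Theorem \ref{thm:generalcriterion} (there through $\beta_A=\max A$), and dual to Corollary \ref{cor:ufhccriterion}, which corresponds to the opposite choice $m_0(I)=\min(I)$. Thus the only thing I need to verify is that the hypothesis of the present corollary, applied to the right integer, delivers the hypothesis of Proposition \ref{prop:ufhccriterion} for this $m_0$.

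Concretely, I would fix $I\in\mathcal P_f(\mathbb N)\backslash\{\varnothing\}$, set $m:=\max(I)$ and declare $m_0(I):=m$ (which indeed lies in $I$, as the proposition requires). Given a non-empty open set $V$ and a neighbourhood $W$ of the origin, I apply the assumption of the corollary to this single integer $m$, obtaining a $\delta>0$ with the stated property. The crucial elementary observation is that the elements of $I\backslash\{m_0\}$ are positive integers strictly smaller than $m$, so $I\backslash\{m_0\}\subseteq\{1,\dots,m-1\}$. Consequently, for every $u$ and every $p$, the requirement $T^p(u^n)\in W$ for all $n\in\{1,\dots,m-1\}$ is stronger than the requirement $T^p(u^n)\in W$ for all $n\in I\backslash\{m_0\}$, while $T^p(u^m)\in V$ is literally $T^p(u^{m_0})\in V$ since $m=m_0$.

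It follows that, for this $\delta$ and for any non-empty open $U$ and any $N_0\in\mathbb N$, the pair $u\in U$, $N\geq N_0$ furnished by the corollary's hypothesis satisfies
$$
\{p\leq N: T^p(u^n)\in W\ (n\in\{1,\dots,m-1\}),\ T^p(u^m)\in V\}\subseteq\{p\leq N: T^p(u^n)\in W\ (n\in I\backslash\{m_0\}),\ T^p(u^{m_0})\in V\}.
$$
Hence the cardinality of the second set is at least that of the first, and the density lower bound $>\delta$ passes to it. This is precisely the condition required by Proposition \ref{prop:ufhccriterion} for the chosen $m_0(I)$; since $I$ was arbitrary, that proposition applies and produces an upper frequently hypercyclic algebra for $T$.

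I expect no genuine obstacle here: the entire content is the set inclusion displayed above, and the mild care needed is only to match the asymmetric roles of the ``small'' powers $\{1,\dots,m-1\}$ and the ``main'' power $m$ in the present statement to the roles of $I\backslash\{m_0\}$ and $m_0$ in Proposition \ref{prop:ufhccriterion}. (One should also keep in mind the harmless clash between the fixed integer $m=\max(I)$ here and the running index denoted $m$ inside the proposition, which is why I use $n$ for the running index throughout.)
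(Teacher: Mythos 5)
Your proposal is correct and matches the paper's (implicit) argument exactly: the paper derives this corollary from Proposition \ref{prop:ufhccriterion} precisely by the choice $m_0(I)=\max(I)$, and your verification via the inclusion $I\backslash\{m_0\}\subseteq\{1,\dots,m-1\}$ is the only point that needs checking.
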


\subsection[Upper frequently hypercyclic algebras I]{Existence of upper frequently hypercyclic algebras for weighted backward shifts - coordinatewise products}

We intend to  apply the previous method to backward shift operators and prove Theorem \ref{thm:ufhcws}. The unconditional convergence of the series $\sum_{n\geq 1}(w_1\cdots w_n)^{-1}e_n$ will be used throughout the following lemma.
\begin{lemma}\label{lem:unconditionalfhc}
Let $(w_n)$ be a weight sequence such that $\sum_{n\geq 1}(w_1\cdots w_n)^{-1}e_n$ converges unconditionally. Then, for all $\veps>0$, for all $p>0$, for all $M>0$, there exists $N\geq p$ such that, for each sequence of complex numbers $(y(n,l))_{n\geq N,\ 0\leq l\leq p}$ with $|y(n,l)|\leq M$ for all $n,l$, then 
\[ \left\|\sum_{n\geq N}\sum_{l=0}^p \frac{y(n,l)}{w_{l+1}\cdots w_{n+l}}e_{n+l}\right\|\leq\veps. \]
\end{lemma}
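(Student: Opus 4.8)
The plan is to recognize the summands appearing in the statement as scalar multiples of the canonical building blocks of the hypothesised unconditionally convergent series, and then to reduce everything to the characterization of unconditional convergence recalled in the Notations. Write $x_k=(w_1\cdots w_k)^{-1}e_k$, so that by hypothesis $\sum_{k\geq 1}x_k$ converges unconditionally. The key algebraic identity is
\[ \frac{1}{w_{l+1}\cdots w_{n+l}}=(w_1\cdots w_l)\cdot\frac{1}{w_1\cdots w_{n+l}}, \]
which shows that $\frac{1}{w_{l+1}\cdots w_{n+l}}e_{n+l}=(w_1\cdots w_l)\,x_{n+l}$. Consequently the double sum in the statement equals $\sum_{l=0}^p (w_1\cdots w_l)\sum_{n\geq N} y(n,l)\,x_{n+l}$, a finite combination of scaled tails of the series $\sum_k x_k$.

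First I would split the outer sum over the finitely many indices $l\in\{0,\dots,p\}$ using the triangle inequality, so that it suffices to bound each term $\bigl\|(w_1\cdots w_l)\sum_{n\geq N}y(n,l)x_{n+l}\bigr\|$ by $\veps/(p+1)$. For fixed $l$ I reindex by $k=n+l$, turning the inner sum into $\sum_{k\geq N+l}y(k-l,l)\,x_k$, whose coefficient of $x_k$ has modulus at most $M$. Setting $C=M\max_{0\leq l\leq p}(w_1\cdots w_l)$ — a constant depending only on $p$ and $M$, and finite because only finitely many $l$ occur — one may write $(w_1\cdots w_l)\sum_{n\geq N}y(n,l)x_{n+l}=\sum_{k\geq N}\beta_k x_k$ with $\beta_k=0$ for $k<N+l$ and $|\beta_k|\leq C$ otherwise, so that $|\beta_k/C|\leq 1$ for all $k$.

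Now I invoke the characterization of unconditional convergence: given any $\veps'>0$ there is $N_0$ such that $\bigl\|\sum_{k\geq N_0}\alpha_k x_k\bigr\|<\veps'$ whenever $\sup_k|\alpha_k|\leq 1$. Choosing $N\geq\max(N_0,p)$ and applying this to $\alpha_k=\beta_k/C$ (which are supported on $k\geq N\geq N_0$ and bounded by $1$) gives $\bigl\|\sum_{k\geq N}(\beta_k/C)x_k\bigr\|<\veps'$. The only genuine obstacle is that the $F$-norm is not positively homogeneous, so reinserting the factor $C$ is not free; I handle this with the $F$-norm inequality \eqref{eq:fnorm}, namely $\|\lambda z\|\leq(|\lambda|+1)\|z\|$, which yields
\[ \Bigl\|(w_1\cdots w_l)\sum_{n\geq N}y(n,l)x_{n+l}\Bigr\|=\Bigl\|C\sum_{k\geq N}(\beta_k/C)x_k\Bigr\|\leq (C+1)\veps'. \]
Taking $\veps'=\veps/\bigl((p+1)(C+1)\bigr)$ at the outset and summing the resulting bounds $\veps/(p+1)$ over the $p+1$ values of $l$ gives the desired estimate $\leq\veps$, with $N\geq p$ as required.
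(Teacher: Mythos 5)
Your argument is correct and follows essentially the same route as the paper's: split the finite sum over $l$ by the triangle inequality, normalize the bounded coefficients and pull the constant out via the almost-homogeneity \eqref{eq:fnorm} of the $F$-norm, and conclude from the characterization of unconditional convergence of the tails. The only difference is cosmetic — you make explicit, via the identity $(w_{l+1}\cdots w_{n+l})^{-1}e_{n+l}=(w_1\cdots w_l)(w_1\cdots w_{n+l})^{-1}e_{n+l}$, the reduction to the single hypothesized series, which the paper states without detail in its first sentence (and note that $\max_l(w_1\cdots w_l)$ should be $\max_l|w_1\cdots w_l|$ if the weights are allowed to be complex).
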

\begin{proof}
We first observe that the convergence of the series involved follows from the unconditional convergence of each series $\sum_{n\geq 1}(w_{l+1}\cdots w_{n+l})^{-1}e_{n+l}$. 
Setting $z(n,l)=y(n,l)/M$ and using the triangle inequality, the (almost) homogeneity of the $F-$norm implies that
\[\left\|\sum_{n\geq N}\sum_{l=0}^p \frac{y(n,l)}{w_{l+1}\cdots w_{n+l}}e_{n+l}\right\|\leq (M+1)\sum_{l=0}^p \left\|\sum_{n\geq N}\frac{z(n,l)}{w_{l+1}\cdots w_{n+l}}e_{n+l}\right\|.\]
The existence of an $N$ such that the last term is less than $\veps$ now follows directly from the unconditional convergence of the series $\sum_{n}(w_{l+1}\cdots w_{n+l})^{-1}e_{n+l}$
(see the introduction).
\end{proof}

\begin{proof}[Proof of Theorem \ref{thm:ufhcws}]
Let $m_0<m_1$ be two positive integers.  Let $V,W\subset X$ be open and non-empty with $0\in W$.
Let $p\geq 0$, $\veps>0$ and $v=\sum_{l=0}^p v_l e_l$ be such that $B(v,\veps)\subset V$ and $B(0,2\veps)\subset W$. We also set $M=\max(1,\|v\|_\infty)^{m_1/m_0}$. Let $N\in\mathbb N$ be given by Lemma \ref{lem:unconditionalfhc} for these values of $\veps$, $p$ and $M$. Without loss of generality, we may assume $N>p$. We set $\delta=\frac1{2N}$. Let $U\subset X$ be open and non-empty and let $x\in U$ with finite support. We also define, for $k\geq 0$, 
$$v(k)=\sum_{l=0}^p \frac{v_l^{1/m_0}}{(w_{l+1}\cdots w_{k+l})^{1/m_0}}e_{k+l}.$$
Let $N_1$ be very large (precise conditions on it will be given later; 
for the moment we just assume that $N_1$ is bigger than the maximum of the support of $x$). We finally set
$$u=x+\sum_{k\geq N_1} v(Nk).$$
The unconditional convergence of the series $\sum_k (w_{l+1}\cdots w_{k+l})^{-1/m_0}e_{k+l}$ ensures that $u$ is well-defined and that $\|u-x\|<\veps$ provided $N_1$ is large enough. 
Let now $m\in\{m_0,\cdots,m_1\}$ and $j\geq N_1$. Then, since $x$ and the $v(kN)$, $k\geq N_1$, have pairwise disjoint support,
\begin{eqnarray*}
B_w^{Nj}u^m&=&\sum_{l=0}^p \frac{v_l^{m/m_0}}{(w_{l+1}\cdots w_{jN+l})^{\frac{m}{m_0}-1}}e_l\\
&&\quad\quad+\sum_{k>j}\sum_{l=0}^p \frac{v_l^{m/m_0}w_{(k-j)N+l+1}\cdots w_{kN+l}}{(w_{l+1}\cdots w_{kN+l})^{\frac m{m_0}}}e_{(k-j)N+l}\\
&=:&z(1,j,m)+z(2,j,m).
\end{eqnarray*}	
If $m=m_0$, then $z(1,j,m)=v$ whereas, if $m\in\{m_0+1,\dots,m_1\}$, then since $|v_l|^{m/m_0}\leq M$ 
and since the sequences $(w_{l+1}\cdots w_{l+n})_n$ go to $+\infty$ (recall that $X$ has a continuous norm),
we may adjust $N_1$ so that $\|z(1,j,m)\|<\veps$ for all $j\geq N_1$. On the other hand, we may write
\[ z(2,j,m)=\sum_{n\geq N}\sum_{l=0}^p \frac{y(j,n,l,m)}{w_{l+1}\cdots w_{n+l}}e_{n+l}\]
where, for $s=k-j\geq 1$, $l=0,\dots,p$, 
$$y(j,sN,l,m)=\frac{v_l^{m/m_0}}{(w_{l+1}\cdots w_{(s+j)N})^{\frac m{m_0}-1}}$$
and $y(j,n,l,m)=0$ if $n$ is not a multiple of $N$. 
Again taking $N_1$ large enough guarantees that $|y(j,n,l,m)|\leq M$ for all $j\geq N_1$, all $n\geq N$, all $l=0,\dots,p$ and all $m=m_0,\dots,m_1$.
By the choice of $N$, we get $\|z(2,j,m)\|<\veps$ for all $j\geq N_1$. Summarizing, we have proved that, for all $j\geq N_1$, $B_w^{Nj}u^{m_0}\in V$ and $B_w^{Nj}u^m\in W$
for all $m\in\{m_0+1,\dots,m_1\}$. Hence we may apply Corollary \ref{cor:ufhccriterion} to prove that $B_w$ supports an upper frequently hypercyclic algebra.	
\end{proof}

The Theorem \ref{thm:ufhcws} can be applied to the following examples: $\lambda B$ on $\ell_p$ for $\lambda>1$, 
$\lambda D$ on $H(\CC)$ for $\lambda>0$ or $B_w$ on $c_0$ with $w_n=1+\lambda/n$, $\lambda>0$. Regarding this last weight, on $\ell_p$, $B_w$ is upper
frequently hypercyclic if and only if $\lambda>1/p$. However we do not know the answer to the following question because of the divergence
of $\sum_n (w_1\cdots w_n)^{-1/m}e_n$ for $m>\lambda p$.
\begin{question}
Let $X=\ell_p$ and $w_n=1+\lambda/n$ for $\lambda>1/p$. Does $B_w$ supports an upper frequently hypercyclic algebra? 
\end{question}

On $\ell_p$, it is known that $B_w$ is (upper) frequently hypercyclic if and only if $\sum_n (w_1\cdots w_n)^{-p}$ is convergent (see \cite{BAYRUZSA}). 
\begin{question}
Let $X=\ell_p$ endowed with the coordinatewise product and let $w=(w_n)$ be an admissible weight sequence. Assume that $B_w$ supports an upper frequently hypercyclic algebra. Does this imply that $\sum_n (w_1\cdots w_n)^{-\gamma}$ is convergent for all $\gamma>0$?
\end{question}

\subsection[Upper frequently hypercyclic algebras II]{Existence of upper frequently hypercyclic algebras for weighted backward shifts - convolution product}

We now study the existence of an upper frequently hypercyclic algebra for weighted backward shifts when the underlying Fréchet algebra is endowed
with the convolution product. We shall give a general statement encompassing the case of the multiples of the backward shift and of the derivation operator.

\begin{theorem}\label{thm:ufhcconvolution}
 Let $X$ be a regular Fréchet sequence algebra for the Cauchy product and let $(w_n)$ be an admissible weight sequence. Assume that
 \begin{enumerate}[(a)]
  \item $\sum_{n\geq 1}(w_1\cdots w_n)^{-1}e_n$ converges unconditionally.
  \item for all $m\geq 2$, there exists $c\in (0,1)$ such that 
  \[ \lim_{\sigma\to+\infty} \sup_{z\in c_{00}\cap B_{\ell_\infty}}
   \left\|\sum_{n\geq c\sigma} \frac{z_n (w_1\cdots w_{m\sigma})^{(m-1)/m}}{w_1\cdots w_{(m-1)\sigma+n}}e_n\right\|=0. 
  \]
Then $B_w$ admits an upper frequently hypercyclic algebra.
 \end{enumerate}
\end{theorem}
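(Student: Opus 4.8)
The plan is to verify the hypotheses of Corollary~\ref{cor:ufhccriterionmax}, adapting the single-scale construction of the proof of Theorem~\ref{thm:wscauchy} to a positive-density family of scales, exactly as the proof of Theorem~\ref{thm:ufhcws} spreads the coordinatewise construction over the arithmetic progression $(Nk)$. Fix a positive integer $m$, a non-empty open set $V$ and a neighbourhood $W$ of $0$. The case $m=1$ is nothing but the upper frequent hypercyclicity of $B_w$ itself, which follows from (a) and the sufficient condition recalled in the introduction, so I would assume $m\geq2$. Choose $v=\sum_{l=0}^p v_le_l\in V$ and $r,\delta_0>0$ with $B(v,\delta_0)\subset V$ for $\|\cdot\|_r$ and $B(0,2\delta_0)\subset W$. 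As in the convolution setting, I use $\beta=\max$: the role of $u^m$ is to reproduce $v$, while the lower powers $u^n$, $n<m$, will be sent to $0$ because their support lies below the relevant iterate.

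For the construction I would place, at a linearly spaced family of scales $\sigma_k=\sigma_0+ks$ (with $s$ a fixed multiple of $p$, large enough to separate the packets), a packet
\[ \psi_k=\sum_{l=0}^p d_{k,l}\,e_{\sigma_k-3p+l}+\varepsilon_k e_{\sigma_k}, \]
of exactly the type used in the proof of Theorem~\ref{thm:wscauchy}: $\varepsilon_k\approx(w_1\cdots w_{m\sigma_k})^{-1/m}$, with the same safety factor ensuring $\varepsilon_k^m\,w_1\cdots w_{m\sigma_k}\to0$, and $d_{k,l}$ chosen so that the cross term $m\varepsilon_k^{m-1}d_{k,l}e_{m\sigma_k-3p+l}$, after applying $B_w^{N_k}$ with $N_k=m\sigma_k-3p$, equals $v_le_l$. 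I then set $u=x+\sum_{k\geq k_0}\psi_k$ for $x\in U$ of finite support and $k_0$ large; unconditional convergence (a) makes $u$ well defined and guarantees $u\in U$ once $k_0$ is large. The hitting times are the $N_k$, spaced by $ms$, so they have lower density $\geq1/(ms)=:\delta$, which will be the constant required by Corollary~\ref{cor:ufhccriterionmax}.

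The heart of the argument is to show that, for every large $k$, $B_w^{N_k}(u^m)\in V$ and $B_w^{N_k}(u^n)\in W$ for $1\leq n<m$. Expanding $u^m$ and $u^n$ and applying $B_w^{N_k}$, I would split the monomials into three groups. First, the self-contribution of $\psi_k$ reproduces $v$ up to the controlled error $\varepsilon_k^m(w_1\cdots w_{m\sigma_k})e_{3p}/(w_1\cdots w_{3p})$, exactly as in Theorem~\ref{thm:wscauchy}; this is where Corollary~\ref{cor:wscauchy} and regularity (a)--(c) enter. Second, for $n<m$ every monomial using only factors of scale $\leq\sigma_k$ has support $\leq(m-1)\sigma_k<N_k$ and is annihilated by $B_w^{N_k}$; the same bound shows that for $n=m$ the only surviving such monomials are the self-terms already counted in the first group, which is precisely why the lower powers vanish. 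Third, the monomials using factors from the higher packets $k'>k$ survive and form a tail that must be shown to be small uniformly in $k$.

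The main obstacle is the uniform control of this tail, and this is exactly where hypotheses (a) and (b) are consumed. I would split it by type of monomial. The pure powers $\varepsilon_{k'}^m e_{m\sigma_{k'}}$ and the cross terms built only from top bumps produce, after $B_w^{N_k}$, series of the form $\sum_n z_n(w_1\cdots w_n)^{-1}e_n$ with $|z_n|$ bounded, which are small by the unconditional convergence (a) through Lemma~\ref{lem:unconditionalfhc}. The cross terms mixing $m-1$ top bumps of a higher packet with one correction bump $d_{k',l}$ or one factor of $x$ carry the factor $\varepsilon_{k'}^{-(m-1)}\approx(w_1\cdots w_{m\sigma_{k'}})^{(m-1)/m}$ and, after $B_w^{N_k}$, land at positions bounded below by a fixed fraction of the scale; these are precisely the series controlled by hypothesis (b), the supremum over $z\in c_{00}\cap B_{\ell_\infty}$ absorbing the various bounded coefficients produced by the multinomial combinatorics and the cutoff $n\geq c\sigma$ reflecting the lower bound on the surviving positions. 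Once both families are shown to tend to $0$ as the scale grows, all but finitely many $N_k$ satisfy the two membership conditions, so for every $N_0$ one finds $N=N_k\geq N_0$ with $\frac1{N+1}\card\{p\leq N:B_w^p(u^m)\in V\text{ and }B_w^p(u^n)\in W\text{ for }n<m\}\geq\delta'$ for a fixed $\delta'>0$, and Corollary~\ref{cor:ufhccriterionmax} then yields the upper frequently hypercyclic algebra.
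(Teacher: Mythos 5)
Your overall framework (reducing to Corollary~\ref{cor:ufhccriterionmax}, treating $m=1$ via (a), and adapting the packet construction of Theorem~\ref{thm:wscauchy}) is the right starting point, but the way you distribute the packets contains a genuine error. You place one full packet $\psi_k=\sum_l d_{k,l}e_{\sigma_k-3p+l}+\veps_k e_{\sigma_k}$, with its own top bump $\veps_k e_{\sigma_k}$, at each scale of a linearly spaced sequence $\sigma_k=\sigma_0+ks$, and you claim the surviving contributions of the higher packets to $B_w^{N_k}(u^n)$, $n<m$, are controlled by (a) and (b). They are not. Take $n=1$: the term $\veps_{k'}e_{\sigma_{k'}}$ with $\sigma_{k'}\geq N_k$ contributes $\veps_{k'}\frac{w_1\cdots w_{\sigma_{k'}}}{w_1\cdots w_{\sigma_{k'}-N_k}}e_{\sigma_{k'}-N_k}$ to $B_w^{N_k}(u)$. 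Written as $\frac{z_j}{w_1\cdots w_j}e_j$, the coefficient is $z_j=\frac{w_1\cdots w_{\sigma_{k'}}}{(w_1\cdots w_{m\sigma_{k'}})^{1/m}}$, which is unbounded in general, so Lemma~\ref{lem:unconditionalfhc} does not apply; and it is not of the shape appearing in (b) either. Concretely, for $X=\ell_1$ and $w_1\cdots w_n=e^{n^\alpha}$ ($0<\alpha<1$), which satisfies both (a) and (b), the packet $k'$ with $\sigma_{k'}$ closest above $N_k$ sits at bounded distance $\sigma_{k'}-N_k\leq s+O(1)$, and its contribution has modulus $\exp\big((1-m^{\alpha-1})\sigma_{k'}^{\alpha}-O(1)\big)\to+\infty$. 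So $B_w^{N_k}(u)$ is not in $W$ — it is not even bounded. This tension is structural: positive lower density of the hitting times forces linearly spaced packets, and linearly spaced packets force some top bump to land at bounded distance above each hitting time, where the shift amplifies it beyond the reach of (a) and (b). A similar unjustified summation over $k$ occurs when you invoke (a) for $\sum_k\veps_k\|e_{\sigma_k}\|_r$ and (b) for $\sum_k\sum_l\|d_{k,l}e_{\sigma_k-3p+l}\|_r$: both hypotheses give smallness for a single scale $\sigma$, not summability over a sequence of scales.

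The paper's proof sidesteps all of this by exploiting that Corollary~\ref{cor:ufhccriterionmax} only requires, for each $U$ and each $N_0$, \emph{some} $u\in U$ and \emph{some} $N\geq N_0$ realizing a fixed density $\delta$ — not a single $u$ with a positive-lower-density set of hitting times. For each large $\sigma$ it builds one vector $u=x+\sum_{j=c\sigma}^{d\sigma-1}\sum_l d_{j,l}e_{qj+l}+\veps e_{q\sigma}$ with a \emph{single} top bump at $q\sigma$ and a positive-proportion cluster of correction bumps at positions $qj+l$ with $c\sigma\leq j<d\sigma$, all sharing the same $\sigma$ (so (b) applies as one sum). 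The hitting times $E_\sigma=\{(m-1)q\sigma+qj:\ c\sigma\leq j<d\sigma\}$ all exceed $q\sigma$, hence annihilate $u$ itself and every power $u^n$ with $n<m$ exactly ($B_w^s(u^n)=0$), while $B_w^s(u^m)$ reproduces $y$ up to a tail controlled by (a). Since $\card(E_\sigma)/\max(E_\sigma)\to(d-c)/((m-1)q+qd)>0$, this yields the required density. To repair your argument you would essentially have to abandon the infinitely-many-top-bumps design and adopt this one-burst-per-$N_0$ scheme.
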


\begin{proof}
 We shall prove that the assumptions of Corollary \ref{cor:ufhccriterionmax} are satisfied. For $m=1$, this follows from condition (a) which implies that $T$ admits a dense
 set of (upper) frequently hypercyclic vectors. Thus, let us assume that $m\geq 2$ and let $c\in (0,1)$ be given by (b). We also consider $d\in(c,(1+c)/2)\subset (0,1)$. Let $V$ be a non-empty
 open subset of $X$, $W$ a neighbourhood of $0$, $y=\sum_{l=0}^p y_l e_l\in V$ and $\eta>0$ such that $B(0,2\eta)+y\subset V$. Let finally $q>p$ be such that, for all $z\in\ell_\infty$ with $\|z\|_\infty\leq \|y\|_\infty\big(\max(1,w_1,\dots,w_p)\big)^{p+1}$,
 \[ \left\|\sum_{n\geq q}\frac{z_n}{w_1\cdots w_n}e_n\right\|<\eta. \]
 We intend to prove that, for each non-empty open subset $U$ and each $N_0\in\NN$,
 there is $u\in U$ and $N\geq N_0$ satisfying
 \[ \frac1{N+1}\card\left\{s\leq N:B_w^s(u^n)\in W\textrm{ for }n<m\textrm{ and }B_w^s(u^m)\in V\right\}\geq\frac{d-c}{2\big((m-1)q+qd\big)}. \]
 
 More precisely, we shall prove that, for all $\sigma$ large enough, setting 
 \[ E_\sigma=\left\{(m-1)q\sigma+qj: c\sigma\leq j<d\sigma\right\} ,\]
 there exists $u\in U$ such that, for all $s\in E_\sigma$, $B_w^s(u^n)=0$ for $n<m$ and $B_w^s(u^m)\in V$. Since
 \[ \lim_{\sigma\to+\infty} \frac{\card(E_\sigma)}{\max(E_\sigma)}=\frac{d-c}{(m-1)q+qd} ,\]
 we will get the claimed result.
 
 We thus fix $x\in U$ with finite support (we denote by $p'$ the maximum of the support of $x$) and let $\sigma>0$ be such that $p'<c\sigma$. Inspired
 by the proof of Theorem \ref{thm:wscauchy}, we set
 \[ u=x+\sum_{j=c\sigma}^{d\sigma-1} \sum_{l=0}^p d_{j,l}e_{qj+l}+\veps e_{q\sigma} \]
 where
 \begin{align*}
  \veps &= \frac{1}{(w_1\cdots w_{mq\sigma})^{1/m}}\\
  d_{j,l}& = \frac{y_l}{m\veps^{m-1} w_{l+1}\cdots w_{(m-1)q\sigma+qj+l}}.
 \end{align*}
 Let us first prove that, provided that $\sigma$ is large enough, $u$ belongs to $U$.
 Let $r\geq 1$. Since $X$ is regular, there exists $\rho\geq r$ and $C>0$ such that 
 \[ \left\| \veps e_{q\sigma}\right\|_r\leq \frac{C}{(w_1\cdots w_{mq\sigma})^{1/m}}\left\|e_{mq\sigma}\right\|^{1/m}_\rho\xrightarrow{\sigma\to+\infty}0. \]
Furthermore,
\begin{align*}
 \left\| \sum_{j=c\sigma}^{d\sigma-1} \sum_{l=0}^p d_{j,l}e_{qj+l} \right\| &=
 \left\|\sum_{j=c\sigma}^{d\sigma-1}\sum_{l=0}^p \frac{y_l w_1\cdots w_l (w_1\cdots w_{mq\sigma})^{(m-1)/m}}{m w_1\cdots w_{(m-1)q\sigma+qj+l}}e_{qj+l}\right \|\\
 &= \left\|\sum_{n\geq cq\sigma}\frac{z_n (w_1\cdots w_{mq\sigma})^{(m-1)/m}}{w_1\cdots w_{(m-1)q\sigma+n}}e_n\right\|
\end{align*}
for some eventually null sequence $(z_n)$ such that 
$$\|z\|_\infty\leq \|y\|_\infty \big(\max(1,w_1,\dots,w_p)\big)^{p}/m.$$
Assumption (b) allows us to conclude that $\sum_{j=c\sigma}^{d\sigma-1} \sum_{l=0}^p d_{j,l}e_{qj+l}$ tends to zero as $\sigma$ goes to $+\infty$.

Observe now that, for $n<m$, the support of $u^n$ is contained in $[0,nq\sigma]$ so that, for $s\in E_\sigma$, $B_w^s(u^n)=0$. On the other hand,
\[ u^m = z+m\veps^{m-1} \sum_{j=c\sigma}^{d\sigma-1} \sum_{l=0}^p d_{j,l}e_{(m-1)q\sigma+qj+l}+\veps^m e_{mq\sigma} \]
with $\supp(z)\subset [0,(m-2)q\sigma+2qd\sigma]\cup [0,(m-1)q\sigma+p']$. It is not difficult to see that, because $d<(1+c)/2$, 
$\max(\supp(z))\leq (m-1)q\sigma+qc\sigma$ for $\sigma$ large enough. Thus, for any $s=(m-1)q\sigma+qk\in E_\sigma$, 
\[ B_w^s(u^m)=y+\sum_{j=k+1}^{d\sigma-1}\sum_{l=0}^p \frac{y_l}{w_{l+1}\cdots w_{q(j-k)+l}}e_{q(j-k)+l}+\frac{1}{w_1\cdots w_{q\sigma-qk}}e_{q\sigma-qk}. \]
We handle the second term of the right hand side of the equality by writing
\[ \sum_{j=k+1}^{d\sigma-1} \sum_{l=0}^p \frac{y_l}{w_{l+1}\cdots w_{q(j-k)+l}}e_{q(j-k)+l}
=\sum_{n\geq q} \frac{z_n}{w_1\cdots w_n}e_n
\]
for some sequence $(z_n)$ such that $\|z\|_\infty\leq \|y\|_\infty \big(\max(1,w_1,\dots,w_p)\big)^p$. By our choice of $q$, this has $F$-norm less than $\eta$. Finally, $\|e_{q\sigma-qk}\|/(w_1\cdots w_{q\sigma-qk})$ becomes also less than $\eta$ provided $\sigma$, and
thus $q\sigma-qd\sigma$, becomes large enough.
\end{proof}

Assumption (a) in Theorem \ref{thm:ufhcconvolution} is what we need to get an (upper) frequently hypercyclic vector. Assumption (b) is also an assumption around unconditional convergence of the series $\sum_{n\geq 1}(w_1\cdots w_n)^{-1}e_n$. This looks clearer by writing 
\[ \sum_{n\geq c\sigma} \frac{z_n(w_1\cdots w_{m\sigma})^{(m-1)/m}}{w_1\cdots w_{n+(m-1)\sigma}}e_n=\sum_{n\geq c\sigma} \frac{z_n(w_1\cdots w_{m\sigma})^{(m-1)/m}}{w_{n+1}\cdots w_{n+(m-1)\sigma}}\times\frac{e_n}{w_1\cdots w_n}. \]
Although it looks quite technical, it is satisfied by three natural examples (where we always endow the $F$-algebra with the Cauchy product).
\begin{example}Let $X=\ell_1$ and $w_n=\lambda>1$ for all $n\in\NN$. Then $B_w$ supports an upper frequently hypercyclic algebra.
\end{example}
\begin{proof}
 The situation is very simple here because, for all $n\geq 1$, 
 \[  \frac{(w_1\cdots w_{m\sigma})^{(m-1)/m}} {w_{n+1}\cdots w_{n+(m-1)\sigma}}=1, \]
 so that (b) is clearly satisfied. 
\end{proof}
\begin{example}
 Let $X=\ell_1$ and $w_1\cdots w_n=\exp(n^\alpha)$ for all $n\in\NN$ and some $\alpha\in(0,1)$. Then $B_w$ supports an upper frequently hypercyclic algebra.
\end{example}
\begin{proof}
 That (b) is satisfied follows from the classical asymptotic behavior
 \begin{equation}\label{eq:wsexpnalpha}
 \sum_{n\geq N}\exp\left(-n^\alpha\right)\sim_{N\to+\infty}\frac 1\alpha N^{1-\alpha}\exp(-N^\alpha).
 \end{equation}
 Assuming \eqref{eq:wsexpnalpha} is true, we just write
\begin{align*}
& \left\|\sum_{n\geq c\sigma} \frac{z_n (w_1\cdots w_{m\sigma})^{(m-1)/m}}{w_1\cdots w_{(m-1)\sigma+n}}e_n\right\| \\
&\quad\quad  = \exp\left(\frac{m-1}m  m^\alpha\sigma^\alpha\right)\sum_{n\geq c\sigma}\exp\left( -\left((m-1)\sigma+n\right)^\alpha\right)\\
 &\quad\quad \sim_{+\infty}C\sigma^{1-\alpha} \exp\left( \left(\frac {m-1}m m^\alpha-(m-1+c)^\alpha\right)\sigma^\alpha\right).
\end{align*}
Assumption (b) is satisfied for $c$ close enough to $1$,  since in that case
 \[ (m-1+c)^\alpha > \frac{(m-1)m^{\alpha}}{m}. \]
 For the sake of completeness, we just mention that \eqref{eq:wsexpnalpha} follows from the formula of integration by parts:
 \[ \int_{N}^{+\infty}\exp(-x^\alpha)dx=\frac 1\alpha N^{1-\alpha}\exp(-N^\alpha)+\frac{1-\alpha}{\alpha}\int_{N}^{+\infty}x^{-\alpha}\exp(-x^\alpha)dx. \]
 \end{proof}
 \begin{example}The derivation operator $D$ supports an upper frequently hypercyclic algebra on $H(\CC)$.
 \end{example}
 \begin{proof}
It is sufficient to verify (b) replacing $\|\cdot\|$ by any seminorm $\|\cdot\|_r$. Now, 
 \begin{align*}
 \left\|\sum_{n\geq c\sigma} \frac{z_n (w_1\cdots w_{m\sigma})^{(m-1)/m}}{w_1\cdots w_{(m-1)\sigma
 +n}}e_n\right\|_r & =
 \sum_{n\geq c\sigma}\frac{(m\sigma)!^{(m-1)/m}}{((m-1)\sigma+n)!}r^n\\
 &= \frac{(m\sigma)!^{(m-1)/m}}{r^{(m-1)\sigma}}\sum_{n\geq c\sigma}\frac{r^{(m-1)\sigma+n}}{((m-1)\sigma+n)!}\\
 &\leq C r^{c\sigma}\frac{(m\sigma)!^{(m-1)/m}}{(m-1+c)\sigma}.
 \end{align*}
Since for all $\veps>0$ Stirling's formula implies
\begin{gather*}
(m\sigma)!^{(m-1)/m}\leq C_\veps \sigma^{(m-1+\veps)\sigma}\\
((m-1+c)\sigma)!\geq C_\veps\sigma^{(m-1+c-\veps)\sigma},
\end{gather*}
choosing $\veps<2c$ it follows that, for all $c\in(0,1)$ and all $r\geq1$, we have
\begin{align*}
\left\|\sum_{n\geq c\sigma} \frac{z_n (w_1\cdots w_{m\sigma})^{(m-1)/m}}{w_1\cdots w_{(m-1)\sigma
 +n}}e_n\right\|_r & \leq C\frac{r^{c\sigma}}{\sigma^{(c-2\veps)\sigma}}\xrightarrow{\sigma\to+\infty}0.
\end{align*} 
Hence, assumption (b) is verified.
\end{proof}
Another natural operator that could admit an upper frequently hypercyclic algebra is the backward shift $B_w$ with $w_n=\left(1+\frac \lambda n\right)$, $\lambda>1$, 
acting over $\ell_1$ with the convolution product. Unfortunately, for this weight, the assumptions in Theorem \ref{thm:ufhcconvolution} are not verified.
\begin{question}
Let $X=\ell_1$ endowed with the convolution product and $w_n=\left(1+\frac \lambda n\right)$, $\lambda>1$. Does $B_w$ admit an upper frequently hypercyclic algebra?
\end{question}

We can ask a similar question for convolution operators $\phi(D)$ on $H(\CC)$, $|\phi(0)|<1$, which are frequently hypercyclic and admit a hypercyclic algebra.
\begin{question}
Let $X=H(\CC)$ and let $\phi:\CC\to\CC$ be a nonconstant entire function with exponential type, not a multiple of an exponential function, with $|\phi(0)|<1$. Does $\phi(D)$ supports an upper frequently hypercyclic algebra?
\end{question}

\subsection{Weighted shifts with a frequently hypercyclic algebra on $\omega$}

Despite the result of Falc\'o and Grosse-Erdmann, it is not so difficult to exhibit operators supporting a frequently hypercyclic algebra
if we work on the big space $\omega$.

\begin{theorem}\label{thm:fhcalgomega}
Let $w=(w_n)_{n\geq 1}$ be a weight sequence such that  $(w_{1}\cdots w_{n})$ either tends to $+\infty$ or to 0.
Then $B_w$, acting on $\omega$ endowed with the coordinatewise product, supports a frequently hypercyclic algebra. 
\end{theorem}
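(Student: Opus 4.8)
The plan is to construct a single vector $u\in\omega$ such that every element $P(u)$ of the (non-unital) algebra it generates — that is, every $P(u)$ with $P\in\CC[z]$, $P(0)=0$, $P\neq 0$ — is frequently hypercyclic for $B_w$; since a frequently hypercyclic vector is nonzero, this exhibits a frequently hypercyclic algebra. The whole argument rests on the fact that convergence in $\omega$ is coordinatewise and that, for the coordinatewise product, the $l$-th coordinate of $B_w^N(P(u))$ is simply $w_{l+1}\cdots w_{N+l}\,P(u_{N+l})$, so that one may prescribe each coordinate independently through a single scalar $u_{N+l}$. I would treat the two hypotheses separately: the case $w_1\cdots w_n\to+\infty$ using a ``minimal power'' scaling (in the spirit of Corollary \ref{cor:thmgenmin}), and the symmetric case $w_1\cdots w_n\to 0$ a ``maximal power'' scaling (in the spirit of Corollary \ref{cor:thmgenmax}).

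The key computation is the following. Assume $w_1\cdots w_n\to+\infty$, fix $m_0\in\NN$ and scalars $\tau_0,\dots,\tau_p$, and at a time $N$ set $u_{N+l}=\tau_l\,(w_{l+1}\cdots w_{N+l})^{-1/m_0}$ for $0\le l\le p$. Then, for any $P(z)=\sum_{m\in I}\hat P(m)z^m$ with $P(0)=0$, the $l$-th coordinate of $B_w^N(P(u))$ equals $\sum_{m\in I}\hat P(m)\tau_l^{\,m}\,(w_{l+1}\cdots w_{N+l})^{1-m/m_0}$. As $N\to+\infty$ every term with $m>m_0$ tends to $0$, every term with $m<m_0$ would blow up, and the term $m=m_0$ contributes exactly $\hat P(m_0)\tau_l^{\,m_0}$. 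Hence, \emph{provided} $\min I=m_0$, one gets $B_w^N(P(u))\to \hat P(m_0)\sum_{l=0}^p\tau_l^{\,m_0}e_l$, a limit depending on $P$ only through $m_0=\min(\supp P)$ and the single coefficient $a:=\hat P(m_0)$. This is what allows one fixed $u$ to serve all polynomials simultaneously: given a target $y=\sum_{l\le p}y_le_l$, choosing $\tau_l$ close to an $m_0$-th root of $y_l/a$ forces $B_w^N(P(u))$ close to $y$. In the case $w_1\cdots w_n\to 0$ the same computation with exponent $-1/m_1$ and $m_1=\max I=\deg P$ gives a limit $\hat P(m_1)\sum_l\tau_l^{\,m_1}e_l$ depending only on $\deg P$ and the leading coefficient.

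To turn these limits into a lower-density statement, I would fix a countable dense family $(\tau^{(i)})_i$ of finitely supported scalar vectors and label ``classes'' by pairs $(m_0,i)$ (resp.\ $(m_1,i)$). The combinatorial heart of the proof is to partition a suitably spread-out subset of $\NN$ into pairwise \emph{block-disjoint} families — one family per class, the class $(m_0,i)$ using blocks of width $|\supp\tau^{(i)}|$ — in such a way that the set of base times of each class has positive lower density; this is possible because one may prescribe the class densities $\delta_{m_0,i}>0$ with $\sum \delta_{m_0,i}\,(\text{width})\le 1$, so the disjoint blocks fit inside $\NN$. One then defines $u$ to be the scaled vector $\tau^{(i)}$ on each block of class $(m_0,i)$ (via the formula above) and $0$ off all blocks. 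For a fixed nonzero $P$ with $\min(\supp P)=m_0$ and $a=\hat P(m_0)$, and a fixed basic open set $V$ around a target $y$, one selects $i$ with $\hat P(m_0)\sum_l(\tau^{(i)}_l)^{m_0}e_l\in V$; by the key computation $B_w^N(P(u))\in V$ for all sufficiently large base times $N$ of class $(m_0,i)$, and these form a set of positive lower density. Discarding finitely many small base times does not affect the lower density, so $P(u)$ is frequently hypercyclic; as a countable dense family of targets $V$ suffices to verify frequent hypercyclicity, this holds for all $V$, hence for all such $P$.

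The step I expect to be the main obstacle is precisely this combinatorial construction of infinitely many pairwise block-disjoint classes, each of positive lower density, with \emph{unbounded} block widths (needed to approximate targets of arbitrarily large support): one must interleave the blocks so that no two overlap while guaranteeing that each individual class still returns with positive lower frequency. This is the $\omega$-analogue — and the easier cousin — of the delicate construction announced for Theorem \ref{thm:afhcc0}, the simplification coming from the fact that in $\omega$ there is no norm constraint on $u$, so the values off the blocks are irrelevant and the blocks may be chosen purely to meet the density bookkeeping. Everything else reduces to the elementary limit computation above, together with the remark that the \emph{full} convergence $w_1\cdots w_n\to+\infty$ (resp.\ $\to 0$), rather than convergence along a subsequence, is exactly what makes the scaling valid along a positive-density set of times.
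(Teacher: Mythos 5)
Your proposal is correct and follows essentially the same route as the paper: a single vector built from a countable dense family of (target, degree) pairs, scaled by $(w_{l+1}\cdots w_{n+l})^{-1/m_0}$ (resp. $-1/m_1$) so that the limit of $B_w^N(P(u))$ depends only on $\min(\supp P)$ and $\hat P(m_0)$ (resp.\ on $\deg P$ and the leading coefficient), supported on disjoint positive-lower-density blocks. The combinatorial step you flag as the main obstacle is exactly Lemma \ref{lem:setsfhc} (Lemma 6.19 of \cite{BM09}), which the paper simply invokes, so no new construction is needed there.
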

\begin{proof}
We first assume that $(w_1\cdots w_n)$ tends to $+\infty$ and observe that this clearly implies that, for all $l\geq 0$, $(w_{l+1}\cdots w_{n+l})$ tends to $+\infty$. 
Let $(v(p),m(p))$ be a dense sequence in $\omega\times\NN$, where each $v(p)$ has finite support contained in $[0,p]$. We then write $v(p)=\sum_{l=0}^p v_l(p)e_l$. 
For $(n,p)\in\NN^2$, we define
\[ y(n,p)=\sum_{l=0}^p \frac{v_l(p)^{1/m(p)}}{(w_{l+1}\cdots w_{n+l})^{1/m(p)}}e_{n+l} .\]
By \cite[Lemma 6.19]{BM09} (see the forthcoming Lemma \ref{lem:setsfhc}), there exists a sequence $(A(p))$ of pairwise disjoint subsets of $\NN$, with positive lower density, and such that $|n-n'|\geq p+q+1$ whenever $n\neq n'$ and $(n,n')\in A(p)\times A(q)$. In particular, the vectors $y(n,p)$ for $p\in\NN$ and $n\in A(p)$ have disjoint support. Hence, we may define $u=\sum_{p\in\NN}\sum_{n\in A(p)}y(n,p)$ and we claim that $u$ generates a frequently hypercyclic algebra. 

Indeed, let $P\in\mathbb C[z]$ be non-constant with $P(0)=0$, $P(z)=\sum_{m=m_0}^{m_1}\hat P(m) z^m$, $\hat P(m_0)\neq 0$,
and let $V$ be a non-empty open subset of $\omega$. Let $p\in\NN$, $\veps>0$ be such that $m(p)=m_0$ and any vector $x\in\omega$ satisfying $|x_l-\hat P(m_0)v_l(p)|<\veps$ for all $l=0,\dots,p$ belongs to $V$. Now, for $l=0,\dots,p$ and $n\in A(p)$, 
\[ \big(B_w^n P(u)\big)_l=\hat P(m_0) v_l(p)+\sum_{m=m_0+1}^{m_1}\frac{\hat P(m) v_l(p)^{\frac m{m_0}}}{(w_{l+1}\cdots w_{n+l})^{\frac m{m_0}-1}}. \]
Since $(w_{l+1}\cdots w_{n+l})$ tends to $+\infty$ for all $l$, $B_w^n P(u)$ belongs to $V$ for all $n$ in a cofinite subset of $A(p)$. Hence, $P(u)$ is a frequently hypercyclic vector for $B_w$.

The proof is completely similar if we assume that $(w_1\cdots w_n)$ tends to $0$. The only difference is that the dominant term is now given by the term of highest degree of $P$, namely we choose $p$
such that $m(p)=m_1$ and we write
\[ \big(B_w^n P(u)\big)_l=\hat P(m_1) v_l(p)+\sum_{m=m_0}^{m_1-1}\frac{\hat P(m) v_l(p)^{\frac m{m_1}}}{(w_{l+1}\cdots w_{n+l})^{\frac m{m_1}-1}}. \]
We will conclude because, for $m<m_1$, $(w_{l+1}\cdots w_{n+l})^{\frac m{m_1}-1}$ tends to $+\infty$.
Details are left to the reader.
\end{proof}

The unweighted backward shift on $\omega$ (still endowed with the coordinatewise product) supports a frequently hypercyclic algebra. Indeed, more generally, let $T$ be a multiplicative operator on an $F$-algebra $X$ with the property that
for every non-zero polynomial $P$ vanishing at the origin, the map
$$
\tilde{P}:X\rightarrow X, x\mapsto P(x)
$$
has dense range. Then if $T$ is frequently hypercyclic, it supports a frequently hypercyclic algebra. The reason for that is the simple observation that if $U$ is a non-empty open set of
$X$ and $P$ a non-zero polynomial vanishing at the origin,
\[ \left\{n\in\NN:\ T^n(P(x))\in U\right\}=\left\{n\in\NN:\ T^n x\in \tilde{P}^{-1}(U)\right\}. \]

From the same observation we may conclude that the translation operators $T_a$ acting on $C^{\infty}(\mathbb{R},\mathbb{C})$ for $a\in \mathbb{R}, a\neq 0$, admit a frequently hypercyclic algebra. The fact that $C^{\infty}(\mathbb{R},\mathbb{C})$ has the above mentioned property is proven in \cite[Proposition 20]{BCP18}.

\subsection[Sequence of sets with positive lower density]{A sequence of sets with positive lower density which are very far away from each other}

The remaining part of this section is devoted to the proof of Theorem \ref{thm:afhcc0}.
The starting point to exhibit frequently hypercyclic vectors is the following lemma on the existence of subsets of $\NN$ with positive lower density which are sufficiently separated.

\begin{lemma}[Lemma 6.19 in \cite{BM09}]\label{lem:setsfhc} Let $(a(p))$ be any sequence of positive real numbers. Then one can find a sequence $(A(p))$ of pairwise disjoint subsets of $\NN$ such that
\begin{enumerate}[(i)]
\item each set $A(p)$ has positive lower density;
\item $\min A(p)\geq a(p)$ and $|n-n'|\geq a(p)+a(q)$ whenever $n\neq n'$ and $(n,n')\in A(p)\times A(q)$.
\end{enumerate}
\end{lemma}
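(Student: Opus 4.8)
The plan is to recast condition (ii) as a disjointness statement and then solve the resulting packing problem by an interval-reservation argument. First I would reduce to the case where $(a(p))$ is a nondecreasing sequence of positive integers: replacing $a(p)$ by $\lceil \max_{j\le p} a(j)\rceil$ only \emph{strengthens} both requirements in (ii) and the inequality $\min A(p)\ge a(p)$, so any family of sets that works for the larger sequence works a fortiori for the original one. Next I would observe that, associating to each $n\in A(p)$ the open interval $J_n=(n-a(p),n+a(p))$, condition (ii) is equivalent to the assertion that the intervals $J_n$, taken over all $p$ and all $n\in A(p)$, are pairwise disjoint: for $n\in A(p)$ and $n'\in A(q)$ one has $J_n\cap J_{n'}=\varnothing$ exactly when $|n-n'|\ge a(p)+a(q)$. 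Thus the whole problem becomes that of placing, for each $p$, a set of centres of positive lower density, internally $2a(p)$-separated (so that the $J_n$ for a fixed $p$ are already disjoint), in such a way that the intervals attached to distinct centres never overlap.

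To do this I would fix a geometric scale: set $L_k=C2^{k}$ and cut $\NN$ into consecutive blocks $B_k=[L_{k-1},L_k)$ of length $\ell_k=L_k-L_{k-1}$. The decisive feature is that $L_{k+1}/L_k$ stays bounded, which is what will keep a \emph{lower} (not merely upper) density positive. Inside each block I reserve, for each index $p$ that is ``active'' at that scale, a subinterval $B_{k,p}$ of width $2^{-p-1}\ell_k$, placing these subintervals consecutively and giving each a guard band of width $a(p)$ at both ends; inside $B_{k,p}$ I then drop an arithmetic progression of centres of common difference $2a(p)$. The guard bands guarantee $J_n\subseteq B_{k,p}\subseteq B_k$ for every centre $n$, so that, since the blocks are pairwise disjoint intervals and the reserved subintervals within one block are disjoint, the intervals $J_n$ are automatically pairwise disjoint; and since $\sum_p 2^{-p-1}<1$ there is always room for these subintervals and their guards inside $B_k$.

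The one genuine obstacle is the tension between two opposing demands: positive lower density forces the blocks to grow at a bounded rate, whereas fitting a centre of $A(p)$ together with its guard of width $a(p)$ requires the reserved subinterval, hence the block, to be large compared with $a(p)$, and $a(p)$ may grow arbitrarily fast. I would resolve this by \emph{delaying activation}: let $A(p)$ contribute only from the first scale $k_p$ for which $2^{-p-1}\ell_{k_p}\ge 4a(p)$ and $L_{k_p-1}\ge a(p)$, which is possible because $\ell_k\to\infty$. Postponing the onset costs nothing, since positive lower density is a statement about large $N$ only.

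Finally I would verify the three conclusions. Disjointness of the intervals $J_n$ yields (ii), and in particular the pairwise disjointness of the $A(p)$; confining the centres of $A(p)$ to indices $\ge L_{k_p-1}\ge a(p)$ gives $\min A(p)\ge a(p)$. For (i), counting centres of $A(p)$ up to $L_k$ gives roughly $\tfrac{2^{-p-1}}{2a(p)}\,(L_k-L_{k_p-1})$; since any $N$ lies in some $B_{k+1}$ with $N<L_{k+1}=2L_k$, the ratio $|A(p)\cap[1,N]|/N$ is bounded below by a positive constant as $N\to\infty$, so $A(p)$ has positive lower density. This completes the plan; the remaining bookkeeping (fitting the progressions and tallying the guard bands) is routine.
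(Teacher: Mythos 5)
Your proposal is correct. A preliminary remark: the paper does not prove this statement at all --- it is imported verbatim as Lemma 6.19 of the cited book --- so what you have written is a self-contained replacement for a citation rather than a variant of an argument appearing in the text. Your reduction to a nondecreasing integer sequence, the reformulation of (ii) as pairwise disjointness of the guarded intervals $J_n=(n-a(p),n+a(p))$, and the blockwise reservation scheme with geometrically growing blocks all work as stated; the decisive idea, which is also the engine of the construction in the cited book, is the delayed activation of $A(p)$: since positive lower density is an asymptotic condition, one may wait until the block length dominates $a(p)$ before placing any element of $A(p)$, and this is exactly what reconciles arbitrarily fast growth of $(a(p))$ with linear growth of the counting functions. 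Two small points deserve to be written out in a final version, though neither causes trouble. First, only finitely many indices $p$ are active in a given block $B_k$ (because $\ell_{k_p}\geq 2^{p+3}a(p)\to\infty$ forces $k_p\to\infty$), and the reserved subintervals have total width at most $\sum_{p}2^{-p-1}\ell_k=\ell_k/2$, so they, together with their guard bands, genuinely fit inside $B_k$. Second, the density bound: for $k\geq k_p$ the block $B_k$ contributes at least $\bigl(2^{-p-1}\ell_k-2a(p)\bigr)/(2a(p))\geq 2^{-p-1}\ell_k/(4a(p))$ centres, whence $\card\bigl(A(p)\cap[1,L_k]\bigr)\geq \tfrac{2^{-p-1}}{4a(p)}(L_k-L_{k_p-1})\geq \tfrac{2^{-p-1}}{8a(p)}L_k$, and since any $N\geq L_{k_p}$ lies in some $[L_k,2L_k)$, the lower density of $A(p)$ is at least $2^{-p-1}/(16a(p))>0$. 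The separation conditions and $\min A(p)\geq L_{k_p-1}\geq a(p)$ are immediate from the nesting $J_n\subseteq B_{k,p}\subseteq B_k$ together with the disjointness of the blocks and of the reserved subintervals within a block.
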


To produce a frequently hypercyclic algebra for a weighted shift on $c_0$, we will need a refined version of this lemma where we add new conditions of separation.

\begin{theorem}\label{thm:setsafhc}
 Let $(a(p))$ be any sequence of positive real numbers. Then one can find a sequence $(A(p))$ of pairwise disjoint subsets of $\NN$ such that
\begin{enumerate}[(i)]
\item each set $A(p)$ has positive lower density;
\item $\min A(p)\geq a(p)$ and $|n-n'|\geq a(p)+a(q)$ whenever $n\neq n'$ and $(n,n')\in A(p)\times A(q)$.
\item for all $C>0$, there exists $\kappa>0$ such that, for all $(n,n')\in A(p)\times A(q)$ with $p\neq q$ and $\max(n,n')\geq \kappa$, then $|n-n'|\geq C$.
\end{enumerate}
\end{theorem}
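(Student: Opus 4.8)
The plan is to build the sets $A(p)$ by hand, because the extra requirement (iii) genuinely obstructs deriving Theorem \ref{thm:setsafhc} from Lemma \ref{lem:setsfhc} by re-embedding: pushing the sets of the Lemma forward through a single increasing map whose consecutive gaps tend to infinity would force (iii) but would simultaneously make every $A(p)$ have density zero. The point is rather that positive lower density only forbids \emph{super-linear} gaps inside a single $A(p)$, so one can let the various sets take turns occupying long regions of $\NN$, each region belonging to a single label, the regions being separated by gaps that grow to infinity. Throughout I may assume, replacing $a(p)$ by $\max(1,\lceil a(1)\rceil,\dots,\lceil a(p)\rceil)$, that $(a(p))$ is non-decreasing, integer-valued and $\ge 1$; proving the statement for this larger sequence is stronger since (ii) involves $a(p)+a(q)$ and $\min A(p)\ge a(p)$.

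The construction proceeds in consecutive \emph{rounds} $r=1,2,\dots$. In round $r$ I fix a fill-budget $F_r=2^r$ and activate the labels $p\le m_r$, where $m_r=\max\{p\ge 1:2^{p+1}a(p)\le 2^r\}$; thus $m_r$ is non-decreasing with $m_r\to+\infty$, and each label $p$ becomes active from the round $R_p=\min\{r:m_r\ge p\}$ on. Round $r$ consists of one block per active label, written in the order $p=1,\dots,m_r$; the block of label $p$ is an arithmetic progression of common difference $2a(p)$ filling a length $f_{r,p}=\lfloor 2^{-p}F_r\rfloor=2^{r-p}$ (hence it carries about $2^{r-p}/(2a(p))$ integers, and is non-empty exactly because $2^{p+1}a(p)\le F_r$). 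Between any two consecutive blocks, and before the first one of a round, I insert an empty gap of length $G_r=\max\bigl(2a(m_r),\lfloor\log_2 r\rfloor\bigr)$, with inter-round gaps of length $\max(G_r,G_{r+1})$. Finally I set $A(p)=\bigcup_{r\ge R_p}(\text{block of label }p\text{ in round }r)$; the $A(p)$ are pairwise disjoint by construction.

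It then remains to verify the three conclusions. For (iii): two elements of $A(p)$ and $A(q)$ with $p\ne q$ lie in distinct blocks, and the larger of the two sits immediately after a gap of its own round, so their distance is at least $G_r$, where $r$ is the round of that larger block; since $G_r\to+\infty$, for each $C$ one finds a round $r_0$ with $G_r\ge C$ for $r\ge r_0$, and $\kappa=$ (start of round $r_0$) works. For (ii): inside a block the spacing is $2a(p)$, while across blocks the separation is at least the intervening gap, which has been kept $\ge 2a(m_r)\ge a(p)+a(q)$ for every pair of labels active by round $r$; and $\min A(p)\ge a(p)$ holds since the first block of label $p$ starts well beyond $a(p)$. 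For (i): the key book-keeping is that the total gap length in round $r$ is at most $(m_r+1)G_r\le\tfrac12 F_r$ — this uses precisely $2a(m_r)\le 2^{-m_r}F_r$ together with $(m_r+1)2^{-m_r}\le\tfrac12$ (valid once $m_r\ge 3$) and $r\log_2 r=o(2^r)$ — so the occupied length of round $r$ is comparable to $F_r=2^r$. Since label $p$ owns a fixed fraction $2^{-p}$ of every fill-budget from round $R_p$ on and the budgets grow geometrically, a direct count at $N$ equal to the end of a round yields lower density $\gtrsim 2^{-p}/a(p)$; checking the worst position, just before the block of label $p$ in the following round, costs only another bounded factor, so the lower density of $A(p)$ is positive.

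The main obstacle, and the reason the parameters are tuned as above, is to reconcile three competing demands: the gaps must tend to infinity for (iii), yet their total contribution must remain a bounded fraction of the occupied length for (i), while the smallest admissible gap is forced to be as large as $2a(p)$, which may grow arbitrarily fast in $p$ since $(a(p))$ is an arbitrary sequence. The device that dissolves the conflict is to \emph{delay the activation} of label $p$ until the round is long enough that $2^{p+1}a(p)\le F_r$: this single inequality guarantees at once that the smallest active block is non-empty, that it still dominates the gaps (so the round stays mostly filled, giving (i)), and that the fixed separation $a(p)+a(q)$ of (ii) is met, while leaving the slowly growing term $\lfloor\log_2 r\rfloor$ free to drive the gaps to infinity for (iii).
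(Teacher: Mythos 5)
Your construction is correct in substance, and it takes a genuinely different route from the paper. The paper proceeds modularly: a splitting lemma (its Lemma \ref{lem:setsafhc1}) carves two disjoint positive-lower-density subsets out of any positive-lower-density set with the separation property of (iii); an induction (Lemma \ref{lem:setsafhc2}) iterates this, together with bookkeeping of the thresholds $\kappa_k$, to produce infinitely many sets satisfying (i) and (iii); and two further thinning steps (Lemmas \ref{lem:setsafhc3} and \ref{lem:setsafhc4}) prune the sets to enforce the within-set and cross-set separations of (ii) without destroying lower density. You instead give one global, explicit construction: after replacing $(a(p))$ by a non-decreasing integer majorant (legitimate, since (ii) only improves), you interleave, round by round, one arithmetic-progression block per active label, with gaps that grow to infinity but whose total length per round stays a bounded fraction of the round. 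The activation threshold $2^{p+1}a(p)\le 2^r$ is the right single inequality making the blocks non-empty, the gaps dominated, and the separations $a(p)+a(q)$ automatic. Your verification of (i)--(iii) is sound, including the check of the lower density at the worst positions between consecutive blocks of a given label; the only loose ends are cosmetic (rounds for which no label is yet active should simply be declared empty, and the phrase ``sits immediately after a gap of its own round'' should be read as ``its block is preceded by a gap of its own round''). What each approach buys: the paper's splitting lemma is relative (it operates inside an arbitrary set of positive lower density), which is what drives its induction and could be reused elsewhere; your construction is more concrete, avoids the four-lemma cascade, and yields an explicit lower bound of order $2^{-p}/a(p)$ for the lower density of $A(p)$.
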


The proof of this theorem is rather long. The strategy is to construct a sequence of sets satisfying only (i) and (iii), and then to modify them to add (ii).
We begin with two sets.
\begin{lemma}\label{lem:setsafhc1}
Let $E\subset\NN$ be a set with positive lower density. There exist $A,B\subset E$ disjoint, with positive lower density, and such that, for all $C>0$, there exists $\kappa>0$ such that, for all $n\in A$ and all $n'\in B$ with $\max(n,n')\geq \kappa$, then $|n-n'|\geq C$.
\end{lemma}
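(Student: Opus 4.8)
The plan is to split $E$ into consecutive \emph{phases}, to feed the elements of $E$ lying in alternate phases into $A$ and into $B$, and to discard at the top of each phase a \emph{buffer} of width $g_k=k$. The growing buffers force the separation (iii), while an adaptive choice of the phase boundaries keeps both $A$ and $B$ of positive lower density. Write $c(N)=\card(E\cap[1,N])$ and let $2\delta=\liminf_{N}c(N)/N>0$, so that $c(N)\ge\delta N$ for all $N\ge N_0$.

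First I would choose checkpoints $0=t_0<t_1<\cdots$ recursively, taking $t_k$ to be the \emph{least} integer with $c(t_k)\ge\tfrac32 c(t_{k-1})$ (starting $t_1$ large enough that every later phase will be much longer than its buffer). By minimality $c(t_{k-1})\le c(t_k)\le 2c(t_{k-1})$ for $k$ large, so each phase $(t_{k-1},t_k]$ captures at least a third of the cumulative count, $c(t_k)-c(t_{k-1})\ge\tfrac13 c(t_k)$, and moreover $c(t_k)-c(t_{k-1})\ge\tfrac12 c(t_{k-1})\ge\tfrac{\delta}{2}t_{k-1}$, whence the phase lengths $t_k-t_{k-1}\to+\infty$ and the buffers eventually fit comfortably inside the phases. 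I then set
\[ A=E\cap\bigcup_{k\text{ odd}}(t_{k-1},\,t_k-g_k],\qquad B=E\cap\bigcup_{k\text{ even}}(t_{k-1},\,t_k-g_k], \]
which are disjoint subsets of $E$.

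For the separation property (iii) one uses that $A$ and $B$ never meet the same phase. Given $n\in A$ and $n'\in B$, the larger of the two lies in some phase $\ell$, hence exceeds $t_{\ell-1}$, whereas every retained point of a phase $\le\ell-1$ is at most $t_{\ell-1}-g_{\ell-1}$ (by the discarded buffer $(t_{\ell-1}-g_{\ell-1},t_{\ell-1}]$, the earlier phases being even lower); therefore $|n-n'|>g_{\ell-1}$. Given $C>0$, choosing $k_0=\lceil C\rceil$ and $\kappa=t_{k_0}$ forces $\ell\ge k_0+1$ whenever $\max(n,n')\ge\kappa$, so $|n-n'|>g_{\ell-1}\ge k_0\ge C$, as required.

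The main obstacle is (i), the positive lower density of $A$ and of $B$: a priori the mass of $E$ might concentrate in the phases of a single parity and starve the other set. This is exactly what the adaptive choice prevents. At a checkpoint $t_K$, one set has just received phase $K$ (contributing at least $\tfrac13 c(t_K)-g_K\ge\tfrac14 c(t_K)$ elements) and the other has received phase $K-1$ (contributing at least $\tfrac13 c(t_{K-1})-g_{K-1}\ge\tfrac14 c(t_{K-1})\ge\tfrac16 c(t_K)$), so \emph{both} satisfy $\card(A\cap[1,t_K]),\ \card(B\cap[1,t_K])\ge\tfrac16 c(t_K)$. For arbitrary $N\in(t_{K-1},t_K]$ I would then bound $\card(A\cap[1,N])\ge\card(A\cap[1,t_{K-1}])\ge\tfrac16 c(t_{K-1})$ by monotonicity and divide by $N\le t_K\le c(t_K)/\delta\le 2c(t_{K-1})/\delta$. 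Here the inequality $c(N)\ge\delta N$ is decisive: it converts the count‑fractions into genuine position‑fractions and rules out the ratio collapsing on a phase where $E$ is sparse. This gives $\liminf_N\card(A\cap[1,N])/N\ge\delta/12>0$, and symmetrically for $B$, completing the proof.
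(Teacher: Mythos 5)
Your proof is correct, and it reaches the conclusion by a genuinely different mechanism than the paper. The paper's proof passes to the increasing enumeration $E=\{n_j:j\in\NN\}$ and does all the work at the level of \emph{indices}: it interleaves blocks of indices of length $u_k=k$ for $A$ and for $B$, separated by gaps of length $v_k=\lfloor\sqrt{k}\rfloor$. Because $v_k/u_k\to 0$, each index set occupies a fixed fraction of $\NN$, and the density of $A$, $B$ follows at once from that of $E$; the separation property is then free, via the elementary observation that $|n_j-n_{j'}|\geq |j-j'|$ for an increasing enumeration of a subset of $\NN$, so a gap of $v_{k-1}$ indices forces a gap of $v_{k-1}$ in value. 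You instead work directly in \emph{position} space: you cut $\NN$ into phases whose endpoints are chosen adaptively from the counting function $c(N)$ (each phase capturing a fixed proportion of the cumulative mass of $E$), alternate the phases between $A$ and $B$, and discard a buffer of width $g_k=k$ at the top of phase $k$ to create the separation. The adaptive checkpoints are exactly what replaces the paper's enumeration trick: with a fixed partition of positions the mass of $E$ could concentrate in phases of one parity, whereas your choice of $t_k$ guarantees both sets a definite fraction of $c(t_K)$ at every checkpoint, and the bound $c(N)\geq\delta N$ converts this into positive lower density. Your buffers ($g_k=k$, negligible against the geometrically growing phase lengths) play the role of the paper's index gaps $v_k=\lfloor\sqrt{k}\rfloor$ (negligible against the block lengths $u_k=k$). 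The paper's route is somewhat shorter because the enumeration trick makes the separation and density arguments decouple cleanly; yours is self-contained in position space and makes the quantitative dependence on $\delta$ explicit. The minor points you leave to ``choosing $t_1$ large enough'' (buffers fitting inside phases, $g_K\leq\frac{1}{12}c(t_K)$) are legitimately handled that way since $c(t_k)$ grows geometrically while $g_k$ grows linearly.
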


\begin{proof}
We write $E=\{n_j:\ j\in\NN\}$ in an increasing order. We set, for $k\geq 1$, $u_k=k$, $v_k=\lfloor \sqrt k\rfloor$. We define sequences $(M_k)$, $(N_k)$, $(P_k)$ and $(Q_k)$ by setting $M_1=1$ and, for $k\geq 1$, 
\[ N_k=M_k+u_k,\ P_k=N_k+v_k,\ Q_k=P_k+u_k,\ M_{k+1}=Q_k+v_k.\]
We then define
\[I=\bigcup_k [M_k,N_k),\ \ J=\bigcup_k [P_k,Q_k), \]
\begin{align*}
A=\left\{n_j:\ j\in I\right\},\ \ 
B=\left\{n_j:\ j\in J\right\}.
\end{align*}
The sets $I$ and $J$ have positive lower density. Indeed, for $N\in\NN$, let $k$ be such that $N\in [M_k,M_{k+1})$. Then 
\[ \frac{\card(I\cap [1,\dots,N])}N\geq \frac{u_1+\cdots+u_{k-1}}{2(u_1+\cdots+u_k+v_1+\cdots+v_k)}\geq\frac 14 \]
provided $k$ is large enough. The same is true for $J$. Since $E$ has positive lower density, this yields that $A$ and $B$ have positive lower density. Moreover, let $C>0$. 
There exists $k\geq 0$ such that $v_{k-1}\geq C$. We set $\kappa=n_{M_k}$. Let $(n,n')\in A\times B$ with $\max(n,n')\geq \kappa$. 
Assume for instance that $n\geq n_{M_k}$ and write $n=n_j$, $n'=n_{j'}$. Then $j\geq M_k$ and the construction of the sets $I$ and $J$ ensure that $j'$ does not belong to $[j-v_{k-1},j+v_{k-1}]$.
Thus, $|n-n'|\geq |j-j'|\geq C$.
\end{proof}

It is not difficult to require that (ii) in Theorem \ref{thm:setsafhc} holds when we restrict ourselves to $p=q$.
\begin{lemma}\label{lem:setsafhc3}
Let $A\subset\NN$ with positive lower density and $a>0$. There exists $B\subset A$ with positive lower density, $\min(B)\geq a$ and $|n-n'|\geq a$ for all $n,n'\in B$, $n\neq n'$.
\end{lemma}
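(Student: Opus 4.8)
The plan is to construct $B$ greedily, selecting elements of $A$ one at a time while always leaving a gap of at least $a$, and then to check that this greedy thinning costs only a constant factor in density. Set $k=\lceil a\rceil$, so that $k\geq a$ is a positive integer. Since $A$ has positive lower density it is infinite, so I may define a strictly increasing sequence $(b_i)_{i\geq 1}$ in $A$ by letting $b_1$ be the first element of $A$ that is $\geq a$ and, once $b_i$ has been chosen, setting $b_{i+1}=\min\{n\in A:\ n\geq b_i+k\}$. I then put $B=\{b_i:\ i\geq 1\}$. By construction $\min(B)=b_1\geq a$, and for $i<j$ we have $b_j-b_i\geq b_{i+1}-b_i\geq k\geq a$, so any two distinct elements of $B$ differ by at least $a$. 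Thus the two separation requirements hold automatically, and the only real content is to show that $B$ still has positive lower density.

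The key observation for the density is that the greedy rule discards at most $k$ elements of $A$ between two consecutive choices. Indeed, by minimality of $b_{i+1}$ there is no element of $A$ in $[b_i+k,b_{i+1})$, so $A\cap(b_i,b_{i+1}]$ is contained in $(b_i,b_i+k)\cup\{b_{i+1}\}$ and hence has at most $(k-1)+1=k$ elements. Summing this estimate over the consecutive choices lying below a threshold $N$, I would obtain that $\card(A\cap[1,N])\leq C+k\cdot\card(B\cap[1,N])$, where $C=\card(A\cap[1,b_1])$ is a fixed constant independent of $N$. Consequently
\[ \card(B\cap[1,N])\geq \frac{\card(A\cap[1,N])-C}{k}. \]

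Writing $\delta=\underline{\dens}(A)>0$, we have $\card(A\cap[1,N])\geq \tfrac{\delta}{2}N$ for all large $N$, and the displayed inequality then yields $\card(B\cap[1,N])\geq \tfrac{\delta}{4k}N$ for all large $N$; hence $\underline{\dens}(B)\geq \delta/(4k)>0$, as required. The main (and essentially the only) obstacle is precisely this density bookkeeping: one must rule out the a priori possibility that forcing gaps of size $a$ destroys positivity of the lower density, and the counting of at most $k$ skipped elements per step is exactly what prevents this. I expect no further difficulty, the separation and the lower bound on $\min(B)$ being immediate from the construction.
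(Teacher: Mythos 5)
Your proof is correct and rests on essentially the same idea as the paper's: thin $A$ so as to keep roughly one element out of every $\lceil a\rceil$, which costs only a factor $\sim 1/a$ in lower density. The paper does this even more directly by enumerating $A=\{n_j:\ j\in\NN\}$ in increasing order and setting $B=\{n_{ka}:\ k\in\NN\}$, so that the gap condition ($n_{(k+1)a}-n_{ka}\geq a$, since the $n_j$ are strictly increasing integers) and the density bound are immediate; your greedy-by-value selection needs the short counting of at most $k$ skipped elements per step, which you carry out correctly.
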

\begin{proof}
Write $A=\{n_j:\ j\in\NN\}$ in an increasing order and define $B=\{n_{ka}:\ k\in\NN\}$.
\end{proof}

We then go inductively from two sets to a sequence of sets.

\begin{lemma}\label{lem:setsafhc2}
There exists a sequence $(A(p))$ of pairwise disjoint subsets of $\NN$ such that
\begin{enumerate}[(i)]
\item each set $A(p)$ has positive lower density;
\item for all $C>0$, there exists $\kappa>0$ such that, for all $(n,n')\in A(p)\times A(q)$ with $p\neq q$ and $\max(n,n')\geq \kappa$, then $|n-n'|\geq C$.
\end{enumerate}
\end{lemma}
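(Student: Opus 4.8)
The plan is to decouple the two requirements by fixing once and for all a \emph{geometry} of blocks and gaps on $\NN$, and only afterwards distributing these blocks among the sets $A(p)$. Concretely, I would write $\NN$ as a union of consecutive intervals (blocks) $I_1,I_2,\dots$ separated by gaps of length $G_j$ placed before $I_j$, where $G_j\to+\infty$; this is exactly the mechanism already at work in Lemma \ref{lem:setsafhc1}, where the separating gaps $v_k=\lfloor\sqrt k\rfloor$ grow. The crucial point is that, because every pair of consecutive blocks is separated by such a gap, the separation property (ii) becomes a property of the block geometry \emph{alone}: given $C>0$, pick $j_0$ with $G_j\geq C$ for all $j\geq j_0$ and let $\kappa$ be the left endpoint of $I_{j_0}$; then any two integers lying in different blocks, the larger of which is $\geq\kappa$, automatically differ by at least $G_{j'}\geq C$, where $I_{j'}$ is the block containing the larger one.

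With this geometry fixed, I would then colour each block monochromatically, assigning $I_j$ to exactly one set $A(c_j)$, and set $A(p)=\bigcup_{c_j=p}I_j$. Since distinct colours force distinct blocks, two points of different colours beyond $\kappa$ lie in different blocks and are therefore $\geq C$ apart: condition (ii) holds with a threshold $\kappa=\kappa(C)$ that is \emph{uniform in the pair $(p,q)$}, which is the delicate part of the statement. Thus the whole separation requirement is discharged by the geometry, regardless of how the colouring is chosen.

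It then remains to choose the colouring so that each $A(p)$ has positive lower density (condition (i)). The natural idea is to let colour $p$ occupy a fixed positive proportion of the blocks, e.g.\ by taking $c_j=v_2(j)+1$ (one plus the $2$-adic valuation of $j$), so that colour $p$ is used exactly for the indices $j$ with $v_2(j)=p-1$, a set of density $2^{-p}$ among the integers. Choosing the block lengths to grow, say $L_j=j$, while keeping the gaps $G_j=\lfloor\sqrt j\rfloor$ of negligible total length ($\sum_{j\le J}G_j=o(\sum_{j\le J}L_j)$), a direct computation shows that $\bigcup_{c_j=p}I_j$ captures a proportion $\approx 2^{-p}$ of $[1,N]$ for all large $N$, so that each $A(p)$ has lower density of order $2^{-p}>0$; the gaps, and the occasional large block of another colour, perturb this ratio only by a vanishing amount since $L_J=o(\sum_{j\le J}L_j)$.

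I expect the main obstacle to be precisely the tension between (i) and (ii): since the $A(p)$ are infinitely many disjoint subsets of $\NN$, their densities must sum to at most $1$ and hence decay to $0$, which is exactly what dooms the naive strategy of peeling off one set at a time by iterating Lemma \ref{lem:setsafhc1}. Indeed, under that strategy the remainder gets sparser at each stage, so the separation threshold produced at stage $p$ blows up with $p$ and no uniform $\kappa$ survives. The key insight that removes the obstacle is that separation is a feature of the fixed block geometry rather than of the colouring, so one is free to use geometrically decaying target densities $2^{-p}$ without ever compromising the single uniform threshold $\kappa(C)$. The only genuine computation left is the lower-density estimate for each colour, which is routine once the lengths and gaps are fixed.
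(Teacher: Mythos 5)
Your proof is correct, but it follows a genuinely different route from the paper's. The paper obtains the sequence $(A(p))$ by iterating Lemma \ref{lem:setsafhc1}: at stage $r+1$ it applies that lemma inside a sparsified reservoir $E\subset B(r)$ to split off $A(r+1)$ and a new reservoir $B(r+1)$, and it secures the uniform threshold through the bookkeeping conditions (a)--(d), the essential points being that the thresholds $\kappa_k$ are fixed once and for all and that all later sets are nested inside the earlier reservoirs ($A(q),B(q)\subset B(p)$ for $q>p$), so the old thresholds remain valid for every new pair. You instead fix a single global block-and-gap decomposition of $\NN$ with gaps tending to infinity and then colour the blocks, so that separation becomes a property of the geometry alone (hence automatically uniform in $(p,q)$) while density is a property of the colouring alone; your $2$-adic colouring with $L_j=j$ and $G_j=\lfloor\sqrt j\rfloor$ does give each colour class lower density about $2^{-p}$, and the dip caused by the current block being of another colour is $O(J)=o(J^2)$, so condition (i) holds. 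Your construction is arguably more transparent and cleanly decouples the two requirements; the paper's iterative scheme has the advantage of refining sets inside an arbitrary prescribed set of positive lower density (Lemma \ref{lem:setsafhc1} takes such an $E$ as input), which is the mechanism it reuses in Lemma \ref{lem:setsafhc4} to graft the extra separation condition (ii) of Theorem \ref{thm:setsafhc} onto the family afterwards. One small correction to your commentary: the strategy of iterating Lemma \ref{lem:setsafhc1} is not ``doomed'' as you suggest --- the paper makes exactly that strategy work, precisely by fixing the thresholds early and nesting later sets in the reservoirs so that no blow-up of $\kappa$ occurs.
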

\begin{proof}
We shall construct by induction two sequences of sets $(A(p))$ and $(B(p))$ and a sequence of integers $(\kappa_k)$ such that, at each step $r$, 
\begin{enumerate}[(a)]
\item for all $1\leq p\leq r$, $A(p)$ and $B(p)$ are disjoint and have positive lower density.
\item for all $1\leq p<q\leq r$, $A(q)\subset B(p)$ and $B(q)\subset B(p)$.
\item for all $C>0$, there exists $\kappa>0$ such that, for all $1\leq p\leq q\leq r$, for all $n\in A(p)$ and $n'\in B(q)$, $\max(n,n')\geq\kappa\implies |n-n'|\geq C$. 
\item for all $k\in\{1,\dots,r\}$, for all $1\leq p\leq q\leq r$, for all $n\in A(p)$ and $n'\in B(q)$, $\max(n,n')\geq \kappa_k\implies |n'-n|\geq k$.
\end{enumerate}
It is straightforward to check that the resulting sequence $(A(p))$ satisfies the conclusions of Lemma \ref{lem:setsafhc2}. 
Observe nevertheless that it is condition (d) together with the inclusion $A(q)\subset B(p)$ for $q>p$ which gives (ii) in this lemma (which is uniform with respect to $p$ and $q$). Condition (c) is only helpful for the induction hypothesis.

We initialize the construction by applying Lemma \ref{lem:setsafhc1} to $E=\NN$. We set $A(1)=A$ and $B(1)=B$ which satisfy (a), (b) and (c).
In particular, applying (c) for $C=1$ we find some $\kappa$ that we call $\kappa_1$. 

Assume now that the construction has been done until step $r$ and let us perform it for step $r+1$. Let $E$ be a subset of $B(r)$ with positive lower density and $|n-n'|\geq r+1$ 
provided $n\neq n'$ are in $E$. We apply Lemma \ref{lem:setsafhc1} to this set $E$ and we set $A(r+1)=A$ and $B(r+1)=B$, so that (a) and (b) are clearly satisfied.
Upon taking a maximum, (c) is also easily satisfied:
indeed, the only case which is not settled by the induction hypothesis is $p=q=r+1$ (when $p<r+1$ and $q=r+1$, use $B(q)\subset B(r)$);
this case is solved by the construction of $A(r+1)$ and $B(r+1)$. 

The proof of (d) is slightly more delicate. For $k=1,\dots,r$, we have to verify that for $1\leq p\leq r+1$, $n\in A(p)$ and $n'\in B(r+1)$, $\max(n,n')\geq \kappa_k\implies |n-n'|\geq k$. 
When $p\leq r$, again this follows from $B(r+1)\subset B(r)$. For $p=r+1$, this follows from $A(r+1),B(r+1)\subset E$ 
and the fact that distinct elements of $E$ have distance greater than or equal to $r+1$. Finally, applying (c) for $C=k+1$, we define $\kappa_{k+1}$.
\end{proof}

We need now to ensure property (ii) in Theorem \ref{thm:setsafhc}. This will be done again inductively, the main step being the following lemma.
\begin{lemma}\label{lem:setsafhc4}
Let $(A(p))$ be a sequence of pairwise disjoint subsets of $\NN$ with positive lower density and let $(a(p))$ be a sequence of positive real numbers. There exists a sequence $(B(p))$ of subsets of $\NN$ with positive lower density such that each $B(p)$ is contained in $A(p)$ and, for all $n\in B(1)$, for all $n'\in B(p)$, $p\geq 2$, $|n-n'|\geq a(1)+a(p)$.
\end{lemma}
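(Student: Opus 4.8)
The plan is to keep almost all of $A(1)$ inside $B(1)$, and to make each of the other sets $B(p)$ ($p\geq 2$) so sparse that the union of the forbidden neighbourhoods they generate around $A(1)$ has density strictly smaller than that of $A(1)$. The asymmetry here is essential: there is only a single set $B(1)$ to protect, against infinitely many separation requirements whose distances $a(1)+a(p)$ grow with $p$, and we absorb this growth by thinning the $B(p)$ more and more severely as $p$ increases.

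Concretely, write $d_1>0$ for the lower density of $A(1)$ and set $L_p=a(1)+a(p)$ for $p\geq 2$. For each such $p$ I would fix a small parameter $\delta_p=c\,2^{-p}/L_p$ (with $c>0$ a small absolute constant chosen at the end) and apply Lemma~\ref{lem:setsafhc3} to $A(p)$ with separation $\lceil 1/\delta_p\rceil$, obtaining $B(p)\subseteq A(p)$ with positive lower density, with all gaps at least $\lceil 1/\delta_p\rceil$, and with $\min B(p)\geq\lceil 1/\delta_p\rceil$; in particular the upper density of $B(p)$ is at most $\delta_p$. I then simply set
\[
B(1)=A(1)\setminus\bigcup_{p\geq 2}\bigl\{n\in\NN:\ \exists\,n'\in B(p),\ |n-n'|<L_p\bigr\}.
\]
With this definition $B(1)\subseteq A(1)$, each $B(p)$ ($p\geq 2$) has positive lower density by construction, and the required separation is automatic: if $n\in B(1)$ and $n'\in B(p)$ then $|n-n'|\geq L_p=a(1)+a(p)$.

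The only point left to verify is that $B(1)$ still has positive lower density. Writing $R_p$ for the set removed on account of $B(p)$, the elements of $R_p$ lie in the union of the intervals $(n'-L_p,n'+L_p)$ over $n'\in B(p)$, each containing fewer than $2L_p$ integers; since $B(p)$ has upper density at most $\delta_p$, one gets $\overline{d}(R_p)\leq 2L_p\delta_p=2c\,2^{-p}$. The subtle point is that upper density is not countably subadditive, so this estimate cannot be summed over $p$ for free; this is exactly where the bound $\min B(p)\geq\lceil 1/\delta_p\rceil$ pays off. Indeed $1/\delta_p=L_p\,2^{p}/c$ grows much faster than $L_p$, so for a fixed $N$ only finitely many indices $p$ satisfy $\min B(p)<N+L_p$, i.e.\ only finitely many $R_p$ meet $[1,N]$; moreover for those $p$ one has $L_p\leq d_1 N\,2^{-p}$, which makes the finite sum $\tfrac1N\sum_p|R_p\cap[1,N]|$ dominated, uniformly in $N$ and up to lower-order terms, by $\sum_{p\geq2}2c\,2^{-p}=c$. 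Choosing $c$ small (say $c<d_1/4$) therefore forces $\overline{d}\bigl(\bigcup_p R_p\bigr)<d_1$, and since lower density obeys $\underline{d}(A(1)\setminus R)\geq\underline{d}(A(1))-\overline{d}(R)$, we conclude $\underline{d}(B(1))>0$.

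The main obstacle is precisely the unboundedness of the separation distances $a(1)+a(p)$: no uniform sparsification of $B(1)$ alone can succeed, so the entire difficulty lies in transferring the sparsity onto the family $(B(p))_{p\geq2}$ and calibrating it ($\delta_p\sim 2^{-p}/L_p$) so that, after each $B(p)$ is weighted by the length $2L_p$ of its forbidden neighbourhood, the contributions remain summable and arbitrarily small — while keeping each individual $B(p)$ of positive lower density, which Lemma~\ref{lem:setsafhc3} guarantees. The rapid growth of $\min B(p)$ is the technical device that converts the failure of countable subadditivity into a harmless, finite, term-by-term estimate on every initial segment.
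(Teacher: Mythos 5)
Your proof is correct and follows essentially the same route as the paper's: thin each $A(p)$, $p\geq 2$, into a set $B(p)$ sparse enough that its $(a(1)+a(p))$-neighbourhood occupies a proportion at most of order $2^{-p}$ of every initial segment $[1,N]$, then remove the union of these neighbourhoods from $A(1)$ and check that a positive proportion survives. The paper simply postulates the uniform-in-$N$ counting bound for the dilated sets $B(p)+[-a(1)-a(p),a(1)+a(p)]$ and notes it is achievable by taking $B(p)=\{n_{ka}\}$ for $a$ large, whereas you derive the same bound from the gap and minimum conditions of Lemma~\ref{lem:setsafhc3}; this is a presentational difference only.
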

\begin{proof}
Since $A(1)$ has positive lower density, there exists $N\in\NN$ and $\delta>0$ such that, for all $n\geq N$, 
$$\frac{\card\big(A(1)\cap[0,\dots,n]\big)}{n+1}\geq\delta.$$
For $p\geq 2$, let $B(p)$ be a subset of $A(p)$ such that, for all $n\in\NN$, 
$$\frac{\card\big((B(p)+[-a(1)-a(p),a(1)+a(p)])\cap [0,\dots,n]\big)}{n+1}<\frac{\delta}{2^p}$$
but $B(p)$ still has positive lower density. This is possible if, writing $A(p)=\{n_j;\ j\in\NN\}$, we set $B(p)=\{n_{ka}; k\geq 1\}$ for some sufficiently large $a$. We then define $B(1)=A(1)\backslash \bigcup_{p\geq 2}\big(B(p)+[-a(1)-a(p),a(1)+a(p)]\big)$. Then, for all $n\in B(1)$ and all $n'\in B(p)$, $p\geq 2$, one clearly has $|n-n'|\geq a(1)+a(p)$ whereas, for all $n\geq N$, 
\[ \frac{\card\big(B(1)\cap[0,\dots,n]\big)}{n+1}\geq \delta-\sum_{p\geq 2}\frac{\delta}{2^p}\geq\frac\delta2 \]
so that $B(1)$ still has positive lower density.
\end{proof}

\begin{proof}[Proof of Theorem \ref{thm:setsafhc}]
Applying Lemmas \ref{lem:setsafhc2} and \ref{lem:setsafhc3},
we start from a sequence $(A(p))$ of pairwise disjoint subsets of $\NN$, satisfying properties (i) and (iii) of Theorem \ref{thm:setsafhc} and property (ii) when $p=q$.
We construct by induction on $r$ sets $B(1),\dots,B(r)$, $A_r(k)$ for $k\geq r+1$ such that
\begin{itemize}
\item $B(k)\subset A(k)$ for all $k\leq r$, $A_r(k)\subset A(k)$ for all $k\geq r+1$;
\item $B(k)$ and $A_r(k)$ have positive lower density;
\item for all $p\in\{1,\dots,r\}$, for all $q\geq p+1$, for all $n\in A(p)$, for all $n'\in B(q)$ if $q\leq r$, for all $n'\in A_r(q)$ if $q\geq r+1$, $|n-n'|\geq a(p)+a(q)$. 
\end{itemize}
The sequence $(B(p))$ that we get at the end will answer the problem.
Now the construction is easily done by successive applications of Lemma \ref{lem:setsafhc4} first with the sequence $(A(p))_{p\geq 1}$, then with the sequence $(A_1(p))_{p\geq 2}$, and so on.
\end{proof}

\subsection{A weighted shift with a frequently hypercyclic algebra on $c_0$}

Let us now define a weight $(w_n)$ such that $B_w$, acting on $c_0$ endowed with the coordinatewise product, supports a frequently hypercyclic algebra. 
We start with the sequence $(A(p))_{p\geq 1}$ given by Theorem \ref{thm:setsafhc} for $a_p=p$. 
We then construct inductively a sequence of integers $(M_k)$ such that, for all $(n,n')\in A(p)\times A(q)$ with $p\neq q$, $\max(n,n')\geq M_{k+1}\implies |n-n'|\geq M_k$. 
This follows directly from property (iii) of Theorem \ref{thm:setsafhc}, applied successively with $C=M_1=1$ to get $M_2$, $C=M_2$ to get $M_3$, and so on.
We may also assume that the sequence $(M_{k+1}-M_k)$ is non-decreasing.

We define the weight $(w_n)_{n\geq 1}$ by the following inductive formulas:
\begin{itemize}
\item $w_n=2$ for all $n\leq M_2$;
\item for all $k\geq 2$, for all $n\in\{M_k+1,\dots,M_{k+1}\}$, 
$$w_n=\left(w_1\cdots w_{M_k}\right)^\frac1{k(M_{k+1}-M_k)}$$
so that, and this is the crucial point, 
$$w_{M_k+1}\cdots w_{M_{k+1}}=\left(w_1\cdots w_{M_k}\right)^{\frac 1k}.$$
\end{itemize}

Let us summarize the properties of the weight  which will be useful later.
\begin{lemma}\label{lem:afhcws1}
The weight $(w_n)$ satisfies the following properties:
\begin{itemize}
\item for all $n\geq 1$, $w_n\geq 1$;
\item $(w_n)$ is non-increasing;
\item $(w_1\cdots w_n)$ tends to $+\infty$;
\item for all $\alpha>0$, for all $l\geq 0$, 
$\displaystyle \frac{w_{M_{k-1}+l+1}\cdots w_{M_{k+1}+l}}{\left(w_{l+1}\cdots w_{M_{k+1}+l}\right)^\alpha}\xrightarrow{k\to+\infty}0. $	
\end{itemize}
\end{lemma}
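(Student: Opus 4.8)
The plan is to reduce all four assertions to the behaviour of the partial products $W_n:=w_1\cdots w_n$. First I would record the recursion hidden in the definition of the weight. Since on each block $\{M_k+1,\dots,M_{k+1}\}$ the weight is constant and $w_{M_k+1}\cdots w_{M_{k+1}}=W_{M_k}^{1/k}$, one gets $W_{M_{k+1}}=W_{M_k}^{(k+1)/k}$ for $k\geq 2$, together with $W_{M_2}=2^{M_2}$. Setting $a_k=\log_2 W_{M_k}$ this reads $a_{k+1}=\frac{k+1}{k}a_k$, which telescopes to $a_k=\frac{k}{2}M_2$; hence
\[ W_{M_k}=2^{kM_2/2}\qquad(k\geq 2), \]
and the common value of $w_n$ on the $k$-th block is $c_k=W_{M_k}^{1/(k(M_{k+1}-M_k))}=2^{M_2/(2(M_{k+1}-M_k))}$. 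This closed form is the engine behind everything.

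For the first three bullets I would argue as follows. That $w_n\geq 1$ follows by induction on the blocks: $W_{M_k}\geq 1$, so the $M_{k+1}-M_k$ equal factors whose product is $W_{M_k}^{1/k}\geq 1$ are each $\geq 1$. The monotonicity is where a little care is needed: $w_n$ is constant on each block, and $c_k=2^{M_2/(2(M_{k+1}-M_k))}$ is non-increasing in $k$ precisely because the gaps $M_{k+1}-M_k$ were assumed non-decreasing; at the junction between the initial block (value $2$) and the block $k=2$ one checks $c_2\leq 2$, which amounts to $M_3\geq\frac32 M_2$ and may be arranged by taking $M_3$ large enough in the construction. Consequently $1\leq w_n\leq 2$ for every $n$, a bound I will use repeatedly. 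Finally $(W_n)$ is non-decreasing (as $w_n\geq 1$) and $W_{M_k}=2^{kM_2/2}\to+\infty$, so $W_n\to+\infty$.

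The fourth bullet is the substantive one, and the key observation is that its numerator telescopes to a constant. Writing the products as quotients of partial products, the numerator equals $W_{M_{k+1}+l}/W_{M_{k-1}+l}$ and the base of the denominator equals $W_{M_{k+1}+l}/W_l$. Ignoring the fixed shift by $l$ for a moment, $W_{M_{k+1}}/W_{M_{k-1}}=2^{((k+1)-(k-1))M_2/2}=2^{M_2}$ is independent of $k$; the shift only perturbs this by a factor lying in $[2^{-l},2^{l}]$, since $1\leq w_i\leq 2$, so the numerator stays bounded, say by $2^{M_2+l}$, uniformly in $k$. On the other hand the denominator base satisfies $W_{M_{k+1}+l}/W_l\geq W_{M_{k+1}}/W_l\to+\infty$, and raising it to the fixed power $\alpha>0$ keeps it tending to $+\infty$. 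Dividing a bounded quantity by one tending to infinity yields the claimed convergence to $0$.

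I expect the only genuine friction to be bookkeeping rather than ideas: handling the translation by $l$, which is cleanly dispatched by the uniform bound $1\leq w_n\leq 2$, and making sure the boundary case of the monotonicity at the first block is covered by the freedom in choosing the $M_k$. Everything else is a direct consequence of the closed form $W_{M_k}=2^{kM_2/2}$.
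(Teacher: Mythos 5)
Your proof is correct and follows essentially the same route as the paper's: the closed form $W_{M_k}=2^{kM_2/2}$ is just the telescoped version of the paper's identity $w_1\cdots w_{M_k}=(w_1\cdots w_{M_2})^{\prod_{j=2}^{k-1}(1+1/j)}$, and your observation that the numerator of the last bullet is the constant $2^{M_2}$ (up to the bounded correction coming from the shift by $l$) matches the paper's computation $w_{M_{k-1}+1}\cdots w_{M_{k+1}}=(w_1\cdots w_{M_{k-1}})^{2/(k-1)}$. The only cosmetic differences are that the paper disposes of the shift $l$ at the outset using that the weight is bounded above and below, and that it does not isolate the junction between the first two blocks in the monotonicity check, which you rightly note is covered by the freedom in choosing $M_3$ (and in fact follows automatically from $M_{k+1}-M_k$ being non-decreasing once $M_2\geq 2$).
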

\begin{proof}
The first property is clear. For the second one, it suffices to prove that if $n\in\{M_k+1,\dots,M_{k+1}\}$ and $n'\in \{M_{k+1}+1,\dots,M_{k+2}\}$ for some $k$, then $w_{n'}\leq w_n$. We now write
\begin{eqnarray*}
w_{n'}&=&\left(w_1\cdots w_{M_{k+1}}\right)^{\frac{1}{(k+1)(M_{k+2}-M_{k+1})}}\\
&=&\left(w_1\cdots w_{M_{k}}\right)^{\frac{1}{(k+1)(M_{k+2}-M_{k+1})}}\left(w_{M_k+1}\cdots w_{M_{k+1}}\right)^{\frac{1}{(k+1)(M_{k+2}-M_{k+1})}}\\
&=&\left(w_1\cdots w_{M_k}\right)^{\frac{1}{k(M_{k+2}-M_{k+1})}}\\
&\leq&w_n.
\end{eqnarray*}
To prove that $(w_1\cdots w_n)$ tends to $+\infty$, we just observe that, for all $k\geq 2$, $$w_1\cdots w_{M_k}=(w_1\cdots w_{M_2})^{\prod_{j=2}^{k-1}\left(1+\frac 1j\right)},$$
and this goes to $+\infty$ as $k$ tends to $+\infty$. Finally, since $(w_n)$ is bounded and bounded below, we need only to prove the last property for $l=0$. Now we write
\begin{align*}
w_{M_{k-1}+1}\cdots w_{M_{k+1}}&=w_{M_{k-1}+1}\cdots w_{M_k}\left(w_1\cdots w_{M_k}\right)^{1/k}\\
&=\left(w_{M_{k-1}+1}\cdots w_{M_k}\right)^{1+\frac 1k}
\left(w_1\cdots w_{M_{k-1}}\right)^{\frac 1k} \\
&=\left(w_1\cdots w_{M_{k-1}}\right)^{\frac 1{k-1}\left(1+\frac 1k\right)+\frac 1k}\\
&=\left(w_1\cdots w_{M_{k-1}}\right)^{\frac 2{k-1}} 
\end{align*}
so that 

$$\frac{w_{M_{k-1}+1}\cdots w_{M_{k+1}}}{\left(w_1\cdots w_{M_{k+1}}\right)^\alpha}
\leq \frac1{\left(w_1\cdots w_{M_{k-1}}\right)^{\alpha-\frac{2}{k-1}}}$$
which indeed tends to zero.
\end{proof}

We now prove that the operator $B_w$ acting on $c_0$ endowed with the coordinatewise product supports a frequently hypercyclic algebra.
Let $(v(p),m(p))$ be a sequence dense in $c_0\times\NN$
such that each $v(p)$ has finite support contained in $[0,p]$. We shall need a last technical lemma involving all the objects we constructed until now.

\begin{lemma}\label{lem:afhcws2}
There exists a sequence of integers $(N(r))_{r\geq 1}$ satisfying the following properties:
\begin{enumerate}[(i)]
\item for all $r\geq 1$, 
$$\sup_{n\geq N(r),\ l=0,\dots,r}\left|\frac{v_l(r)}{\left(w_{l+1}\cdots w_{n+l}\right)^{\frac{1}{m(r)+1}}}\right|^{\frac 1{m(r)}}<\frac 1r.$$
\item for all $r\geq 2$, for all $s\in\{1,\dots,r-1\}$, for all $(j,j')\in A(r)\times A(s)$ with $j\geq N(r)$, for all $l\in \{0,\dots,r\}$, for all $\alpha\geq\min\left(\frac 1{m(r)},\frac 1{m(s)}\right)$, 
\begin{align*}
j>j'&\implies \left|\frac{w_{l+(j-j')+1}\cdots w_{j+l}v_l(r)^\alpha}{\left(w_{l+1}\cdots w_{j+l}\right)^\alpha}\right|<\frac1r\\
j'>j&\implies \left|\frac{w_{l+(j'-j)+1}\cdots w_{j'+l}v_l(s)^\alpha}{\left(w_{l+1}\cdots w_{j'+l}\right)^\alpha}\right|<\frac1r.
\end{align*}
\end{enumerate}
\end{lemma}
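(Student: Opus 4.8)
The plan is to choose each $N(r)$ separately and large enough to force both (i) and (ii); no induction on $r$ is needed, since (ii) only refers to the already fixed data $A(s),v(s),m(s)$ for $s<r$. Throughout I use Lemma \ref{lem:afhcws1}: that $(w_n)$ is non-increasing with $w_n\ge 1$ and that $(w_1\cdots w_n)\to+\infty$, hence $w_{l+1}\cdots w_{n+l}=(w_1\cdots w_{n+l})/(w_1\cdots w_l)\to+\infty$ for each fixed $l$. Property (i) is then immediate: for each of the finitely many $l\in\{0,\dots,r\}$ the displayed term tends to $0$ as $n\to+\infty$, so the supremum can be made $<1/r$ by taking $N(r)$ large.

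For (ii) I treat $j>j'$; the case $j'>j$ is identical after exchanging $(r,j)$ with $(s,j')$ (note that $j\ge N(r)$ and $j'>j$ force $\max(j,j')\ge N(r)$ in both cases). Write $\Delta=j-j'$, $W_1=w_{l+\Delta+1}\cdots w_{j+l}$ and $W_2=w_{l+1}\cdots w_{j+l}$, so that the quantity to estimate is $W_1\,(|v_l(r)|/W_2)^{\alpha}$. The first step removes the unbounded range of $\alpha$: since $W_2\to+\infty$ with $j$ while $|v_l(r)|$ is one of finitely many fixed numbers, we have $|v_l(r)|\le W_2$ once $j$ is large, so $\alpha\mapsto(|v_l(r)|/W_2)^{\alpha}$ is non-increasing on $[\alpha_0,+\infty)$ with $\alpha_0=\min(1/m(r),1/m(s))$. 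Thus the supremum over $\alpha\ge\alpha_0$ is attained at $\alpha_0$, and it suffices to control $|v_l(r)|^{\alpha_0}W_1W_2^{-\alpha_0}$.

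The decisive step couples the separation built into the $A(p)$ with the last assertion of Lemma \ref{lem:afhcws1}. Let $k$ satisfy $M_{k+1}\le j<M_{k+2}$. Because $(j,j')\in A(r)\times A(s)$ with $r\ne s$ and $\max(j,j')=j\ge M_{k+1}$, the defining property of the sequence $(M_k)$ gives $\Delta=j-j'\ge M_k$. Using $w_n\ge1$ to enlarge the numerator and to shrink the denominator (adding, resp.\ deleting, factors $\ge1$), I obtain $W_1\le w_{M_k+l+1}\cdots w_{M_{k+2}+l}$ and $W_2\ge w_{l+1}\cdots w_{M_{k+1}+l}$, whence
\[ W_1W_2^{-\alpha_0}\le \frac{w_{M_k+l+1}\cdots w_{M_{k+2}+l}}{(w_{l+1}\cdots w_{M_{k+1}+l})^{\alpha_0}}. \]
The numerator is a two-block product; comparing with the last limit in Lemma \ref{lem:afhcws1} (applied with index $k+1$ and exponent $\alpha_0$) the right-hand side differs from a quantity tending to $0$ only by the one-block factor $(w_{M_{k+1}+l+1}\cdots w_{M_{k+2}+l})^{\alpha_0}$, which is bounded independently of $k$: a one-block product equals $(w_1\cdots w_{M_{k+1}})^{1/(k+1)}=(w_1\cdots w_{M_2})^{1/2}$, and the shift by $l\le r$ perturbs it only by a bounded factor. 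As the full product $w_1\cdots w_{M_{k+1}}$ in the denominator diverges, the right-hand side tends to $0$ as $k\to+\infty$, i.e.\ as $j\to+\infty$, uniformly over the finitely many $l\in\{0,\dots,r\}$, $s\in\{1,\dots,r-1\}$ and values $\alpha_0$. Taking $N(r)$ large enough that $j\ge N(r)$ pushes this ratio below $1/r$ finishes (ii).

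The main obstacle is this last estimate, where two things must be reconciled at once: the exponent $\alpha$ must be linearized down to its least admissible value, and the weight ratio attached to an arbitrary separated pair $(j,j')$ must be matched to the block-indexed ratio that Lemma \ref{lem:afhcws1} actually controls. The quantitative engine behind the matching is the separation $j-j'\ge M_k$: it prevents the moving block $w_{l+\Delta+1}\cdots$ from reaching back into the early (large) weights, so that the numerator remains a bounded two-block product while the denominator diverges.
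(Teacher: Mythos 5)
Your proof is correct. It rests on the same two pillars as the paper's argument --- the separation $|j-j'|\ge M_k$ forced by $\max(j,j')\ge M_{k+1}$ when $j$ and $j'$ lie in different sets $A(p)$, and the last limit of Lemma \ref{lem:afhcws1} --- but you organize the estimate differently. The paper fixes $s$ and $l$, splits the range of $\alpha$ into three cases ($\alpha\ge 2$, $\alpha\in[1,2]$, $\alpha<1$) and only invokes the block limit of Lemma \ref{lem:afhcws1} in the last case, handling the first two by cruder bounds of the type $C^2/(w_1\cdots w_{j-j'})$. You instead observe that once $w_{l+1}\cdots w_{j+l}\ge |v_l(r)|$ the map $\alpha\mapsto(|v_l(r)|/W_2)^{\alpha}$ is non-increasing, which collapses the whole range $\alpha\ge\alpha_0$ to the single value $\alpha_0$ and removes the case analysis. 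The price is a mismatch between your numerator (two blocks, up to $M_{k+2}+l$) and your denominator (one block, up to $M_{k+1}+l$), which you repair with the nice extra observation that each one-block product $w_{M_{k+1}+1}\cdots w_{M_{k+2}}=(w_1\cdots w_{M_{k+1}})^{1/(k+1)}$ equals the constant $(w_1\cdots w_{M_2})^{1/2}$, so the leftover factor is bounded; the paper avoids this by instead exploiting that $j\mapsto w_{M_{k-1}+1}\cdots w_j/(w_1\cdots w_j)^{\alpha_0}$ is non-decreasing (since $w_n\ge1$ and $\alpha_0\le1$) to push $j$ up to exactly $M_{k+1}$ and land on the Lemma \ref{lem:afhcws1} quantity itself. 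Both routes are valid; yours is cleaner on the exponent and marginally heavier on the block bookkeeping.
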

\begin{proof}
Let $r\geq 1$ be fixed, We first observe that it is easy to ensure (i), just by assuming that $N(r)$ is large enough. Let us choose $N(r)$ to ensure (ii).
Upon taking a supremum, we may fix $s$ and $l$ and to simplify the notations, we will assume $l=0$. Let $\alpha_0=
\min\left(\frac 1{m(r)},\frac 1{m(s)}\right)$ and $C=\max(1,|v_0(r)|,|v_0(s)|)$.
We define three integers $N_0$, $k_0$ and $k_1$ satisfying the following three conditions:
$$n\geq N_0\implies \frac{C^2}{w_1\cdots w_n}<\frac 1r$$
$$n\geq M_{k_0}\implies \frac{C^2}{w_1\cdots w_n}<\frac 1r$$
$$k\geq k_1\implies \frac{w_{M_{k-1}+1}\cdots w_{M_{k+1}}C}{\left(w_1\cdots w_{M_{k+1}}\right)^{\alpha_0}}
< \frac 1{r}.$$
We set $N(r)=\max(N_0,M_{k_0+1},M_{k_1})$. 
Let $(j,j')\in A(r)\times A(s)$ with $j\geq N(r)$. To fix the ideas, we assume that $j>j'$. If $\alpha\geq 2$, then 
$$\left|\frac{w_{(j-j')+1}\cdots w_{j}v_0(r)^\alpha}{\left(w_{1}\cdots w_{j}\right)^\alpha}\right|
\leq \frac{w_{(j-j')+1}\cdots w_{j}}{w_1\cdots w_j}\times \left|\frac{v_0(r)^2}{w_1\cdots w_j}\right|^{\alpha/2}<1\times\left(\frac 1r\right)^{\frac\alpha 2}\leq \frac 1r.$$
 If $\alpha\in [1,2]$, then
$$\left|\frac{w_{(j-j')+1}\cdots w_{j}v_0(r)^\alpha}{\left(w_{1}\cdots w_{j}\right)^\alpha}\right|
\leq \frac{C^2}{w_1\cdots w_{j-j'}}.
$$
Since $j\geq N(r)$, $j-j'\geq M_{k_0}$ so that the last term is less than $1/r$. 
For $\alpha<1$, since $j\geq N(r)\geq M_{k_1}$,  there exists a single integer $k\geq k_1$ such that $j\in [M_k,M_{k+1})$. Then $j-j'\geq M_{k-1}$ and
\begin{align*}
\left|\frac{w_{(j-j')+1}\cdots w_j v_0(r)^\alpha}{\left(w_1\cdots w_{j}\right)^{\alpha}}\right|
&\leq \frac{w_{M_{k-1}+1}\cdots w_{j} C}{\left(w_1\cdots w_{j}\right)^{\alpha_0}}\\
&\leq \frac{w_{M_{k-1}+1}\cdots w_{M_{k+1}}C}{\left(w_1\cdots w_{M_{k+1}}\right)^{\alpha_0}}\\
&<\frac 1r.
\end{align*}
\end{proof}

\begin{proof}[Proof of Theorem \ref{thm:afhcc0}]
We are now ready for the proof that $B_w$ supports a frequently hypercyclic algebra. 
By Lemma \ref{lem:setsafhc3}, for each $p\geq 1$, let $B(p)$ be a subset of $A(p)$ with positive lower density such that $\min(B(p))\geq N(p)$ and $|n-n'|\geq N(p)$ for all $n\neq n'\in B(p)$. We set
$$u(p)=\sum_{n\in B(p)}\sum_{l=0}^p \frac1{\left(w_{l+1}\cdots w_{n+l}\right)^{1/m(p)}} v_l(p)^{1/m(p)} e_{n+l}.$$

Since $(w_1\cdots w_n)$ tends to $+\infty$, $u(p)$ belongs to $c_0$. Moreover, the choice of $N(p)$ 
(here, (i) of Lemma \ref{lem:afhcws2}) ensures that $\|u(p)\|<1/p$. 
We also observe that the $u(p)$ have pairwise disjoint support.
Hence we may define $u=\sum_{p\geq 1}u(p)$ which still belongs to $c_0$.  We claim that the following property is true:
for all $p\geq 1$, for all $q\neq p$, for all $n\in B(p)$, 
\begin{equation}\label{eq:afhcws1}
 \left\|B_w^n u(p)^{m(p)}-v(p)\right\|<\frac 1p,
\end{equation}
\begin{equation}\label{eq:afhcws2}
 \forall m>m(p),\ \left\|B_w^n u(p)^{m}\right\|<\frac 1p,
\end{equation}
\begin{equation}\label{eq:afhcws3}
 \forall m\geq m(p),\ \left\|B_w^n u(q)^{m}\right\|<\frac 1p.
\end{equation}
Assume that these properties have been proved. Let $P$ be a non-constant polynomial with $P(0)=0$ and write it $P(z)=\sum_{m=m_0}^{m_1} \hat P(m) z^m$ with $\hat P(m_0)\neq 0$.
We aim to prove that $P(u)$ is a frequently hypercyclic
vector for $B_w$. Without loss of generality, we can assume $\hat P(m_0)=1$. 
Let $V$ be a non-empty open subset of $c_0$.
There exists $p\geq 1$ such that $B\left(v(p),\left(2+\sum_{m=m_0+1}^{m_1}|\hat P(m)|\right)/p\right)\subset V$
and $m(p)=m_0$. Then, for all $n\in B(p)$, 
\begin{align*}
 \left\|B_w^n P(u)-v(p)\right\|&\leq \left\|B_w^n u(p)^{m(p)}-v(p)\right\|+\left\|\sum_{q\neq p} B_w^n u(q)^{m(p)}\right\|\\
 &\quad\quad\quad+\sum_{m=m_0+1}^{m_1}|\hat P(m)|\left\|\sum_{q\geq 1} B_w^n u(q)^m\right\|\\
&\leq \frac{2+\sum_{m=m_0+1}^{m_1}|\hat P(m)|}{p},
\end{align*}
where the last inequality follows from \eqref{eq:afhcws1}, \eqref{eq:afhcws2}, \eqref{eq:afhcws3} and the fact that the $B_w^n u(q)$ have pairwise disjoint support.
Therefore, for all $n$ in a set of positive lower density, $B_w^n P(u)$ belongs to $V$, showing that $P(u)$ is a frequently hypercyclic vector for $B_w$.
Hence, it remains to prove \eqref{eq:afhcws1}, \eqref{eq:afhcws2} and \eqref{eq:afhcws3}. We first observe that 
\[ B_w^n u(p)^{m(p)}-v(p)=\sum_{\substack{n'\in B(p)\\ n'\geq n}}\sum_{l=0}^p \frac{v_l(p)}{w_{l+1}\cdots w_{(n'-n)+l}} e_{(n'-n)+l}. \]
Since $n'-n>N(p)$ for all $n'>n$, $n'\in B(p)$, \eqref{eq:afhcws1} follows from (i) in Lemma \ref{lem:afhcws2}. 
Next, for $m>m(p)$, we may write $B_w^n u(p)^m$ as
$$\sum_{\substack{n'\in B(p)\\ n'\geq n}}\sum_{l=0}^p \frac{v_l(p)^{\frac{m}{m(p)}}}{\left(w_{l+1}\cdots w_{(n'-n)+l}\right)^{\frac m{m(p)}}\left(w_{(n'-n)+l+1}\cdots w_{n'+l}\right)
^{\frac{m}{m(p)}-1}}e_{(n'-n)+l}.
$$
There is an additional difficulty since now we may have $n'=n$. We overcome this difficulty by writing
\begin{align*}
&\left|\frac{v_l(p)^{\frac{m}{m(p)}}}{\left(w_{l+1}\cdots w_{(n'-n)+l}\right)^{\frac m{m(p)}}\left(w_{(n'-n)+l+1}\cdots w_{n'+l}\right)^{\frac{m}{m(p)}-1}}\right|\\
&\quad\quad\leq
 \left| \frac{v_l(p)^{\frac{m}{m(p)}}}{\left(w_{l+1}\cdots w_{n'+l}\right)^{\frac{m}{m(p)}-1}}\right|\\
 &\quad\quad\leq \left| \frac{v_l(p)^{\frac{1}{m(p)}}}{\left(w_{l+1}\cdots w_{n'+l}\right)^{\frac{1}{m(p)}-\frac 1m}}\right|^m\\
 &\quad\quad\leq  \left| \frac{v_l(p)^{\frac{1}{m(p)}}}{\left(w_{l+1}\cdots w_{n'+l}\right)^{\frac{1}{m(p)(m(p)+1)}}}\right|^m\\
 &\quad\quad<\frac 1p.
\end{align*}

Finally, for $m\geq m(p)$ and $q\neq p$, we write
\[ B_w^n u(q)^m=\sum_{\substack{n'\in B(q)\\ n'>n}}\sum_{l=0}^q \frac{w_{l+(n'-n)+1}\cdots w_{n'+l}}{\left(w_{l+1}\cdots w_{n'+l}\right)^{\frac m{m(q)}}}v_l(q)^{\frac m{m(q)}}e_{(n'-n)+l}. \]

For $q>p$, we apply (ii) of Lemma \ref{lem:afhcws2} with $r=q$, $s=p$, $j=n'$, $j'=n$ and $\alpha=m/m(q)$. For $q<p$, we apply (ii) of Lemma \ref{lem:afhcws2} with
$r=p$, $s=q$, $j=n$, $j'=n'$ and $\alpha=m/m(q)$. In both cases, we immediately find that all the coefficients of $B_w^n u(q)^m$ are smaller than $1/p$, yielding
\[ \left\|B_w^n u(q)^m\right\|<\frac 1p. \]
This closes the proof of Theorem \ref{thm:afhcc0}.
\end{proof}

This technical construction leads to an example over the not so difficult space $c_0$, but the following question remains open.

\begin{question}
Does there exist a weighted shift on $\ell_p$ endowed with the pointwise product admitting a frequently hypercyclic algebra?
\end{question}

Of course, it would also be nice to get simpler examples! On the other hand, for sequence spaces endowed with the convolution product,
we have neither positive nor negative examples. For instance, it would be very interesting to solve the following questions.

\begin{question}
 Does $B$ on $\omega$ endowed with the convolution product support a frequently hypercyclic algebra?
\end{question}

\begin{question}
Does $2B$ on $\ell_1$ endowed with the convolution product support a frequently hypercyclic algebra?
\end{question}

\section{Concluding remarks and open questions}

\subsection{Closed hypercyclic algebras}

As pointed out in the introduction, provided $T$ is hypercyclic, $HC(T)\cup\{0\}$ always contains a dense subspace. When moreover $T$ satisfies
the hypercyclicity criterion, there is a necessary and sufficient condition to determine whether $HC(T)\cup\{0\}$ contains an infinite-dimensional closed subspace
(see for instance \cite{BM09}). In our context, it is natural to ask whether, for some of our examples, $HC(T)\cup\{0\}$ contains a closed non-trivial algebra
(we will say that $T$ supports a closed hypercyclic algebra).

The third author and K. Grosse-Erdmann have shown that it is the case if $T$ is a translation operator acting on the space $\mathcal C^{\infty}(\mathbb R,\mathbb C)$.
The fact that $T$ is an algebra homomorphism plays an important role here. We now give several negative results. The first one solves a question of \cite{shkarin}.

\begin{proposition}
No convolution operator $P(D)$ induced by a nonconstant polynomial $P\in\CC[z]$ admits a closed hypercyclic algebra.
\end{proposition}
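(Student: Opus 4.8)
The plan is to argue by contradiction: suppose $\mathcal A\subseteq H(\CC)$ is a \emph{closed} subalgebra with $\mathcal A\setminus\{0\}\subseteq HC(P(D))$, and fix $f\in\mathcal A\setminus\{0\}$. Two preliminary facts drive the argument. The first is a \emph{non-hypercyclicity criterion}: if $g\in H(\CC)$ satisfies $Q(P(D))g=0$ for some nonzero polynomial $Q$, then $g$ is not hypercyclic, because its orbit stays inside $\ker Q(P(D))$, the solution space of the constant-coefficient linear ODE $Q(P(D))y=0$, which is finite dimensional, hence closed and proper in the infinite-dimensional space $H(\CC)$. In particular every polynomial (the orbit of a polynomial of degree $k$ remains among polynomials of degree $\le k$) and every exponential polynomial $\sum_j q_j(z)e^{\beta_j z}$ is non-hypercyclic. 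The second is a \emph{closure fact}: since $\mathcal A$ is a closed subalgebra, it is stable under $g\mapsto h\circ g$ for every entire $h$ with $h(0)=0$. Indeed, writing $h(w)=\sum_{k\ge1}c_kw^k$, the partial sums $\sum_{k=1}^n c_kg^k$ lie in $\mathcal A$ and converge to $h\circ g$ uniformly on compacta, that is, in the topology of $H(\CC)$. Thus the closed algebra generated by $f$ consists of limits of entire functions of $f$; in particular it contains $e^{\lambda f}-1$ for every $\lambda\in\CC$.

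First I would dispose of the case where $f$ is a polynomial: then $f$ itself is non-hypercyclic by the criterion above, contradicting $f\in\mathcal A\setminus\{0\}$. So assume $f$ is transcendental. Here the decisive feature of $P$ being a \emph{polynomial} enters, namely $|P(z)|\to+\infty$ as $|z|\to+\infty$. Recall that $E(\lambda)=e^{\lambda z}$ satisfies $P(D)E(\lambda)=P(\lambda)E(\lambda)$. The set $K=\{\lambda:|P(\lambda)|\le1\}$ is then a bounded lemniscatic region containing a neighbourhood of each zero of $P$, hence it has accumulation points, so that $\overline{\vect}\{E(\lambda):\lambda\in K\}=H(\CC)$ by the density property of the exponentials. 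This already shows that the naive candidate for a proper $P(D)$-invariant subspace, namely the closed span of the ``power-bounded'' eigenfunctions, is in fact all of $H(\CC)$: one cannot simply read off a non-hypercyclic witness from the eigenstructure. The aim must therefore be subtler: to use the closure of $\mathcal A$ under entire functions of $f$, together with the fact that $|P|\to+\infty$ forces a lower bound on the growth of $P(D)$-hypercyclic functions (no hypercyclic vector can be too small), in order to manufacture a nonzero element of $\mathcal A$ that is either annihilated by some $Q(P(D))$ or grows too slowly to be hypercyclic. Either conclusion contradicts the criterion of the first paragraph.

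The hard part is precisely this construction in the transcendental case. Composition alone is powerless: among the functions $h\circ f$ the slowest-growing nonzero one is $f$ itself (take $h(w)=w$), so a single composition cannot lower the growth below that of $f$, and every element of the closed algebra generated by $f$ is ``subordinate'' to $f$ in the strong sense of being constant on the fibres of $f$. The technical heart is therefore to go beyond a single generator, exploiting the full algebra $\mathcal A$ and the analyticity of the $H(\CC)$-valued map $\lambda\mapsto e^{\lambda f}-1$ (whose values, and hence whose contour integrals $\int_\gamma (e^{\lambda f}-1)\,\psi(\lambda)\,d\lambda$, all lie in the closed space $\mathcal A$), so as to extract a nonzero element whose $P(D)$-orbit is confined to a proper closed invariant subspace. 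I expect the careful choice of weight $\psi$ and contour $\gamma$ — arranged so that the resulting vector has insufficient growth, where the hypothesis $|P|\to+\infty$ is used in an essential way — to be the main obstacle, and to be the step that genuinely distinguishes polynomial symbols $P$ from the general exponential-type symbols of Theorem~\ref{thm:convintro}.
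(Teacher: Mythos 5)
Your proposal sets up the right framework but stops exactly where the proof has to begin, so there is a genuine gap. The two correct ingredients you do have are (1) the closure fact that the closed algebra generated by $f$ contains $h\circ f$ for every entire $h$ with $h(0)=0$, and (2) the reduction of the proposition to producing, inside that algebra, a single nonzero non-hypercyclic vector. But you then write that ``the hard part is precisely this construction'' and defer it to an unspecified contour-integral argument $\int_\gamma (e^{\lambda f}-1)\psi(\lambda)\,d\lambda$ with a ``careful choice of weight $\psi$ and contour $\gamma$'' that you never make. Nothing in the proposal shows that such a choice exists, and the two non-hypercyclicity certificates you aim for (membership in $\ker Q(P(D))$ for some polynomial $Q$, or ``insufficient growth'') are both much stronger than what is needed and not obviously attainable for a function of the form $h\circ f$ with $f$ transcendental: compositions $h\circ f$ tend to grow \emph{faster} than $f$, and there is no reason an element of the closed algebra generated by a hypercyclic $f$ should solve a constant-coefficient ODE. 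You have also misidentified where the polynomial hypothesis on $P$ enters: it is not through $|P(z)|\to+\infty$ on $\CC$ (indeed your own lemniscate observation shows the eigenvalue picture gives nothing), but through the existence of a finite top degree $t$ with $\hat P(t)\neq 0$.

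The paper's construction, which you are missing, is elementary and direct. Write $f=a_0+z^p+\cdots$ (normalizing $a_p=1$) and build $g(w)=\sum_{k\ge1}b_k(w-a_0)^{kt}$ by induction on $k$. The point is that $(f-a_0)^{kt}=z^{ktp}+O(z^{ktp+1})$, while $P(D)^{lp}$ is a differential operator of order exactly $ltp$ with leading coefficient $\hat P(t)^{lp}$. Hence for $l<k$ the new block $b_k(f-a_0)^{kt}$ contributes nothing to $P(D)^{lp}(\cdot)(0)$ (the order of differentiation is too low to reach degree $ktp$), and for $l=k$ it contributes exactly $b_k\hat P(t)^{kp}(ktp)!$, so one can choose $b_k$ with $|b_k|\le\big((|\hat P(0)|+1)/|\hat P(t)|\big)^{kp}/(ktp)!$ forcing $|P(D)^{kp}(P_k\circ f)(0)|\ge(|\hat P(0)|+1)^{kp}$ while preserving the earlier inequalities. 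The bound on $|b_k|$ makes $g$ entire, $h=g-g(0)$ satisfies $h(0)=0$... (more precisely $h\circ f$ lies in the closed algebra generated by $f$ after subtracting the constant), and $|P(D)^{lp}(h\circ f)(0)|\to+\infty$, which is a \emph{third}, much softer non-hypercyclicity certificate than either of yours: an orbit whose evaluation at $0$ tends to infinity cannot be dense, so $h\circ f\notin HC(P(D)^p)$, and Ansari's theorem transfers this to $P(D)$. Without this (or some other explicit) construction, your argument does not prove the proposition.
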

\begin{proof}
We write  $P(z)=\sum_{s=0}^t\hat{P}(s)z^s$, with $\hat{P}(t)\neq0$, and let $f\in HC(P(D))$. We shall prove that the closed algebra generated by $f$ contains a non-zero and non-hypercyclic vector. Write $f(z)=a_0+\sum_{n\geq p}a_n z^n$, with $a_p\neq 0$. Without loss of generality, we may assume that $a_p=1$. We shall construct by induction a sequence of complex numbers $(b_k)$ such that 
\[ |b_k|\leq  \left(\frac{|\hat P(0)|+1}{|\hat{P}(t)|}\right)^{kp}\times\frac 1{(ktp)!}\]
 for all $k$ and, setting $P_k(z)=\sum_{l=1}^k b_l ( z-a_0)^{lt}$, then 
 \[ |P(D)^{lp}(P_k\circ f)(0)|\geq (|\hat P(0)|+1)^{lp} \]
 for all $1\leq l\leq k$. The conclusion follows easily. In fact, $(P_k)$ converges uniformly on compact subsets of $\CC$ to some entire function $g$. From the uniformity of the convergence, we conclude that the function $g\circ f$ satisfies 
 \[ \left|P(D)^{lp}(g\circ f)(0)\right|\geq (|\hat P(0)|+1)^{lp}\] 
 for all $l\geq 1$.  Let us set $h=g-g(0)$. The function $h\circ f$, which is in the algebra generated by $f$, satisfies
\begin{align*}
\left |P(D)^{lp}(h\circ f)(0)\right| & \geq \left|P(D)^{lp}(g\circ f)(0)\right| - \left| P(D)^{lp}(g(0))\right|\\
&\geq  (|\hat P(0)|+1)^{lp} - |\hat P(0)|^{lp} |g(0)|\\
&\xrightarrow{l\to+\infty}+\infty.
\end{align*} 
Hence, $h\circ f$ is nonzero and it cannot be hypercyclic for $P(D)^p$. In particular, since $HC(P(D))\subset \bigcap_{n\geq1}HC(P(D)^n)$ (see \cite[Theorem 1]{ansari}), $h\circ f$ cannot be hypercyclic for $P(D)$ as well.

For the proof we will use the formula
\[
P(D)^q=\sum_{{\bf j}\in I_q}{\binom q {\bf j}}\hat{P}(0)^{j_0}\cdots\hat{P}(t)^{j_t}D^{j_1+2j_2+\cdots+tj_t},
\]
where $I_q=\{{\bf j}=(j_0,...,j_t)\in \NN_0^{t+1} : j_0+\cdots+j_t=q\}$ and $\binom q {\bf j}$ denote the multinomial coefficient \[\binom q {j_0,\dots,j_t}=\frac{q!}{j_0!\cdots j_t!}.\]
 
Let us set $P_0(z)=0$ and let us assume that the construction has been done until step $k-1$. Then
\begin{align*}
P_{k-1}\circ f+b(f-a_0)^{kt}&=P_{k-1}\circ f+b(z^p+a_{p+1}z^{p+1}+\cdots)^{kt}\\
  &=P_{k-1}\circ f+b z^{ktp}+\sum_{j\geq ktp+1}c_j z^j.
\end{align*}

Hence, for $1\leq l\leq k$,
\begin{align*}
P(D)^{lp}(P_{k-1}\circ f+b(f-a_0)^{kt})(0)&=P(D)^{lp}(P_{k-1}\circ f)(0)+P(D)^{lp}(b(f-a_0)^{kt})(0)\\ &=P(D)^{lp}(P_{k-1}\circ f)(0)+g_l(0),
\end{align*}
where
\begin{align*}
g_l(z)
&=P(D)^{lp}\left(bz^{ktp}+\sum_{j\geq ktp+1}c_jz^j\right).
\end{align*}
If $l\leq k-1$ then $\deg P^{lp}\leq(k-1)tp<ktp$, hence $g_l(0)=0$. By the induction hypothesis it follows that
\begin{align*}
|P(D)^{lp}(P_{k-1}\circ f+b(f-a_0)^{kt})(0)|&=|P(D)^{lp}(P_{k-1}\circ f)(0)+g_l(0)|\\
&=|P(D)^{lp}(P_{k-1}\circ f)(0)|\geq (|\hat P(0)|+1)^{lp}
\end{align*}
whatever the value of $b$ is. On the other hand, if $l=k$, then
\begin{align*}
g_k(z)
&=b\hat{P}(t)^{kp}(ktp)!\\ &\quad\quad\quad+\sum_{{\bf j}\in I_{kp}\backslash\{(0,...,0,kp)\}}\binom{kp}{\bf j}\hat{P}(0)^{j_0}\cdots\hat{P}(t)^{j_t}D^{j_1+2j_2+\cdots+tj_t}\left(bz^{ktp}+\sum_{j\geq ktp+1}c_jz^j\right),
\end{align*}
hence $g_k(0)=b\hat{P}(t)^{kp}(ktp)!$, that is,
\begin{align*}
|P(D)^{kp}(P_{k-1}\circ f+b(f-a_0)^{kt})(0)|&=|P(D)^{kp}(P_{k-1}\circ f)(0)+g_k(0)|\\
&= |P(D)^{kp}(P_{k-1}\circ f)(0)+b\hat{P}(t)^{kp}(ktp)!|,
\end{align*}
so we can find $b$ satisfying 
\[ |b|\leq  \left(\frac{|\hat P(0)|+1}{|\hat{P}(t)|}\right)^{kp}\times\frac 1{(ktp)!}\]
such that 
 \[ |P(D)^{lp}(P_{k-1}\circ f+b(f-a_0)^{kt})(0)|\geq (|\hat P(0)|+1)^{lp}. \]
  The proof is now done by taking $b_k=b$.
\end{proof}

\begin{question}
Does there exist an entire function $\phi$ of exponential type such that $\phi(D)$ supports a closed hypercyclic algebra?
\end{question}

\begin{proposition}
Let $X=\ell_p$, $X=c_0$ or $X=\omega$, endowed with the coordinatewise product. No backward shift on $X$ supports a closed hypercyclic algebra.
\end{proposition}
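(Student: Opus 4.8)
The plan is to argue by contradiction. Suppose $\mathcal A\subseteq X$ is a closed subalgebra all of whose nonzero elements are hypercyclic, and pick any $u\in\mathcal A\setminus\{0\}$. Since $\mathcal A$ is a closed subalgebra containing $u$, it contains the closed subalgebra generated by $u$; and for a single generator under the coordinatewise product this is exactly the closed linear span $\overline{\mathrm{span}}\{u^m:m\ge1\}$, because $u^m\cdot u^{m'}=u^{m+m'}$. It thus suffices to produce inside this closed span a nonzero \emph{non}-hypercyclic vector. The test for non-hypercyclicity I will use is elementary: the coordinate map $\phi_0:X\to\CC$, $\phi_0(z)=z_0$, is continuous, linear and onto, and $\phi_0(B_w^n z)=w_1\cdots w_n\,z_n$. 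Hence if $y$ is hypercyclic then $\{w_1\cdots w_n\,y_n:n\ge0\}$ is dense in $\CC$; so it is enough to find $y$ in the closed span whose coordinate-$0$ orbit is \emph{not} dense (for instance bounded, or finitely supported).

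Next I would describe $\overline{\mathrm{span}}\{u^m\}$ concretely. As $u$ is hypercyclic it has infinite support, and on $\ell_p$ and $c_0$ one has $u_n\to0$, so $K:=\{u_n:n\}\cup\{0\}$ is compact with $0$ as its only accumulation point; every value $u_n\neq0$ is then isolated in $K$ and attained finitely often. Since $K$ has empty interior and connected complement, Lavrentiev's theorem makes polynomials uniformly dense in $C(K)$, and normalising by $P\mapsto P-P(0)$ yields, on $c_0$, that $\overline{\mathrm{span}}\{u^m\}=\{(F(u_n))_n:F\in C(K),\,F(0)=0\}$. On $\omega$, where the topology is coordinatewise, Lagrange interpolation at $u_1,\dots,u_k$ (together with the node $0$) shows that $(F(u_n))_n$ belongs to the closed span for an essentially arbitrary $F$ with $F(0)=0$. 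The point to stress is that the \emph{analytic} elements $g(u)=\sum_m b_m u^m$ are themselves hypercyclic (this follows from hypercyclicity of the lowest power $u^{m_0}$ occurring in $g$, which holds inside $\mathcal A$), so the non-hypercyclic vector we seek must be a genuinely non-analytic element of the closure.

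With this in hand the construction is short in two of the three spaces. On $\omega$ I take $F$ to be the indicator of a single isolated value $v_*=u_{n_*}\neq0$ attained only finitely often; then $(F(u_n))_n$ is a nonzero, finitely supported vector, and such vectors have eventually vanishing orbit, hence are non-hypercyclic (a suitable value must be selected, the fully degenerate configurations being treated by the same coordinate-$0$ criterion applied to a carefully chosen fibre). On $c_0$ I instead choose a continuous $F$ with $F(0)=0$ that is uniformly small against the weights: enumerating the distinct nonzero values as $(v_j)$ and setting $W^{(j)}=\max\{|w_1\cdots w_n|:u_n=v_j\}<\infty$, the choice $F(v_j)=1/\max(W^{(j)},j)$ gives $F\in C(K)$, $F\not\equiv0$, and $|w_1\cdots w_n\,F(u_n)|\le1$ for every $n$; thus the coordinate-$0$ orbit of $y=(F(u_n))_n$ is bounded, so $y$ is non-hypercyclic. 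In each case $y\in\overline{\mathrm{span}}\{u^m\}\subseteq\mathcal A$ is nonzero and non-hypercyclic, contradicting the hypothesis.

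The genuinely delicate case is $\ell_p$, which I expect to be the main obstacle. There uniform approximation on $K$ no longer controls the $\ell_p$-norm, since the error $\sum_n|P_k(u_n)-F(u_n)|^p$ runs over the infinitely many indices with $u_n$ near $0$, and a merely continuous $F$ need not lie in the $\ell_p$-closure of the powers. The remedy I would pursue keeps the same strategy but realises $y$ through an $F$ supported on a sufficiently \emph{sparse and well-separated} set of isolated values, approximated by polynomials $P_k$ vanishing to a high (growing) order $N_k$ at $0$, so that the bound $|P_k(u_n)|\le|u_n|^{N_k}\sup_K|P_k(z)/z^{N_k}|$ together with $\sum_n|u_n|^p<\infty$ makes the tail small. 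Balancing the interpolation requirement $P_k(v_*)\approx1$ against this tail control — via a Runge-type construction on the two disjoint compacta $\{v_*\}$ and $K\setminus\{v_*\}$ — is exactly where the work lies; once a single such nonzero $y$ with bounded coordinate-$0$ orbit is placed in the $\ell_p$-closure, the same contradiction closes the proof.
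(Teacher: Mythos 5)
Your $c_0$ argument is sound and close in spirit to the paper's: both reduce to placing, in the $\|\cdot\|_\infty$-closure of $\mathrm{span}\{u^m:m\ge1\}$, a vector $F(u)$ with $F$ continuous on the compact set $K=\{u_n\}\cup\{0\}$ and $F(0)=0$, via Runge/Mergelyan approximation. The paper takes $F$ to be (in the limit) the indicator of the complement of a small disc $D$ about $0$, so that the limit vector is finitely supported; you take a small positive $F$ and certify non-hypercyclicity by boundedness of the coordinate-$0$ orbit. Either certificate works, and that part of your proposal is correct.

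The $\ell_p$ case, however, is exactly where you stop, and it is the heart of the proof; as written there is a genuine gap. The missing idea in the paper is not a high-order-vanishing or sparse-support construction but a one-line derivative estimate: keeping the same polynomials $P_n$ as in the $c_0$ case ($P_n(0)=0$, $P_n\to0$ uniformly on a disc $D$ containing all but finitely many $u_k$, $P_n\to1$ pointwise off $D$), Cauchy's formula gives $C:=\sup_n\sup_{\frac{1}{2}D}|P_n'|<\infty$, hence $|P_n(u_k)|\le C|u_k|$ for $u_k\in\frac{1}{2}D$. The tail $\sum_{k>k_0}|P_m(u_k)-P_n(u_k)|^p\le (2C)^p\sum_{k>k_0}|u_k|^p$ is then uniformly small, so $(P_n(u))$ is Cauchy in $\ell_p$ and converges to a finitely supported, nonzero, non-hypercyclic vector. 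Your planned route via polynomials vanishing to growing order $N_k$ at $0$ would require controlling $\sup_K|P_k(z)/z^{N_k}|$, which is precisely the quantity that blows up; the first-order estimate already suffices and is what you are missing. Separately, your $\omega$ argument is also incomplete: if every nonzero value of $u$ is attained infinitely often --- for instance if $u$ is $\{0,v\}$-valued, which you have not excluded --- then every $F(u)$ with $F(0)=0$ is a scalar multiple of $u$, so the "indicator of a single fibre" is a hypercyclic vector rather than a non-hypercyclic one, and the "carefully chosen fibre" escape hatch does not obviously exist. (The paper instead takes $F=\mathbf 1_{\CC\setminus\{0\}}$, i.e. $y=\mathbf 1_{\supp(u)}$, and asserts that such a $0$--$1$ sequence is never hypercyclic for a weighted shift on $\omega$.)
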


\begin{proof}
We first consider the case $X=c_0$. Let $x\in X$ be a non-zero sequence, and let $D\subset \mathbb{C}$ be a compact disc centered at the origin and
omitting at least one of the terms of $x$.  For each $n\in \NN$, consider the compact set $K_n$ as in Figure \ref{polyfigure} and $f$ a holomorphic function
defined on a neighbourhood of $K_n$ and satisfying that $f(z)=0$ if $z\in D$ and $f(z)=1$ if $z\in K_n\setminus D$. 

\begin{figure}[ht]
    \centering
    \includegraphics[height=7cm]{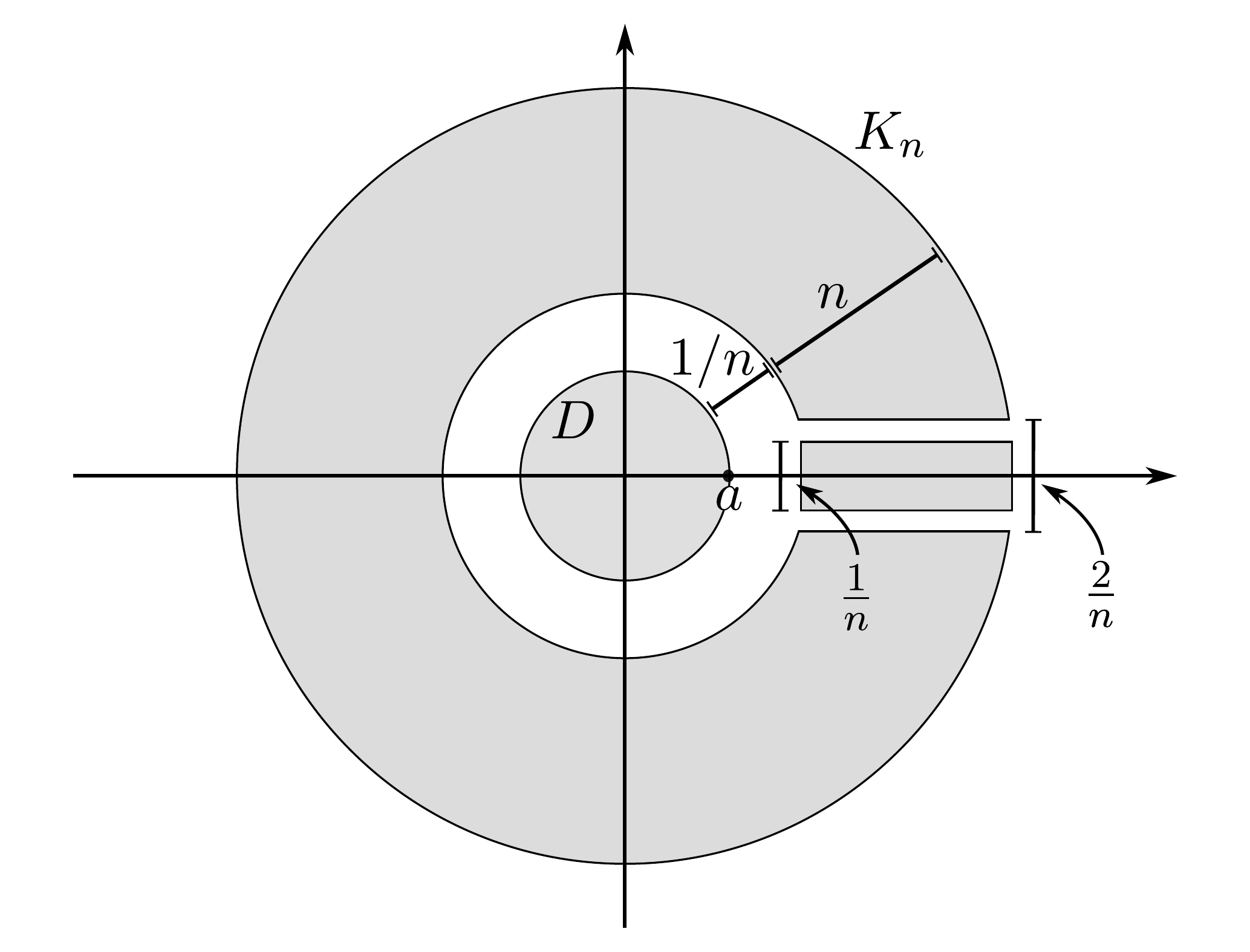}
    \caption{}
    \label{polyfigure}
\end{figure}

By Runge's approximation theorem we get a polynomial $P_n$ such that $\| P_n-f\| _{K_n}<\frac{1}{n}$. We end up with a sequence of polynomials $(P_n)$,  satisfying that $P_n(z)\rightarrow 0$ uniformly on $D$, and $P_n(z)\rightarrow 1$ pointwise on $\mathbb{C}\setminus D$. Redefining $P_n$ by $P_n-P_n(0)$, we may also assume that $P_n(0)=0$, for every $n\in \NN$. 

Since $x\in c_0$, it follows that eventually all the terms of $x$ belong to $D$ which yields that $P_n(x)\rightarrow y=(y_k)$ in $c_0$, where $y_k=0$, if $x_k\in D$ and $y_k=1$ otherwise. We conclude,  that $y$ is a non-zero element in the closed algebra generated by $x$ which is not hypercyclic for any weighted backward shift on $c_0$.  

Let now $X=\ell ^p, p\geq 1$. Consider an $x\in X, x\neq 0$, and $D$ and $(P_n)$ defined as above. Cauchy's formula ensures that $P_n'\rightarrow 0$ uniformly on $\frac{1}{2}D$. Let
$$
C=\sup \left\{ |P_n'(z)|: n\in \NN, z\in \frac{1}{2}D \right\}.
$$
Fix $\varepsilon >0$ and let $k_0\in \NN$ be such that, for $k>k_0$, $x_k\in\frac 12 D$ and  
$$
2^pC^p\sum_{k>k_0}|x_k|^p<\frac{\varepsilon}{2}.
$$
Find $N\in \NN$ such that for all $m,n\geq N$,
$$
\sum_{k=1}^{k_0}|P_m(x_k)-P_n(x_k)|^p<\frac{\varepsilon}{2}.
$$
We have
\begin{align*}
\|P_m(x)-P_n(x)\|_p^p &\leq \sum_{k=1}^{k_0}|P_m(x_k)-P_n(x_k)|^p+\sum_{k>k_0}(|P_m(x_k)|+|P_n(x_k)|)^p  \\
&\leq \sum_{k=1}^{k_0}|P_m(x_k)-P_n(x_k)|^p+ 2^pC^p\sum_{k>k_0}|x_k|^p<\varepsilon.
\end{align*}
That means that the sequence $(P_n(x))$ is Cauchy in $\ell_p$ and the conclusion follows exactly as in the previous case.

Finally, we consider the case $X=\omega$. Letting $D=\{0\}$ and $K_n$ be as above, and by using Runge's approximation theorem, we get a sequence of polynomials $(Q_n)$ such  that $Q_n(0)=0$, for every $n\in \NN$ and $Q_n(z)\rightarrow 1$ for each $z\in \mathbb{C}\setminus \{0\}$. If $x\in \omega$ is a non-zero sequence, then $Q_n(x)\rightarrow y=(y_k)$, where $y_k=0$ if $x_k=0$, and $y_k=1$ otherwise. It is immediate that $y$ is a non-zero element in the closed algebra generated by $x$ which fails to be hypercyclic for any weighted backward shift on $\omega$.
\end{proof}

\begin{question}
 Does there exist a weight $(w_n)$ such that $B_w$, acting on $\ell_1$ endowed with the Cauchy product, supports a closed hypercyclic algebra? 
\end{question}

\subsection{Hypercyclic algebras in the ideal of compact operators}

Beyond the examples given in that paper, there are other examples where the existence of a hypercyclic algebra would be natural. One of them is given by hypercyclic operators acting on separable ideals of operators. For instance, assume that $H$ is a separable Hilbert space and denote by $X=\mathcal K(H)$ the (non-commutative) algebra of compact operators in $H$, endowed with the norm topology.

For $T\in\mathcal L(H)$, denote by $L_T$ the operator of left multiplication by $T$, defined on $\mathcal K(H)$.
It is known (see for instance \cite[Chapter 8]{BM09}) that if $T$ satisfies the hypercyclicity criterion, then $L_T$ is a hypercyclic operator on $\mathcal K(H)$. This latter space being an algebra, it is natural to study whether $L_T$ supports a hypercyclic algebra. We do not know the answer to this question, but we point out that a positive answer would require different techniques. Indeed, Theorem \ref{thm:generalcriterion} can never be applied to these operators.

\begin{proposition}
Let $T\in\mathcal L(H)$. Then $L_T$, acting on $\mathcal K(H)$, does not satisfy the assumptions of Theorem \ref{thm:generalcriterion} even for $d=1$.
\end{proposition}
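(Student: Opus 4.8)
The plan is to exhibit a single finite set $A\subset\NN\setminus\{0\}$ together with a single choice of open sets for which the hypothesis of Theorem~\ref{thm:generalcriterion} fails, and to arrange this so that the argument never uses any property of $T$. The structural feature to exploit is that $L_T$ is a \emph{left} multiplication, so $L_T^N(u^\alpha)=T^N u^\alpha$ and, for $\alpha<\alpha'$, the higher power is obtained from the lower one by right multiplication:
\[
L_T^N(u^{\alpha'})=\bigl(L_T^N(u^{\alpha})\bigr)\,u^{\alpha'-\alpha}.
\]
This rigid multiplicative linkage between the images of the different powers of $u$ is precisely what prevents the criterion from holding.

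Concretely I would take $A=\{1,2\}$ and choose $U_1=B(u_0,\delta)$, $V=B(v_0,\delta')$ and $W=B(0,\varepsilon)$ in $\mathcal K(H)$, where $u_0,v_0$ are fixed finite-rank operators with $v_0u_0\neq0$ (for instance the rank-one operators $u_0=\tfrac12\,e\otimes e$ and $v_0=e\otimes e$ for a unit vector $e$, so that $v_0u_0=\tfrac12\,e\otimes e\neq0$), and $\delta,\delta',\varepsilon$ are small positive numbers. The radii are tuned so that every $u\in U_1$ satisfies $\|u\|<1$, every $S\in V$ satisfies $\|S\|>\varepsilon$, and $\|Su-v_0u_0\|<\tfrac12\|v_0u_0\|$ for all $S\in V$ and $u\in U_1$ (an elementary estimate obtained by expanding $S=v_0+\Delta_S$, $u=u_0+\Delta_u$).

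With these choices I would rule out both admissible values of $\beta$. For $\beta=2$, submultiplicativity gives $\|T^Nu^2\|=\|(T^Nu)\,u\|\le\|T^Nu\|\,\|u\|<\|T^Nu\|$, so $T^Nu\in W$ (hence $\|T^Nu\|<\varepsilon$) forces $\|T^Nu^2\|<\varepsilon$, contradicting $T^Nu^2\in V$; thus $\beta=2$ is impossible. The genuinely delicate point, and the main obstacle, is $\beta=1$, which submultiplicativity cannot exclude since the relevant inequality points the wrong way; here I would use the linkage directly. If $S:=T^Nu\in V$, then $T^Nu^2=Su$, and by the tuning of the radii $\|Su\|\ge\tfrac12\|v_0u_0\|>\varepsilon$, so $T^Nu^2\notin W$; thus $\beta=1$ is impossible as well. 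Since no choice of $u\in U_1$, $\beta\in A$ and $N\ge1$ can meet the requirements, the hypothesis of Theorem~\ref{thm:generalcriterion} fails for $d=1$, and because $T$ was never invoked beyond the identity $L_T^N(u^\alpha)=T^Nu^\alpha$, this holds for every $T\in\mathcal L(H)$.
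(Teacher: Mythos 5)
Your proof is correct and takes essentially the same route as the paper: both arguments pick $A=\{1,2\}$, take $U$ and $V$ to be small balls around (multiples of) a rank-one operator and $W$ a small ball around $0$, and exploit the factorization $L_T^N(u^2)=(L_T^Nu)\,u$ together with submultiplicativity to rule out $\beta=2$ and the stability of the product near the rank-one idempotent to rule out $\beta=1$. The only cosmetic difference is that you run the estimates at the level of operator norms while the paper evaluates everything at the distinguished vector $x$.
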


\begin{proof}
We fix $x\in H$, $x^*\in H^*$ with $x^*(x)=1$, $\|x\|=1$, $\|x^*\|=1$. 
Using the notations of Theorem \ref{thm:generalcriterion}, let $A=\{1,2\}$, 
\begin{align*}
U=V&=\left\{u\in \mathcal L(H):\ \|u-x^*\otimes x\|<1/4\right\},\\
W&=\left\{u\in \mathcal L(H):\ \|u\|<1/8\right\}.
\end{align*}
Assume first that $\beta=1$ and that there exist $u\in U$, $N\in\mathbb N$ with $T^N u\in V$ and $T^N u^2\in W$. Then we know that
\[ \left\|T^N u^2(x)-x^*(u(x))x\right\|<\frac{\|u(x)\|}4 \]
since $T^N u\in V$. Now, $\|u(x)-x\|<1/4$ so that $\|u(x)\|<5/4$ and $|x^*(u(x))|> 3/4$. Hence, 
\[ \left\|T^N u^2(x)\right\| > \frac 34-\frac 5{16}>\frac 18. \]
This contradicts $T^N u^2\in W$.

If we assume that $\beta=2$ and that there exist $u\in U$, $N\in\mathbb N$ with $T^N u\in W$ and $T^N u^2\in V$, then we get successively
\begin{align*}
\left\| T^N u^2(x)-x\right\|&<\frac 14\textrm{ (since $T^N u^2\in V$)}\\
\left\| T^N u^2(x)\right\|&<\frac 18 \|u(x)\|\textrm{ (since $T^N u\in W$)}\\
\left\|u(x)-x\right\|&<\frac 14.
\end{align*}
These three inequalities yield easily a contradiction.
\end{proof}

\subsection{Further question and remark}

As independently shown by Ansari in \cite{ansariexistence} and later-on by Bernal-Gonzáles in \cite{gonzalez}, every separable Banach space supports a hypercyclic operator. In the context of algebras a natural question arises.
 \begin{question} Is it true that every separable Banach algebra supports a hypercyclic operator admitting a hypercyclic algebra? \end{question}

In all known results, the set of generators for hypercyclic algebras is either empty or residual. We observe below that this set can be non-empty and meager.

\begin{remark}
For every pair $(X,T)$ where $X$ is a Banach space and $T$ a hypercyclic operator with a non-hypercyclic vector, we may define a product on $X$ turning it into a commutative Banach algebra and such that the set of generators for a hypercyclic algebra for $T$ is non-empty and nowhere dense.
\end{remark} 

\begin{proof}
Let $x\in HC(T)$ and $y$ a non-hypercyclic vector for $T$ with $\|y\|=1$. Consider $f\in X^{\ast}$ with $\|f\|=1$ and such that $f(x)=0$ and $f(y)=1$. We define the product 
$$
z\cdot w=f(z)f(w)y, \,\, \mbox{with} \,\,  z,w \in X
$$
and observe that it turns $X$ into a commutative Banach algebra. Now, $x^2=0$ so $A(x)=span(x)$, and thus $x$ is a generator for a hypercyclic algebra for $T$. Moreover, it is easy to check that the following holds,
$$
\{x\in X: A(x)\setminus \{0\}\subset HC(T)\}=HC(T)\cap Ker(f).
$$
Since $Ker(f)$ is a proper, closed hyperplane of $X$, we conclude that the set of generators for a hypercyclic algebra for $T$ is non-empty and nowhere dense. 
\end{proof}


\bibliographystyle{plain} 

\begin{thebibliography}{}

\end{thebibliography}


\begin{thebibliography}{99}

\bibitem{ansari} S.I. Ansari, Hypercyclic and Cyclic Vectors, {\em Journal of Functional Analysis} 128 (1995), no. 2, 374--383.

\bibitem{ansariexistence} S.I. Ansari, Existence of hypercyclic operators on topological vector spaces, {\em J. Funct. Anal.} 148 (1997), 384--390.

\bibitem{ACPS07}
R.~M. Aron, J.~A. Conejero, A.~Peris, and J.~B. Seoane-Sep{\'u}lveda.
\newblock Powers of hypercyclic functions for some classical hypercyclic
  operators.
\newblock {\em Integral {E}quations Operator Theory}, 58 (2007), 591--596.

\bibitem{Bayhcalg}
F.~Bayart.
\newblock Hypercyclic algebras.
\newblock {\em J. Funct. Anal.}, 276 (2019), 3441--3467,
\newblock arXiv:1804.01730.

\bibitem{BM09}
F.~Bayart and \'E. Matheron.
\newblock {\em Dynamics of linear operators}, volume 179 of {\em Cambridge
  Tracts in Math}.
\newblock Cambridge University Press, 2009.

\bibitem{BAYRUZSA}
F.~Bayart and I.~Ruzsa.
\newblock {Difference sets and frequently hypercyclic weighted shifts}
\newblock {\em Ergodic Theory Dynam. Systems},
35: 691--709 (2015).

\bibitem{gonzalez} L. Bernal-González, On hypercyclic operators on Banach spaces, {\em Proc. Amer. Math. Soc.} 127 (1999), no. 4, 1003--1010.

\bibitem{GoRo}
L. Bernal-Gonz\'alez and A. Montes-Rodr\'iguez.
\newblock {Non-finite-dimensional closed vector spaces of universal functions for composition operators}.
\newblock {\em J. Approx. Theory}, 82(3): 375--391, 1995.
     
\bibitem{bes1}
J. Bès, A. Conejero, and D. Papathanasiou. 
\newblock Convolution operators supporting hypercyclic algebras.
\newblock {\em J. Math. Anal. Appl.} 445 (2017), 1232–1238.

\bibitem{BCP18}
J.~B{\`e}s, A.~Conejero, and D.~Papathanasiou.
\newblock Hypercyclic algebras for convolution and composition operators.
\newblock {\em J. Funct. Anal.}, 274 (2018) 2884--2905.

\bibitem{BP18}
J.~B{\`e}s and D.~Papathanasiou.
\newblock Algebrable sets of hypercyclic vectors for convolution operators.
\newblock {\em Israel J. Math.}, 1--29. 10.1007/s11856-020-2024-x, 2020.

\bibitem{BesErnstPrieto}
J.~B{\`e}s and R. Ernst and A. Prieto
\newblock Hypercyclic Algebras for convolution operators of unimodular constant term
\newblock {\em ArXiv e-prints}, May 2019, arXiv:1905.03157.

\bibitem{BesPel57}
C.~Bessaga and A.~Pe{\l}czy\'nski.
\newblock {On a class of $B_0$-spaces}.
\newblock {\em Bull. Acad. Polon. Sci. Sér. Sci. Math. Astronom. Phys},
  5:379--383, 1957.
  
\bibitem{BoGre18}
A.~Bonilla and K. -G.~Grosse-Erdmann.
\newblock Upper frequent hypercyclicity and related notions.
\newblock {\em Rev. Mat. Complut.},
31: 673--711, 2018.

\bibitem{bourdon}
 P. S. Bourdon. 
\newblock Invariant manifolds of hypercyclic vectors. 
\newblock {\em Proc. Amer. Math. Soc.} 118 (1993), 845–847.

\bibitem{FalGre18}
J.~Falc\'o and K-G. Grosse-Erdmann.
\newblock Algebrability of the set of hypercyclic vectors for backward shift
  operators.
\newblock {\em  Adv. Math.} 366 (2020), 107082, 25 pp. 47A16 (47B37)

\bibitem{FalGre19}
J.~Falc\'o and K-G. Grosse-Erdmann.
\newblock Algebras of frequently hypercyclic vectors.
\newblock {\em Math. Nachr.} 293 (2020), no. 6, 1120-1135. 47A16 (47B37).

\bibitem{GePeBook}
K.-G. Grosse-Erdmann and A.~Peris.
\newblock {\em Linear chaos}.
\newblock Springer, 2011.

\bibitem{GoSh91}
G.~Godefroy and J.~H. Shapiro.
\newblock Operators with dense, invariant, cyclic vector manifolds.
\newblock {\em J. Funct. Anal.}, 98:229--269, 1991.

\bibitem{menet}
Q. Menet, Hypercyclic subspaces and weighted shifts, 
{\em Adv. Math.} 255 (2014), 305–337.

\bibitem{peterson}
H. Petersson, 
\newblock {Hypercyclic subspaces for Fréchet space operators}, 
\newblock {\em J. Math. Anal. Appl.}, 319 (2006), 764–782.

\bibitem{shkarin}
S. Shkarin, On the set of hypercyclic vectors for the differentiation operator,
{\em Israel J. Math.} 180 (2010), 271–283.

\end{thebibliography}
%

\end{document}